\documentclass[12pt,oneside,english,american]{amsart}
\usepackage[T1]{fontenc}
\usepackage[latin9]{inputenc}
\setlength{\parskip}{\smallskipamount}
\setlength{\parindent}{0pt}
\usepackage{amsthm}
\usepackage{amssymb}
\usepackage{esint}

\makeatletter
\numberwithin{equation}{section}
\numberwithin{figure}{section}
\theoremstyle{plain}
\newtheorem{thm}{\protect\theoremname}[section]
  \theoremstyle{definition}
  \newtheorem{defn}[thm]{\protect\definitionname}
  \theoremstyle{definition}
  \newtheorem{example}[thm]{\protect\examplename}
  \theoremstyle{remark}
  \newtheorem{rem}[thm]{\protect\remarkname}
  \theoremstyle{plain}
  \newtheorem{lem}[thm]{\protect\lemmaname}
  \theoremstyle{plain}
  \newtheorem{prop}[thm]{\protect\propositionname}
  \theoremstyle{plain}
  \newtheorem{cor}[thm]{\protect\corollaryname}



\usepackage{fancyhdr}
\usepackage[us,12hr]{datetime} 


\newcommand{\norm}[1]{\Vert #1 \Vert}

\newcommand{\cF}{\mathcal{F}}

\usepackage{babel}

\makeatother

\usepackage{babel}
  \addto\captionsamerican{\renewcommand{\corollaryname}{Corollary}}
  \addto\captionsamerican{\renewcommand{\definitionname}{Definition}}
  \addto\captionsamerican{\renewcommand{\examplename}{Example}}
  \addto\captionsamerican{\renewcommand{\lemmaname}{Lemma}}
  \addto\captionsamerican{\renewcommand{\propositionname}{Proposition}}
  \addto\captionsamerican{\renewcommand{\remarkname}{Remark}}
  \addto\captionsamerican{\renewcommand{\theoremname}{Theorem}}
  \addto\captionsenglish{\renewcommand{\corollaryname}{Corollary}}
  \addto\captionsenglish{\renewcommand{\definitionname}{Definition}}
  \addto\captionsenglish{\renewcommand{\examplename}{Example}}
  \addto\captionsenglish{\renewcommand{\lemmaname}{Lemma}}
  \addto\captionsenglish{\renewcommand{\propositionname}{Proposition}}
  \addto\captionsenglish{\renewcommand{\remarkname}{Remark}}
  \addto\captionsenglish{\renewcommand{\theoremname}{Theorem}}
  \providecommand{\corollaryname}{Corollary}
  \providecommand{\definitionname}{Definition}
  \providecommand{\examplename}{Example}
  \providecommand{\lemmaname}{Lemma}
  \providecommand{\propositionname}{Proposition}
  \providecommand{\remarkname}{Remark}
\providecommand{\theoremname}{Theorem}

\begin{document}
\title[Tensorial Function Theory]{Tensorial Function Theory:\\{\tiny{From Berezin Transforms to Taylor's Taylor Series and Back}}}

\author{Paul S. Muhly}

\address{Department of Mathematics \\
University of Iowa\\
 Iowa City, IA 52242}

\email{paul-muhly@uiowa.edu }

\author{Baruch Solel}

\address{Department of Mathematics\\
 Technion\\
32000 Haifa, Israel}

\email{mabaruch@techunix.technion.ac.il}

\thanks{The research of both authors was supported in part by a US-Israel Binational Science Foundation grant. The second author was also supported by the Technion V.P.R. Fund.}

\begin{abstract}
Let $H^{\infty}(E)$ be the Hardy algebra of a $W^{*}$-correspond\-ence
$E$ over a $W^{*}$-algebra $M$. Then the ultraweakly continuous
completely contractive representations of $H^{\infty}(E)$ are parametrized
by certain sets $\mathcal{AC}(\sigma)$ indexed by $NRep(M)$ - the
normal $*$-repres\-ent\-ations $\sigma$ of $M$. Each set $\mathcal{AC}(\sigma)$
has analytic structure, and each element $F\in H^{\infty}(E)$ gives
rise to an analytic operator-valued function $\widehat{F}_{\sigma}$
on $\mathcal{AC}(\sigma)$ that we call the $\sigma$-Berezin transform
of $F$. The sets $\{\mathcal{AC}(\sigma)\}_{\sigma\in\Sigma}$ and
the family of functions $\{\widehat{F}_{\sigma}\}_{\sigma\in\Sigma}$
exhibit ``matricial structure'' that was introduced by Joeseph Taylor
in his work on noncommutative spectral theory in the early 1970s.
Such structure has been exploited more recently in other areas of
free analysis and in the theory of linear matrix inequalities. Our
objective here is to determine the extent to which the matricial structure
characterizes the Berezin transforms.
\end{abstract}

\maketitle

\section{Introduction\label{sec:Introduction}}

Our purpose in this paper is to explore relations among three subjects:
(1) the theory of Berezin transforms that arise from the representation
theory of tensor algebras and Hardy algebras of $W^{*}$-correspondences,
(2) infinite dimensional holomorphy, and (3) the theory of  free holomorphic
functions initiated by Joseph Taylor in \cite{Tay72c,Tay73a}. In
this introduction we indicate the connections we have in mind and
provide a bit of context. Details and a fuller account, including
relevant definitions of terms left undefined here, will be given in
subsequent sections. 

Suppose first that $M$ is a $W^{*}$-algebra and that $E$ is a $W^{*}$-correspond\-ence
over $M$.%
\footnote{We shall assume throughout that $M$ has a separable predual and that
$E$ is countably generated.%
} With these ingredients one may build two operator algebras, $\mathcal{T}_{+}(E)$
and $H^{\infty}(E)$, that are generated by a copy of $M$ and the
creation operators $\{T_{\xi}\mid\xi\in E\}$ acting on the full Fock
space $\mathcal{F}(E)=M\oplus E\oplus E^{\otimes2}\oplus E^{\otimes3}\oplus\cdots$:
$\mathcal{T}_{+}(E)$, \emph{the tensor algebra of} $E$, is the norm
closed algebra generated by these objects and $H^{\infty}(E)$, \emph{the
Hardy algebra} of $E$, is its ultraweak closure. One may think of
$\mathcal{T}_{+}(E)$ as a generalization of the disc algebra $A(\mathbb{D})$,
and $H^{\infty}(E)$ may be viewed as a generalization of the classical
Hardy space, $H^{\infty}(\mathbb{D})$, consisting of the bounded
analytic functions on $\mathbb{D}$. Indeed, when $M=\mathbb{C}=E$,
then $\mathcal{T}_{+}(E)$ is naturally completely isometrically isomorphic
to $A(\mathbb{D})$ and $H^{\infty}(E)$ is naturally completely isometrically
isomorphic and weak-$*$ homeomorphic to $H^{\infty}(\mathbb{D})$.
Another important example to keep in mind is that which arises when
$M=\mathbb{C}$ and $E=\mathbb{C}^{d}$, $d\geq2$. In this case,
$\mathcal{T}_{+}(E)$ is naturally completely isometrically isomorphic
to Gelu Popescu's noncommutative disc algebra $\mathcal{A}_{d}$ \cite{P1996b}
and $H^{\infty}(E)$ has come to be called the \emph{algebra of noncommutative
analytic Toeplitz operators. }The terminology is due to Davidson and
Pitts \cite{DPit98a}, and much of the initial theory of these algebras
is due to them. Other interesting finite dimensional settings can
be constructed from graphs or quivers. (See \cite{Muhly1997,Muhly1999,Katsoulis2006}
for examples.)

It is worthwhile to emphasize that even if one were interested only
in these finite dimensional examples, it is useful to work with the
general theory. One reason is that the general theory is invariant
under Morita equivalence \cite{Muhly2000a,Muhly2011b} and, among
many things, Morita theory allows one to study every $\mathcal{T}_{+}(E)$
and $H^{\infty}(E)$ in terms of analytic crossed products - generalizations
of twisted polynomial rings - that have played such a prominent role
in the theory of non-self-adjoint operator algebras. The point is
that although $\mathcal{T}_{+}(E)$ and $H^{\infty}(E)$ look very
much like algebras of functions of several (noncommutative) variables,
as we shall see, they behave much more like algebras of functions
of one variable than one might expect.

In \cite[Theorem 2.9]{Muhly1998a}, we showed that every completely
contractive representation of $\mathcal{T}_{+}(E)$ is given by a
pair, $(\sigma,\mathfrak{z})$, where $\sigma$ is a normal representation
of $M$ on a Hilbert space $H_{\sigma}$ and $\mathfrak{z}:E\otimes_{\sigma}H_{\sigma}\to H_{\sigma}$
is an operator of norm at most $1$ that intertwines $\sigma^{E}\circ\varphi$
and $\sigma$, where $\sigma^{E}$ is the representation of $\mathcal{L}(E)$
that is induced by $\sigma$ in the sense of Rieffel \cite{R1974b}
and where $\varphi$ denotes the left action of $M$ on $E$. We denote
the representation associated to $(\sigma,\mathfrak{z})$ by $\sigma\times\mathfrak{z}$. 

(In general, if $\mathcal{A}$ is a not-necessarily-self-adjoint algebra
and if $\pi$ and $\rho$ are two representations of $\mathcal{A}$
by bounded operators on Hilbert spaces $H_{\pi}$ and $H_{\rho}$,
respectively, then we shall write $\mathcal{I}(\pi,\sigma)$ for the
collection of all operators $C$ from $H_{\pi}$ to $H_{\rho}$ such
that $C\pi(a)=\rho(a)C$ for all $a\in\mathcal{A}$ and we call $\mathcal{I}(\pi,\rho)$
the \emph{intertwiner space} or the \emph{space of intertwiners from
$\pi$ to $\rho$}.) 

For each normal representation $\sigma$ of $M$, we endow $\mathcal{I}(\sigma^{E}\circ\varphi,\sigma)$
with the operator norm and we write $\mathbb{D}(0,1,\sigma)$ for
the open unit ball in $\mathcal{I}(\sigma^{E}\circ\varphi,\sigma)$.
Then, when $\sigma$ is fixed, each $F\in\mathcal{T}_{+}(E)$ gives
rise to a $B(H_{\sigma})$-valued function $\widehat{F}_{\sigma}$
defined on the closed unit ball $\overline{\mathbb{D}(0,1,\sigma)}$
by the formula 
\begin{equation}
\widehat{F}_{\sigma}(\mathfrak{z}):=\sigma\times\mathfrak{z}(F).\label{eq:Berezin transform}
\end{equation}
Because of formulas that we derived in \cite[Theorem 13]{Muhly2009},
we call $\widehat{F}_{\sigma}$ the \emph{$\sigma$-Berezin transform}
of $F$.%
\footnote{We are indebted to Lew Coburn for calling our attention to the connection
between our formulas and the classical Berezin transform associated
with the Hardy space on the open unit disc in the complex plane. We
note, too, that our terminology agrees with that of Gelu Popescu \cite{P2008d}
in those settings where his theory and ours overlap. %
} The collection of all $\sigma$-Berezin transforms, $\{\widehat{F}_{\sigma}\}_{\sigma\in NRep(M)}$,
obtained by letting $\sigma$ range over the collection of all normal
representations of $M$ on separable Hilbert space, $NRep(M)$, is
called \emph{the} \emph{Berezin transform} of $F$ and will be written
simply as $\widehat{F}$. It is easy to see that for $F\in\mathcal{T}_{+}(E)$,
each $\sigma$-Berezin transform $\widehat{F}_{\sigma}$ is a continuous
function on $\overline{\mathbb{D}(0,1,\sigma)}$ with values in $B(H_{\sigma})$,
where both spaces are given the norm operator topology. Further, $\widehat{F}_{\sigma}$
is holomorphic in the sense of Frechet \cite[112 and 778]{HP1974}
when restricted to $\mathbb{D}(0,1,\sigma)$. 

A similar sort of representation exists for elements $F\in H^{\infty}(E)$.
However, for these $F$, $\widehat{F}_{\sigma}$ makes good sense
only on the set of $\mathfrak{z}$s in $\overline{\mathbb{D}(0,1,\sigma)}$
such that $\sigma\times\mathfrak{z}$ extends from $\mathcal{T}_{+}(E)$
to an ultraweakly continuous representation of $H^{\infty}(E)$ in
$B(H_{\sigma})$. We denote the set of such points by $\mathcal{AC}(\sigma)$
and for reasons spelled out in \cite{Muhly2011a} we call them the
\emph{absolutely continuous points} in $\overline{\mathbb{D}(0,1,\sigma)}$.
It turns out that $\mathbb{D}(0,1,\sigma)\subseteq\mathcal{AC}(\sigma)$
\cite[Corollary 2.14]{Muhly2004a}. Thus, for $F\in H^{\infty}(E)$,
$\widehat{F}_{\sigma}$ makes sense as a function on $\mathbb{D}(0,1,\sigma)$.
It is, in fact, bounded and holomorphic with respect to the norm topologies
on $\mathbb{D}(0,1,\sigma)$ and $B(H_{\sigma})$.

The Frechet power series of $\widehat{F}_{\sigma}$ is easy to calculate
and has a remarkably simple expression in terms of the tensorial ``Fourier
series'' in which $F$ may be expressed using the gauge automorphism
group built from the number operator on the full Fock space $\mathcal{F}(E)$.
We call power series with this special form \emph{tensorial power
series} (Definition \ref{def:Tensorial_power_series}). It is natural
to inquire about the structure of such power series, in general, and
one soon sees that much of standard elementary theory of complex analysis
on the open unit disc can be recapitulated in the more general setting
we are describing. When $M=\mathbb{C}$ and $E=\mathbb{C}^{d}$, this
has been done already by Popescu in \cite{P2006a}. 

For a given $\sigma$, we write $B(\sigma)$ for $\{\widehat{F}_{\sigma}\mid F\in H^{\infty}(E)\}$.
Then $B(\sigma)$ is an algebra under pointwise multiplication that
may be identified naturally with a quotient of $H^{\infty}(E)$ by
an ultraweakly closed ideal and the map $F\to\widehat{F}_{\sigma}$
is a complete quotient map. If $\sigma$ is a faithful normal representation
of $M$ of infinite multiplicity, then the map is a completely isometric
isomorphism from $H^{\infty}(E)$ onto $B(\sigma)$ \cite[Lemma 3.8]{Muhly2008b}.
In general, however, $B(\sigma)$ is a proper quotient of $H^{\infty}(E)$.
Indeed, when $M=\mathbb{C}$, and $E=\mathbb{C}^{d}$, with $d\geq2$,
then each normal representation of $M$ is, of course, (unitarily
equivalent to) a multiple of the representation $\sigma_{1}$ of $\mathbb{C}$
on the one-dimensional Hilbert space, $\mathbb{C}$. The $\sigma_{1}$-Berezin
transforms $\{\widehat{F}_{\sigma_{1}}\mid F\in H^{\infty}(E)\}$
are the multipliers of the Drury-Arveson space and so form a commutative
algebra. On the other hand, $B(\infty\sigma_{1})$ is isomorphic to
the algebra of noncommutative analytic Toeplitz operators\emph{ }in
the fashion just described. For $2\leq n<\infty$, $B(n\sigma_{1})$
is a completion of the algebra of $d$ generic $n\times n$ matrices.
So, as soon as $n\geq2$, it is noncommutative. But also, by virtue
of the polynomial identities these algebras satisfy, it is easy to
see that when $n\neq m$, $B(n\sigma_{1})\ncong B(m\sigma_{1})$,
and that no finite dimensional representation of $\mathbb{C}$ yields
a faithful representation of either $H^{\infty}(E)$ or of $\mathcal{T}_{+}(E)$
in terms of Berezin transforms. Thus there arise very natural questions:
How much is lost when forming the $\sigma$-Berezin transform of an
element in $H^{\infty}(E)$? How might one reconstruct an $F$ from
its finite dimensional $\sigma$-Berezin transforms? What extra information
is required? While we are still far from giving definitive answers
to these questions, we believe that what we accomplish here is a helpful
start.

There is an important feature of the discs $\mathbb{D}(0,1,\sigma)$
that plays a central roll in our theory: For any two normal representations
of $M$, $\sigma$ and $\tau$, 
\begin{equation}
\mathbb{D}(0,1,\sigma)\oplus\mathbb{D}(0,1,\tau)\subseteq\mathbb{D}(0,1,\sigma\oplus\tau).\label{eq:Sum}
\end{equation}
The meaning of this inclusion is easy to understand when one realizes
that $\mathcal{I}((\sigma\oplus\tau)^{E}\circ\varphi,\sigma\oplus\tau)$
may be viewed as a set of operator matrices $\begin{bmatrix}\mathfrak{z_{11}} & \mathfrak{z_{12}}\\
\mathfrak{z_{21}} & \mathfrak{z_{22}}
\end{bmatrix}$ acting as operators from $H_{\sigma^{E}\circ\varphi}\oplus H_{\tau^{E}\circ\varphi}$
to $H_{\sigma}\oplus H_{\tau}$, where $\mathfrak{z_{11}}\in\mathcal{I}(\sigma^{E}\circ\varphi,\sigma)$,
$\mathfrak{z_{12}}\in\mathcal{I}(\tau^{E}\circ\varphi,\sigma)$, $\mathfrak{z_{21}}\in\mathcal{I}(\sigma^{E}\circ\varphi,\tau)$,
and $\mathfrak{z_{22}}\in\mathcal{I}(\tau^{E}\circ\varphi,\tau)$.
So $\mathbb{D}(0,1,\sigma)\oplus\mathbb{D}(0,1,\tau)$ is just the
collection of those matrices $\begin{bmatrix}\mathfrak{z_{11}} & \mathfrak{z_{12}}\\
\mathfrak{z_{21}} & \mathfrak{z_{22}}
\end{bmatrix}\in\mathbb{D}(0,1,\sigma\oplus\tau)$ in which the off-diagonal entries vanish. 
\begin{defn}
\label{def:Matricial_family_of_sets} A family of sets $\{\mathcal{U}(\sigma)\}_{\sigma\in NRep(M)}$,
with $\mathcal{U}(\sigma)\subseteq\mathcal{I}(\sigma^{E}\circ\varphi,\sigma)$,
satisfying $\mathcal{U}(\sigma)\oplus\mathcal{U}(\tau)\subseteq\mathcal{U}(\sigma\oplus\tau)$
is called a \emph{matricial family} of sets.
\end{defn}
Matricial families, in particular, families of discs $\{\mathbb{D}(0,1,\sigma)\}_{\sigma\in NRep(M)}$,
enjoy properties that are very similar to the properties of the domains
that Taylor first considered in \cite[Section 6]{Tay72c}, when he
introduced a notion of localization for free algebras. They are very
closely related to the fully matricial sets of Dan Voiculescu \cite{Voi2004,Voi2010},
which are matricial sets in our terminology but also satisfy additional
conditions which we do not use here. In contexts when $E$ is finite
dimensional, matricial sets are essentially the \emph{noncommutative
sets} that Bill Helton, Igor Klep, Scott McCullough and others study
in the setting of linear matrix inequalities \cite{HKM2011a,HKM2011b,HKMS2009}.
They are also closely connected to the \emph{noncommutative sets}
in the work of Dmitry Kaliuzhny\u{i}-Verbovetsky\u{i} and Victor
Vinnikov devoted to noncommutative function theory \cite{K-VV2009,Kaliuzhnyi-Verbovetskyi2010,K-VVPrep}.

There is another property that the discs also enjoy, viz. for any
contraction $\mathfrak{t}\in\mathcal{I}(\tau,\sigma)$, the inclusion,
\begin{equation}
\mathfrak{t}\mathbb{D}(0,1,\tau)(I_{E}\otimes\mathfrak{t}^{*})\subseteq\mathbb{D}(0,1,\sigma),\label{eq:matricially convex}
\end{equation}
holds. This shows that the discs are \emph{matricially convex} in
the sense of operator space theory (see \cite{Effros2000}). While
matricial convexity does not play a role in our immediate considerations,
it already has proved useful elsewhere.

The Berezin transform, $\widehat{F}=\{\widehat{F}_{\sigma}\}_{\sigma\in NRep(M)}$,
of an element $F\in H^{\infty}(E)$ satisfies the equation
\begin{equation}
\widehat{F}_{\sigma\oplus\tau}(\mathfrak{z}\oplus\mathfrak{w})=\widehat{F}_{\sigma}(\mathfrak{z})\oplus\widehat{F}_{\tau}(\mathfrak{w}),\qquad\mathfrak{z}\oplus\mathfrak{w}\in\mathbb{D}(0,1,\sigma)\oplus\mathbb{D}(0,1,\tau).\label{eq:Direct_sum_decomposition}
\end{equation}
This, too, is a critical feature of the functions in Taylor's theory
and in the other places just cited. In fact, the Berezin transforms
have an additional property that we will use repeatedly: 
\begin{defn}
\label{def:Matricial_family_of_functions} Suppose $\{\mathcal{U}(\sigma)\}_{\sigma\in NRep(M)}$
is a matricial family of sets and suppose that for each $\sigma\in NRep(M)$,
$f_{\sigma}:\mathcal{U}(\sigma)\to B(H_{\sigma})$ is a function.
We say that $f:= \{f_{\sigma}\}_{\sigma\in NRep(M)}$ is a \emph{matricial
family of functions }in case 
\begin{equation}
Cf_{\sigma}(\mathfrak{z})=f_{\tau}(\mathfrak{w})C\label{eq:respects_intertwiners}
\end{equation}
for every $\mathfrak{z}\in\mathcal{U}(\sigma)$, every $\mathfrak{w}\in\mathcal{U}(\tau)$
and every $C\in\mathcal{I}(\sigma,\tau)$ such that 
\begin{equation}
C\mathfrak{z}=\mathfrak{w}(I_{E}\otimes C).\label{eq:basic_intertwining}
\end{equation}

When the family $\{\mathcal{U}(\sigma)\}_{\sigma\in NRep(M)}$ is
$\{\mathbb{D}(0,1,\sigma)\}_{\sigma\in NRep(M)},$ and $f = \{f_{\sigma}\}_{\sigma\in NRep(M)}$
is a Berezin transform, then it is easy to see that the assumptions
on an operator $C:H_{\sigma}\to H_{\tau}$ that $C\in\mathcal{I}(\sigma,\tau)$
and satisfies equation \eqref{eq:basic_intertwining} express the
fact that $C$ lies in $\mathcal{I}(\sigma\times\mathfrak{z},\tau\times\mathfrak{w})$.
But then, equation \eqref{eq:respects_intertwiners} is immediate.
It is simply a manifestation of the structure of the commutant of
the representation $(\sigma\oplus\tau)\times(\mathfrak{z}\oplus\mathfrak{w})$.
In this setting also, the defining hypothesis for a matricial family
can be written simply as
\begin{equation}
\mathcal{I}(\sigma\times\mathfrak{z},\tau\times\mathfrak{w})\subseteq\mathcal{I}(f_{\sigma}(\mathfrak{z}),f_{\tau}(\mathfrak{w})),\label{eq:Resp_intertwiners}
\end{equation}
for all $\sigma,\tau\in NRep(M)$, $\mathfrak{z}\in\mathcal{AC}(\sigma),$
and $\mathfrak{w}\in\mathcal{AC}(\tau)$. Consequently, we sometimes
say  that a matricial family \emph{respects intertwiners}. Observe
that if a family respects intertwiners, then it automatically satisfies
equations like \eqref{eq:Direct_sum_decomposition}.

What is surprising is the following converse - a nonlinear double
commutant theorem of sorts - which extends the double commutant theorem
for induced representations of Hardy algebras \cite[Corollary 3.10]{Muhly2004a}.
We shall prove a more refined statement in Theorem \ref{thm:_Relative_double_commutant}. \end{defn}
\begin{thm}
\label{thm:Double_Commutant} If $f=\{f_{\sigma}\}_{\sigma\in NRep(M)}$
is a matricial family of functions, with $f_{\sigma}$ defined on
$\mathcal{AC}(\sigma)$ and mapping to $B(H_{\sigma}),$ then there
is an $F\in H^{\infty}(E)$ such that $f$ is the Berezin transform
of $F$, i.e., $f_{\sigma}=\widehat{F}_{\sigma}$ for every $\sigma$.
\end{thm}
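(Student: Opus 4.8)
The plan is to evaluate the family $f$ at one carefully chosen point of one carefully chosen representation --- the point at which the Berezin transform \emph{is} (the Fock model of) the identity map of $H^{\infty}(E)$ --- and then to invoke the hypothesis that $f$ respects intertwiners twice: once, together with the double commutant theorem for induced representations, to \emph{manufacture} the element $F$, and once more, together with the dilation theory of absolutely continuous representations, to check that $f_{\tau}=\widehat{F}_{\tau}$ for \emph{every} $\tau$.

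First I would fix a faithful normal representation $\sigma_{0}$ of $M$ of infinite multiplicity on a separable Hilbert space $K_{0}$ and set $H_{\sigma}:=\mathcal{F}(E)\otimes_{\sigma_{0}}K_{0}$, with $\sigma$ the normal representation of $M$ obtained from the left action of $M$ on $\mathcal{F}(E)$, amplified by $I_{K_{0}}$. Using the identification $E\otimes_{\sigma}H_{\sigma}\cong(E\oplus E^{\otimes2}\oplus\cdots)\otimes_{\sigma_{0}}K_{0}\subseteq H_{\sigma}$, let $\mathfrak{t}\in\mathcal{I}(\sigma^{E}\circ\varphi,\sigma)$ be the resulting isometric inclusion, so $\Vert\mathfrak{t}\Vert=1$ and $\mathfrak{t}\in\overline{\mathbb{D}(0,1,\sigma)}$. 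Then $\sigma\times\mathfrak{t}$ is exactly the induced (Fock) representation $X\mapsto X\otimes I_{K_{0}}$ of $H^{\infty}(E)$; it is ultraweakly continuous and, because $\sigma_{0}$ is faithful, a complete isometry. Hence $\mathfrak{t}\in\mathcal{AC}(\sigma)$ and $\widehat{F}_{\sigma}(\mathfrak{t})=F\otimes I_{K_{0}}$ for every $F\in H^{\infty}(E)$; in particular $F$ is recovered from the single value $\widehat{F}_{\sigma}(\mathfrak{t})$.

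Next I would show that $F_{0}:=f_{\sigma}(\mathfrak{t})\in B(H_{\sigma})$ lies in the image of $H^{\infty}(E)$ under the Fock representation. An operator $C\in B(H_{\sigma})$ lies in the commutant of $(\sigma\times\mathfrak{t})(H^{\infty}(E))$ precisely when $C\in\mathcal{I}(\sigma,\sigma)$ and $C\mathfrak{t}=\mathfrak{t}(I_{E}\otimes C)$, since the latter two conditions say exactly that $C$ commutes with $\sigma(M)$ and with every creation operator $T_{\xi}$. That is, such $C$ are exactly the operators satisfying \eqref{eq:basic_intertwining} with $\tau=\sigma$ and $\mathfrak{w}=\mathfrak{t}$, so the hypothesis \eqref{eq:respects_intertwiners} yields $CF_{0}=F_{0}C$. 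Thus $F_{0}$ lies in the double commutant of $(\sigma\times\mathfrak{t})(H^{\infty}(E))$, which by the double commutant theorem for induced representations of Hardy algebras \cite[Corollary 3.10]{Muhly2004a} is $(\sigma\times\mathfrak{t})(H^{\infty}(E))$ itself. Hence there is $F\in H^{\infty}(E)$ with $F\otimes I_{K_{0}}=F_{0}=f_{\sigma}(\mathfrak{t})$, i.e. $\widehat{F}_{\sigma}(\mathfrak{t})=f_{\sigma}(\mathfrak{t})$.

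It then remains to verify $f_{\tau}=\widehat{F}_{\tau}$ for arbitrary $\tau\in NRep(M)$ and $\mathfrak{w}\in\mathcal{AC}(\tau)$, and this is where the main obstacle lies. The needed input is that, because $\sigma_{0}$ is a faithful representation of infinite multiplicity, every absolutely continuous completely contractive representation $\tau\times\mathfrak{w}$ of $H^{\infty}(E)$ is a dense-range quotient of the single Fock representation $\sigma\times\mathfrak{t}$: there is $C\in\mathcal{I}(\sigma\times\mathfrak{t},\tau\times\mathfrak{w})$ with dense range --- for $\mathfrak{w}$ in the open ball this is the Popescu-type dilation used in \cite{Muhly2004a}, and for general $\mathfrak{w}\in\mathcal{AC}(\tau)$ it is precisely what the theory of absolutely continuous points in \cite{Muhly2011a} provides. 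Granting this, such a $C$ satisfies $C\in\mathcal{I}(\sigma,\tau)$ and \eqref{eq:basic_intertwining}, so \eqref{eq:respects_intertwiners} gives $Cf_{\sigma}(\mathfrak{t})=f_{\tau}(\mathfrak{w})C$, while $C(\sigma\times\mathfrak{t})(F)=(\tau\times\mathfrak{w})(F)C$ because $C$ intertwines the representations; combining these, $f_{\tau}(\mathfrak{w})C=\widehat{F}_{\tau}(\mathfrak{w})C$, and density of the range of $C$ forces $f_{\tau}(\mathfrak{w})=\widehat{F}_{\tau}(\mathfrak{w})$. The delicate point is that no continuity of $f_{\tau}$ is available, so one cannot first treat the open disc and then pass to the limit on $\mathcal{AC}(\tau)$; the dense-range intertwiner must be produced uniformly over all absolutely continuous points. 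Keeping careful track of which intertwiners are actually used is what leads to the sharper Theorem \ref{thm:_Relative_double_commutant}.
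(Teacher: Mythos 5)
Your proposal is correct and follows essentially the same route as the paper's proof (given there for the refined Theorem \ref{thm:_Relative_double_commutant}): evaluate $f$ at the induced Fock point of a special generator, use the intertwiner-respecting hypothesis together with the double commutant theorem of \cite[Corollary 3.10]{Muhly2004a} to manufacture $F$, and then propagate the identity $f_{\tau}(\mathfrak{w})=\widehat{F}_{\tau}(\mathfrak{w})$ to every $\mathfrak{w}\in\mathcal{AC}(\tau)$ via intertwiners from the generator. The only (cosmetic) difference is that you ask for a single dense-range intertwiner, whereas the paper's Theorem \ref{abscont} states that the ranges of \emph{all} $C\in\mathcal{I}(\sigma_{0}\times\mathfrak{s}_{0},\tau\times\mathfrak{w})$ already cover $H_{\tau}$, which suffices directly since the operators being compared are bounded; a single dense-range $C$ could be assembled from countably many of these using the infinite multiplicity of the generator, but it is not needed.
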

Three features of Theorem \ref{thm:Double_Commutant} deserve comment:
(1) the domain of each $f_{\sigma}$ is $\mathcal{AC}(\sigma)$, (2)
\emph{all} the representations in $NRep(M)$ are used, and (3) no
hypotheses on the \emph{nature} of the functions $f_{\sigma}$ are
made. They are not assumed to be analytic or continuous; they are
not even assumed to be bounded. If we relax (1) and assume only that
the functions are defined on $\mathbb{D}(0,1,\sigma)$, then the theorem
breaks down: In Example \ref{eg:_Failure_of_resolvents} we will exhibit
an \emph{unbounded} family $f=\{f_{\sigma}\}_{\sigma\in NRep(M)}$,
where $f_{\sigma}$ is defined only on $\mathbb{D}(0,1,\sigma)$,
that satisfies \eqref{eq:Resp_intertwiners}. However, we shall also
prove that if the $f_{\sigma}:\mathbb{D}(0,1,\sigma)\to B(H_{\sigma})$
are \emph{bounded uniformly in} $\sigma\in NRep(M)$ and if they respect
intertwiners, then $f$ is a Berezin transform (Theorem \ref{boundedhol}). 

With regard to (2), it is helpful to reflect on the fact that $NRep(M)$
is really a $W^{*}$-category in the sense of \cite{GLR1985}. In
fact, it is essentially Riefel's category $Normod-M$ \cite{R1974a}.
We do not need much of the theory of such categories to achieve our
goals here, but that theory has guided our thinking. The \emph{objects}
of $NRep(M)$ are the normal representations of $M$ on separable
Hilbert space. (We have abused notation a bit and have simply written
$\sigma\in NRep(M)$, when $\sigma$ is a normal representation of
$M$.) The set of morphisms from $\sigma$ to $\tau$ is the intertwiner
space $\mathcal{I}(\sigma,\tau)$. We want to consider subcategories
$\Sigma$ of $NRep(M)$ with the property that if $\sigma$ and $\tau$
are in $\Sigma$, then so is $\sigma\oplus\tau$. Such a category
is an \emph{additive} subcategory of $NRep(M)$. In the literature,
particularly that dealing with the setting where $M=\mathbb{C}$ and
$E=\mathbb{C}^{d}$, it is important to extract as much information
as is possible from the finite dimensional representations of $M$,
and, of course, the finite dimensional representations of $M$ determine
an additive subcategory of $NRep(M)$. Although, we do not know how
to modify the hypotheses in Theorem \ref{thm:Double_Commutant} so
that we can restrict to every additive subcategory, we can prove that
if $\Sigma$ is a full additive subcategory of $NRep(M)$ that consists
of faithful representations of $M$, if $f=\{f_{\sigma}\}_{\sigma\in\Sigma}$
is a uniformly bounded matricial family of functions, with $f_{\sigma}:\mathbb{D}(0,1,\sigma)\to B(H_{\sigma})$,
then each of the $f_{\sigma}$s is Frechet holomorphic in $\mathbb{D}(0,1,\sigma)$
and the Frechet-Taylor expansion of $f_{\sigma}$ can be expressed
explicitly in terms of tensors from the tensor powers of $E$ in the
same way that the Taylor series for Berezin transforms can be expressed,
i.e., in terms of our tensorial power series. But to do this, we must
first use ideas from Kaliuzhny\u{i}-Verbovetsky\u{i} and Vinnikov
\cite{K-VV2009,Kaliuzhnyi-Verbovetskyi2010,K-VVPrep} to show that
the $f_{\sigma}$s have  Taylor series in the sense of Joseph Taylor
\cite{Tay73a} and then use a variant of our duality theorem \cite[Theorem 3.6]{Muhly2004a}
to show that Taylor's Taylor coefficients are tensors of the desired
type. Thus, we will show that there are strong interconnections among
three notions of holomorphy and their associated power series: Frechet
holomorphy for maps between Banach spaces, Taylor's notion of free
holomorphy, and tensorial holomorphy. We were led to these by studying
Berezin transforms of elements of $H^{\infty}(E)$. Although Theorem
\ref{boundedhol} shows that a matricial family of functions $f=\{f_{\sigma}\}_{\sigma\in NRep(M)}$,
with $f_{\sigma}:\mathbb{D}(0,1,\sigma)\to B(H_{\sigma})$, is a Berezin
transform if and only if $f$ is uniformly bounded in $\sigma$, the
proof very much depends upon properties of $NRep(M)$ that are not
shared by all subcategories. The best we can say at this stage is
that if $\Sigma$ is a full additive subcategory of $NRep(M)$ consisting
of faithful representations, if $f=\{f_{\sigma}\}_{\sigma\in\Sigma}$
is a family of functions with $f_{\sigma}:\mathbb{D}(0,1,\sigma)\to B(H_{\sigma})$,
and if $\sup_{\sigma\in\Sigma}\sup_{\mathfrak{z}\in\mathbb{D}(0,1,\sigma)}\Vert f_{\sigma}(\mathfrak{z})\Vert<\infty$,
then $f$ is a family of tensorial power series if and only if $f$
is a matricial family (Theorem \ref{thm:Matricial_implies_Frechet_hol.}).
Additional information appears to be needed to conclude that such
an $f$ is a Berezin transform. 

Section \ref{sec:Preliminaries} is dedicated to recapping a number
of facts we need from the theory of tensor algebras and Hardy algebras.
Tensorial power series are also introduced and some of their properties
developed. Section \ref{sec:Matricial Families and Functions} is
devoted to developing general properties of matricial sets and functions.
Section \ref{sec:A_special_generator} is devoted to proving a refined
version of Theorem \ref{thm:Double_Commutant}. The connection between
Taylor's Taylor series and our Tensorial Taylor series is made in
Section \ref{Function Theory without a generator}. Finally in Section
\ref{sec:Series-for-matricial} we extend our work in Section \ref{Function Theory without a generator}
to cover maps between correspondence duals.

\subsection*{Acknowledgement}

We are pleased to thank Victor Vinnikov for enlightening conversations
we had with him over matricial function theory. His work in this area,
especially that with Dmitry Kaliuzhny\u{i}-Verbovetsky\u{i} and
Mihai Popa, has been a source of inspiration to us.

\section{Preliminaries\label{sec:Preliminaries}}

Throughout, $M$ will be a fixed $W^{*}$-algebra with separable predual.
However, we shall not fix a representation in advance, but instead
we shall want to access the entire category of normal representations
of $M$ on separable Hilbert space, $NRep(M)$. We shall always assume
that such representations are unital. Strictly speaking, we shall
not want to identify unitarily equivalent representations. But when
$M=\mathbb{C}$, there is no harm in doing so. Also, throughout this
paper, $\Sigma$ will denote an additive subcategory of $NRep(M)$.
This means that whenever $\sigma$ and $\tau$ are in $\Sigma$ so
is $\sigma\oplus\tau$, where $H_{\sigma\oplus\tau}:=H_{\sigma}\oplus H_{\tau}$
and $\sigma\oplus\tau(a):=\sigma(a)\oplus\tau(a)$. In particular,
we shall be interested in additive subcategory generated by a single
representation $\sigma$. It consists of all the \emph{finite} multiples
of $\sigma$, which we write $n\sigma$. When we want to consider
an infinite multiple of $\sigma$, we will be explicit about this
and denote it by $\infty\sigma$. Only at certain points will we need
to assume that $\Sigma$ is a \emph{full} subcategory of $NRep(M)$,
meaning that when $\sigma$ and $\tau$ are in $\Sigma$, then \emph{all}
the intertwiners between $\sigma$ and $\tau$ are also in $\Sigma$.
We will be explicit about where this assumption is used. We will,
however, always assume that if $\sigma$ lies in $\Sigma$ and if
$\tau=m\sigma$, then the natural injection $\iota_{k}$ of $H_{\sigma}$
into $H_{\tau}$, that identifies $H_{\sigma}$ with the $k^{th}$
summand of $H_{\tau}$, is a morphism in $\Sigma$, and so is its
adjoint, $\iota_{k}^{*}$, which is the projection of $H_{\tau}$
onto $H_{\sigma}$.

We shall fix a $W^{*}$-correspondence $E$ over $M$. Thus, $E$
is a self-dual right Hilbert module over $M$ that is endowed with
a left action given by a faithful normal representation $\varphi$
of $M$ in the algebra of all continuous linear right module maps
on $E$, $\mathcal{L}(E)$. (Recall that because $E$ is a self-dual
right Hilbert module over $M$, every bounded right module map on
$E$ is automatically adjointable.) As with Hilbert space representations,
we shall assume normal homomorphisms between $W^{*}$-algebras are
unital and, in particular, we shall assume that $\varphi$ is unital.
Like $M$, $E$ is a dual space. We shall refer to the weak-$*$ topology
on a $W^{*}$-algebra or on a $W^{*}$-correspondence as the \emph{ultraweak
topology} \cite[Paragraph 2.2]{Muhly2011a}.

The $k$-fold tensor powers of $E$, balanced over $M$ (via $\varphi$),
will be denoted $E^{\otimes k}$. Recall that $E^{\otimes k}$ is
the self-dual completion of the $C^{*}$-tensor power and so is a
$W^{*}$-correspondence over $M$. The left action of $M$ on $E^{\otimes k}$
will be denoted by $\varphi_{k}$, i.e., $\varphi_{k}(a)(\xi_{1}\otimes\xi_{2}\otimes\cdots\otimes\xi_{k}):=(\varphi(a)\xi_{1})\otimes\xi_{2}\otimes\cdots\otimes\xi_{k}$.
In particular, $\varphi_{0}$ is just left multiplication of $M$
on $M$ and $\varphi_{1}=\varphi$ \cite[Paragraph 2.7]{Muhly2011a}. 

There are a number of spaces and algebras that we will want to build
from the tensor powers $E^{\otimes k}$. First, there is the Fock
space $\mathcal{F}(E):=M\oplus E\oplus E^{\otimes2}\oplus E^{\otimes3}\oplus\cdots$
Here we interpret the sum as the self-dual completion of the Hilbert
$C^{*}$-module direct sum. It is a natural $W^{*}$-correspondence
over $M$, with the left action $\varphi_{\infty}:=\varphi_{0}\oplus\varphi_{1}\oplus\varphi_{2}\oplus\cdots$
\cite[Paragraph 2.7]{Muhly2011a}. For $\xi\in E$, we write $T_{\xi}$
for the creation operator determined by $\xi$. By definition, $T_{\xi}\eta:=\xi\otimes\eta$,
$\eta\in\mathcal{F}(E)$. It is easy to check that $T_{\xi}$ is bounded,
with norm dominated by $\Vert\xi\Vert$. Since we are assuming $\varphi$
is injective and unital, it follows that $\Vert T_{\xi}\Vert=\Vert\xi\Vert$.
The \emph{norm-closed} subalgebra of $\mathcal{L}(\mathcal{F}(E))$
generated by $\varphi_{\infty}(M)$ and $\{T_{\xi}\}_{\xi\in E}$
will be denoted $\mathcal{T}_{+}(E)$ and called the \emph{tensor
algebra} of $E$. The \emph{Hardy algebra} of $E$ is defined to be
the ultraweak closure of $\mathcal{T}_{+}(E)$ in $\mathcal{L}(\mathcal{F}(E))$.
The (nonclosed) subalgebra of $\mathcal{T}_{+}(E)$ generated by $\varphi_{\infty}(M)$
and$\{T_{\xi}\}_{\xi\in E}$ will be denoted by $\mathcal{T}_{0}(E)$
and called the \emph{algebraic tensor algebra }of $E$. In the purely
algebraic setting it would simply be called \emph{the} tensor algebra
of $E$.%
\footnote{Strictly speaking, because our coefficient algebra $M$ may be noncommutative,
in the algebra literature $\mathcal{T}_{0}(E)$ would be called the
\emph{tensor $M$-ring} determined by the bimodule $E$. (See, e.g.,
\cite[P. 134]{Cohn2006}.)%
} 

Before getting too far along with the theory, it will be helpful to
have an example that we can follow as the theory develops.
\begin{example}
\label{Eg:The basic Example}(The Basic Example) In this example we
let $M=\mathbb{C}$ and we let $E=\mathbb{C}^{d}$. The left and right
actions coincide and the inner product is the usual one, except we
choose it to be conjugate linear in the right hand variable. We interpret
$\mathbb{C}^{d}$ as $\ell^{2}(\mathbb{N})$, when $d=\infty$. The
representations of $M$ on Hilbert space, in this case, are all multiples
of the identity representation $\sigma_{1}$ which represents $\mathbb{C}$
on $\mathbb{C}$ via multiplication, i.e., we may view the objects
in $NRep(M)$ as $\{n\sigma_{1}\mid n\in\mathbb{N}\cup\{\infty\}\}$.
For $m,n\in\mathbb{N}\cup\{\infty\}$, the intertwiner space $\mathcal{I}(m\sigma_{1},n\sigma_{1})$
may be identified with the $n\times m$ matrices, where when either
$m$ or $n$ is infinite, we interpret the $n\times m$ matrices as
bounded linear operators. To keep the notation simple, when it is
convenient we shall write $M_{n}(\mathbb{C})$ for $B(\ell^{2}(\mathbb{N}))$
when $n=\infty$. An additive subcategory of $NRep(M)$ is determined
by any additive subsemigroup of $\mathbb{N}\cup\{\infty\}$.

The Fock space $\mathcal{F}(E)$ is usually called the \emph{full
Fock space of the Hilbert space} $\mathbb{C}^{d}$. It is customary
to identify $\mathcal{F}(\mathbb{C}^{d})$ with $\ell^{2}(\mathbb{F}_{d}^{+})$,
where $\mathbb{F}_{d}^{+}$ denotes the free semigroup on $d$ generators.
The identification is carried out explicitly by letting $\{e_{i}\mid1\leq i\leq d\}$
denote the standard basis for $\mathbb{C}^{d}$ and then sending the
decomposable tensor $e_{i_{1}}\otimes e_{i_{2}}\otimes\cdots\otimes e_{i_{k}}\in(\mathbb{C}^{d})^{\otimes k}\subseteq\mathcal{F}(\mathbb{C}^{d})$
to $\delta_{w}$, where $\delta_{w}$ is the point mass at the word
$w=i_{1}i_{2}\cdots i_{k}$. The vacuum vector $\Omega$ in $\mathcal{F}(\mathbb{C}^{d})$
is sent to the empty word $\emptyset$ in $\mathbb{F}_{d}^{+}$. When
this identification is made, then the creation operator $T_{e_{i}}$
is identified with the $S_{i}$, where $S_{i}$ is the isometry defined
by the equation 
\[
S_{i}\delta_{w}=\delta_{iw},\qquad w\in\mathbb{F}_{d}^{+}.
\]

Also under this identification, the algebraic tensor algebra $\mathcal{T}_{0}(\mathbb{C}^{d})$
is identified with the free algebra on $d$ generators, $\mathbb{C}\langle X_{1},X_{2},\cdots,X_{d}\rangle$,
realized as convolution operators on $\ell^{2}(\mathbb{F}_{d}^{+})$
through the correspondence $X_{i}\leftrightarrow S_{i}$. Of course,
$\mathcal{T}_{+}(\mathbb{C}^{d})$ may be viewed as the norm closed,
unital algebra generated by the $S_{i}$, and, as we mentioned in
the introduction, in this guise it is known as Popescu's \emph{noncommutative
disc algebra} \cite{P1996b}. Its ultraweak closure, $H^{\infty}(E)$,
which in this case coincides with its weak operator closure, is the
\emph{noncommutative analytic Toeplitz algebra}.\end{example}
\begin{defn}
The full Cartesian product $\Pi_{k\geq0}E^{\otimes k}$ will be called
the \emph{formal tensor series algebra} determined by $M$ and $E$
and will be denoted by $\mathcal{T}_{+}((E))$. Elements in $\mathcal{T}_{+}((E))$
will be denoted by formal sums, $\theta\sim\sum_{k\geq0}\theta_{k}$,
$\theta_{k}\in E^{\otimes k}$. 
\end{defn}
It is easy to see that $\mathcal{T}_{+}((E))$ is, indeed, an algebra
under coordinatewise addition and scalar multiplication, with the
product given by the formula 
\[
\theta*\eta=\zeta,
\]
 where $\theta\sim\sum_{k\geq0}\theta_{k}$, $\eta\sim\sum_{k\geq0}\eta_{k}$,
and $\zeta\sim\sum_{k}\zeta_{k}$ are related by the equation 
\[
\zeta_{k}:=\sum_{k=l+m}\theta_{l}\otimes\eta_{m},\qquad k,l,m\geq0.
\]
 Further, we may view $\mathcal{T}_{0}(E)$, $\mathcal{T}_{+}(E)$
and $H^{\infty}(E)$ as subsets of $\mathcal{T}_{+}((E))$ using
the ``Fourier coefficient operators'' calculated with respect to
the gauge automorphism group acting on $\mathcal{L}(\mathcal{F}(E))$
\cite[Paragraph 2.9]{Muhly2011a}. That is, if $P_{n}$ is the projection
of $\mathcal{F}(E)$ onto $E^{\otimes n}$ and if 
\[
W_{t}:=\sum_{n=0}^{\infty}e^{int}P_{n},
\]
 then $\{W_{t}\}_{t\in\mathbb{R}}$ is a one-parameter ($2\pi$-periodic)
unitary group in $\mathcal{L}(\mathcal{F}(E))$ such that if $\{\gamma_{t}\}_{t\in\mathbb{R}}$
is defined by the formula $\gamma_{t}=Ad(W_{t})$, then $\{\gamma_{t}\}_{t\in\mathbb{R}}$
is an ultraweakly continuous group of $*$-automorphisms of $\mathcal{L}(\mathcal{F}(E))$,
called the gauge automorphism group of $\mathcal{L}(\mathcal{F}(E))$.
This group leaves each of the algebras $\mathcal{T}_{0}(E)$, $\mathcal{T}_{+}(E)$,
and $H^{\infty}(E)$ invariant and so do each of the \emph{Fourier
coefficient operators} $\{\Phi_{j}\}_{j\in\mathbb{Z}}$ defined on
$\mathcal{L}(\mathcal{F}(E))$ by the formula 
\begin{equation}
\Phi_{j}(F):=\frac{1}{2\pi}\int_{0}^{2\pi}e^{-int}\gamma_{t}(F)\, dt,\qquad F\in\mathcal{L}(\mathcal{F}(E)),\label{FourierOperators}
\end{equation}
where the integral converges in the ultraweak topology. Further, if
$F\in H^{\infty}(E)$, then $\Phi_{j}(F)$ is of the form $T_{\theta_{j}}$,
where $\theta_{j}\in E^{\otimes j}$. This is clear if $F\in\mathcal{T}_{0}(E)$.
For a general element $F\in H^{\infty}(E),$ the result follows from
the facts that $\mathcal{T}_{0}(E)$ is ultraweakly dense in $H^{\infty}(E)$
and $\Phi_{j}$ is ultraweakly continuous. In fact, if $\Sigma_{k}$
is defined by the formula 
\begin{equation}
\Sigma_{k}(F):=\sum_{|j|<k}(1-\frac{|j|}{k})\Phi_{j}(F),\label{eq:Cesaro}
\end{equation}
 $F\in\mathcal{L}(\mathcal{F}(E))$, then for $F\in\mathcal{L}(\mathcal{F}(E))$,
$\lim_{k\to\infty}\Sigma_{k}(F)=F$, in the ultra-weak topology. It
follows that for $F\in H^{\infty}(E)$, $F$ determines and is uniquely
determined by its sequence of Fourier coefficients $\theta=\{\theta_{j}\}_{j\geq0}$,
where $T_{\theta_{j}}:=\Phi_{j}(F)$.

The correspondence $E$ determines an endo-functor $\Phi_{E}$ from
$NRep(M)$ into $NRep(M)$. For an object $\sigma$ in $NRep(M),$
$\Phi_{E}(\sigma)=\sigma^{E}\circ\varphi$, where $\sigma^{E}$ is
the representation of $\mathcal{L}(E)$ that is induced from $\sigma$
in the sense of Rieffel \cite{R1974b}: For $X\in\mathcal{L}(E)$,
$\sigma^{E}(X)=X\otimes I_{H_{\sigma}}$ acting on $E\otimes_{\sigma}H_{\sigma}$.
It is \cite[Theorem 5.3]{R1974b} that ensures that for $T\in\mathcal{I}(\sigma,\tau)$,
$\Phi_{E}(T)$ should be $I_{E}\otimes T\in\mathcal{I}(\Phi_{E}(\sigma),\Phi_{E}(\tau))$.

Following the notation in \cite[Paragraph 2.6]{Muhly2011a} and elsewhere
in our work, we set $E^{\sigma*}=\mathcal{I}(\sigma^{E}\circ\varphi,\sigma)$
for each $\sigma\in NRep(M)$. Also, for $\mathfrak{z}_{0}\in E^{\sigma*}$
and $0\leq R\leq\infty$, we write $\mathbb{D}(\mathfrak{z}_{0},R,\sigma,E)$
for the open disc or ball in $E^{\sigma*}$ of radius $R$ centered
at $\mathfrak{z}_{0}$. Usually, $E$ will be dropped from the notation,
since it will be understood from context. 

The space $E^{\sigma*}$ is a bimodule over $\sigma(M)'$, where for
$a,b\in\sigma(M)'$ and $\xi\in E^{\sigma*}$, $a\cdot\xi\cdot b:=a\xi(I_{E}\otimes b)$.
In fact, it is a left Hilbert module over $\sigma(M)'$, where the
inner product is given by the formula $\langle\xi,\eta\rangle=\xi\eta^{*}$
and as such, $E^{\sigma*}$ becomes a correspondence over $\sigma(M)'$.
Although the correspondence properties of $E^{\sigma*}$ have figured
heavily elsewhere in our work, they do not play much of a role here.
Only the bimodule properties are of consequence in this section. (In
Section \ref{sec:Series-for-matricial} we will use properties of
the correspondence $E^{\sigma}$, which is a right Hilbert $W^{*}$-module
over $\sigma(M)'.$\label{E-sigma_correspondence}) 

Once $\sigma$ is fixed, each $\mathfrak{z}\in E^{\sigma*}$ determines
a homomorphism, denoted $\sigma\times\mathfrak{z}$, from $\mathcal{T}_{0}(E)$
to $B(H_{\sigma})$, which is defined on the generators of $\mathcal{T}_{0}(E)$
via the formulas 
\[
\sigma\times\mathfrak{z}(\varphi_{\infty}(a))=\sigma(a),\qquad a\in M,
\]

\[
\sigma\times\mathfrak{z}(T_{\xi})(h)=\mathfrak{z}(\xi\otimes h),\qquad\xi\in E,\, h\in H_{\sigma}.
\]
The fact that these equations define a representation of $\mathcal{T}_{0}(E)$
on $B(H_{\sigma})$ follows from the universal properties of tensor
algebras once it is checked that the map $\xi\to\sigma\times\mathfrak{z}(T_{\xi})$
is a bimodule map in the sense that 
\[
\sigma\times\mathfrak{z}(T_{a\cdot\xi\cdot b})=\sigma(a)(\sigma\times\mathfrak{z}(T_{\xi}))\sigma(b),\qquad a,b\in M,\,\xi\in E.
\]
And conversely, every representation $\rho$ of $\mathcal{T}_{0}(E)$
on Hilbert space such that the restriction of $\rho$ to $\varphi_{\infty}(M)$
in $\mathcal{T}_{0}(E)$ is a normal representation of $M$, say $\sigma$,
and the restriction of $\rho$ to the subspace $\{T_{\xi}\}_{\xi\in E}$
yields an ultraweakly continuous bimodule map of $E$, must be of
the form $\sigma\times\mathfrak{z}$, where $\mathfrak{z}$ is given
by the formula $\mathfrak{z}(\xi\otimes h)=\rho(T_{\xi})h$.

As we indicated in the introduction, it is of fundamental importance
for our theory, that $\sigma\times\mathfrak{z}$ extends from $\mathcal{T}_{0}(E)$
to a \emph{completely contractive representation} of $\mathcal{T}_{+}(E)$
on $B(H_{\sigma})$ if and only if $\mathfrak{z}$ lies in the norm
closed disc $\overline{\mathbb{D}(0,1,\sigma)}$ \cite[Theorem 2.9]{Muhly2011a}.
Moreover, for $\Vert\mathfrak{z}\Vert<1$, $\sigma\times\mathfrak{z}$
extends further to be an \emph{ultraweakly continuous}, completely
contractive representation of $H^{\infty}(E)$ on $H_{\sigma}$ \cite[Corollary 2.14]{Muhly2004a}.
We write $\mathcal{AC}(\sigma,E)$, or simply $\mathcal{AC}(\sigma)$,
when $E$ is understood, for the collection of all $\mathfrak{z}\in\overline{\mathbb{D}(0,1,\sigma)}$
such that $\sigma\times\mathfrak{z}$ extends to an ultraweakly continuous,
completely contractive representation of $H^{\infty}(E)$ on $H_{\sigma}$.
We then call $\mathfrak{z}$ an \emph{absolutely continuous} point
in $\overline{\mathbb{D}(0,1,\sigma)}$ and we say that  $\sigma\times\mathfrak{z}$
is an \emph{absolutely continuous representation}. The terminology
derives from the fact that one can define an $H^{\infty}$-functional
calculus in the sense of Sz.-Nagy and Foia\c{s} for a contraction
operator if and only if its minimal unitary dilation is absolutely
continuous. (See \cite{Muhly2011a}, where this perspective is developed
at length.)
\begin{rem}
\label{Xeta} There is now a bit of ambiguity that needs to be clarified.
When we write $\mathcal{I}(\sigma\times\mathfrak{z},\tau\times\mathfrak{w})$
we shall \emph{always} mean the set of all operators $C:H_{\sigma}\to H_{\tau}$
such that $C\in\mathcal{I}(\sigma,\tau)$ and $C\mathfrak{z}=\mathfrak{w}(I_{E}\otimes C)$.
It is straightforward to see that this happens if and only if $C\in\mathcal{I}(\sigma,\tau)$
and $C\sigma\times\mathfrak{z}(F)=\tau\times\mathfrak{w}(F)C$ for
all $F\in\mathcal{T}_{0}(E)$. Since $\mathcal{T}_{0}(E)$ is norm
dense in $\mathcal{T}_{+}(E)$ and since $\sigma\times\mathfrak{z}$
and $\tau\times\mathfrak{w}$ extend to continuous representations
of $\mathcal{T}_{+}(E)$ when $\Vert\mathfrak{z}\Vert,\Vert\mathfrak{w}\Vert\leq1$,
it follows that $\mathcal{I}(\sigma\times\mathfrak{z},\tau\times\mathfrak{w})=\{C:H_{\sigma}\to H_{\tau}\mid C\sigma\times\mathfrak{z}(F)=\tau\times\mathfrak{w}(F)C\,\mbox{for all}\, F\in\mathcal{T}_{+}(E)\}$,
when $\Vert\mathfrak{z}\Vert,\Vert\mathfrak{w}\Vert\leq1$. Likewise,
since $\mathcal{AC}(\sigma)$ is the set of all $\mathfrak{z}\in E^{\sigma*}$
such that $\sigma\times\mathfrak{z}$ extends from $\mathcal{T}_{+}(E)$
to an ultraweakly continuous completely contractive representation
of $H^{\infty}(E)$, we find that $\mathcal{I}(\sigma\times\mathfrak{z},\tau\times\mathfrak{w})=\{C:H_{\sigma}\to H_{\tau}\mid C\sigma\times\mathfrak{z}(F)=\tau\times\mathfrak{w}(F)C\,\mbox{for all}\, F\in H^{\infty}(E)\}$
when $\mathfrak{z}\in\mathcal{AC}(\sigma)$ and $\mathfrak{w}\in\mathcal{AC}(\tau)$.
\end{rem}
With the notation we have established, it follows that $\mathcal{T}_{0}(E)$
may be represented as a space polynomial-like, $B(H_{\sigma})$-valued
functions on $E^{\sigma*}$ via the formula: 
\begin{equation}
\widehat{F}_{\sigma}(\mathfrak{z}):=\sigma\times\mathfrak{z}(F),\qquad F\in\mathcal{T}_{0}(E),\,\mathfrak{z}\in E^{\sigma*}.\label{eq:F_sigma_1}
\end{equation}

Since our primary objective is to understand the nature of the functions
$\widehat{F}_{\sigma}$, it will be helpful to work carefully through
some examples that illuminate their definition. Consider first the
case when $F=T_{\xi_{1}}T_{\xi_{2}}=T_{\xi_{1}\otimes\xi_{2}}$. Then
for $h\in H_{\sigma}$, we have 
\begin{equation}
\begin{gathered}(\sigma\times\mathfrak{z})(T_{\xi_{1}}T_{\xi_{2}})(h)=(\sigma\times\mathfrak{z})(T_{\xi_{1}})(\sigma\times\mathfrak{z})(T_{\xi_{2}})(h)\\
=\mathfrak{z}(\xi_{1}\otimes\mathfrak{z}(\xi_{2}\otimes h))\\
=\mathfrak{z}(I_{E}\otimes\mathfrak{z})(\xi_{1}\otimes\xi_{2}\otimes h).
\end{gathered}
\label{eq:Powers_1}
\end{equation}
 More generally, we find that 
\[
(\sigma\times\mathfrak{z})(T_{\xi_{1}\otimes\xi_{2}\otimes\cdots\otimes\xi_{k}})(h)=\mathfrak{z}(I_{E}\otimes\mathfrak{z})\cdots(I_{E^{\otimes k-1}}\otimes\mathfrak{z})(\xi_{1}\otimes\xi_{2}\otimes\cdots\otimes\xi_{k}\otimes h).
\]
 Observe that $\mathfrak{z}(I_{E}\otimes\mathfrak{z})\cdots(I_{E^{\otimes k-1}}\otimes\mathfrak{z})$
is a bounded linear operator from $E^{\otimes k}\otimes H_{\sigma}$
to $H_{\sigma}$, with norm dominated by $\Vert\mathfrak{z}\Vert^{k}$.
Further, $\mathfrak{z}(I_{E}\otimes\mathfrak{z})\cdots(I_{E^{\otimes k-1}}\otimes\mathfrak{z})$
intertwines $\sigma^{E^{\otimes k}}\circ\varphi_{k}$ and $\sigma$.
Thus $\mathfrak{z}(I_{E}\otimes\mathfrak{z})\cdots(I_{E^{\otimes k-1}}\otimes\mathfrak{z})$
is an element of $\mathcal{I}(\sigma^{E^{\otimes k}}\circ\varphi_{k},\sigma)=(E^{\otimes k})^{\sigma*}$.
In \cite[Page 363]{Muhly2004a} $\mathfrak{z}(I_{E}\otimes\mathfrak{z})\cdots(I_{E^{\otimes k-1}}\otimes\mathfrak{z})$
is called \emph{the $k^{th}$ generalized power} of $\mathfrak{z}$.
We shall denote it here by $\mathfrak{z}^{(k)}$ and we shall denote
the function $\mathfrak{z}\to\mathfrak{z}^{(k)}$, from $E^{\sigma*}$
to $(E^{\otimes k})^{\sigma*}$, by $\mathcal{Z}_{k}$, i.e., $\mathcal{Z}_{k}(\mathfrak{z}):=\mathfrak{z}^{(k)}=\mathfrak{z}(I_{E}\otimes\mathfrak{z})\cdots(I_{E^{\otimes k-1}}\otimes\mathfrak{z})$.
By convention, we set $\mathcal{Z}_{0}(\mathfrak{z})\equiv I_{H_{\sigma}}$. 
\begin{example}
\label{eg: Basic_Example_update_1} In the context of our basic example,
Example \ref{Eg:The basic Example}, $E\otimes_{\sigma}H_{\sigma}$
is $\mathbb{C}^{d}\otimes H_{\sigma}$, which we shall view simply
as columns of length $d$ of vectors from $H_{\sigma}$. We may do
this, since there is no consequential balancing going on. After all,
$\sigma$ is a representation of $\mathbb{C}$. It follows that $E^{\sigma*}:=\mathcal{I}(\sigma^{E}\circ\varphi,\sigma)$
may be identified with all $d$-tuples of operators in $B(H_{\sigma})$
arranged as a row. That is, we may and shall write $E^{\sigma*}=\{\mathfrak{z}=(Z_{1},Z_{2},\cdots,Z_{d})\mid Z_{i}\in B(H_{\sigma})\}$.
So, if $F\in\mathcal{T}_{0}(E)$ and if we identify $\mathcal{T}_{0}(E)$
with the free algebra on $d$ generators, as we did in our basic example,
we may write $F=\sum_{w\in\mathbb{F}_{d}^{+}}a_{w}X_{w}$, and then
equation \eqref{eq:F_sigma_1} becomes
\[
\widehat{F}_{\sigma}(\mathfrak{z})=\sum_{w\in\mathbb{F}_{d}^{+}}a_{w}Z_{w},
\]
where $Z_{w}=Z_{i_{1}}Z_{i_{2}}\cdots Z_{i_{k}}$, $w=i_{1}i_{2}\cdots i_{k}$.
In particular, if $\sigma$ happens to be an $n$-dimensional representation
with $n<\infty$, then $E^{\sigma*}$ is the collection of all $d$-tuples
of $n\times n$ matrices and equation \eqref{eq:F_sigma_1} yields
the representation of the free algebra on $d$ generators on the algebra
of $d$ \emph{generic $n\times n$ matrices}. The function $\mathcal{Z}_{k}$
in this setting assigns to $\mathfrak{z}=(Z_{1},Z_{2},\cdots,Z_{d})$
the tuple $(Z_{w})_{\vert w|=k}$ of length $d^{k}$ that is ordered
lexicographically by the words of length $k$. 
\end{example}
In the following lemma and, indeed, throughout this paper, we identify
$E^{\otimes(k+l)}$ with $E^{\otimes k}\otimes E^{\otimes l}$; $E^{\otimes0}:=M$. 
\begin{lem}
\label{lem:Derivative_of_Z_k} The functions $\mathcal{Z}_{k}$, from
$E^{\sigma*}$ to $(E^{\otimes k})^{\sigma*}$, satisfy the equation
\[
\mathcal{Z}_{k+l}(\mathfrak{z})=\mathcal{Z}_{k}(\mathfrak{z})(I_{E^{\otimes k}}\otimes\mathcal{Z}_{l}(\mathfrak{z})),\qquad\mathfrak{z}\in E^{\sigma*}.
\]
Further, each $\mathcal{Z}_{k}$ is Frechet differentiable and the
Frechet derivative of $\mathcal{Z}_{k}$, denoted $D\mathcal{Z}_{k}$,
is given by the formula 
\begin{equation}
(D\mathcal{Z}_{k})(\mathfrak{z})[\zeta]=\sum_{l=0}^{k-1}\mathcal{Z}_{l}(\mathfrak{z})(I_{E^{\otimes l}}\otimes\zeta)(I_{E^{\otimes(l+1)}}\otimes\mathcal{Z}_{k-l-1}(\mathfrak{z}))\label{derivative-1}
\end{equation}
 for all $\mathfrak{z},\zeta\in E^{\sigma*}$. 
\end{lem}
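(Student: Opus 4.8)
The first identity is the composition law for generalized powers, so the plan is to prove it directly from the definition $\mathcal{Z}_n(\mathfrak{z}) = \mathfrak{z}(I_E\otimes\mathfrak{z})\cdots(I_{E^{\otimes n-1}}\otimes\mathfrak{z})$. Under the identification $E^{\otimes(k+l)} = E^{\otimes k}\otimes E^{\otimes l}$, the factor $I_{E^{\otimes j}}\otimes\mathfrak{z}$ appearing in $\mathcal{Z}_{k+l}(\mathfrak{z})$ for $j \geq k$ can be rewritten as $I_{E^{\otimes k}}\otimes(I_{E^{\otimes(j-k)}}\otimes\mathfrak{z})$; the first $k$ factors (for $j=0,\dots,k-1$) are exactly the factors of $\mathcal{Z}_k(\mathfrak{z})$, and the remaining $l$ factors assemble to $I_{E^{\otimes k}}\otimes\mathcal{Z}_l(\mathfrak{z})$ since tensoring with $I_{E^{\otimes k}}$ is multiplicative. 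Care is needed to check associativity of the tensor identifications, but this is routine bookkeeping; the boundary cases $k=0$ or $l=0$ follow from $\mathcal{Z}_0 \equiv I$.

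For the Frechet derivative, the plan is to proceed by induction on $k$ using the composition law just established with $l=1$, namely $\mathcal{Z}_{k}(\mathfrak{z}) = \mathfrak{z}\,(I_E\otimes\mathcal{Z}_{k-1}(\mathfrak{z}))$ (taking $k$ in place of $k+l$, with the roles of the two factors as $\mathcal{Z}_1 = \mathrm{id}$ and $\mathcal{Z}_{k-1}$; alternatively use $\mathcal{Z}_k(\mathfrak{z}) = \mathcal{Z}_{k-1}(\mathfrak{z})(I_{E^{\otimes(k-1)}}\otimes\mathfrak{z})$). The map $\mathfrak{z}\mapsto\mathcal{Z}_k(\mathfrak{z})$ is a polynomial map between Banach spaces (a product of $k$ affine-in-$\mathfrak{z}$ factors), hence Frechet differentiable; one computes $D\mathcal{Z}_k$ via the product rule. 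Writing $\mathcal{Z}_k(\mathfrak{z}+t\zeta)$, expanding the product, and collecting the terms linear in $t$ gives a sum over which factor is perturbed: perturbing the $(l+1)$st factor $I_{E^{\otimes l}}\otimes\mathfrak{z}$ (for $l = 0,\dots,k-1$) produces the term with $\mathcal{Z}_l(\mathfrak{z})$ to the left (the first $l$ unperturbed factors), then $I_{E^{\otimes l}}\otimes\zeta$, then the product of the last $k-l-1$ factors, which by the composition law and multiplicativity of $I_{E^{\otimes(l+1)}}\otimes(-)$ equals $I_{E^{\otimes(l+1)}}\otimes\mathcal{Z}_{k-l-1}(\mathfrak{z})$. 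This yields precisely \eqref{derivative-1}. The only points requiring attention are verifying that the remainder (terms of order $t^2$ and higher) is $o(t)$ in operator norm --- immediate from the norm bound $\norm{\mathcal{Z}_j(\mathfrak{z})} \leq \norm{\mathfrak{z}}^j$ --- and that the tensor identifications are applied consistently when regrouping the trailing factors.

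The main obstacle is bookkeeping: keeping the chain of identifications $E^{\otimes(a+b+c)} \cong E^{\otimes a}\otimes E^{\otimes b}\otimes E^{\otimes c}$ coherent so that, for instance, "$I_{E^{\otimes l}}\otimes\zeta$ followed by the trailing factors" is literally the operator written in \eqref{derivative-1}. I expect this to be handled by fixing once and for all the canonical associativity isomorphisms and noting that each factor $I_{E^{\otimes j}}\otimes\mathfrak{z}$ acts only on the "last $E\otimes H_\sigma$ block," so the regroupings commute with everything to their left. No deep input is needed beyond the definition of $\mathcal{Z}_k$ and elementary Frechet calculus for polynomial maps between Banach spaces.
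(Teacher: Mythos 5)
Your proposal is correct, and it is exactly the "straightforward calculation based on the definition of $\mathcal{Z}_{k}$" that the paper states and omits: splitting the product of factors $I_{E^{\otimes j}}\otimes\mathfrak{z}$ for the composition law, and the product-rule expansion with the $O(\Vert\zeta\Vert^{2})$ remainder estimate for the Frechet derivative. No discrepancy with the paper's (omitted) argument.
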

The proof is a straightforward calculation based on the definition
of $\mathcal{Z}_{k}$ and will be omitted. Observe that for each $\mathfrak{z}\in E^{\sigma*}$,
$D\mathcal{Z}_{k}(\mathfrak{z})$ is a bounded linear operator from
$E^{\sigma*}$ to $(E^{\otimes k})^{\sigma*}$ whose norm is dominated
by $k\Vert\mathfrak{z}\Vert^{k-1}$.

If $\theta\in E^{\otimes k}$, we shall write $L_{\theta}$ for the
linear operator from $H_{\sigma}$ to $E^{\otimes k}\otimes_{\sigma}H_{\sigma}$
that maps $h$ to $\theta\otimes h$. Evidently, $\Vert L_{\theta}\Vert\leq\Vert\theta\Vert$,
with equality holding if $\sigma$ is faithful. The calculations that
led from equation \eqref{eq:Powers_1} through Lemma \ref{lem:Derivative_of_Z_k}
immediately yield 
\begin{thm}
\label{thm: Basic_Calculus} Suppose $F=\sum_{k=0}^{n}T_{\theta_{k}}\in\mathcal{T}_{0}(E)$,
with $\theta_{k}\in E^{\otimes k}$. Then for $\sigma\in NRep(M)$
and $\mathfrak{z}\in E^{\sigma*}$, the operator $\widehat{F}_{\sigma}(\mathfrak{z})$
in $B(H_{\sigma})$ is given by the formula 
\begin{equation}
\widehat{F}_{\sigma}(\mathfrak{z})=\sum_{k=0}^{n}\mathcal{Z}_{k}(\mathfrak{z})L_{\theta_{k}}.\label{eq:F_sigma_2}
\end{equation}
 Further, as a function of $\mathfrak{z}\in E^{\sigma*}$, $\widehat{F}_{\sigma}$
is Frechet differentiable and its Frechet derivative, mapping $E^{\sigma*}$
to $B(H_{\sigma})$, is given by the formula 
\begin{equation}
D\widehat{F}_{\sigma}(\mathfrak{z})[\zeta]=\sum_{k=0}^{n}D\mathcal{Z}_{k}(\mathfrak{z})[\zeta]L_{\theta_{k}}.\label{eq:Deriv_F_sigma_2}
\end{equation}

\end{thm}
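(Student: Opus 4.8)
The plan is to reduce everything to the two ingredients already in hand: the computation of $(\sigma\times\mathfrak{z})(T_{\theta_k})$ leading up to equation \eqref{eq:Powers_1}, and Lemma \ref{lem:Derivative_of_Z_k} on the Frechet differentiability of each $\mathcal{Z}_k$. First I would establish \eqref{eq:F_sigma_2}. Since $\sigma\times\mathfrak{z}$ is a homomorphism and $\widehat{F}_\sigma(\mathfrak{z}) = (\sigma\times\mathfrak{z})(F)$ by definition \eqref{eq:F_sigma_1}, linearity reduces matters to the case $F = T_{\theta_k}$ with $\theta_k \in E^{\otimes k}$. For a decomposable tensor $\theta_k = \xi_1 \otimes \cdots \otimes \xi_k$, the displayed computation just above Lemma \ref{lem:Derivative_of_Z_k} gives
\[
(\sigma\times\mathfrak{z})(T_{\xi_1\otimes\cdots\otimes\xi_k})(h) = \mathfrak{z}(I_E\otimes\mathfrak{z})\cdots(I_{E^{\otimes k-1}}\otimes\mathfrak{z})(\xi_1\otimes\cdots\otimes\xi_k\otimes h) = \mathcal{Z}_k(\mathfrak{z})\, L_{\theta_k}\, h,
\]
using the definitions $\mathcal{Z}_k(\mathfrak{z}) = \mathfrak{z}(I_E\otimes\mathfrak{z})\cdots(I_{E^{\otimes k-1}}\otimes\mathfrak{z})$ and $L_{\theta_k}h = \theta_k\otimes h$. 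Both sides are bounded and bilinear in a way compatible with the $M$-balancing (the $\xi_i$ enter through $\theta_k = \xi_1\otimes\cdots\otimes\xi_k$ on both sides), so the identity $(\sigma\times\mathfrak{z})(T_{\theta_k}) = \mathcal{Z}_k(\mathfrak{z})L_{\theta_k}$ extends from decomposable tensors to all $\theta_k \in E^{\otimes k}$ by linearity and ultraweak/norm continuity. Summing over $k = 0,\dots,n$ yields \eqref{eq:F_sigma_2}, with the $k=0$ term reading $\mathcal{Z}_0(\mathfrak{z})L_{\theta_0} = \sigma(\theta_0)$ under the convention $\mathcal{Z}_0(\mathfrak{z}) \equiv I_{H_\sigma}$ and $E^{\otimes 0} = M$.

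For the second assertion, \eqref{eq:F_sigma_2} exhibits $\widehat{F}_\sigma$ as a finite sum $\sum_{k=0}^n (\mathcal{Z}_k(\mathfrak{z})) L_{\theta_k}$, where each $L_{\theta_k} \in B(H_\sigma, E^{\otimes k}\otimes_\sigma H_\sigma)$ is a fixed bounded operator \emph{not} depending on $\mathfrak{z}$. Post-composition with a fixed bounded operator is a bounded linear map $(E^{\otimes k})^{\sigma*} \to B(H_\sigma)$, hence Frechet differentiable with itself as derivative; thus $\mathfrak{z} \mapsto \mathcal{Z}_k(\mathfrak{z})L_{\theta_k}$ is the composition of the Frechet-differentiable map $\mathcal{Z}_k$ (Lemma \ref{lem:Derivative_of_Z_k}) with this bounded linear map, and by the chain rule it is Frechet differentiable with derivative $\zeta \mapsto (D\mathcal{Z}_k)(\mathfrak{z})[\zeta]\,L_{\theta_k}$. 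Summing over the finitely many $k$ gives \eqref{eq:Deriv_F_sigma_2}.

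There is no real obstacle here — the theorem is, as the paper says, an immediate consequence of the preceding computations and Lemma \ref{lem:Derivative_of_Z_k}. The only point deserving a moment's care is the passage from decomposable tensors to general $\theta_k \in E^{\otimes k}$ in the first step: one must check that both sides of $(\sigma\times\mathfrak{z})(T_{\theta_k}) = \mathcal{Z}_k(\mathfrak{z})L_{\theta_k}$ are well defined and continuous as functions of $\theta_k \in E^{\otimes k}$ (the $W^*$-tensor power, i.e.\ a self-dual completion), so that agreement on the dense set of finite sums of decomposables propagates. This is routine because $\theta_k \mapsto T_{\theta_k}$, $\theta_k \mapsto L_{\theta_k}$, and $\sigma\times\mathfrak{z}$ are all contractive (or norm-continuous), but it is the one spot where something is actually being verified rather than merely assembled. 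Accordingly, as in the paper, I would record these two computations and then simply cite Lemma \ref{lem:Derivative_of_Z_k} and the chain rule, omitting the elementary details.
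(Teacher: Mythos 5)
Your proposal is correct and follows exactly the route the paper intends: the paper gives no separate proof, stating that the computation of $(\sigma\times\mathfrak{z})(T_{\xi_{1}\otimes\cdots\otimes\xi_{k}})$ leading to the definition of $\mathcal{Z}_{k}$, together with Lemma \ref{lem:Derivative_of_Z_k}, immediately yields the theorem, which is precisely your assembly via linearity and the chain rule. Your added remark about passing from decomposable tensors to general $\theta_{k}$ by contractivity/continuity is a reasonable (and harmless) elaboration of a detail the paper leaves implicit.
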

Of course, we would like to extend our function theory beyond the
realm of the algebraic tensor algebra. For this purpose, we follow
Popescu \cite{P2006a} (who followed Hadamard, who followed Cauchy)
and introduce the following definition. 
\begin{defn}
\label{def:Radius_of_convergence} For $\theta\sim\sum_{k\geq0}\theta_{k}$
in $\mathcal{T}_{+}((E))$, we define $R(\theta)$ to be 
\[
R(\theta)=(\limsup_{k\to\infty}\Vert\theta_{k}\Vert^{\frac{1}{k}})^{-1},
\]
 and we call $R(\theta)$ the \emph{radius of convergence} of the
formal series $\theta$. 
\end{defn}
Of course, $R(\theta)$ is a non-negative real number or $+\infty$. 
\begin{prop}
\label{prop:Power_series_give_holomorphic_fcns} Suppose $\theta\sim\sum_{k\geq0}\theta_{k}$
in $\mathcal{T}_{+}((E))$ and $R=R(\theta)$ is not zero. Then for
each $\sigma\in NRep(M)$, for each $\mathfrak{z}_{0}\in E^{\sigma*}$,
and for each $\mathfrak{z}\in\mathbb{D}(\mathfrak{z}_{0},R,\sigma)$,
the series 
\begin{equation}
\sum_{k=0}^{\infty}\mathcal{Z}_{k}(\mathfrak{z}-\mathfrak{z}_{0})L_{\theta_{k}}\label{eq:Power_series}
\end{equation}
converges in the norm of $B(H_{\sigma})$. The convergence is uniform
on any subdisk $\mathbb{D}(\mathfrak{z}_{0},\rho,\sigma)$, $\rho<R$;
\emph{and it is uniform in} $\sigma$, as well. The resulting function,
$f_{\sigma}$, is analytic as a map from $\mathbb{D}(\mathfrak{z}_{0},R,\sigma)$
to $B(H_{\sigma})$ and the Frechet derivative of $f_{\sigma}$ is
given by the formula 
\[
Df_{\sigma}(\mathfrak{z})[\zeta]=\sum_{k=1}^{\infty}D\mathcal{Z}_{k}(\mathfrak{z}-\mathfrak{z}_{0})[\zeta]L_{\theta_{k}},\qquad\mathfrak{z}\in\mathbb{D}(\mathfrak{z}_{0},R,\sigma),\,\zeta\in E^{\sigma*}.
\]
\end{prop}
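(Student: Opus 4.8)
The plan is to reduce everything to a single scalar majorization, exactly as in the classical Cauchy--Hadamard argument, using the norm bound on generalized powers recorded just before Lemma \ref{lem:Derivative_of_Z_k}. First I would recall the two elementary estimates: $\Vert\mathcal{Z}_{k}(\mathfrak{w})\Vert\le\Vert\mathfrak{w}\Vert^{k}$ for $\mathfrak{w}\in E^{\sigma*}$, and $\Vert L_{\theta_{k}}\Vert\le\Vert\theta_{k}\Vert$. Hence the $k$-th term of \eqref{eq:Power_series} has norm at most $\Vert\theta_{k}\Vert\,\Vert\mathfrak{z}-\mathfrak{z}_{0}\Vert^{k}$. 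Fix $\rho<R$. The definition of $R=R(\theta)$ gives, for all large $k$, $\Vert\theta_{k}\Vert^{1/k}\le 1/\rho'$ for any chosen $\rho<\rho'<R$, so $\Vert\theta_{k}\Vert\,\rho^{k}\le(\rho/\rho')^{k}$, the tail of a convergent geometric series. Therefore $\sum_{k}\Vert\mathcal{Z}_{k}(\mathfrak{z}-\mathfrak{z}_{0})L_{\theta_{k}}\Vert$ converges uniformly for $\mathfrak{z}\in\mathbb{D}(\mathfrak{z}_{0},\rho,\sigma)$ by the Weierstrass $M$-test, and crucially the bounding constants $(\rho/\rho')^{k}$ do not depend on $\sigma$; this delivers the uniformity in $\sigma$ at once. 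By completeness of $B(H_{\sigma})$ the series converges in norm to a function $f_{\sigma}$.

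Next I would handle analyticity. The partial sums $S_{n,\sigma}(\mathfrak{z})=\sum_{k=0}^{n}\mathcal{Z}_{k}(\mathfrak{z}-\mathfrak{z}_{0})L_{\theta_{k}}$ are, by Theorem \ref{thm: Basic_Calculus} applied to $\mathfrak{z}\mapsto\mathfrak{z}-\mathfrak{z}_{0}$ (a translation, which is Frechet differentiable with derivative the identity), Frechet differentiable on $E^{\sigma*}$ with derivative $\sum_{k=0}^{n}D\mathcal{Z}_{k}(\mathfrak{z}-\mathfrak{z}_{0})[\zeta]L_{\theta_{k}}$. Using the bound $\Vert D\mathcal{Z}_{k}(\mathfrak{w})\Vert\le k\Vert\mathfrak{w}\Vert^{k-1}$ from the remark after Lemma \ref{lem:Derivative_of_Z_k}, the same geometric majorization (now with the extra factor $k$, which is absorbed since $k(\rho/\rho')^{k-1}$ is still summable) shows that the sequence of derivatives $DS_{n,\sigma}$ converges uniformly on $\mathbb{D}(\mathfrak{z}_{0},\rho,\sigma)$ to the operator-valued map $\zeta\mapsto\sum_{k=1}^{\infty}D\mathcal{Z}_{k}(\mathfrak{z}-\mathfrak{z}_{0})[\zeta]L_{\theta_{k}}$. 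The standard theorem on uniform convergence of Frechet derivatives (see \cite[112, 778]{HP1974}) then gives that $f_{\sigma}$ is Frechet differentiable with $Df_{\sigma}$ equal to this limit; since $\rho<R$ was arbitrary, analyticity holds on all of $\mathbb{D}(\mathfrak{z}_{0},R,\sigma)$, and the derivative formula is exactly the one asserted.

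The only point requiring a little care — the ``main obstacle'' such as it is — is the bookkeeping that makes the constants in the $M$-test genuinely independent of $\sigma$. This works precisely because the norm estimates on $\mathcal{Z}_{k}$, $D\mathcal{Z}_{k}$, and $L_{\theta_{k}}$ are all bounded above by quantities ($\Vert\mathfrak{z}-\mathfrak{z}_{0}\Vert^{k}$, $k\Vert\mathfrak{z}-\mathfrak{z}_{0}\Vert^{k-1}$, $\Vert\theta_{k}\Vert$) that involve $\sigma$ only through $\Vert\mathfrak{z}-\mathfrak{z}_{0}\Vert$, which is itself controlled by $\rho$ on the subdisk; the equality $\Vert L_{\theta}\Vert=\Vert\theta\Vert$ when $\sigma$ is faithful is not needed, only the inequality. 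Everything else is the routine transcription of the one-variable power-series argument, so I would state the scalar majorization once and invoke it for both the function and its derivative rather than repeating the computation.
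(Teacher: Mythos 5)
Your proposal is correct and follows essentially the same route as the paper: the paper's proof consists of exactly the Cauchy--Hadamard majorization $\Vert\mathcal{Z}_{k}(\mathfrak{z}-\mathfrak{z}_{0})L_{\theta_{k}}\Vert\leq\Vert\theta_{k}\Vert\,\Vert\mathfrak{z}-\mathfrak{z}_{0}\Vert^{k}\leq(\rho/\rho')^{k}$, which is manifestly independent of $\sigma$, and then disposes of analyticity and the derivative formula by ``similar elementary estimates'' or by citing \cite[Theorems 3.17.1 and 3.18.1]{HP1974}. You have merely written out those remaining estimates (term-by-term differentiation using $\Vert D\mathcal{Z}_{k}(\mathfrak{w})\Vert\leq k\Vert\mathfrak{w}\Vert^{k-1}$ and uniform convergence of the derivatives of the partial sums), which is a correct filling-in of the details the paper leaves implicit.
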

\begin{proof}
The uniform convergence of \eqref{eq:Power_series} follows a standard
argument: Fix $\rho,\rho'$ such that $0<\rho<\rho'<R$. Since $\frac{1}{R(\theta)}=\limsup_{k}\norm{\theta_{k}}^{1/k}$,
there is an $m$ such that, for all $k\geq m$, $\norm{\theta_{k}}^{1/k}<1/\rho'$.
For such $k$, for all $\sigma$ and for all $\mathfrak{z}\in\mathbb{D}(\mathfrak{z}_{0},\rho,\sigma)$,
$\norm{\mathcal{Z}_{k}(\mathfrak{z}-\mathfrak{z}_{0})L_{\theta_{k}}}\leq\norm{\mathfrak{z}-\mathfrak{z}_{0}}^{k}\norm{\theta_{k}}\leq(\frac{\rho}{\rho'})^{k}$.
Since $\rho<\rho'$, we see that \eqref{eq:Power_series} converges
uniformly on $\mathbb{D}(\mathfrak{z}_{0},\rho,\sigma)$. The remaining
assertions can be proved by similar elementary estimates. Alternatively,
one can appeal to \cite[Theorems 3.17.1 and 3.18.1]{HP1974}.\end{proof}
\begin{defn}
\label{def:Tensorial_power_series}The series \eqref{eq:Power_series}
is called the \emph{tensorial power series }determined by the formal
tensor series $\theta$, the point $\mathfrak{z}_{0}$, and the representation
$\sigma$.\end{defn}
\begin{rem}
Following the line of thought developed in elementary texts on complex
function theory, one might expect that if $\mathfrak{z}\in E^{\sigma*}$
has norm larger than $R(\theta)$, then the series \eqref{eq:Power_series}
diverges. However, the situation is quite a bit more complicated than
in one complex variable. For example, if $M=E=\mathbb{C}$ and if
$\dim H_{\sigma}\geq2$, then we may identify $E^{\sigma*}$ with
$B(H_{\sigma})$, and if $\mathfrak{z}$ is any \emph{nilpotent} element
in $E^{\sigma*}$, the series \eqref{eq:Power_series} converges no
matter what $R(\theta)$ is. The situation is even more complicated:
There are series $\theta$ with finite $R(\theta)$ such that for
``most'' $\sigma$ the series \eqref{eq:Power_series} converges
on all of $E^{\sigma*}$, i.e., almost all $f_{\sigma}$ are entire.
A particularly instructive example, due to Luminet \cite[Example 2.9]{Lum86},
is constructed as follows. Let $S_{k}(X_{1},X_{2},\cdots,X_{k})$
denote the standard identity in $k$ noncommuting variables: 
\[
S_{k}(X_{1},X_{2},\cdots,X_{k})=\sum_{s\in\mathfrak{S}_{k}}{\rm sign}(s)X_{s(1)}X_{s(2)}\cdots X_{s(k)},
\]
 where $\mathfrak{S}_{k}$ denotes the permutation group on $k$ letters
and where ${\rm sign}(s)$ is $1$ if $s$ is even and $-1$ if $s$
is odd. Then the set of these identities determines an element $\theta$
in $\mathcal{T}_{+}((\mathbb{C}^{2}))$ by identifying $\mathcal{T}_{+}((\mathbb{C}^{2}))$
with the free formal series in two noncommuting variables $X_{1}$
and $X_{2}$ and setting 
\[
\theta(X_{1},X_{2})=\sum_{k\geq2}S_{k}(X_{1},X_{1}X_{2},\cdots,X_{1}X_{2}^{k-1}).
\]
 When this series is written as a series of words in $X_{1}$ and
$X_{2}$, 
\[
\theta(X_{1},X_{2})=\sum_{w\in\mathbb{F}_{2}^{+}}\lambda_{w}X_{w},
\]
one sees that for each positive integer $d$ there is a word $w$
of length at least $d$ so that $\lambda_{w}=1$. It follows that
$R(\theta)<\infty$. However, if $\sigma$ is any finite dimensional
representation of $\mathbb{C}=M$, the series \eqref{eq:Power_series}
converges throughout $E^{\sigma*}$. Indeed, for any given $\mathfrak{z}$
there are only finitely many nonzero terms in the series. On the other
hand, we will be able to show later that given $R'>R(\theta)$, there
is a $\sigma\in NRep(M)$ and an element $\mathfrak{z}\in E^{\sigma*}$
with $\Vert\mathfrak{z}\Vert=R'$ such that the series \eqref{eq:Power_series}
diverges. 
\end{rem}
The next proposition establishes a bridge between the theory developed
in \cite{Muhly2004a,Muhly2005b,Muhly2008b,Muhly2009} and the focus
of this paper. First, recall that if $F\in H^{\infty}(E)$, then the
Fourier coefficients $\Phi_{j}(F)$ are of the form $T_{\theta_{j}}$
for $\theta_{j}\in E^{\otimes j}$. Further, the series $\sum_{j\geq0}T_{\theta_{j}}$
is Cesaro summable to $F$ \eqref{eq:Cesaro}. On the other hand,
each Fourier coefficient operator $\Phi_{j}$ is contractive and so
the norm of each $\theta_{j}$ is dominated by $\Vert F\Vert$. Thus
the radius of convergence of the series $\theta\sim\sum_{j\geq0}\theta_{j}$
is at least one. Since $\sigma\times\mathfrak{z}(T_{\theta})=\mathcal{Z}_{j}(\mathfrak{z})L_{\theta}$
for $\theta\in E^{\otimes j}$ and $\mathfrak{z}\in\mathbb{D}(0,1,\sigma)$,
the validity of the following proposition is immediate. 
\begin{prop}
\label{prop:H_infty_values}For $F\in H^{\infty}(E)$ and $\sigma\in\Sigma$,
the $\sigma$-Berezin transform $\widehat{F}_{\sigma}$, defined on
$\mathcal{AC}(\sigma)$ by the formula, $\widehat{F}_{\sigma}(\mathfrak{z}):=\sigma\times\mathfrak{z}(F)$,
admits the tensorial power series expansion, 
\begin{equation}
\widehat{F}_{\sigma}(\mathfrak{z})=\sum_{j=0}^{\infty}\mathcal{Z}_{j}(\mathfrak{z})L_{\theta_{j}},\label{expression}
\end{equation}
where $T_{\theta_{j}}=\Phi_{j}(F)$. The series converges in norm
for $\mathfrak{z}\in\mathbb{D}(0,1,\sigma)$, uniformly in $\sigma$
and on any sub-disk, $\mathbb{D}(0,\rho,\sigma)$, $\rho<1$. Moreover,
the function $\widehat{F}_{\sigma}(\mathfrak{z})$ is bounded by $\Vert F\Vert$
throughout $\mathcal{AC}(\sigma)$. \end{prop}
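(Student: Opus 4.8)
The plan is to assemble three facts that are already in hand: (i) the radius of convergence of the formal tensor series $\theta\sim\sum_{j\geq0}\theta_{j}$ of Fourier coefficients of $F$ is at least $1$; (ii) Proposition \ref{prop:Power_series_give_holomorphic_fcns}, which converts (i) into the asserted convergence statements; and (iii) the ultraweak continuity of $\sigma\times\mathfrak{z}$ on $H^{\infty}(E)$ for $\mathfrak{z}\in\mathbb{D}(0,1,\sigma)$, which is what permits passing from the Ces\`aro sums of $F$ to the tensorial series.

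First I would record that, since $T_{\theta_{j}}=\Phi_{j}(F)$, since each Fourier coefficient operator $\Phi_{j}$ is contractive, and since $\Vert T_{\theta_{j}}\Vert=\Vert\theta_{j}\Vert$ (as $\varphi$ is injective and unital), one has $\Vert\theta_{j}\Vert\leq\Vert F\Vert$ for all $j$, so $R(\theta)=(\limsup_{j}\Vert\theta_{j}\Vert^{1/j})^{-1}\geq1$. Proposition \ref{prop:Power_series_give_holomorphic_fcns}, applied with $\mathfrak{z}_{0}=0$ and $R=R(\theta)\geq1$, then yields that $\sum_{j\geq0}\mathcal{Z}_{j}(\mathfrak{z})L_{\theta_{j}}$ converges in the norm of $B(H_{\sigma})$ for every $\mathfrak{z}\in\mathbb{D}(0,1,\sigma)$, uniformly in $\sigma$ and uniformly on each sub-disk $\mathbb{D}(0,\rho,\sigma)$ with $\rho<1$; this covers all the convergence claims.

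Next I would identify the sum of the series with $\widehat{F}_{\sigma}(\mathfrak{z})$. Fix $\mathfrak{z}\in\mathbb{D}(0,1,\sigma)$; since $\mathbb{D}(0,1,\sigma)\subseteq\mathcal{AC}(\sigma)$, the homomorphism $\sigma\times\mathfrak{z}$ extends to an ultraweakly continuous completely contractive representation of $H^{\infty}(E)$. As only the coefficients $\Phi_{j}(F)$ with $j\geq0$ are nonzero, the Ces\`aro sum of \eqref{eq:Cesaro} is $\Sigma_{k}(F)=\sum_{j=0}^{k-1}(1-j/k)T_{\theta_{j}}$, and applying $\sigma\times\mathfrak{z}$ together with the identity $\sigma\times\mathfrak{z}(T_{\theta})=\mathcal{Z}_{j}(\mathfrak{z})L_{\theta}$ for $\theta\in E^{\otimes j}$ (Theorem \ref{thm: Basic_Calculus}) gives
\[
(\sigma\times\mathfrak{z})(\Sigma_{k}(F))=\sum_{j=0}^{k-1}\Bigl(1-\frac{j}{k}\Bigr)\mathcal{Z}_{j}(\mathfrak{z})L_{\theta_{j}}=\frac{1}{k}\sum_{n=0}^{k-1}\sum_{j=0}^{n}\mathcal{Z}_{j}(\mathfrak{z})L_{\theta_{j}},
\]
so $(\sigma\times\mathfrak{z})(\Sigma_{k}(F))$ is exactly the $k$-th Ces\`aro mean of the partial sums of the norm-convergent series $\sum_{j\geq0}\mathcal{Z}_{j}(\mathfrak{z})L_{\theta_{j}}$. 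Since Ces\`aro means of a norm-convergent series converge in norm to its sum, $(\sigma\times\mathfrak{z})(\Sigma_{k}(F))\to\sum_{j\geq0}\mathcal{Z}_{j}(\mathfrak{z})L_{\theta_{j}}$; on the other hand $\Sigma_{k}(F)\to F$ ultraweakly (as noted after \eqref{eq:Cesaro}) and $\sigma\times\mathfrak{z}$ is ultraweakly continuous, so $(\sigma\times\mathfrak{z})(\Sigma_{k}(F))\to\widehat{F}_{\sigma}(\mathfrak{z})$ ultraweakly. Uniqueness of limits in the ultraweak topology gives \eqref{expression}.

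The uniform bound is then immediate: for $\mathfrak{z}\in\mathcal{AC}(\sigma)$ the representation $\sigma\times\mathfrak{z}$ of $H^{\infty}(E)$ is (completely) contractive, so $\Vert\widehat{F}_{\sigma}(\mathfrak{z})\Vert=\Vert(\sigma\times\mathfrak{z})(F)\Vert\leq\Vert F\Vert$. I do not expect any serious obstacle here; the only point calling for a moment's care is the combinatorial rearrangement exhibiting $(\sigma\times\mathfrak{z})(\Sigma_{k}(F))$ as a Ces\`aro mean of the tensorial partial sums, together with the elementary fact that Ces\`aro summation recovers the sum of an already norm-convergent series — which is why, with Proposition \ref{prop:Power_series_give_holomorphic_fcns} and Theorem \ref{thm: Basic_Calculus} in place, the statement is essentially immediate.
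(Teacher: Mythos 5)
Your argument is correct and follows essentially the same route the paper intends: the paper declares the proposition ``immediate'' from exactly the three facts you assemble (the bound $\Vert\theta_{j}\Vert\leq\Vert F\Vert$ giving $R(\theta)\geq1$, Proposition \ref{prop:Power_series_give_holomorphic_fcns}, and the identity $\sigma\times\mathfrak{z}(T_{\theta_{j}})=\mathcal{Z}_{j}(\mathfrak{z})L_{\theta_{j}}$ combined with Ces\`aro summability and ultraweak continuity of $\sigma\times\mathfrak{z}$). You have merely written out the Ces\`aro-mean and limit-uniqueness details that the paper leaves implicit, and these are carried out correctly.
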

\begin{rem}
\label{Rem:Arveson's_discovery} In view of Proposition \ref{prop:H_infty_values},
one might wonder if every bounded analytic function on $\mathbb{D}(0,1,\sigma)$,
with tensorial power series $\sum_{j=0}^{\infty}\mathcal{Z}_{j}(\mathfrak{z})L_{\theta_{j}}$,
comes from a function in $H^{\infty}(E)$. Thanks to a very detailed
study by Arveson, such is not the case. He shows that in one of the
simplest settings, when $M=\mathbb{C}$, $E=\mathbb{C}^{d}$ ($d\geq2$),
and $\sigma$ is one-dimensional, there are bounded analytic functions
on $\mathbb{D}(0,1,\sigma)$ (which is the unit ball in $\mathbb{C}^{d}$)
which are not of the form $\widehat{F}_{\sigma}(\mathfrak{z})$ for
any element $F$ in $H^{\infty}(\mathbb{C}^{d})$ (see \cite[Theorem 3.3]{Arv1998}). 
\end{rem}

\section{Matricial Families and Functions\label{sec:Matricial Families and Functions}}

Suppose $\sigma$ and $\tau$ are two normal representations of our
$W^{*}$-algebra $M$ on Hilbert spaces $H_{\sigma}$ and $H_{\tau}$,
respectively. We have noted that if one writes $\sigma\oplus\tau$
matricially as $(\sigma\oplus\tau)(\cdot)=\begin{bmatrix}\sigma(\cdot) & 0\\
0 & \tau(\cdot)
\end{bmatrix}$, then $E^{(\sigma\oplus\tau)*}=\mathcal{I}((\sigma\oplus\tau)^{E}\circ\varphi,(\sigma\oplus\tau))$
may be written as matrices of operators $\begin{bmatrix}\mathfrak{z_{11}} & \mathfrak{z_{12}}\\
\mathfrak{z_{21}} & \mathfrak{z_{22}}
\end{bmatrix}$, viewed as operators from $H_{\sigma^{E}\circ\varphi}\oplus H_{\tau^{E}\circ\varphi}$
to $H_{\sigma}\oplus H_{\tau}$, where $\mathfrak{z_{11}}\in\mathcal{I}(\sigma^{E}\circ\varphi,\sigma)$,
$\mathfrak{z_{12}}\in\mathcal{I}(\tau^{E}\circ\varphi,\sigma)$, $\mathfrak{z_{21}}\in\mathcal{I}(\sigma^{E}\circ\varphi,\tau)$,
and where $\mathfrak{z_{22}}\in\mathcal{I}(\tau^{E}\circ\varphi,\tau)$.
In particular, note that all the matrices of the form $\begin{bmatrix}\mathfrak{z_{11}} & 0\\
0 & \mathfrak{z_{22}}
\end{bmatrix}$, with $\mathfrak{z_{11}}\in\mathcal{I}(\sigma^{E}\circ\varphi,\sigma)$
$ $and $\mathfrak{z_{22}}\in\mathcal{I}(\tau^{E}\circ\varphi,\tau)$
are contained $E^{(\sigma\oplus\tau)*}$. We abbreviate this fact
by writing $E^{\sigma*}\oplus E^{\tau*}\subseteq E^{(\sigma\oplus\tau)*}$.
We will have occasion later to think about higher order matrices.
That is, if $\sigma_{1},\sigma_{2},\cdots,\sigma_{n}$ are $n$ (not-necessarily
distinct) representations in $NRep(M)$, then $E^{(\sigma_{1}\oplus\sigma_{2}\oplus\cdots\oplus\sigma_{n})*}$
can be viewed as $n\times n$ matrices whose $i,j$-entries lie in
$\mathcal{I}(\sigma_{j}^{E}\circ\varphi,\sigma_{i})$. When this is
done, we will be able to write $\sum_{1\leq i\leq n}^{\oplus}E^{\sigma_{i}*}\subseteq E^{(\sigma_{1}\oplus\sigma_{2}\oplus\cdots\oplus\sigma_{n})*}$.
\begin{rem}
\label{def:matricial_family_of_sets} In Definition \ref{def:Matricial_family_of_sets},
we defined the notion of a matricial family of sets indexed by all
of $NRep(M)$. Evidently, that notion can be ``relativized'' to additive
subcategories. So if $\Sigma$ is an additive subcategory of $NRep(M)$,
then a family of sets $\mathcal{U}=\{\mathcal{U}(\sigma)\}_{\sigma\in\Sigma}$
that satisfies the following two properties will be called a \emph{matricial
$E,\Sigma$-family} \emph{of sets}. If $\Sigma$ or $E$ are clear
from context, we shall simply call $\mathcal{U}$ a matricial family
of sets.
\begin{enumerate}
\item Each $\mathcal{U}(\sigma)$ is contained in $E^{\sigma*}$. 
\item The family $\{\mathcal{U}(\sigma)\}_{\sigma\in\Sigma}$ is \emph{closed
with respect to taking direct} sums, i.e., 
\[
\mathcal{U}(\sigma)\oplus\mathcal{U}(\tau):=\begin{bmatrix}\mathcal{U}(\sigma) & 0\\
0 & \mathcal{U}(\tau)
\end{bmatrix}\subseteq\mathcal{U}(\sigma\oplus\tau).
\]

\end{enumerate}
\end{rem}
If each of the members of $\mathcal{U}$ is described by a common
property, such as being open or a domain, then we shall adjust the
terminology appropriately, e.g., by saying that the family is a matricial
$E,\Sigma$-family of open sets or domains. We shall say that $\mathcal{U}$
\emph{unitarily invariant,} if each $\mathcal{U}(\sigma)$ is \emph{unitarily
invariant }in the sense that for each unitary operator $u\in\sigma(M)'$
and each $\mathfrak{z}\in\mathcal{U}(\sigma)$, $u^{-1}\cdot\mathfrak{z}\cdot u:=u^{-1}\mathfrak{z}(I_{E}\otimes u)$
lies in $\mathcal{U}(\sigma)$. Relatedly, $\mathcal{U}$ is called
\emph{matricially convex}, if for any $\sigma$ and $\tau$ in $\Sigma$
and for all $v\in\mathcal{I}(\sigma,\tau)$ such that $vv^{*}=I_{H_{\tau}}$,
$v\cdot\mathcal{U}(\sigma)\cdot v^{*}=v\mathcal{U}(\sigma)(I_{E}\otimes v^{*})\subseteq\mathcal{U}(\tau)$. 
\begin{example}
\label{eg:Examples_matricial_E,sigma_sets} We already have noted
that $\{\mathbb{D}(0,1,\sigma)\}_{\sigma\in NRep(M)}$ is a matricially
convex, matricial family of sets. However, the best we can say, in
general, is that $\{\mathcal{AC}(\sigma)\}_{\sigma\in NRep(M)}$ is
unitarily invariant. For a more general class of discs, suppose that
$\{\zeta_{\sigma}\}_{\sigma\in\Sigma}$ is a family of vectors with
$\zeta_{\sigma}\in E^{\sigma*}$ such that $\zeta_{\sigma\oplus\tau}=\zeta_{\sigma}\oplus\zeta_{\tau}$
for all $\sigma,\tau\in\Sigma$. We shall call $\zeta:=\{\zeta_{\sigma}\}_{\sigma\in\Sigma}$
an \emph{additive field of vectors} over $\Sigma$. Given such a field
$\zeta$ and an $R$, $0\leq R\leq\infty$, we shall call $\mathbb{D}(\zeta,R):=\{\mathbb{D}(\zeta_{\sigma},R,\sigma)\}_{\sigma\in\Sigma}$
the \emph{matricial disc} determined by the field $\zeta$. If $\zeta_{\sigma}=0$
for all $\sigma\in\Sigma$, we shall simply write $\mathbb{D}(0,R)$,
calling it the matricial disc $\{\mathbb{D}(0,R,\sigma)\}_{\sigma\in\Sigma}$
we already have defined. It is clear that $\mathbb{D}(\zeta,R)$ is
a matricial set when $\zeta$ is an additive field of vectors over
$\Sigma$, but in general $\mathbb{D}(\zeta,R)$ won't be matricially
convex. However, it will be matricially convex when $\zeta=\{\zeta_{\sigma}\}_{\sigma\in\Sigma}$
is a \emph{central }additive field in the sense that $\zeta_{\tau}(I_{E}\otimes C)=C\zeta_{\sigma}$
for all $C\in\mathcal{I}(\sigma,\tau)$, $\sigma,\tau\in\Sigma$,
as is evident from the definition. Note that for $\zeta$ to be a
central additive field requires more than each $\zeta_{\sigma}$ being
central in the sense of \cite[Definition 4.11]{Muhly2008b}, which
simply means that $\zeta_{\sigma}(I_{E}\otimes C)=C\zeta_{\sigma}$
for all $C\in\mathcal{I}(\sigma,\sigma)=\sigma(M)'$. However, if
each $\zeta_{\sigma}$ is central in that sense, then $\mathbb{D}(\zeta,R)$
is a unitarily invariant matricial set. Note, too, that if $\Sigma$
is the subcategory consisting of all multiples of a single representation
$\sigma$, together with all the spaces $\mathcal{I}(n\sigma,m\sigma)$,
$n,m\in\mathbb{N}$, and if $\zeta_{\sigma}$ is a central vector
in $E^{\sigma*}$, then $\{\zeta_{m\sigma}\}_{m\in\mathbb{N}}$ is
a central additive field on $\Sigma$, where $\zeta_{m\sigma}$ is
the $m$-fold direct sum of copies of $\zeta_{\sigma}$.
\end{example}
To see these examples in the simplest, most concrete setting, consider
the following addition to our basic example, Example \ref{Eg:The basic Example}. 
\begin{example}
\label{eg:Concrete example} Let $M=\mathbb{C}$ and let $E=\mathbb{C}^{d}$.
Then since every representation $\sigma$ of $\mathbb{C}$ is simply
a multiple of the basic, $1$-dimensional representation $\sigma_{1}$,
i.e., $\sigma=n\sigma_{1}$ for a suitable positive integer $n$ or
$\infty$, it follows that $E^{\sigma*}=\{\mathfrak{z}=(Z_{1},Z_{2},\cdots Z_{d})\mid Z_{i}\in M_{n}(\mathbb{C})\}$,
where we interpret $M_{\infty}(\mathbb{C})$ as $B(\ell^{2}(\mathbb{N}))$.
The disc \foreignlanguage{english}{$\mathbb{D}(0,R,\sigma)$ is} $\{\mathfrak{z}=(Z_{1},Z_{2},\cdots Z_{d})\mid\Vert\sum Z_{i}Z_{i}^{*}\Vert^{\frac{1}{2}}<R\}$,
and a $\mathfrak{z}=(Z_{1},Z_{2},\cdots,Z_{d})\in\overline{\mathbb{D}(0,1,\sigma)}$
is in $\mathcal{AC}(\sigma),$ when $\sigma$ is a \emph{finite} multiple
of $\sigma_{1}$, if and only if $\mathfrak{z}$ is completely noncoisometric
\cite[Corollary 5.7]{Muhly2011a} in the sense that $\Vert\mathfrak{z}^{(k)*}h\Vert\to0$
for every $h\in\mathbb{C}^{n}$ \cite[Remark 7.2]{Muhly2004a}. (No
such simple characterization of $\mathcal{AC}(\infty\sigma_{1})$
is known.) For a $\mathfrak{z}\in E^{\sigma*}$ and unitary $u\in\sigma(M)'=M_{n}(\mathbb{C})$,
$u^{-1}\cdot\mathfrak{z}\cdot u=u^{-1}\mathfrak{z}(I_{E}\otimes u)=(u^{-1}Z_{1}u,u^{-1}Z_{2}u,\cdots,u^{-1}Z_{d}u)$.
Thus $\mathfrak{z}$ is central if and only if each $Z_{i}$ is a
scalar multiple of the identity. Further, as we have seen, an additive
subcategory $\Sigma$ of $NRep(M)$ is simply determined by an additive
subsemigroup of $\mathbb{N}\cup\{\infty\}$ and a central additive
field $\{\zeta_{\sigma}\}_{\sigma\in\Sigma}$ is a family of central
elements such that for each $i$, $1\leq i\leq d$, the scalar that
forms the scalar multiple of the identity in the $i^{th}$ element
of $\zeta_{\sigma}$ is independent of $\sigma$. \end{example}
\begin{rem}
\label{rem:Rel_matricial_function} The notion of a matricial family
of functions indexed by $NRep(M)$, defined in Definition \ref{def:Matricial_family_of_functions},
can be ``relativized'' to additive subcategories, too. So, if $\{\mathcal{U}(\sigma)\}_{\sigma\in\Sigma}$
is a matricial $E,\Sigma$-family, where $\Sigma$ is an additive
subcategory of $NRep(M)$, then a family of functions $f=\{f_{\sigma}\}_{\sigma\in\Sigma}$,
with 
\[
f_{\sigma}:\mathcal{U}(\sigma)\to B(H_{\sigma}),\qquad\sigma\in\Sigma,
\]
is an $E,\Sigma$-\emph{matricial family of functions,} or simply\emph{
a matricial family} \emph{of} \emph{functions, }in case $f$ \emph{respects
intertwiners} in the sense that for every $\mathfrak{z}\in\mathcal{U}(\sigma)$,
every $\mathfrak{w}\in\mathcal{U}(\tau)$, $\mathcal{I}(\sigma\times\mathfrak{z},\tau\times\mathfrak{w})\subseteq\mathcal{I}(f_{\sigma}(\mathfrak{z}),f_{\tau}(\mathfrak{w}))$.
(Thanks to our convention established in Remark \ref{Xeta}, $\mathcal{I}(\sigma\times\mathfrak{z},\tau\times\mathfrak{w})\subseteq\mathcal{I}(f_{\sigma}(\mathfrak{z}),f_{\tau}(\mathfrak{w}))$
if and only if every $C\in\mathcal{I}(\sigma,\tau)$ that satisfies
the equation $C\mathfrak{z}=\mathfrak{w}(I_{E}\otimes C)$ also satisfies
the equation $Cf_{\sigma}(\mathfrak{z})=f_{\tau}(\mathfrak{w})C$.)
We shall say that $f$ is continuous, or holomorphic, etc., in case
each $f_{\sigma}$ has the indicated property. \end{rem}
\begin{prop}
\label{prop:Matricial_holo_fcn} Let $\Sigma$ be an additive subcategory
of $NRep(M)$ and let $\mathbb{D}(\zeta,R)$ be the matricial disc
determined by a central additive field $\zeta$ on $\Sigma$, where
$0<R\leq\infty$. If $\theta\in\mathcal{T}_{+}((E))$ has radius of
convergence at least $R$, then the collection $\{f_{\sigma}\}_{\sigma\in\Sigma}$
of tensorial power series determined by $\theta$, $f_{\sigma}(\mathfrak{z}):=\sum_{k\geq0}\mathcal{Z}_{k}(\mathfrak{z}-\zeta_{\sigma})L_{\theta_{k}}$,
forms a matricial family of holomorphic functions on $\mathbb{D}(\zeta,R)$.\end{prop}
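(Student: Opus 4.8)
The plan is to check the two conditions in the definition of a matricial family of holomorphic functions (Remark \ref{rem:Rel_matricial_function}): that each $f_\sigma$ is a well-defined holomorphic map $\mathbb{D}(\zeta_\sigma,R,\sigma)\to B(H_\sigma)$, and that the family $\{f_\sigma\}_{\sigma\in\Sigma}$ respects intertwiners. That the underlying family $\mathbb{D}(\zeta,R)=\{\mathbb{D}(\zeta_\sigma,R,\sigma)\}_{\sigma\in\Sigma}$ is a matricial $E,\Sigma$-family of open sets is already recorded in Example \ref{eg:Examples_matricial_E,sigma_sets}, since a central additive field is in particular an additive field of vectors. Holomorphy of each $f_\sigma$ is then immediate from Proposition \ref{prop:Power_series_give_holomorphic_fcns} applied with $\mathfrak{z}_0=\zeta_\sigma$: because $R(\theta)\geq R$, the tensorial power series $f_\sigma(\mathfrak{z})=\sum_{k\geq0}\mathcal{Z}_k(\mathfrak{z}-\zeta_\sigma)L_{\theta_k}$ converges in $B(H_\sigma)$-norm for every $\mathfrak{z}\in\mathbb{D}(\zeta_\sigma,R,\sigma)$, uniformly on subdisks, and its sum is Frechet holomorphic there.

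The content of the proof is the intertwining property. Fix $\sigma,\tau\in\Sigma$, $\mathfrak{z}\in\mathbb{D}(\zeta_\sigma,R,\sigma)$, $\mathfrak{w}\in\mathbb{D}(\zeta_\tau,R,\tau)$, and $C\in\mathcal{I}(\sigma,\tau)$ with $C\mathfrak{z}=\mathfrak{w}(I_E\otimes C)$; by the convention of Remark \ref{Xeta} these are exactly the elements of $\mathcal{I}(\sigma\times\mathfrak{z},\tau\times\mathfrak{w})$, and we must produce $Cf_\sigma(\mathfrak{z})=f_\tau(\mathfrak{w})C$. The first step exploits the centrality of $\zeta$: since $\zeta_\tau(I_E\otimes C)=C\zeta_\sigma$, subtracting this from $C\mathfrak{z}=\mathfrak{w}(I_E\otimes C)$ gives $C\mathfrak{z}'=\mathfrak{w}'(I_E\otimes C)$, where $\mathfrak{z}':=\mathfrak{z}-\zeta_\sigma$ and $\mathfrak{w}':=\mathfrak{w}-\zeta_\tau$, both of norm strictly less than $R$. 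The second step is the identity
\[
C\,\mathcal{Z}_{k}(\mathfrak{z}')=\mathcal{Z}_{k}(\mathfrak{w}')\,(I_{E^{\otimes k}}\otimes C),\qquad k\geq 0,
\]
proved by induction on $k$: the case $k=1$ is precisely $C\mathfrak{z}'=\mathfrak{w}'(I_E\otimes C)$, and the inductive step uses the recursion $\mathcal{Z}_{k+1}(\cdot)=\mathcal{Z}_1(\cdot)(I_E\otimes\mathcal{Z}_k(\cdot))$ from Lemma \ref{lem:Derivative_of_Z_k} together with the multiplicativity of $X\mapsto I_E\otimes X$. The third step is the elementary observation that $(I_{E^{\otimes k}}\otimes C)L_{\theta_k}=L_{\theta_k}C$, immediate from $L_{\theta_k}h=\theta_k\otimes h$. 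Combining the three steps, $C\,\mathcal{Z}_k(\mathfrak{z}')L_{\theta_k}=\mathcal{Z}_k(\mathfrak{w}')L_{\theta_k}C$ for each $k$; summing over $k$ and invoking the norm convergence of both series (guaranteed by Proposition \ref{prop:Power_series_give_holomorphic_fcns}) together with the continuity of $X\mapsto CX$ and $X\mapsto XC$ yields $Cf_\sigma(\mathfrak{z})=f_\tau(\mathfrak{w})C$, i.e. $\mathcal{I}(\sigma\times\mathfrak{z},\tau\times\mathfrak{w})\subseteq\mathcal{I}(f_\sigma(\mathfrak{z}),f_\tau(\mathfrak{w}))$, which is the claim.

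I do not expect a genuine obstacle. The one delicate point is that centrality of $\zeta$ is essential and must be invoked precisely at the reduction step: without it one cannot replace $\mathfrak{z},\mathfrak{w}$ by the recentered elements $\mathfrak{z}',\mathfrak{w}'$ satisfying the same intertwining relation, and then the family need not respect intertwiners. Beyond that, one only has to be careful to apply the recursion for $\mathcal{Z}_k$ with the intertwiner on the correct side and to justify the termwise passage to the limit, both of which are routine.
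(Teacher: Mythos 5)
Your proof is correct and follows essentially the same route as the paper's: holomorphy from Proposition \ref{prop:Power_series_give_holomorphic_fcns}, recentering via centrality of $\zeta$, termwise intertwining built up from the $k=1$ case, and norm convergence to pass to the sum. The only cosmetic difference is that you establish $C\,\mathcal{Z}_{k}(\mathfrak{z}')=\mathcal{Z}_{k}(\mathfrak{w}')(I_{E^{\otimes k}}\otimes C)$ by induction on $k$ via the recursion of Lemma \ref{lem:Derivative_of_Z_k}, whereas the paper reduces to decomposable tensors $\theta_{k}=\xi_{1}\otimes\cdots\otimes\xi_{k}$ and iterates the $k=1$ factorization; your variant is a slight streamlining of the same idea.
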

\begin{proof}
We saw in Proposition \ref{prop:Power_series_give_holomorphic_fcns}
that each $f_{\sigma}$ is a holomorphic $B(H_{\sigma})$-valued function
defined throughout $\mathbb{D}(\zeta_{\sigma},R,\sigma)$. We therefore
need only check that $\{f_{\sigma}\}_{\sigma\in\Sigma}$ preserves
intertwiners. For this, it suffices to check that for each $k$, the
$k^{th}$ terms of $\{f_{\sigma}\}_{\sigma\in\Sigma}$ preserve intertwiners.
If $C\in\mathcal{I}(\sigma\times\mathfrak{z},\tau\times\mathfrak{w})$,
then by definition $C\in\mathcal{I}(\sigma,\tau)$, so $C\in\mathcal{I}(\mathcal{Z}_{0}(\mathfrak{z}-\zeta_{\sigma})L_{\theta_{0}},\mathcal{Z}_{0}(\mathfrak{w}-\zeta_{\tau})L_{\theta_{0}})$
because $\mathcal{Z}_{0}(\mathfrak{z}-\zeta_{\sigma})$ is identically
$I_{H_{\sigma}}$ in $\mathfrak{z}$, similarly for $\mathcal{Z}_{0}(\mathfrak{w}-\zeta_{\tau}),$
and because when we view $L_{\theta_{0}}$ as a map from $H_{\sigma}$
to $M\otimes_{\sigma(M)}H_{\sigma}$ we identify $\theta_{0}$ with
$\sigma(\theta_{0})$, when we identify $H_{\sigma}$ with $M\otimes_{\sigma(M)}H_{\sigma}$,
as is customary; similarly for $L_{\theta_{0}}$ and $\tau(\theta_{0})$.
To handle $\mathcal{I}(\mathcal{Z}_{1}(\mathfrak{z}-\zeta_{\sigma})L_{\theta_{1}},\mathcal{Z}_{1}(\mathfrak{w}-\zeta_{\tau})L_{\theta_{1}})$,
observe first that $L_{\theta_{1}}C=(I_{E}\otimes C)L_{\theta_{1}}$
whether $L_{\theta_{1}}$ is viewed as a map from $H_{\sigma}$ to
$E\otimes_{\sigma}H_{\sigma}$ or from $H_{\tau}$ to $E\otimes_{\tau}H_{\tau}$.
Further, $C\zeta_{\sigma}=\zeta_{\tau}(I_{E}\otimes C)$ by definition
of a central family and $C\mathfrak{z}=\mathfrak{w}(I_{E}\otimes C)$
by the hypothesis that $C\in\mathcal{I}(\sigma\times\mathfrak{z},\tau\times\mathfrak{w})$.
Thus $C\in\mathcal{I}(\mathcal{Z}_{1}(\mathfrak{z}-\zeta_{\sigma})L_{\theta_{1}},\mathcal{Z}_{1}(\mathfrak{w}-\zeta_{\tau})L_{\theta_{1}})$.
The general case $\mathcal{I}(\mathcal{Z}_{k}(\mathfrak{z}-\zeta_{\sigma})L_{\theta_{k}},\mathcal{Z}_{k}(\mathfrak{w}-\zeta_{\tau})L_{\theta_{k}})$
is handled by noting that it suffices to check the intertwining condition
when $\theta_{k}$ is a decomposable tensor, say $\theta_{k}=\xi_{1}\otimes\xi_{2}\otimes\cdots\otimes\xi_{k}$
and noting that in this case, $\mathcal{Z}_{k}(\mathfrak{z}-\zeta_{\sigma})L_{\theta_{k}}=\mathcal{Z}_{1}(\mathfrak{z}-\zeta_{\sigma})L_{\xi_{1}}\mathcal{Z}_{1}(\mathfrak{z}-\zeta_{\sigma})L_{\xi_{2}}\cdots\mathcal{Z}_{1}(\mathfrak{z}-\zeta_{\sigma})L_{\xi_{k}}$
and similarly $\mathcal{Z}_{k}(\mathfrak{w}-\zeta_{\tau})L_{\theta_{k}}=\mathcal{Z}_{1}(\mathfrak{w}-\zeta_{\tau})L_{\xi_{1}}\mathcal{Z}_{1}(\mathfrak{w}-\zeta_{\tau})L_{\xi_{2}}\cdots\mathcal{Z}_{1}(\mathfrak{w}-\zeta_{\tau})L_{\xi_{k}}$
. For these expressions, it is obvious that $C$ intertwines, by virtue
of the fact that $C\in\mathcal{I}(\mathcal{Z}_{1}(\mathfrak{z}-\zeta_{\sigma})L_{\theta_{1}},\mathcal{Z}_{1}(\mathfrak{w}-\zeta_{\tau})L_{\theta_{1}})$. 
\end{proof}
A concept that is closely related to the notion of a matricial family
of functions is given in 
\begin{defn}
\label{def:_matricial_maps} Suppose $E$ and $F$ are two $W^{*}$-correspondences
over the same $W^{*}$-algebra, $M$ and suppose $\Sigma$ is an additive
subcategory of $NRep(M)$. If $\{\mathcal{U}(\sigma)\}_{\sigma\in\Sigma}$
is an $E,\Sigma$-matricial family of sets and if $\{\mathcal{V}(\sigma)\}_{\sigma\in\Sigma}$
is an $F,\Sigma$-matricial family of sets, then we call a family
of maps $\{f_{\sigma}\}_{\sigma\in\Sigma}$, where $f_{\sigma}$ maps
$\mathcal{U}(\sigma)$ to $\mathcal{V}(\sigma)$, an $E,F,\Sigma$-\emph{matricial
family of maps, }or for short, a\emph{ matricial family of maps, }in
case
\[
Cf_{\sigma}(\mathfrak{z})=f_{\tau}(\mathfrak{w})(I_{F}\otimes C)
\]
for every $C:H_{\sigma}\to H_{\tau}$ in $\mathcal{I}(\sigma,\tau)$
such that $C\mathfrak{z}=\mathfrak{w}(I_{E}\otimes C)$, for all $\mathfrak{z}\in\mathcal{U}(\sigma)$
and all $\mathfrak{w}\in\mathcal{V}(\tau)$. To say the same thing
more succinctly, $\{f_{\sigma}\}_{\sigma\in\Sigma}$ is a matricial
family of maps in case 
\begin{equation}
\mathcal{I}(\sigma\times\mathfrak{z},\tau\times\mathfrak{w})\subseteq\mathcal{I}(\sigma\times f_{\sigma}(\mathfrak{z}),\tau\times f_{\tau}(\mathfrak{w})),\qquad\mathfrak{z}\in\mathcal{U}(\sigma),\,\mathfrak{w}\in\mathcal{U}(\tau).\label{eq:preserv_intertwiners}
\end{equation}

\end{defn}
To help forestall confusion that may develop, we emphasize that we
will be consistent in our distinction between functions and maps:
matricial families of functions map matricial families of sets to
algebras of operators; matricial families of maps are families of
maps between matricial families of sets. As we shall see later, matricial
families of maps are connected to homomorphisms between Hardy algebras.

\section{A special Generator\label{sec:A_special_generator}}

In ring theory, a module $G$ is a \emph{generator} for the category
of left modules over a ring $R$, $\,_{R}\mathfrak{M}$, in case every
$M\in\,_{R}\mathfrak{M}$ is the image of a homomorphism from the
algebraic direct sum of a suitable number of copies of $G$ \cite[Page 193]{AF1992}.
In \cite{R1974a}, Rieffel defines a generator for $NRep(M)$ in a
similar fashion, but allows infinite Hilbert space direct sums. In
\cite[Proposition 1.3]{R1974a}, he proves that a representation $\sigma$
is a generator for $NRep(M)$ in this extended sense if and only if
$\sigma$ is faithful. Here we shall develop a useful analogue of
the notion of a generator for the category of ultraweakly continuous,
completely contractive representations of $H^{\infty}(E)$.
\begin{defn}
\label{def:_Infinite_generator} We shall say that a generator $\pi$
for $NRep(M)$ is an \emph{infinite generator} in case it is an infinite
multiple of a generator for $NRep(M)$, i.e., an infinite multiple
of a faithful normal representation of $M$. We shall say that $\sigma_{0}$
is a \emph{special generator} for $NRep(M)$ if $\sigma_{0}=\pi^{\mathcal{F}(E)}\circ\varphi_{\infty}$
for an infinite generator for $NRep(M)$.\end{defn}
\begin{rem}
\label{rm:_explain_generator} Of course $\sigma_{0}$ and $\pi$
are equivalent in $NRep(M)$ if $\pi$ is an infinite generator. However,
we want to consider additive subcategories of $NRep(M)$ that are
not necessarily closed under forming infinite direct sums. Consequently,
it is important for our considerations to make a distinction between
$\sigma_{0}$ and $\pi$.
\end{rem}
If $\sigma_{0}=\pi^{\mathcal{F}(E)}\circ\varphi_{\infty}$, acting
on $H_{\sigma_{0}}=\mathcal{F}(E)\otimes_{\pi}H_{\pi}$, is a special
generator for $NRep(M)$, and if $\mathfrak{s}_{0}$ is defined by
the formula 
\[
\mathfrak{s}_{0}(\xi\otimes h)=T_{\xi}h,\qquad\xi\in E,\, h\in\mathcal{F}(E)\otimes_{\pi}H_{\pi},
\]

then $\sigma_{0}\times\mathfrak{s}_{0}$ is an induced representation
of $H^{\infty}(E)$ in the sense of \cite{Muhly1999}. In \cite[Proposition 2.3]{Muhly2011a}
we show that $\sigma_{0}\times\mathfrak{s}_{0}$ is unique up to unitary
equivalence in the sense that if $\pi'$ has the same properties as
$\pi$ and if $\sigma_{0}'\otimes\mathfrak{s}_{0}'$ is constructed
from $\pi'$ in a similar fashion to $\sigma_{0}\times\mathfrak{s}_{0}$,
then $\sigma_{0}'\otimes\mathfrak{s}_{0}'$ is unitarily equivalent
to $\sigma_{0}\times\mathfrak{s}_{0}$. Further, if $\sigma\times\mathfrak{z}$
is any induced representation of $H^{\infty}(E)$, then there is a
subspace $\mathcal{K}$ of $H_{\pi}$ that reduces $\pi$ such that
$\sigma\times\mathfrak{z}$ is unitarily equivalent to $\sigma_{0}\times\mathfrak{s}_{0}\vert\mathcal{F}(E)\otimes_{\pi}\mathcal{K}$.
Observe that by construction $\sigma_{0}\times\mathfrak{s}_{0}$ is
absolutely continuous, so $\mathfrak{s}_{0}\in\mathcal{AC}(\sigma_{0})$.
In fact, $\sigma_{0}\times\mathfrak{s}_{0}$ is a generator for the
category of all ultraweakly continuous completely contractive representations
of $H^{\infty}(E)$. This assertion is the content of the following
theorem, which is a summary of Theorems 4.7 and 4.11 of \cite{Muhly2011a}. 
\begin{thm}
\label{abscont} Suppose $\sigma\in NRep(M)$ and $\mathfrak{z}\in\overline{\mathbb{D}(0,1,\sigma)}$.
Then $\mathfrak{z}$ lies in $\mathcal{AC}(\sigma)$ if and only if
$H_{\sigma}$ is the closed linear span of the ranges of the operators
in $\mathcal{I}(\sigma_{0}\times\mathfrak{s}_{0},\sigma\times\mathfrak{z})$.
In this event, the stronger equation holds: 
\[
H_{\sigma}=\bigcup\{Ran(C)\mid C\in\mathcal{I}(\sigma_{0}\times\mathfrak{s}_{0},\sigma\times\mathfrak{z})\}.
\]

\end{thm}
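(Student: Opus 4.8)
This is a restatement of \cite[Theorems 4.7 and 4.11]{Muhly2011a}, so the most economical argument is simply to invoke those results; what follows is a plan for how I would organize the proof, whose engine is the dilation theory of completely contractive representations of $\mathcal{T}_{+}(E)$ together with the description of absolute continuity in terms of the minimal isometric dilation. The first move is a reformulation: by the infinite-generator hypothesis, $\sigma_{0}\times\mathfrak{s}_{0}$ is unitarily equivalent to its countable ampliation, so any countable family $\{C_{n}\}\subseteq\mathcal{I}(\sigma_{0}\times\mathfrak{s}_{0},\sigma\times\mathfrak{z})$ can be rescaled so that the row $[2^{-n}\Vert C_{n}\Vert^{-1}C_{n}]_{n}$ is a bounded intertwiner $C\in\mathcal{I}(\sigma_{0}\times\mathfrak{s}_{0},\sigma\times\mathfrak{z})$ with $Ran(C)\supseteq\bigcup_{n}Ran(C_{n})$; since $H_{\sigma}$ is separable, it follows that the closed-span condition in the theorem is equivalent to the existence of a \emph{single} $C\in\mathcal{I}(\sigma_{0}\times\mathfrak{s}_{0},\sigma\times\mathfrak{z})$ with dense range, and moreover that once the union statement is proved for one well-chosen such $C$ it follows for the whole family.

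I would then dispatch the implication ``dense-range intertwiner $\Rightarrow\mathfrak{z}\in\mathcal{AC}(\sigma)$.'' The representation $\sigma_{0}\times\mathfrak{s}_{0}$ is induced, hence ultraweakly continuous on $H^{\infty}(E)$, i.e. $\mathfrak{s}_{0}\in\mathcal{AC}(\sigma_{0})$. Given a dense-range $C$, the subspace $\ker C$ is invariant under $(\sigma_{0}\times\mathfrak{s}_{0})(H^{\infty}(E))$: it is invariant under $(\sigma_{0}\times\mathfrak{s}_{0})(\mathcal{T}_{0}(E))$ because $C$ intertwines there, and since $\ker C$ is norm-closed (hence weakly closed) the ultraweak continuity of $\sigma_{0}\times\mathfrak{s}_{0}$ and the ultraweak density of $\mathcal{T}_{0}(E)$ in $H^{\infty}(E)$ propagate that invariance. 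One then transports the $H^{\infty}(E)$-functional calculus of $\sigma_{0}\times\mathfrak{s}_{0}$ through $C$ and verifies that the resulting densely defined operators are bounded and assemble into an ultraweakly continuous completely contractive extension of $\sigma\times\mathfrak{z}$ to $H^{\infty}(E)$; this boundedness step is where the dilation-theoretic estimates of \cite{Muhly2011a} are genuinely needed. Conceptually, it says that no ``singular'' subrepresentation can survive inside $H_{\sigma}$, since -- exactly as a singular unitary admits no nonzero intertwiner from the unilateral shift in the classical picture -- such a piece would be orthogonal to the range of every intertwiner out of the purely analytic representation $\sigma_{0}\times\mathfrak{s}_{0}$.

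For the converse I would use the model theory of \cite{Muhly2011a}: when $\mathfrak{z}\in\mathcal{AC}(\sigma)$, the representation $\sigma\times\mathfrak{z}$ is accessible from the canonical induced representation, and the structure statement for induced representations of $H^{\infty}(E)$ quoted just above lets one manufacture intertwiners $C\in\mathcal{I}(\sigma_{0}\times\mathfrak{s}_{0},\sigma\times\mathfrak{z})$ whose ranges exhaust $H_{\sigma}$; with the amalgamation device of the first paragraph this already gives the closed-span assertion. The passage from closed span to the literal union -- the ``stronger equation'' -- is the step I expect to be the main obstacle, and it is really a noncommutative inner--outer (Beurling-type) factorization: $\mathcal{I}(\sigma_{0}\times\mathfrak{s}_{0},\sigma\times\mathfrak{z})$ is a module over the commutant of $\sigma\times\mathfrak{z}$, and, using the $H^{\infty}(E)$-functional calculus now available, one absorbs the portion of a prescribed vector $h\in H_{\sigma}$ that a single intertwiner fails to reach into the ``$H^{\infty}$-part'' of a suitably modified intertwiner, so that $h$ itself lies in one $Ran(C)$ rather than merely in the closure of a span of such ranges. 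This is \cite[Theorem 4.11]{Muhly2011a}, and it is the place where one genuinely uses features special to $NRep(M)$ -- the faithfulness and infinite multiplicity of $\pi$ and the self-duality of $E$ -- rather than the purely formal intertwining relations.
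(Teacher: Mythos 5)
Your proposal is correct and matches the paper's treatment: the paper offers no independent proof of this theorem, stating explicitly that it is a summary of Theorems 4.7 and 4.11 of \cite{Muhly2011a}, which is exactly the citation you lead with. Your additional sketch of how those cited results are established (assembling countably many intertwiners into one via the infinite multiplicity of $\sigma_{0}\times\mathfrak{s}_{0}$, and deferring the boundedness and range-absorption steps to \cite{Muhly2011a}) is reasonable commentary but not required, since the hard work is carried by the cited theorems in both your argument and the paper's.
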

With this theorem at our disposal, we are able to prove the following
theorem that has Theorem \ref{thm:Double_Commutant} as an immediate
corollary.
\begin{thm}
\label{thm:_Relative_double_commutant} Suppose $\Sigma$ is a full
additive subcategory of $NRep(M)$ that contains a special generator
$\sigma_{0}$ for $NRep(M)$. If $f=\{f_{\sigma}\}_{\sigma\in\Sigma}$
is a family of functions such that each $f_{\sigma}$ maps $\mathcal{AC}(\sigma)$
to $B(H_{\sigma})$, then $f$ is a $Berezin$ transform (restricted
to $\{\mathcal{AC}(\sigma)\}_{\sigma\in\Sigma}$) if and only if $f$
is an $E,\Sigma$-matricial family.\end{thm}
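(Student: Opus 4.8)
The plan is to prove the two implications separately; the forward direction (a Berezin transform respects intertwiners) is essentially already recorded in the discussion following Definition \ref{def:Matricial_family_of_functions}, so the substance is the converse. Suppose then that $f=\{f_\sigma\}_{\sigma\in\Sigma}$ respects intertwiners and that $\sigma_0=\pi^{\mathcal F(E)}\circ\varphi_\infty$ lies in $\Sigma$. The first step is to extract the candidate $F$ from the value of $f$ at the single point $\mathfrak s_0\in\mathcal{AC}(\sigma_0)$. I would set $F:=f_{\sigma_0}(\mathfrak s_0)$, a priori just an operator in $B(H_{\sigma_0})$, and show it lies in $H^\infty(E)$. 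The mechanism is the double commutant theorem for induced representations \cite[Corollary 3.10]{Muhly2004a}: the image of $H^\infty(E)$ under $\sigma_0\times\mathfrak s_0$ equals its own double commutant, so it suffices to show $F$ commutes with everything in the commutant of $(\sigma_0\times\mathfrak s_0)(H^\infty(E))$. For $C\in\mathcal I(\sigma_0\times\mathfrak s_0,\sigma_0\times\mathfrak s_0)$ (the commutant, by Remark \ref{Xeta}), the hypothesis that $f$ respects intertwiners — applied with $\sigma=\tau=\sigma_0$ and $\mathfrak z=\mathfrak w=\mathfrak s_0$ — gives $CF=FC$ exactly, hence $F$ is in the double commutant and therefore in the image of $H^\infty(E)$; say $F=(\sigma_0\times\mathfrak s_0)(\widetilde F)$ with $\widetilde F\in H^\infty(E)$.

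The second, and main, step is to show $f_\sigma=\widehat{\widetilde F}_\sigma$ on all of $\mathcal{AC}(\sigma)$, for every $\sigma\in\Sigma$. Fix $\sigma\in\Sigma$ and $\mathfrak z\in\mathcal{AC}(\sigma)$. Here Theorem \ref{abscont} is the key lever: $H_\sigma$ is the union of the ranges of the operators $C\in\mathcal I(\sigma_0\times\mathfrak s_0,\sigma\times\mathfrak z)$. For each such $C$, fullness of $\Sigma$ guarantees $C$ is a morphism in $\Sigma$, so the matricial hypothesis applies to the pair $(\sigma_0,\mathfrak s_0)$, $(\sigma,\mathfrak z)$ and yields
\[
C\,f_{\sigma_0}(\mathfrak s_0)=f_\sigma(\mathfrak z)\,C,
\]
i.e. $C F = f_\sigma(\mathfrak z) C$. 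On the other hand, $C\in\mathcal I(\sigma_0\times\mathfrak s_0,\sigma\times\mathfrak z)\subseteq\mathcal I(\sigma_0\times\mathfrak s_0(\widetilde F),\sigma\times\mathfrak z(\widetilde F))$ since $\widetilde F\in H^\infty(E)$ and $\mathfrak z\in\mathcal{AC}(\sigma)$ (again Remark \ref{Xeta}), so $C F = \widehat{\widetilde F}_\sigma(\mathfrak z)\, C$. Subtracting, $(f_\sigma(\mathfrak z)-\widehat{\widetilde F}_\sigma(\mathfrak z))C=0$ for every such $C$; since the ranges of these $C$'s exhaust $H_\sigma$, we conclude $f_\sigma(\mathfrak z)=\widehat{\widetilde F}_\sigma(\mathfrak z)$. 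As $\sigma$ and $\mathfrak z$ were arbitrary, $f=\widehat{\widetilde F}$ as required.

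I expect the main obstacle to be bookkeeping around Step 1 rather than any deep new idea: one must be careful that $f$ respecting intertwiners really does force $F=f_{\sigma_0}(\mathfrak s_0)$ to commute with the \emph{full} commutant of the induced representation and not merely with some self-intertwiners coming from $\Sigma$ — this is where fullness of $\Sigma$ enters a first time (the commutant of $\sigma_0\times\mathfrak s_0$ consists of morphisms that must be available in $\Sigma$), and one should double-check that the commutant of $(\sigma_0\times\mathfrak s_0)(H^\infty(E))$ coincides with $\mathcal I(\sigma_0\times\mathfrak s_0,\sigma_0\times\mathfrak s_0)$ as defined in Remark \ref{Xeta}. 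A minor secondary point is to confirm that $\widetilde F$ is genuinely well-defined as an element of $H^\infty(E)$ (not just its image), which follows because $\sigma_0\times\mathfrak s_0$ is isometric on $H^\infty(E)$ — $\sigma_0$ being faithful of infinite multiplicity — by \cite[Lemma 3.8]{Muhly2008b}; but for the statement as phrased we only need existence of \emph{some} $F\in H^\infty(E)$ with $f=\widehat F$, so even without isometry the argument closes. The reverse implication — that any Berezin transform is a matricial family — needs nothing beyond the observation already made in the text: if $C\in\mathcal I(\sigma\times\mathfrak z,\tau\times\mathfrak w)$ then $C\,(\sigma\times\mathfrak z)(F)=(\tau\times\mathfrak w)(F)\,C$ for all $F\in H^\infty(E)$ when $\mathfrak z\in\mathcal{AC}(\sigma)$, $\mathfrak w\in\mathcal{AC}(\tau)$, which is precisely $C\widehat F_\sigma(\mathfrak z)=\widehat F_\tau(\mathfrak w)C$.
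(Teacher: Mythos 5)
Your proposal is correct and follows essentially the same route as the paper's own proof: extract $F$ from $f_{\sigma_{0}}(\mathfrak{s}_{0})$ via the double commutant theorem for the induced representation \cite[Corollary 3.10]{Muhly2004a} (with fullness of $\Sigma$ guaranteeing the full commutant is available), and then propagate the identity $f_{\sigma}(\mathfrak{z})=\widehat{F}_{\sigma}(\mathfrak{z})$ to all of $\mathcal{AC}(\sigma)$ using Theorem \ref{abscont} together with Remark \ref{Xeta}. No gaps; the side remarks on fullness and on well-definedness of the preimage are handled exactly as in the paper.
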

\begin{proof}
We already have noted Berezin transforms are matricial families of
functions on $\{\mathcal{AC}(\sigma)\}_{\sigma\in NRep(M)}$. So certainly,
their restrictions to $\{\mathcal{AC}(\sigma)\}_{\sigma\in\Sigma}$
are matricial $E,\Sigma$-families. For the converse, suppose that
$\{f_{\sigma}\}_{\sigma\in\Sigma}$ is a matricial family of functions
defined on $\{\mathcal{AC}(\sigma)\}_{\sigma\in\Sigma}$. Since $\Sigma$
is assumed to be full, for every $\sigma$ and $\tau$ in $\Sigma$,
$\mathcal{I}_{\Sigma}(\sigma,\tau)=\mathcal{I}_{NRep(M)}(\sigma,\tau$)
where the subscripts indicate the category under consideration. It
follows that for every $\mathfrak{z}\in\mathcal{AC}(\sigma)$ and
for every $\mathfrak{w}\in\mathcal{AC}(\tau)$, $\mathcal{I}_{\Sigma}(\sigma\times\mathfrak{z},\tau\times\mathfrak{w})=\mathcal{I}_{NRep(M)}(\sigma\times\mathfrak{z},\tau\times\mathfrak{w})$.
So our hypotheses guarantee that for every $C\in\mathcal{I}_{NRep(M)}(\sigma_{0}\times\mathfrak{s}_{0},\sigma_{0}\times\mathfrak{s}_{0})$,
$Cf_{\sigma_{0}}(\mathfrak{s}_{0})=f_{\sigma_{0}}(\mathfrak{s}_{0})C$.
That is, $f_{\sigma_{0}}(\mathfrak{s}_{0})$ lies in the double commutant
of $\sigma_{0}\times\mathfrak{s}_{0}(H^{\infty}(E))$. However, $\sigma_{0}\times\mathfrak{s}_{0}$
is the restriction of $\pi^{\mathcal{F}(E)}$ to $H^{\infty}(E)$,
where $\sigma_{0}=\pi^{\mathcal{F}(E)}\circ\varphi_{\infty}$, and
$\pi^{\mathcal{F}(E)}(H^{\infty}(E))$ is its own double commutant
by \cite[Corollary 3.10]{Muhly2004a}. Thus there is an $F\in H^{\infty}(E)$
so that 
\begin{equation}
f_{\sigma_{0}}(\mathfrak{s}_{0})=\widehat{F}_{\sigma_{0}}(\mathfrak{s}_{0}).\label{eq:f-F_equality}
\end{equation}
If $\sigma$ is an arbitrary representation in $\Sigma$ and if $\mathfrak{z}\in\mathcal{AC}(\sigma)$,
then for every $C\in\mathcal{I}_{NRep(M)}(\sigma_{0}\times\mathfrak{s}_{0},\sigma\times\mathfrak{z})$,
$f_{\sigma}(\mathfrak{z})C=Cf_{\sigma_{0}}(\mathfrak{s}_{0})$ because
$\Sigma$ is full and $\{f_{\sigma}\}_{\sigma\in\Sigma}$ preserves
intertwiners by hypothesis. However, by \eqref{eq:f-F_equality},
$Cf_{\sigma_{0}}(\mathfrak{s}_{0})=C\widehat{F}_{\sigma_{0}}(\mathfrak{s}_{0})$.
Hence we have 
\[
f_{\sigma}(\mathfrak{z})C=C\widehat{F}_{\sigma_{0}}(\mathfrak{s}_{0})=\widehat{F}_{\sigma}(\mathfrak{z})C,
\]
where the second equality is justified by Remark~\ref{Xeta}. Since
the ranges of the $C$ in $\mathcal{I}_{NRep(M)}(\sigma_{0}\times\mathfrak{s}_{0},\sigma\times\mathfrak{z})$
cover all of $H_{\sigma}$, by Theorem \ref{abscont}, we conclude
that $f_{\sigma}(\mathfrak{z})=\widehat{F}_{\sigma}(\mathfrak{z})$. 
\end{proof}
We digress momentarily to provide an example promised in the introduction
that shows that Theorems \ref{thm:_Relative_double_commutant} and
\ref{thm:Double_Commutant} can fail if the hypothesis that the matricial
function $f=\{f_{\sigma}\}_{\sigma\in\Sigma}$ in question is defined
only on $\{\mathbb{D}(0,1,\sigma)\}_{\sigma\in\Sigma}$ and not on
the collection of larger sets, $\{\mathcal{AC}(\sigma)\}_{\sigma\in\Sigma}$. 
\begin{example}
\label{eg:_Failure_of_resolvents} We let $M=\mathbb{C}=E$. Then
$NRep(M)$ may be identified with $\{n\sigma_{1}\}_{0<n\leq\infty}$,
where, recall, $\sigma_{1}$ is the one-dimensional representation
of $\mathbb{C}$ on $\mathbb{C}$. The disc $\mathbb{D}(0,1,\sigma)$
is just the collection of all operators of norm less than $1$ in
$B(H_{\sigma})$. We set $f_{\sigma}(\mathfrak{z})=(I_{H_{\sigma}}-\mathfrak{z})^{-1}$,
i.e., $f_{\sigma}$ is just the resolvent operator restricted to $\mathbb{D}(0,1,\sigma)$.
Then it is immediate that $f=\{f_{\sigma}\}_{\sigma\in NRep(M)}$
is a matricial family of functions defined on $\{\mathbb{D}(0,1,\sigma)\}_{\sigma\in NRep(M)}$.
However, since none of the $f_{\sigma}$s is bounded, $f$ is not
a Berezin transform of an element in $H^{\infty}(E)\simeq H^{\infty}(\mathbb{T})$.
\end{example}
This example should be compared with Theorem \ref{boundedhol}.

We would like to use Theorem \ref{thm:_Relative_double_commutant}
to obtain information about which matricial families of functions
come from tensorial power series that have a given radius of convergence.
First, however, we take up an issue that was left hanging after Proposition
\ref{prop:Power_series_give_holomorphic_fcns}. 
\begin{prop}
\label{RadiusConv} Suppose $\theta\sim\sum\theta_{k}\in\mathcal{T}_{+}((E))$
has a finite radius of convergence $R=R(\theta)$. If $R'>R$, then
there a $\sigma\in NRep(M)$ and $\mathfrak{z}\in E^{\sigma*}$, with
$\norm{\mathfrak{z}}=R'$, such that the tensorial power series $\sum_{k}\mathcal{Z}_{k}(\cdot)L_{\theta_{k}}$
diverges at $\mathfrak{z}$; indeed, $\sum_{k}\norm{\mathcal{Z}_{k}(\mathfrak{z})L_{\theta_{k}}}=\infty$. \end{prop}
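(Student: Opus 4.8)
The plan is to produce the witnesses $\sigma$ and $\mathfrak{z}$ explicitly, using the special generator from Section~\ref{sec:A_special_generator}. First note that $E\neq 0$: otherwise $E^{\otimes k}=0$, hence $\theta_k=0$ for every $k\geq 1$, forcing $R(\theta)=+\infty$, contrary to hypothesis. Now fix $R'>R$, let $\sigma_0=\pi^{\mathcal{F}(E)}\circ\varphi_\infty$ be a special generator for $NRep(M)$ acting on $H_{\sigma_0}=\mathcal{F}(E)\otimes_\pi H_\pi$, let $\mathfrak{s}_0\in\mathcal{AC}(\sigma_0)$ be the associated shift, $\mathfrak{s}_0(\xi\otimes h)=T_\xi h$, and set
\[
\sigma:=\sigma_0,\qquad \mathfrak{z}:=R'\,\mathfrak{s}_0\in E^{\sigma_0*}.
\]

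The two facts I would establish about this choice are: (i) $\norm{\mathfrak{s}_0}=1$, and (ii) $\norm{\mathcal{Z}_k(\mathfrak{s}_0)L_{\theta_k}}=\norm{\theta_k}$ for every $k$. For (ii)---which contains (i) as the case $k=1$---the point is that the generalized power $\mathcal{Z}_k(\mathfrak{s}_0)=\mathfrak{s}_0^{(k)}$ is precisely the canonical identification of $E^{\otimes k}\otimes_{\sigma_0}H_{\sigma_0}$ with the ``tail'' subspace $\bigoplus_{n\geq k}E^{\otimes n}\otimes_\pi H_\pi$ of $H_{\sigma_0}$, hence is an isometry; since $\sigma_0$ is faithful, $\norm{L_{\theta_k}}=\norm{\theta_k}$, and composing with an isometry changes nothing. (Equivalently, $\mathcal{Z}_k(\mathfrak{s}_0)L_{\theta_k}=\sigma_0\times\mathfrak{s}_0(T_{\theta_k})=\pi^{\mathcal{F}(E)}(T_{\theta_k})$, which has norm $\norm{T_{\theta_k}}=\norm{\theta_k}$ because $\pi$ is faithful, so $\pi^{\mathcal{F}(E)}$ is isometric, and because $\varphi$ is faithful and unital.) Granting this, $\norm{\mathfrak{z}}=R'$, and since $\mathcal{Z}_k$ is homogeneous of degree $k$ (it is a product of $k$ factors, each linear in its argument), $\mathcal{Z}_k(\mathfrak{z})=(R')^k\mathcal{Z}_k(\mathfrak{s}_0)$, so $\norm{\mathcal{Z}_k(\mathfrak{z})L_{\theta_k}}=(R')^k\norm{\theta_k}$.

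The conclusion then follows from a Cauchy--Hadamard computation: $\limsup_k\bigl((R')^k\norm{\theta_k}\bigr)^{1/k}=R'\cdot\limsup_k\norm{\theta_k}^{1/k}=R'/R>1$, so along a subsequence $(R')^{k_j}\norm{\theta_{k_j}}\to\infty$; in particular the general term of $\sum_k\norm{\mathcal{Z}_k(\mathfrak{z})L_{\theta_k}}$ does not tend to $0$, whence the series diverges and, a fortiori, so does the tensorial power series $\sum_k\mathcal{Z}_k(\cdot)L_{\theta_k}$ at $\mathfrak{z}$. The only step requiring any care is the norm identity (ii); for the mere divergence any lower bound of the form $\norm{\mathcal{Z}_k(\mathfrak{s}_0)L_{\theta_k}}\geq c\,\norm{\theta_k}$ with $c>0$ would suffice, but the exact value is needed to guarantee $\norm{\mathfrak{z}}=R'$ exactly, and it drops out of the Fock-space structure of the special generator as indicated.
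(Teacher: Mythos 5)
Your proposal is correct and follows essentially the same route as the paper: take $\sigma=\sigma_0$ and $\mathfrak{z}=R'\mathfrak{s}_0$, use $\mathcal{Z}_k(\mathfrak{s}_0)L_{\theta_k}=\pi^{\mathcal{F}(E)}(T_{\theta_k})$ to get $\norm{\mathcal{Z}_k(\mathfrak{z})L_{\theta_k}}=(R')^k\norm{\theta_k}$, and conclude divergence from $\limsup_k\norm{\theta_k}^{1/k}=1/R<1/R'$ (the paper phrases this via an intermediate radius $\rho$ rather than Cauchy--Hadamard, which is the same estimate).
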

\begin{proof}
Choose $\rho$, with $R<\rho<R'$, let $\sigma$ be $\sigma_{0}$
and set $\mathfrak{z}=R'\mathfrak{s}_{0}$. Since $\frac{1}{\rho}<\frac{1}{R}$,
there are infinitely many $k$s for which $\norm{\theta_{k}}^{1/k}>\frac{1}{\rho}$.
On the other hand, $\mathcal{Z}_{k}(\mathfrak{z})L_{\theta_{k}}=\mathcal{Z}_{k}(R'\mathfrak{s}_{0})L_{\theta_{k}}=R'^{k}(T_{\theta_{k}}\otimes_{\pi}I_{K_{0}})$.
Consequently, $\norm{\mathcal{Z}_{k}(\mathfrak{z})L_{\theta_{k}}}=R'^{k}\norm{\theta_{k}}$.
So for each $k$ satisfying $\norm{\theta_{k}}^{1/k}>\frac{1}{\rho}$,
we have $\norm{\mathcal{Z}_{k}(\mathfrak{z})L_{\theta_{k}}}>(\frac{R'}{\rho})^{k}>1$.
Since there are infinitely many such $k$s, the series $\sum_{k}\norm{\mathcal{Z}_{k}(\mathfrak{z})L_{\theta_{k}}}$
diverges to $\infty$. \end{proof}
\begin{thm}
\label{RD} Suppose $\Sigma$ is a full additive subcategory of $NRep(M)$
containing a special generator for $NRep(M)$. If $f=\{f_{\sigma}\}_{\sigma\in\Sigma}$
is a family of functions, with $f_{\sigma}$ mapping $\mathbb{D}(0,R,\sigma)$
to $B(H_{\sigma})$, then there is a formal tensor series $\theta$
with $R(\theta)\geq R$ such that $f$ is the family of tensorial
power series determined by $\theta$, $\{\sum_{k\geq0}\mathcal{Z}_{k}(\mathfrak{z})L_{\theta_{k}}\mid\mathfrak{z}\in\mathbb{D}(0,R,\sigma)\}_{\sigma\in\Sigma},$
if and only if $f$ is an $E,\Sigma$-matricial family of functions.\end{thm}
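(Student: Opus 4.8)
The ``if'' direction is immediate from what is already in hand: the constant field $\zeta_{\sigma}\equiv 0$ is (trivially) a central additive field on $\Sigma$, and the matricial disc it determines is exactly $\{\mathbb{D}(0,R,\sigma)\}_{\sigma\in\Sigma}$, so Proposition \ref{prop:Matricial_holo_fcn}, applied with this $\zeta$ and the given $\theta$ (which has $R(\theta)\ge R$), says precisely that the associated family of tensorial power series is a matricial family of holomorphic functions. For the converse, let $\sigma_{0}\in\Sigma$ be a special generator, with distinguished point $\mathfrak{s}_{0}\in\mathcal{AC}(\sigma_{0})$, so that $\sigma_{0}\times\mathfrak{s}_{0}$ is the restriction to $H^{\infty}(E)$ of $\pi^{\mathcal{F}(E)}$. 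The plan is to evaluate $f$ at the rescaled points $\rho\mathfrak{s}_{0}$, $0<\rho<R$ (each of which lies in $\mathbb{D}(0,R,\sigma_{0})$, even when $\rho\ge 1$), read off a single formal tensor series $\theta$ from them, and then propagate the resulting formula to all of $\Sigma$ by means of Theorem \ref{abscont}.

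The first key point is that multiplying the ``$\mathfrak{z}$''-variable by a nonzero scalar leaves the intertwining relation of Remark \ref{Xeta} unchanged, so $\mathcal{I}(\sigma_{0}\times(\rho\mathfrak{s}_{0}),\sigma_{0}\times(\rho\mathfrak{s}_{0}))=\mathcal{I}(\sigma_{0}\times\mathfrak{s}_{0},\sigma_{0}\times\mathfrak{s}_{0})$, which by Remark \ref{Xeta} and \cite[Corollary 3.10]{Muhly2004a} equals $\pi^{\mathcal{F}(E)}(H^{\infty}(E))'$; since $\Sigma$ is full and $f$ respects intertwiners, $f_{\sigma_{0}}(\rho\mathfrak{s}_{0})$ is forced into the double commutant $\pi^{\mathcal{F}(E)}(H^{\infty}(E))''=\pi^{\mathcal{F}(E)}(H^{\infty}(E))$. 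Hence $f_{\sigma_{0}}(\rho\mathfrak{s}_{0})=(\sigma_{0}\times\mathfrak{s}_{0})(F_{\rho})$ for some $F_{\rho}\in H^{\infty}(E)$, and Proposition \ref{prop:H_infty_values} rewrites this as $f_{\sigma_{0}}(\rho\mathfrak{s}_{0})=\sum_{j}T_{\theta^{(\rho)}_{j}}\otimes_{\pi}I$, where $T_{\theta^{(\rho)}_{j}}=\Phi_{j}(F_{\rho})$. To see that $\theta^{(\rho)}_{j}$ depends on $\rho$ only through the factor $\rho^{j}$, I will use, for $0<\rho'\le\rho<R$, the operator $C_{\rho,\rho'}:=\mathrm{diag}((\rho'/\rho)^{n})_{n\ge 0}\otimes I_{H_{\pi}}$; it lies in $\sigma_{0}(M)'$, satisfies $C_{\rho,\rho'}(\rho\mathfrak{s}_{0})=(\rho'\mathfrak{s}_{0})(I_{E}\otimes C_{\rho,\rho'})$, and obeys $C_{\rho,\rho'}(T_{\theta_{j}}\otimes I)=(\rho'/\rho)^{j}(T_{\theta_{j}}\otimes I)C_{\rho,\rho'}$ on degree-$j$ creation operators. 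Evaluating the intertwining identity $C_{\rho,\rho'}f_{\sigma_{0}}(\rho\mathfrak{s}_{0})=f_{\sigma_{0}}(\rho'\mathfrak{s}_{0})C_{\rho,\rho'}$ on vectors in the degree-zero summand of $\mathcal{F}(E)\otimes_{\pi}H_{\pi}$ and matching the components in each $E^{\otimes j}\otimes_{\pi}H_{\pi}$ yields $(\rho'/\rho)^{j}\theta^{(\rho)}_{j}=\theta^{(\rho')}_{j}$. Thus $\theta_{j}:=\rho^{-j}\theta^{(\rho)}_{j}$ is independent of $\rho\in(0,R)$, and since $\|\theta_{j}\|\le\rho^{-j}\|F_{\rho}\|$ for every such $\rho$ we get $\limsup_{j}\|\theta_{j}\|^{1/j}\le R^{-1}$, i.e.\ $R(\theta)\ge R$ (this last point may also be read off from Proposition \ref{RadiusConv} applied at $\sigma_{0}$).

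It remains to propagate the formula to all of $\Sigma$. Set $\tilde f_{\sigma}(\mathfrak{z}):=\sum_{j}\mathcal{Z}_{j}(\mathfrak{z})L_{\theta_{j}}$; by Proposition \ref{prop:Matricial_holo_fcn} (with $\zeta\equiv 0$) this is a matricial family on $\{\mathbb{D}(0,R,\sigma)\}_{\sigma\in\Sigma}$, hence so is $g:=f-\tilde f$, and by construction $g_{\sigma_{0}}(\rho\mathfrak{s}_{0})=0$ for all $\rho\in(0,R)$. Given $\sigma\in\Sigma$ and $\mathfrak{z}\in\mathbb{D}(0,R,\sigma)$, choose $\rho$ with $\|\mathfrak{z}\|<\rho<R$; then $\mathfrak{z}/\rho\in\mathbb{D}(0,1,\sigma)\subseteq\mathcal{AC}(\sigma)$, and by the same scalar-invariance $\mathcal{I}(\sigma_{0}\times(\rho\mathfrak{s}_{0}),\sigma\times\mathfrak{z})=\mathcal{I}(\sigma_{0}\times\mathfrak{s}_{0},\sigma\times(\mathfrak{z}/\rho))$. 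For every $D$ in this intertwiner space (a $\Sigma$-morphism, since $\Sigma$ is full), the fact that $g$ respects intertwiners gives $g_{\sigma}(\mathfrak{z})D=D\,g_{\sigma_{0}}(\rho\mathfrak{s}_{0})=0$; since by Theorem \ref{abscont} the ranges of such $D$ exhaust $H_{\sigma}$, we conclude $g_{\sigma}(\mathfrak{z})=0$, i.e.\ $f_{\sigma}(\mathfrak{z})=\sum_{j}\mathcal{Z}_{j}(\mathfrak{z})L_{\theta_{j}}$, completing the proof.

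The step I expect to be the main obstacle is precisely the possibility $R>1$: there $\rho\mathfrak{s}_{0}$ with $1<\rho<R$ is not a contraction and $\sigma_{0}\times(\rho\mathfrak{s}_{0})$ is not a representation of $H^{\infty}(E)$, so neither \cite[Corollary 3.10]{Muhly2004a} nor Theorem \ref{abscont} applies to it directly. The observation that rescues the argument is the elementary identity $\mathcal{I}(\sigma\times\mathfrak{z},\tau\times\mathfrak{w})=\mathcal{I}(\sigma\times(\lambda\mathfrak{z}),\tau\times(\lambda\mathfrak{w}))$ for nonzero scalars $\lambda$, which transports every intertwiner computation back into the absolutely continuous regime; the other delicate ingredient is the extraction of the homogeneity $\theta^{(\rho)}_{j}\propto\rho^{j}$ via the non-unitary diagonal intertwiners $C_{\rho,\rho'}$, which is what binds the various $F_{\rho}$ together into one formal tensor series $\theta$.
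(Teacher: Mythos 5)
Your proof is correct, and its skeleton is the one the paper uses: exploit the special generator, note that scaling the point by a nonzero scalar does not change the relevant intertwiner spaces, apply the double commutant theorem \cite[Corollary 3.10]{Muhly2004a} at $\rho\mathfrak{s}_{0}$ to produce $F_{\rho}\in H^{\infty}(E)$, and propagate to all of $\Sigma$ via Theorem \ref{abscont}. Where you genuinely diverge is in how the various $F_{\rho}$ are bound into a single series and in the order of the steps. The paper first propagates, obtaining $f_{\sigma}(\mathfrak{z})=\widehat{F_{r}}_{\sigma}(\tfrac{1}{r}\mathfrak{z})$ for all $\sigma$ and all $\Vert\mathfrak{z}\Vert<r$, and only then compares two radii and applies the Fourier operators $\Phi_{k}$ (via the identity $\widehat{\Phi_{k}(X)}_{\sigma_{0}}(t\mathfrak{s}_{0})=t^{k}\Phi_{k}(X)\otimes I_{H_{\pi}}$) to see that $r\mapsto r^{-k}\Phi_{k}(F_{r})$ is constant. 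You instead extract the homogeneity $\Phi_{j}(F_{\rho})=\rho^{j}T_{\theta_{j}}$ directly at $\sigma_{0}$, using the explicit graded contractions $C_{\rho,\rho'}=\sum_{n}(\rho'/\rho)^{n}P_{n}\otimes I_{H_{\pi}}$ as intertwiners between $\sigma_{0}\times\rho\mathfrak{s}_{0}$ and $\sigma_{0}\times\rho'\mathfrak{s}_{0}$ (these do lie in $\sigma_{0}(M)'$ and satisfy the required relation, and they are bounded because $\rho'\le\rho$), and then you propagate only once, applied to the difference $g=f-\tilde f$; your direct estimate $\Vert\theta_{j}\Vert\le\rho^{-j}\Vert F_{\rho}\Vert$ also replaces the paper's appeal to Proposition \ref{RadiusConv}. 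Both mechanisms do the same work; yours is a bit more self-contained at $\sigma_{0}$, the paper's avoids introducing the diagonal operators. Two small points: the claim that $g_{\sigma_{0}}(\rho\mathfrak{s}_{0})=0$ ``by construction'' deserves the one-line remark that $F_{\rho}$ and the norm-convergent sum $\sum_{j}\rho^{j}T_{\theta_{j}}$ have the same Fourier coefficients and hence coincide; and the paper's proof additionally establishes that $\theta$ is \emph{uniquely} determined by $f$ (indeed by $f_{\sigma_{0}}$), a fact the paper invokes later in Theorem \ref{UEtoUF} -- your argument proves the existence statement as quoted but omits this uniqueness.
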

\begin{proof}
Arguments we have used before show that if the $f_{\sigma}$s admit
tensorial power series expansions of the indicated kind, then the
family preserves intertwiners. Consequently, we shall concentrate
on the converse assertion. So assume that $\{f_{\sigma}\}_{\sigma\in\Sigma}$
preserves intertwiners. Since $\Sigma$ is assumed to be full, we
may and will drop the subscripts $\Sigma$ and $NRep(M)$ on all intertwining
spaces. Also, we will let $\sigma_{0}=\pi^{\mathcal{F}(E)}\circ\varphi_{\infty}$,
for a suitable infinite generator $\pi$. $ $ The key to our analysis
is to focus on $f_{\sigma_{0}}$ in order to bring properties of $H^{\infty}(E)$
into play and then to use the intertwining property of the family
$\{f_{\sigma}\}_{\sigma\in\Sigma}$ to propagate to the other discs
$\mathbb{D}(0,R,\sigma)$. Fix $0<r<R$ and consider $f_{\sigma_{0}}(r\mathfrak{s}_{0})$.
Recall that $\sigma_{0}\times\mathfrak{s}_{0}(H^{\infty}(E))=\pi^{\mathcal{F}(E)}(H^{\infty}(E))$.
Consequently, every $C$ in the commutant of $\pi^{\cF(E)}(H^{\infty}(E))$
lies in $\mathcal{I}(\sigma_{0}\times\mathfrak{s}_{0},\sigma_{0}\times\mathfrak{s}_{0})=\mathcal{I}(\sigma_{0}\times r\mathfrak{s}_{0},\sigma_{0}\times r\mathfrak{s}_{0})$
and, thus commutes with $f_{\sigma_{0}}(r\mathfrak{s}_{0})$. Since
$\pi^{\cF(E)}(H^{\infty}(E))$ equals its own double commutant, there
is an element $F_{r}\in H^{\infty}(E)$ such that $f_{\sigma_{0}}(r\mathfrak{s}_{0})=\pi^{\cF(E)}(F_{r})=\widehat{F_{r}}_{\sigma_{0}}(\mathfrak{s}_{0})$.
Now take a $\mathfrak{z}\in\mathbb{D}(0,r,\sigma)$. Then $\norm{\frac{1}{r}\mathfrak{z}}<1$
and so $\sigma\times\frac{1}{r}\mathfrak{z}$ is absolutely continuous.
We conclude, by Theorem \ref{abscont}, that 
\begin{equation}
\bigvee\{Ran(C)\mid C\in\mathcal{I}(\sigma_{0}\times\mathfrak{s}_{0},\sigma\times\frac{1}{r}\mathfrak{z})\}=H_{\sigma}.\label{eq:span_Ran(C)}
\end{equation}
Also, for every $C\in\mathcal{I}(\sigma_{0}\times\mathfrak{s}_{0},\sigma\times\frac{1}{r}\mathfrak{z})=\mathcal{I}(\sigma_{0}\times r\mathfrak{s}_{0},\sigma\times\mathfrak{z})$,
$Cf_{\sigma_{0}}(r\mathfrak{s}_{0})=f_{\sigma}(\mathfrak{z})C$ by
hypothesis. Since $f_{\sigma_{0}}(r\mathfrak{s}_{0})=\widehat{F_{r}}_{\sigma_{0}}(\mathfrak{s}_{0})$,
we see that $f_{\sigma}(\mathfrak{z})C=C\widehat{F_{r}}_{\sigma_{0}}(\mathfrak{s}_{0})$.
But by Remark~\ref{Xeta}, $C\widehat{F_{r}}_{\sigma_{0}}(\mathfrak{s}_{0})=\widehat{F_{r}}_{\sigma}(\frac{1}{r}\mathfrak{z})C$,
so we conclude from \eqref{eq:span_Ran(C)} that ,
\begin{equation}
f_{\sigma}(\mathfrak{z})=\widehat{F_{r}}_{\sigma}(\frac{1}{r}\mathfrak{z}),\qquad\norm{\mathfrak{z}}<r<R.\label{fsigma}
\end{equation}
We need to remove the dependence of $F_{r}$ on $r$. So if $0<r<r_{1}<r_{2}<R$
and if $\norm{\mathfrak{z}}\leq r$, we obtain the equation $\widehat{F_{r_{1}}}_{\sigma}(\frac{1}{r_{1}}\mathfrak{z})=\widehat{F_{r_{2}}}_{\sigma}(\frac{1}{r_{2}}\mathfrak{z})$.
In particular, 
\begin{equation}
\widehat{F_{r_{1}}}_{\sigma_{0}}(\frac{r}{r_{1}}\mathfrak{s}_{0})=\widehat{F_{r_{2}}}_{\sigma_{0}}(\frac{r}{r_{2}}\mathfrak{s}_{0}).\label{eq1}
\end{equation}
We now would like to apply the ``Fourier coefficient maps'' $\Phi_{k}$
to equation (\ref{eq1}). To give this its proper meaning, note that,
whenever $X\in H^{\infty}(E)$ and $0<t<1$, 
\begin{equation}
\widehat{\Phi_{k}(X)}_{\sigma_{0}}(t\mathfrak{s}_{0})=t^{k}\Phi_{k}(X)\otimes I_{H_{\pi}}.\label{eq:Phi_k_ts_0}
\end{equation}
This is easy to verify by first taking $X=T_{\xi}$ for $\xi\in E^{\otimes m}$,
and then using linearity and ultra-weak continuity. Thus, applying
$\Phi_{k}$ to equation (\ref{eq1}), we obtain 
\begin{equation}
\frac{r^{k}}{r_{1}^{k}}\Phi_{k}(F_{r_{1}})\otimes I_{H_{\pi}}=\frac{r^{k}}{r_{2}^{k}}\Phi_{k}(F_{r_{2}})\otimes I_{H_{\pi}}.\label{eq2}
\end{equation}
 for all $k\geq0$, which implies that $r\mapsto\frac{1}{r^{k}}\Phi_{k}(F_{r})$
is constant in $r,$ $0<r<R$, for every $k$. Consequently, since
the image of $\Phi_{k}$ is $\{T_{\theta_{k}}\mid\theta_{k}\in E^{\otimes k}\}$,
there is a $\theta_{k}\in E^{\otimes k}$, independent of $r$, so
that 
\[
T_{\theta_{k}}\otimes I_{H_{\pi}}=\frac{1}{r^{k}}\Phi_{k}(F_{r})\otimes I_{H_{\pi}},\qquad0<r<R.
\]
Canceling ``$\otimes I_{H_{\pi}}$'', as we may, we conclude that
\begin{equation}
\Phi_{k}(F_{r})=r^{k}T_{\theta_{k}}=T_{r^{k}\theta_{k}},\qquad0<r<R.\label{eq:thetak}
\end{equation}
Now fix $0<r<R$ and $\mathfrak{z}\in E^{\sigma*}$ with $\norm{\mathfrak{z}}<r$.
For $0\leq k$, let $\xi_{k}=r^{k}\theta_{k}\in E^{\otimes k}$. Then
with $F_{r}$ in place of $F$ and $\frac{1}{r}\mathfrak{z}$ in place
of $\mathfrak{z}$, we find that 
\[
\widehat{F_{r}}_{\sigma}(\frac{1}{r}\mathfrak{z})=\sum_{k\geq0}\frac{1}{r^{k}}\mathcal{Z}_{k}(\mathfrak{z})L_{\xi_{k}}
\]
and $\sum\frac{1}{r^{k}}\norm{\mathcal{Z}_{k}(\mathfrak{z})L_{\xi_{k}}}<\infty$.
By (\ref{fsigma}), we conclude that $f_{\sigma}(\mathfrak{z})=\widehat{F_{r}}_{\sigma}(\frac{1}{r}\mathfrak{z})=\sum_{k\geq0}\frac{1}{r^{k}}\mathcal{Z}_{k}(\mathfrak{z})L_{\xi_{k}}=\sum_{k\geq0}\mathcal{Z}_{k}(\mathfrak{z})L_{\theta_{k}}$.
Thus $\sum\norm{\mathcal{Z}_{k}(\mathfrak{z})L_{\theta_{k}}}<\infty$
and $f_{\sigma}(\mathfrak{z})=\sum_{k\geq0}\mathcal{Z}_{k}(\mathfrak{z})L_{\theta_{k}}$.
By Theorem~\ref{RadiusConv}, $R(\theta)\geq R$.

It remains to show that the series $\theta\sim\sum\theta_{k}$ is
uniquely determined by $\{f_{\sigma}\}_{\sigma\in\Sigma}$. In fact,
it is uniquely determined by $f_{\sigma_{0}}$. Suppose $\theta'\sim\sum\theta_{k}'$
is another series with $R(\theta')\geq R$ and suppose 
\[
\sum_{k\geq0}\mathcal{Z}_{k}(\mathfrak{z})L_{\theta_{k}}=f_{\sigma_{0}}(\mathfrak{z})=\sum_{k\geq0}\mathcal{Z}_{k}(\mathfrak{z})L_{\theta'_{k}}
\]
for all $\mathfrak{z}\in\mathbb{D}(0,R,\sigma_{0})$. However, as
we have seen from our analysis that yielded \eqref{eq:Phi_k_ts_0},
$\mathcal{Z}_{k}(t\mathfrak{s}_{0})L_{\theta_{k}}=t^{k}T_{\theta_{k}}\otimes I_{H_{\pi}}$.
Thus we conclude that for $0\leq t<\min\{1,R(\theta)\}$, 
\begin{gather*}
\sum_{k\geq0}t^{k}T_{\theta_{k}}\otimes I_{H_{\pi}}=\sum_{k\geq0}\mathcal{Z}_{k}(t\mathfrak{s}_{0})L_{\theta_{k}}=\sum_{k\geq0}\mathcal{Z}_{k}(t\mathfrak{s}_{0})L_{\theta'_{k}}=\sum_{k\geq0}t^{k}T_{\theta'_{k}}\otimes I_{H_{\pi}},
\end{gather*}
where all the series converge in the operator norm on $B(\mathcal{F}(E)\otimes_{\pi}H_{\pi})$.
Since the map $X\mapsto\pi^{\mathcal{F}(E)}(X)$, $X\in\mathcal{L}(\mathcal{F}(E))$,
is a faithful normal representation of $\mathcal{L}(\mathcal{F}(E))$,
\[
\sum_{k\geq0}t^{k}T_{\theta_{k}}=\sum_{k\geq0}t^{k}T_{\theta'_{k}}
\]
as norm-convergent series in $H^{\infty}(E)$. So, if we apply $\Phi_{k}$
to both sides, we conclude that $t^{k}T_{\theta_{k}}=t^{k}T_{\theta'_{k}}$
for every $k$. Hence $\theta_{k}=\theta'_{k}$ for every $k$ and
$\theta=\theta'$. 
\end{proof}
As we noted in Remark \ref{Rem:Arveson's_discovery}, it can happen
that a series $\sum_{k\geq0}\mathcal{Z}_{k}(\mathfrak{z})L_{\theta_{k}}$
represents a bounded holomorphic function $f_{\sigma}$ on $\mathbb{D}(0,1,\sigma)$
for a particular $\sigma$ and $R(\theta)\geq1$, $\theta\sim\sum\theta_{k}$,
but $f_{\sigma}$ is not a $\sigma$-Berezin transform. However, the
following proposition shows that if $f_{\sigma}$ is a member of a
matricial family of functions $f=\{f_{\sigma}\}_{\sigma\in\Sigma}$
that are uniformly bounded in $\sigma$, then $f$ is a Berezin transform. 
\begin{thm}
\label{boundedhol} Suppose $\Sigma$ is an additive full subcategory
of $NRep(M)$ that contains a special generator for $NRep(M)$ and
suppose $f=\{f_{\sigma}\}_{\sigma\in\Sigma}$ is an $E,\Sigma$-matricial
family of functions, with $f_{\sigma}$ defined on $\mathbb{D}(0,1,\sigma)$
and mapping to $B(H_{\sigma})$. Then $f$ is a Berezin transform
restricted to $\{\mathbb{D}(0,1,\sigma)\}_{\sigma\in\Sigma}$ if and
only if 
\begin{equation}
\sup\{\norm{f_{\sigma}(\mathfrak{z})}\mid\sigma\in\Sigma,\;\mathfrak{z}\in\mathbb{D}(0,1,\sigma)\}<\infty.\label{eqboundedhol}
\end{equation}
 \end{thm}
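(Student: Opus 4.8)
The plan is to deduce the theorem from Theorem~\ref{RD} by using the hypothesis \eqref{eqboundedhol} to push the tensorial power series produced there into $H^{\infty}(E)$. The ``only if'' direction is immediate: if $f_{\sigma}=\widehat{F}_{\sigma}$ on $\mathbb{D}(0,1,\sigma)$ for some $F\in H^{\infty}(E)$ and all $\sigma\in\Sigma$, then Proposition~\ref{prop:H_infty_values} gives $\norm{f_{\sigma}(\mathfrak{z})}=\norm{\widehat{F}_{\sigma}(\mathfrak{z})}\leq\norm{F}$ for every $\mathfrak{z}\in\mathbb{D}(0,1,\sigma)$, so \eqref{eqboundedhol} holds with bound $\norm{F}$.

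For the converse, assume $C:=\sup\{\norm{f_{\sigma}(\mathfrak{z})}\mid\sigma\in\Sigma,\ \mathfrak{z}\in\mathbb{D}(0,1,\sigma)\}<\infty$. Since $f$ is an $E,\Sigma$-matricial family on $\{\mathbb{D}(0,1,\sigma)\}_{\sigma\in\Sigma}$ and $\Sigma$ meets the hypotheses of Theorem~\ref{RD} with $R=1$, there is a formal tensor series $\theta\sim\sum_{k\geq0}\theta_{k}$ with $R(\theta)\geq1$ such that $f_{\sigma}(\mathfrak{z})=\sum_{k\geq0}\mathcal{Z}_{k}(\mathfrak{z})L_{\theta_{k}}$ for all $\mathfrak{z}\in\mathbb{D}(0,1,\sigma)$ and all $\sigma\in\Sigma$. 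Fix the special generator $\sigma_{0}=\pi^{\mathcal{F}(E)}\circ\varphi_{\infty}$ lying in $\Sigma$, with its associated $\mathfrak{s}_{0}$. For $0<t<1$, the bound $R(\theta)\geq1$ makes $\sum_{k}t^{k}T_{\theta_{k}}$ converge in the norm of $\mathcal{T}_{+}(E)$ to an element $F_{t}\in\mathcal{T}_{+}(E)\subseteq H^{\infty}(E)$; using the identity $\mathcal{Z}_{k}(t\mathfrak{s}_{0})L_{\theta_{k}}=t^{k}\bigl(T_{\theta_{k}}\otimes I_{H_{\pi}}\bigr)$ recorded in the proofs of Proposition~\ref{RadiusConv} and Theorem~\ref{RD}, together with the multiplicativity and norm-continuity of the faithful normal representation $X\mapsto\pi^{\mathcal{F}(E)}(X)$ of $\mathcal{L}(\mathcal{F}(E))$, we get $\pi^{\mathcal{F}(E)}(F_{t})=\sum_{k}t^{k}\bigl(T_{\theta_{k}}\otimes I_{H_{\pi}}\bigr)=f_{\sigma_{0}}(t\mathfrak{s}_{0})$. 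Because $\pi^{\mathcal{F}(E)}$ is isometric, $\norm{F_{t}}=\norm{f_{\sigma_{0}}(t\mathfrak{s}_{0})}\leq C$ for all $t\in(0,1)$.

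Next I would pass to an ultraweak limit. The ball of radius $C$ in the $W^{*}$-algebra $\mathcal{L}(\mathcal{F}(E))$ is ultraweakly compact, so the net $\{F_{t}\}_{t\to1^{-}}$ has an ultraweak cluster point $F$; since $H^{\infty}(E)$ is ultraweakly closed, $F\in H^{\infty}(E)$ with $\norm{F}\leq C$. Applying the ultraweakly continuous Fourier coefficient operators $\Phi_{k}$ and using $\Phi_{k}(F_{t})=t^{k}T_{\theta_{k}}\to T_{\theta_{k}}$ in norm, we obtain $\Phi_{k}(F)=T_{\theta_{k}}$ for every $k$; thus $\theta$ is the formal tensor series of $F$ (and, by uniqueness of Fourier coefficients, $F$ is the unique such cluster point). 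Finally, Proposition~\ref{prop:H_infty_values} gives $\widehat{F}_{\sigma}(\mathfrak{z})=\sum_{k\geq0}\mathcal{Z}_{k}(\mathfrak{z})L_{\theta_{k}}=f_{\sigma}(\mathfrak{z})$ for every $\sigma\in\Sigma$ and $\mathfrak{z}\in\mathbb{D}(0,1,\sigma)$, so $f$ is the Berezin transform of $F$ restricted to $\{\mathbb{D}(0,1,\sigma)\}_{\sigma\in\Sigma}$.

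The one step that carries the real content — and where the special-generator hypothesis is essential — is the transfer of the uniform bound into $H^{\infty}(E)$: one must verify that evaluating the tensorial power series at $t\mathfrak{s}_{0}$ reproduces exactly $\pi^{\mathcal{F}(E)}(F_{t})$, so that \eqref{eqboundedhol} forces $\norm{F_{t}}\leq C$ uniformly in $t$ and hence the $F_{t}$ cluster weak-$*$ inside $H^{\infty}(E)$. I do not anticipate a serious obstacle beyond this bookkeeping, since Theorem~\ref{RD} already does the hard work of extracting the tensor series $\theta$; the new ingredient here is merely that uniform boundedness over \emph{all} of $\Sigma$ (tested on $\sigma_{0}$) is precisely what promotes ``tensorial power series'' to ``Berezin transform.''
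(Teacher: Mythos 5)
Your argument is correct and follows essentially the same route as the paper: invoke Theorem~\ref{RD} to get the tensorial series $\theta$ with $R(\theta)\geq1$, use the special generator to identify $f_{\sigma_{0}}(t\mathfrak{s}_{0})$ with $\pi^{\mathcal{F}(E)}(F_{t})$ so that \eqref{eqboundedhol} gives $\norm{F_{t}}\leq C$, take an ultraweak cluster point $F$, read off $\Phi_{k}(F)=T_{\theta_{k}}$, and finish with Proposition~\ref{prop:H_infty_values} (the paper obtains its $F_{r}$ from the double-commutant step inside the proof of Theorem~\ref{RD} rather than summing $\sum_{k}t^{k}T_{\theta_{k}}$ directly, a cosmetic difference). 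One inconsequential slip: since $\theta_{k}$ is a general element of the self-dual power $E^{\otimes k}$, the operator $T_{\theta_{k}}$ (hence $F_{t}$) is only guaranteed to lie in $H^{\infty}(E)$, not in the norm-closed algebra $\mathcal{T}_{+}(E)$, but that is all your argument actually uses.
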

\begin{proof}
If there is an $F\in H^{\infty}(E)$ such that $f_{\sigma}=\widehat{F}_{\sigma}$
for all $\sigma$, then certainly $\norm{f_{\sigma}(\mathfrak{z})}\leq\norm{F}$
for all $\sigma$ and $\mathfrak{z}$. So we shall attend to the converse
and suppose $\sup\{\norm{f_{\sigma}(\mathfrak{z})}\mid\sigma\in NRep_{O}(M),\;\mathfrak{z}\in\mathbb{D}(0,1,\sigma)\}=A<\infty$.
If $0<r<1$, then as we saw in the proof of Theorem~\ref{RD}, there
is an $F_{r}\in H^{\infty}(E)$ such that $f_{\sigma_{0}}(r\mathfrak{s}_{0})=\pi^{\cF(E)}(F_{r})$
and $f_{\sigma}(\mathfrak{z})=\widehat{F_{r}}_{\sigma}(\frac{1}{r}\mathfrak{z})$
for all $\norm{\mathfrak{z}}<r$ and all $\sigma$ (see (\ref{fsigma})).

Also, it follows from Theorem~\ref{RD} that $f$ is a tensorial
power series $\{\sum_{k\geq0}\mathcal{Z}_{k}(\cdot)L_{\theta_{k}}\mid\mathfrak{z}\in\mathbb{D}(0,1,\sigma)\}$
where the series $\theta\sim\sum_{k\geq0}\theta_{k}$ in $\mathcal{T}_{+}((E))$
has $R(\theta)\geq1$. For $0<t<r<1$, we thus have 
\[
\widehat{F_{r}}_{\sigma_{0}}(\frac{t}{r}\mathfrak{s}_{0})=f_{\sigma_{0}}(t\mathfrak{s}_{0})=\sum_{k\geq0}\mathcal{Z}_{k}(t\mathfrak{s}_{0})L_{\theta_{k}}=\sum_{k\geq0}t^{k}T_{\theta_{k}}\otimes I_{H_{\pi}}
\]
 and it follows that for every integer $m\geq0$, 
\[
\Phi_{m}(\widehat{F_{r}}_{\sigma_{0}}(\frac{t}{r}\mathfrak{s}_{0}))=t^{m}T_{\theta_{k}}\otimes I_{H_{\pi}}.
\]
 Therefore 
\[
\Phi_{m}(\widehat{F_{r}}_{\sigma_{0}}(\mathfrak{s}_{0}))=r^{m}T_{\theta_{k}}\otimes I_{H_{\pi}}
\]
 for every $0<r<1$.

Note that, for every $0<r<1$, $||F_{r}||=||\pi^{\cF(E)}(F_{r})||=||f_{\sigma_{0}}(r\mathfrak{s}_{0})||\leq A$.
Thus $\{F_{r}\}$ is a bounded set.

If $r_{n}\nearrow1$ and if $F$ is an ultraweak limit point of $\{F_{r_{n}}\}$,
say $F=\lim_{\alpha}F_{r_{n_{\alpha}}}$ for an appropriate subnet
of $\{r_{n}\}$, then for every $m\geq0$ we have $\Phi_{m}(\pi^{\cF(E)}(F))=\lim\Phi_{m}(\pi^{\cF(E)}(F_{r_{n_{\alpha}}}))=\lim\Phi_{m}(\widehat{F_{r_{n_{\alpha}}}}_{\sigma_{0}}(\mathfrak{s}_{0}))=\lim r_{n_{\alpha}}^{m}T_{\xi_{m}}\otimes_{\pi}I_{H_{\pi}}=T_{\xi_{m}}\otimes_{\pi}I_{H_{\pi}}$.
It follows that $\Phi_{m}(F)=T_{\theta_{m}}$ and, using Proposition~\ref{prop:H_infty_values},
we have, for every $\sigma\in\Sigma$ and every $\mathfrak{z}\in\mathbb{D}(0,1,\sigma)$,
\[
f_{\sigma}(\mathfrak{z})=\sum_{k\geq0}\mathcal{Z}_{k}(\mathfrak{z})L_{\theta_{k}}=\widehat{F}_{\sigma}(\mathfrak{z}).
\]
 Thus $f_{\sigma}=\widehat{F}_{\sigma}$.
\end{proof}
In the following theorem we want to consider two $W^{*}$-correspondences
over $M$, $E$ and $F$, and relate \emph{maps} between their families
of absolutely continuous representations to homomorphisms between
their Hardy algebras. 
\begin{thm}
\label{UEtoUF} Suppose $\Sigma$ is a full additive subcategory of
$NRep(M)$ that contains a special generator for $NRep(M)$. Suppose
also that $E$ and $F$ are two $W^{*}$-correspondences over $M$
and that $f=\{f_{\sigma}\}_{\sigma\in\Sigma}$ is a family of maps,
with $f_{\sigma}:\mathcal{AC}(\sigma,E)\to\mathcal{AC}(\sigma,F)$.
Then $f$ is an $E,F,\Sigma$-matricial family of maps if and only
if there is an ultraweakly continuous homomorphism $\alpha:H^{\infty}(F)\to H^{\infty}(E)$
such that for every $\mathfrak{z}\in\mathcal{AC}(\sigma,E)$ and every
$Y\in H^{\infty}(F)$, 
\begin{equation}
\widehat{\alpha(Y)}(\mathfrak{z})=\widehat{Y}(f_{\sigma}(\mathfrak{z})).\label{eq:thmUEtoUF}
\end{equation}
 \end{thm}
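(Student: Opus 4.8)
The plan is to establish the two implications separately; the backward one is formal, the forward one rests on the double-commutant machinery already deployed in the proof of Theorem~\ref{thm:_Relative_double_commutant}.

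\emph{Backward implication (uses neither fullness of $\Sigma$ nor a special generator).} Assuming an ultraweakly continuous homomorphism $\alpha: H^{\infty}(F)\to H^{\infty}(E)$ satisfying \eqref{eq:thmUEtoUF}, I would verify that $f$ respects intertwiners in the sense of \eqref{eq:preserv_intertwiners} directly. Given $\mathfrak{z}\in\mathcal{AC}(\sigma,E)$, $\mathfrak{w}\in\mathcal{AC}(\tau,E)$ and $C\in\mathcal{I}(\sigma\times\mathfrak{z},\tau\times\mathfrak{w})$, Remark~\ref{Xeta} applied to $E$ gives $C\,(\sigma\times\mathfrak{z})(G)=(\tau\times\mathfrak{w})(G)\,C$ for every $G\in H^{\infty}(E)$; taking $G=\alpha(Y)$ for $Y\in H^{\infty}(F)$ and rewriting $(\sigma\times\mathfrak{z})(\alpha(Y))=\widehat{\alpha(Y)}_{\sigma}(\mathfrak{z})=\widehat{Y}_{\sigma}(f_{\sigma}(\mathfrak{z}))=(\sigma\times f_{\sigma}(\mathfrak{z}))(Y)$ via \eqref{eq:thmUEtoUF}, and likewise at $\tau$, yields $C\,(\sigma\times f_{\sigma}(\mathfrak{z}))(Y)=(\tau\times f_{\tau}(\mathfrak{w}))(Y)\,C$ for all $Y\in H^{\infty}(F)$, which by Remark~\ref{Xeta} applied to $F$ is exactly $C\in\mathcal{I}(\sigma\times f_{\sigma}(\mathfrak{z}),\tau\times f_{\tau}(\mathfrak{w}))$.

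\emph{Forward implication.} I would localize at a special generator, as in Theorem~\ref{thm:_Relative_double_commutant}. Fix $\sigma_{0}\in\Sigma$ a special generator (Definition~\ref{def:_Infinite_generator}), realized as $\sigma_{0}=\pi^{\mathcal{F}(E)}\circ\varphi_{\infty}$ on $\mathcal{F}(E)\otimes_{\pi}H_{\pi}$ with generalized shift $\mathfrak{s}_{0}\in\mathcal{AC}(\sigma_{0},E)$, so that $\sigma_{0}\times\mathfrak{s}_{0}$ is $\pi^{\mathcal{F}(E)}$ restricted to $H^{\infty}(E)$, $\mathcal{R}:=\pi^{\mathcal{F}(E)}(H^{\infty}(E))$ equals its own double commutant by \cite[Corollary 3.10]{Muhly2004a}, and $X\mapsto\pi^{\mathcal{F}(E)}(X)$ is a faithful normal representation of $\mathcal{L}(\mathcal{F}(E))$, whose restriction to $H^{\infty}(E)$ is therefore an algebra isomorphism onto $\mathcal{R}$ and a homeomorphism for the ultraweak topologies. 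Put $\mathfrak{w}_{0}:=f_{\sigma_{0}}(\mathfrak{s}_{0})\in\mathcal{AC}(\sigma_{0},F)$ and let $\rho:=\sigma_{0}\times\mathfrak{w}_{0}$, an ultraweakly continuous representation of $H^{\infty}(F)$ on $H_{\sigma_{0}}$. The crucial step is $\rho(H^{\infty}(F))\subseteq\mathcal{R}$: each $C\in\mathcal{R}'=\mathcal{I}(\sigma_{0}\times\mathfrak{s}_{0},\sigma_{0}\times\mathfrak{s}_{0})$ lies in $\mathcal{I}(\sigma_{0},\sigma_{0})$, hence is a morphism of $\Sigma$ since $\Sigma$ is full, so the matricial-family-of-maps hypothesis \eqref{eq:preserv_intertwiners} (Definition~\ref{def:_matricial_maps}), with $\sigma=\tau=\sigma_{0}$ and $\mathfrak{z}=\mathfrak{w}=\mathfrak{s}_{0}$, puts $C$ in $\mathcal{I}(\sigma_{0}\times\mathfrak{w}_{0},\sigma_{0}\times\mathfrak{w}_{0})$, i.e.\ $C$ commutes with $\rho(Y)$ for every $Y$; hence $\rho(H^{\infty}(F))\subseteq\mathcal{R}''=\mathcal{R}$. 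Composing $\rho$ with the inverse of $\pi^{\mathcal{F}(E)}|_{H^{\infty}(E)}$ then defines an ultraweakly continuous homomorphism $\alpha: H^{\infty}(F)\to H^{\infty}(E)$ with $\pi^{\mathcal{F}(E)}(\alpha(Y))=\rho(Y)$, i.e.\ $\widehat{\alpha(Y)}_{\sigma_{0}}(\mathfrak{s}_{0})=\widehat{Y}_{\sigma_{0}}(f_{\sigma_{0}}(\mathfrak{s}_{0}))$, which is \eqref{eq:thmUEtoUF} at $(\sigma_{0},\mathfrak{s}_{0})$. To finish I would propagate \eqref{eq:thmUEtoUF} to every $(\sigma,\mathfrak{z})$ with $\sigma\in\Sigma$, $\mathfrak{z}\in\mathcal{AC}(\sigma,E)$, exactly as in Theorem~\ref{thm:_Relative_double_commutant}: for $C\in\mathcal{I}(\sigma_{0}\times\mathfrak{s}_{0},\sigma\times\mathfrak{z})$ (a morphism of $\Sigma$, by fullness), \eqref{eq:preserv_intertwiners} gives $C\,\widehat{Y}_{\sigma_{0}}(f_{\sigma_{0}}(\mathfrak{s}_{0}))=\widehat{Y}_{\sigma}(f_{\sigma}(\mathfrak{z}))\,C$; replacing $\widehat{Y}_{\sigma_{0}}(f_{\sigma_{0}}(\mathfrak{s}_{0}))$ by $\pi^{\mathcal{F}(E)}(\alpha(Y))=(\sigma_{0}\times\mathfrak{s}_{0})(\alpha(Y))$ and using $C\in\mathcal{I}(\sigma_{0}\times\mathfrak{s}_{0},\sigma\times\mathfrak{z})$ with Remark~\ref{Xeta} rewrites the left side as $\widehat{\alpha(Y)}_{\sigma}(\mathfrak{z})\,C$, so $\widehat{\alpha(Y)}_{\sigma}(\mathfrak{z})\,C=\widehat{Y}_{\sigma}(f_{\sigma}(\mathfrak{z}))\,C$; since the ranges of these $C$ span $H_{\sigma}$ by Theorem~\ref{abscont}, we conclude $\widehat{\alpha(Y)}_{\sigma}(\mathfrak{z})=\widehat{Y}_{\sigma}(f_{\sigma}(\mathfrak{z}))$.

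I expect the inclusion $\rho(H^{\infty}(F))\subseteq\mathcal{R}$ to be the only genuinely delicate point: it is where the matricial hypothesis on the $F$-side really enters, and it is what makes $\alpha$ well defined; everything else is a transcription of the intertwiner-chasing in Theorem~\ref{thm:_Relative_double_commutant}. It is also worth recording in the write-up that $\alpha$ is uniquely determined by \eqref{eq:thmUEtoUF} (apply $\widehat{\,\cdot\,}_{\sigma}$ for a faithful $\sigma$ of infinite multiplicity), and that it is harmless that $\sigma_{0}$ was built from $\mathcal{F}(E)$ rather than $\mathcal{F}(F)$, since all special generators are unitarily equivalent, so such a $\sigma_{0}$ simultaneously plays the required role for both $E$ and $F$.
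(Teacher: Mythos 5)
Your proof is correct, and both directions rest on the same machinery as the paper's (Remark~\ref{Xeta}, fullness, the double commutant theorem \cite[Corollary 3.10]{Muhly2004a} at the special generator, and the spanning property of Theorem~\ref{abscont}); the difference is in how the forward direction is organized. The paper fixes $Y\in H^{\infty}(F)$, observes that $g_{\sigma}:=\widehat{Y}\circ f_{\sigma}$ is a matricial family of functions on $\{\mathcal{AC}(\sigma,E)\}_{\sigma\in\Sigma}$, and invokes Theorem~\ref{thm:_Relative_double_commutant} once per $Y$ to produce $\alpha(Y)$; it then needs the uniqueness statement from Theorem~\ref{RD} to see that $\alpha$ is multiplicative, and a separate evaluation at $(\sigma_{0},\mathfrak{s}_{0})$ to get ultraweak continuity. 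You instead build $\alpha$ in one stroke as $\bigl(\pi^{\mathcal{F}(E)}|_{H^{\infty}(E)}\bigr)^{-1}\circ\bigl(\sigma_{0}\times f_{\sigma_{0}}(\mathfrak{s}_{0})\bigr)$, after showing $\rho(H^{\infty}(F))\subseteq\mathcal{R}''=\mathcal{R}$ exactly as you indicate; this makes the homomorphism property and ultraweak continuity automatic from the construction, at the cost of re-proving the propagation to general $(\sigma,\mathfrak{z})$ that the paper delegates to Theorem~\ref{thm:_Relative_double_commutant} -- in effect you inline that theorem's proof rather than quoting it. Two small remarks: your closing comment about $\sigma_{0}$ ``playing the required role for both $E$ and $F$'' is not needed and should be dropped or justified separately -- the uniqueness cited in the paper is for a fixed correspondence, and your argument never uses that $\sigma_{0}$ is special for $F$, only that $f_{\sigma_{0}}(\mathfrak{s}_{0})\in\mathcal{AC}(\sigma_{0},F)$, which is part of the hypothesis; and in the propagation step the line ``\eqref{eq:preserv_intertwiners} gives $C\,\widehat{Y}_{\sigma_{0}}(f_{\sigma_{0}}(\mathfrak{s}_{0}))=\widehat{Y}_{\sigma}(f_{\sigma}(\mathfrak{z}))\,C$'' tacitly uses Remark~\ref{Xeta} on the $F$-side as well (both images being absolutely continuous for $F$), which is worth saying explicitly in the write-up.
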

\begin{proof}
Suppose that $f=\{f_{\sigma}\}_{\sigma\in\Sigma}$ is a matricial
family of maps and let $Y$ be an element of $H^{\infty}(F)$. For
each $\sigma\in\Sigma$, define $g_{\sigma}:\mathcal{AC}(\sigma,E)\to B(H_{\sigma})$
by $g_{\sigma}(\mathfrak{z})=\widehat{Y}(f_{\sigma}(\mathfrak{z}))$.
If $\mathfrak{z}\in\mathcal{AC}(\sigma,E$), if $\mathfrak{w}\in\mathcal{AC}(\tau,E)$,
and if $C\in\mathcal{I}(\sigma\times\mathfrak{z},\tau\times\mathfrak{w})$,
then since $f$ is assumed to be a matricial family of maps, $C\in\mathcal{I}(\sigma\times f_{\sigma}(\mathfrak{z}),\tau\times f_{\tau}(\mathfrak{w}))$.
By Theorem~\ref{thm:_Relative_double_commutant}, $Cg_{\sigma}(\mathfrak{z})=g_{\tau}(\mathfrak{w})C$.
Thus, by Theorem~\ref{thm:_Relative_double_commutant} again, there
is an operator $\alpha(Y)$ in $H^{\infty}(E)$ such that $\widehat{\alpha(Y)}(\mathfrak{z})=g_{\sigma}(\mathfrak{z})$
for all $\mathfrak{z}\in\mathcal{AC}(\sigma,E)$. Thus equation \ref{eq:thmUEtoUF}
is satisfied. Note that $\alpha(Y)$ is uniquely determined by virtue
of the uniqueness assertion in Theorem \ref{RD}. Since 
\begin{equation}
\widehat{\alpha(Y)}(\mathfrak{z})=g_{\sigma}(\mathfrak{z})=\widehat{Y}(f_{\sigma}(\mathfrak{z})),\label{eq:def_alpha}
\end{equation}
it is clear that $\alpha$ is a homomorphism. It remains to prove
that $\alpha$ is ultraweakly continuous. For that purpose, let $\sigma$
be the special generator $\sigma_{0}$ and let $\mathfrak{z}=\mathfrak{s}_{0}$
in \eqref{eq:def_alpha}, to conclude that 
\[
\alpha(Y)\otimes I_{H_{\pi}}=\widehat{Y}(f_{\sigma_{0}}(\mathfrak{s}_{0})),
\]
 from which it follows immediately that $\alpha$ is ultraweakly continuous.

For the converse, suppose the family $f$ implements a homomorphism
$\alpha$ via equation \eqref{eq:def_alpha}. To show that $f$ is
an $E,F,\Sigma$-matricial family of maps, we must show that the family
preserves intertwiners in the sense of equation \eqref{eq:preserv_intertwiners}.
So let $C\in\mathcal{I}(\sigma\times\mathfrak{z},\tau\times\mathfrak{w})$
and apply Theorem~\ref{thm:_Relative_double_commutant} to conclude
that $C\widehat{Y}(f_{\sigma}(\mathfrak{z}))=C\widehat{\alpha(Y)}(\mathfrak{z})=\widehat{\alpha(Y)}(\mathfrak{w})C=\widehat{Y}(f_{\tau}(\mathfrak{w}))$
for all $Y\in H^{\infty}(F)$. In particular, this holds for $Y=T_{\xi}$
for every $\xi\in F$. So, if we write $L_{\xi}^{\sigma}$ for the
map from $H_{\sigma}\to F\otimes_{\sigma}H_{\sigma}$ defined by the
equation $L_{\xi}^{\sigma}h:=\xi\otimes h$ and if we define $L_{\xi}^{\tau}$
similarly, then for $Y=T_{\xi}$ we conclude that $\widehat{Y}(f_{\sigma}(\mathfrak{z}))=f_{\sigma}(\mathfrak{z})L_{\xi}^{\sigma}$
and $\widehat{Y}(f_{\tau}(\mathfrak{w}))=f_{\tau}(\mathfrak{w})L_{\xi}^{\tau}$.
Thus $Cf_{\sigma}(\mathfrak{z})L_{\xi}^{\sigma}=f_{\tau}(\mathfrak{w})L_{\xi}^{\tau}C=f_{\tau}(\mathfrak{w})(I_{E}\otimes C)L_{\xi}^{\sigma}$.
Since this equation holds for all $\xi\in F$, we conclude that $Cf_{\sigma}(\mathfrak{z})=f_{\tau}(\mathfrak{w})(I_{E}\otimes C)$.
Thus $C\in\mathcal{I}(f_{\sigma}(\mathfrak{z}),f_{\tau}(\mathfrak{w}))$. \end{proof}
\begin{rem}
\label{Rm:Center-preservation} Recall that an element $\mathfrak{z}\in E^{\sigma*}$
lies in the \emph{center of} $E^{\sigma*}$ in case $b\cdot\mathfrak{z}=\mathfrak{z}\cdot b$
for all $b\in\sigma(M)'$ \cite[Definition 4.11]{Muhly2008b}, in
which case we write $\mathfrak{z}\in\mathfrak{Z}(E^{\sigma*})$. Thus
$\mathfrak{z}\in\mathfrak{Z}(E^{\sigma*})$ if and only if $\sigma(M)'\subseteq\mathcal{I}(\sigma\times\mathfrak{z},\sigma\times\mathfrak{z})$.
It follows that if $\{f_{\sigma}\}_{\sigma\in NRep(M)}$ is a matricial
family of maps, with $f_{\sigma}:\mathcal{AC}(\sigma,E)\to\mathcal{AC}(\sigma,F)$,
then $f_{\sigma}(\mathcal{AC}(\sigma,E)\cap\mathfrak{Z}(E^{\sigma*}))\subseteq\mathcal{AC}(\sigma,F)\cap\mathfrak{Z}(F^{\sigma*})$. 
\end{rem}

\section{Function Theory without a generator\label{Function Theory without a generator}}

In this section we shall focus on additive subcategories $\Sigma$
of $NRep(M)$ that do not necessarily contain a special generator
and address the problem of deciding which matricial families of functions
$\{f_{\sigma}\}_{\sigma\in\Sigma}$ have tensorial power series representations
as in Proposition \ref{prop:Power_series_give_holomorphic_fcns}.
In particular, we shall prove the following theorem which complements
and is something of a converse to Proposition \ref{prop:Matricial_holo_fcn}.
\begin{thm}
\label{thm:Matricial_implies_Frechet_hol.} Suppose $\Sigma$ is an
additive subcategory of $NRep(M)$ and that $\mathbb{D}(\zeta,R)$
is a matricial disc determined by an additive field $\zeta=\{\zeta_{\sigma}\}_{\sigma\in\Sigma}$
on $\Sigma$ and $R$, $0<R\leq\infty$. Suppose, also, that $f=\{f_{\sigma}\}_{\sigma\in\Sigma}$
is a matricial family of functions defined on $\mathbb{D}(\zeta,R)$
that is locally uniformly bounded on $\mathbb{D}(\zeta,R)$ in the
sense that for each $r<R$, 
\[
\sup_{\sigma\in\Sigma}\sup_{\mathfrak{z}\in\mathbb{D}(\zeta_{\sigma},r)}\Vert f_{\sigma}(\mathfrak{z})\Vert<\infty.
\]
Then: 
\begin{enumerate}
\item Each $f_{\sigma}$ is Frechet analytic on $\mathbb{D}(\zeta_{\sigma},R)$. 
\item If the subcategory is full, if the additive field $\zeta$ is also
central, and if each $\sigma\in\Sigma$ is faithful, then $f$ is
a family of tensorial power series $\{\sum_{k\geq0}\mbox{\ensuremath{\mathcal{Z}}}_{k}(\mathfrak{z}-\zeta_{\sigma})L_{\theta_{k}}\mid\mathfrak{z}\in\mathbb{D}(\zeta_{k},R,\sigma)\}_{\sigma\in\Sigma}$,
where $\theta\sim\sum_{k\geq0}\theta_{k}$ has $R(\theta)\geq R$.
\end{enumerate}
\end{thm}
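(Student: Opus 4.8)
\textbf{Part (1).} The plan is to reduce Frechet analyticity of each individual $f_{\sigma}$ to a scalar complex-analyticity statement, using the direct-sum structure of matricial families. Fix $\sigma \in \Sigma$. To show $f_{\sigma}$ is Frechet analytic on $\mathbb{D}(\zeta_{\sigma},R)$, by a standard criterion (e.g.\ \cite[Theorem 3.17.1]{HP1974}) it suffices to show that $f_{\sigma}$ is locally bounded — which is a hypothesis — and that it is \emph{G\^ateaux holomorphic}, i.e.\ that for each $\mathfrak{z}_{0}\in\mathbb{D}(\zeta_{\sigma},R)$ and each direction $\zeta\in E^{\sigma*}$, the scalar function $\lambda\mapsto \phi(f_{\sigma}(\mathfrak{z}_{0}+\lambda\zeta))$ is holomorphic in a disc of $\lambda$'s, for every $\phi$ in the predual functionals of $B(H_{\sigma})$. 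First I would verify that the slice $\lambda\mapsto f_{\sigma}(\mathfrak{z}_{0}+\lambda\zeta)$ indeed lands in $\mathbb{D}(\zeta_{\sigma},R)$ for $|\lambda|$ small, and is locally bounded there. Then, to get holomorphy of the slice, I would use Morera's theorem: integrate $\phi(f_{\sigma}(\mathfrak{z}_{0}+\lambda\zeta))$ around triangles and show the integral vanishes. The continuity needed to make Morera applicable is where the matricial structure enters — one exploits that for $n$ points $\mathfrak{z}^{(1)},\dots,\mathfrak{z}^{(n)}\in\mathbb{D}(\zeta_{\sigma},r)$, the block-diagonal element $\mathfrak{z}^{(1)}\oplus\cdots\oplus\mathfrak{z}^{(n)}$ lies in $\mathbb{D}(\zeta_{n\sigma},r)$, and $f_{n\sigma}$ evaluated there is block-diagonal with entries $f_{\sigma}(\mathfrak{z}^{(i)})$ by the intertwiner-respecting property applied to the coordinate projections $\iota_{k}^{*}$ (which are morphisms in $\Sigma$ by our standing assumption). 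Uniform boundedness of the $f_{n\sigma}$ then forces a difference-quotient / resolvent-type estimate that yields local Lipschitz continuity of $f_{\sigma}$, hence continuity, hence Morera applies. This is the first substantial step; I expect the technical heart of (1) to be squeezing Lipschitz continuity out of uniform boundedness via the block-diagonal trick — this is exactly the argument Kaliuzhny\u{i}-Verbovetsky\u{i} and Vinnikov use, and I would cite \cite{K-VV2009,Kaliuzhnyi-Verbovetskyi2010,K-VVPrep} for the mechanism.

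\textbf{Part (2): producing a Taylor series in Taylor's sense.} Now assume $\Sigma$ full, $\zeta$ central, each $\sigma$ faithful. Having (1), each $f_{\sigma}$ has a Frechet-Taylor expansion $f_{\sigma}(\mathfrak{z}) = \sum_{k\geq 0} D^{k}f_{\sigma}(\zeta_{\sigma})[\mathfrak{z}-\zeta_{\sigma},\dots,\mathfrak{z}-\zeta_{\sigma}]/k!$ converging on $\mathbb{D}(\zeta_{\sigma},R)$. The plan is to show these homogeneous pieces assemble, across $\sigma\in\Sigma$, into a \emph{Taylor series in the sense of Joseph Taylor} \cite{Tay73a}: that is, the $k$-homogeneous part is given by evaluating a single "noncommutative $k$-linear" object against $\mathfrak{z}-\zeta_{\sigma}$, compatibly with the matricial grid. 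The intertwiner-respecting property of the family $f$, differentiated, transfers to the homogeneous components: if $C\mathfrak{z} = \mathfrak{w}(I_{E}\otimes C)$ and $C\zeta_{\sigma}=\zeta_{\tau}(I_{E}\otimes C)$ (the latter because $\zeta$ is central), then $C$ intertwines $f_{\sigma}$ and $f_{\tau}$, and expanding both sides about $\zeta_{\sigma},\zeta_{\tau}$ and matching degrees shows $C$ intertwines the $k$-homogeneous parts. This is precisely the intertwining hypothesis needed to invoke the Kaliuzhny\u{i}-Verbovetsky--Vinnikov machinery to conclude that the family of $k$-homogeneous parts is a genuine Taylor-series coefficient — i.e.\ there is a single "tensor-like" coefficient living on $E^{\otimes k}$-data that reproduces all the $f_{\sigma}$'s degree-$k$ parts simultaneously. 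I would spell this out by applying the uniqueness/rigidity part of their noncommutative-function theory, using fullness of $\Sigma$ (so that \emph{all} intertwiners are available) and faithfulness of each $\sigma$ (so that $\Vert L_{\theta}\Vert = \Vert\theta\Vert$ and no information is lost passing between $E^{\otimes k}$ and its image in $B(H_{\sigma})$).

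\textbf{Part (2), continued: identifying Taylor's coefficients as tensors.} The remaining step is to show that Taylor's $k$-th coefficient is of the \emph{tensorial} form $\mathcal{Z}_{k}(\cdot)L_{\theta_{k}}$ for some $\theta_{k}\in E^{\otimes k}$. The plan is to use a duality argument modeled on \cite[Theorem 3.6]{Muhly2004a}: the space of $k$-homogeneous "Taylor coefficients" compatible with the matricial structure is, via that duality, identified with $(E^{\otimes k})$ itself, with the pairing given by $\theta_{k}\mapsto (\mathfrak{z}\mapsto \mathcal{Z}_{k}(\mathfrak{z})L_{\theta_{k}})$. Concretely, I would first check directly (as in Proposition \ref{prop:Matricial_holo_fcn}, reversing the argument) that every tensorial monomial $\mathcal{Z}_{k}(\cdot)L_{\theta_{k}}$ \emph{is} a compatible $k$-homogeneous matricial family, giving a map from $E^{\otimes k}$ into the space of such families; then use the duality theorem to see this map is onto. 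Evaluating at the additive field $\zeta$ and at block-diagonal / "free" configurations (enough faithful $\sigma$ are present) pins down the $\theta_{k}$ uniquely. Finally, since $f_{\sigma}$ is defined and locally bounded on all of $\mathbb{D}(\zeta_{\sigma},R)$ with this power-series expansion, Proposition \ref{RadiusConv} (or rather its contrapositive, as in the end of the proof of Theorem \ref{RD}) forces $R(\theta)\geq R$, where $\theta\sim\sum_{k\geq0}\theta_{k}$.

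\textbf{Expected main obstacle.} The routine-looking but genuinely delicate point is Part (1): extracting \emph{continuity} (indeed local Lipschitz continuity) of each $f_{\sigma}$ from nothing but uniform boundedness and the direct-sum/intertwiner structure, without a generator in $\Sigma$. Once continuity is in hand, Frechet analyticity follows from Morera plus local boundedness; and once one has the Frechet-Taylor decomposition with intertwining-compatible homogeneous parts, Part (2) is an application of the Kaliuzhny\u{i}-Verbovetsky--Vinnikov identification of noncommutative Taylor series together with the Muhly--Solel duality theorem. So the crux is the passage "matricial $+$ uniformly bounded $\Rightarrow$ continuous", which is exactly the step that Example \ref{eg:_Failure_of_resolvents} shows cannot be dispensed with.
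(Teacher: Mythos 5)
Your high-level roadmap (build Taylor-type homogeneous pieces from the matricial structure, then identify the coefficients as tensors via the duality theorem of \cite{Muhly2004a}) is the same as the paper's, but the execution has a genuine gap in Part (1). The Morera route does not close: Morera requires you to \emph{prove} that the triangle integrals of $\lambda\mapsto\phi(f_{\sigma}(\mathfrak{z}_{0}+\lambda\zeta))$ vanish, and continuity --- even local Lipschitz continuity --- gives no handle on that; given continuity, the vanishing of those integrals is equivalent to the holomorphy you are trying to establish, so as written the argument is circular. Moreover, the mechanism you point to is misidentified: block-diagonal evaluations only give $f_{n\sigma}(\mathfrak{z}^{(1)}\oplus\cdots\oplus\mathfrak{z}^{(n)})=f_{\sigma}(\mathfrak{z}^{(1)})\oplus\cdots\oplus f_{\sigma}(\mathfrak{z}^{(n)})$, which carries no quantitative information. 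What the paper (following Taylor and Kaliuzhny\u{i}-Verbovetsky\u{i}--Vinnikov) actually uses are \emph{upper-triangular} points with diagonal entries $\mathfrak{z},\mathfrak{w}$ and corner $\mathfrak{u}$, which lie in $\mathbb{D}(\zeta_{2\sigma},r)$ because $\zeta$ is additive; respecting the block-scalar intertwiners forces $f_{2\sigma}$ of such a point to be upper triangular with corner $\Delta f_{\sigma,\sigma}(\mathfrak{z},\mathfrak{w})(\mathfrak{u})$ (Lemma \ref{Definition_Delta}), homogeneous in $\mathfrak{u}$, and the $3\times3$ evaluation gives the exact identity $f_{\sigma}(\mathfrak{z}+\mathfrak{w})-f_{\sigma}(\mathfrak{z})-\Delta f_{\sigma}(\mathfrak{z})(\mathfrak{w})=\Delta^{2}f_{\sigma,\sigma,\sigma}(\mathfrak{z},\mathfrak{z},\mathfrak{z}+\mathfrak{w})(\mathfrak{w},\mathfrak{w})$ (Theorem \ref{Taylor_Remainder}), whose norm is $O(\Vert\mathfrak{w}\Vert^{2})$ by the uniform bound. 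That yields Frechet differentiability directly (Theorem \ref{TTseries}), and rescaling the superdiagonal entries gives the remainder bound $q^{-n}M$ for the full series --- no Morera, no separate continuity step. Deferring to \cite{K-VV2009,K-VVPrep} ``for the mechanism'' does not discharge this, since the present setting ($W^{*}$-correspondences, an additive field $\zeta$, only block-scalar intertwiners guaranteed in $\Sigma$) requires rebuilding the $\Delta$-calculus (Lemmas \ref{Definition_Delta}, \ref{Delta_n}, \ref{fmatrix}, \ref{additivity}).

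In Part (2) the passage from the homogeneous pieces to tensors $\theta_{k}\in E^{\otimes k}$ is not a formal ``duality argument,'' and your ``onto by duality'' claim is the conclusion, not an input. To reach the duality theorem \cite[Theorem 3.6]{Muhly2004a} one must first convert the $k$-linear map $\Delta^{k}f_{\sigma}(0)$ into a single bounded $\sigma(M)'$-module map from $(E^{\sigma})^{\otimes k}\otimes_{\iota}H_{\sigma}$ to $H_{\sigma}$, and mere boundedness of the multilinear map does not suffice: the paper proves it is balanced over $\sigma(M)'$ (this is where fullness enters) and \emph{completely} bounded with $\Vert\cdot\Vert_{cb}\leq M/r^{k}$, via the amplification identity of Lemmas \ref{ljmj} and \ref{The_map_phi(n)} and Proposition \ref{completely_bounded}, and then passes through the representation of cb balanced multilinear maps on the module Haagerup tensor product and \cite[Lemma 3.5]{Muhly1998a} before duality produces $\theta_{k}$ (Theorem \ref{theta_k}). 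None of this appears in your sketch. Finally, your appeal to Proposition \ref{RadiusConv} for $R(\theta)\geq R$ does not work here, because that proposition evaluates at the special generator $\sigma_{0}$, which is not assumed to lie in $\Sigma$ in this theorem; the paper instead gets $R(\theta)\geq R$ from the coefficient bounds $\Vert\theta_{k}\Vert\leq M_{r}/r^{k}$ for every $r<R$, which is where the uniformity in $\sigma$ of the boundedness hypothesis is used. (The centrality of $\zeta$ is then only needed for the harmless translation step, Lemma \ref{lem:Translation_invariance}.)
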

To achieve this goal, we use the matrix analysis initiated by Taylor
in \cite{Tay72c}, and developed further in the work of Voiculescu
in \cite{Voi2004,Voi2010}, in the work of Helton and his collaborators
\cite{HKMS2009,HKM2011b,HKM2011a} and especially in the investigations
of Kaliuzhny\u{i}-Verbovetsky\u{i} and Vinnikov \cite{K-VV2009,Kaliuzhnyi-Verbovetskyi2010,K-VV2006}.
Indeed, the first half of Theorem \ref{thm:Matricial_implies_Frechet_hol.}
is proved by showing that Taylor's matrix analysis leads to one of
Taylor's Taylor series for each $f_{\sigma}$. The existence of such
a series implies that $f_{\sigma}$ is Frechet analytic. The second
half of the theorem involves showing that each of Taylor's Taylor
coefficients comes from an element of a suitable tensor power of $E^{\otimes k}$. 

Throughout this section, $\Sigma$ will denote a fixed additive subcategory
of $NRep(M)$. No other assumptions will be placed on $\Sigma$, except
as explicitly stated in added hypotheses in the statements of results.
In particular, we do not assume that $\Sigma$ is full, nor do we
assume that $\Sigma$ contains a special generator. The matricial
sets we will consider will primarily be matricial discs $\mathbb{D}(\zeta,R)=\{\mathbb{D}(\zeta_{\sigma},R,\sigma)\}_{\sigma\in\Sigma}$,
where $\zeta$ is an additive field of vectors on $\Sigma$ (see Example
\ref{eg:Examples_matricial_E,sigma_sets}). Note, in particular, that
the assumption that $\zeta$ is an additive field on $\Sigma$ guarantees
that $\zeta_{m\sigma}=\zeta_{\sigma}\oplus\zeta_{\sigma}\oplus\cdots\oplus\zeta_{\sigma}$
($m$-summands). 

The following result can be found in \cite{K-V2012} for the case
when $M=\mathbb{C}$.
\begin{lem}
\label{Definition_Delta} Let $f=\{f_{\sigma}\}_{\sigma\in\Sigma}$
be a matricial family of functions defined on a matricial $E,\Sigma$-family
$\{\mathcal{U}(\sigma)\}_{\sigma\in\Sigma}$ where $\Sigma$ is an
additive subcategory of $NRep(M)$. Suppose $\sigma,\tau\in\Sigma$,
$\mathfrak{z}\in\mathcal{U}(\sigma)$, $\mathfrak{w}\in\mathcal{U}(\sigma)$
and $\mathfrak{u}\in\mathcal{I}(\tau^{E}\circ\varphi,\sigma)$ are
such that $\left(\begin{array}{cc}
\mathfrak{z} & \mathfrak{u}\\
0 & \mathfrak{w}
\end{array}\right)\in\mathcal{U}(\sigma\oplus\tau)$. Then there is an operator $\Delta f_{\sigma,\tau}(\mathfrak{z},\mathfrak{w})(\mathfrak{u})\in B(H_{\tau},H_{\sigma})$
such that 
\begin{enumerate}
\item {}
\[
f_{\sigma\oplus\tau}(\left(\begin{array}{cc}
\mathfrak{z} & \mathfrak{u}\\
0 & \mathfrak{w}
\end{array}\right))=\left(\begin{array}{cc}
f_{\sigma}(\mathfrak{z}) & \Delta f_{\sigma,\tau}(\mathfrak{z},\mathfrak{w})(\mathfrak{u})\\
0 & f_{\tau}(\mathfrak{w})
\end{array}\right).
\]
 
\item If $b\in\sigma(M)'$ is such that $\mathfrak{z}(I_{E}\otimes b)=b\mathfrak{z}$
and such that $\left(\begin{array}{cc}
\mathfrak{z} & b\mathfrak{u}\\
0 & \mathfrak{w}
\end{array}\right)\in\mathcal{U}(\sigma\oplus\tau)$, then $f_{\sigma}(\mathfrak{z})b=bf_{\sigma}(\mathfrak{z})$ and
$\Delta f_{\sigma,\tau}(\mathfrak{z},\mathfrak{w})(b\mathfrak{u})=b\Delta f_{\sigma,\tau}(\mathfrak{z},\mathfrak{w})(\mathfrak{u})$.
In particular $\Delta f_{\sigma,\tau}(\mathfrak{z},\mathfrak{w})(t\mathfrak{u})=t\Delta f_{\sigma,\tau}(\mathfrak{z},\mathfrak{w})(\mathfrak{u})$
for $t\in\mathbb{C}$ such that 
\[
\left(\begin{array}{cc}
\mathfrak{z} & t\mathfrak{u}\\
0 & \mathfrak{w}
\end{array}\right)\in\mathcal{U}(\sigma\oplus\tau).
\]
 
\item If $b\in\tau(M)'$ is such that $\mathfrak{w}(I_{E}\otimes b)=b\mathfrak{w}$
and such that 
\[
\left(\begin{array}{cc}
\mathfrak{z} & \mathfrak{u}(I_{E}\otimes b)\\
0 & \mathfrak{w}
\end{array}\right)\in\mathcal{U}(\sigma\oplus\tau),
\]
 then $f_{\tau}(\mathfrak{w})b=bf_{\tau}(\mathfrak{w})$ and $\Delta f_{\sigma,\tau}(\mathfrak{z},\mathfrak{w})(\mathfrak{u}(I_{E}\otimes b))=\Delta f_{\sigma,\tau}(\mathfrak{z},\mathfrak{w})(\mathfrak{u})b$.
\item If $\sigma=\tau$, $\mathfrak{z}\in\mathcal{U}(\sigma)$ and $b\in\sigma(M)'$
such that $\left(\begin{array}{cc}
\mathfrak{z} & b\mathfrak{u}\\
0 & \mathfrak{z}+b\mathfrak{u}
\end{array}\right)\in\mathcal{U}(\sigma\oplus\sigma)$, then 
\[
f_{\sigma}(\mathfrak{z}+b\mathfrak{u})-f_{\sigma}(\mathfrak{z})=\Delta f_{\sigma,\sigma}(\mathfrak{z},\mathfrak{z}+b\mathfrak{u})(b\mathfrak{u})
\]
 and, if $t\in\mathbb{C}$, 
\[
f_{\sigma}(\mathfrak{z}+t\mathfrak{u})-f_{\sigma}(\mathfrak{z})=t\Delta f_{\sigma,\sigma}(\mathfrak{z},\mathfrak{z}+t\mathfrak{u})(\mathfrak{u}).
\]
 
\end{enumerate}
\end{lem}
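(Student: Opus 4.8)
The plan is to deduce all four clauses from the single fact that $f$ respects intertwiners, i.e.\ that $\mathcal{I}(\sigma\times\mathfrak{z},\tau\times\mathfrak{w})\subseteq\mathcal{I}(f_{\sigma}(\mathfrak{z}),f_{\tau}(\mathfrak{w}))$ for all admissible $\mathfrak{z},\mathfrak{w}$ (Definition~\ref{def:Matricial_family_of_functions}, Remark~\ref{rem:Rel_matricial_function}), by feeding it judiciously chosen operators $C$ and then reading off entries of $2\times 2$ operator matrices. Throughout I use the canonical identification $E\otimes_{\sigma\oplus\tau}H_{\sigma\oplus\tau}=(E\otimes_{\sigma}H_{\sigma})\oplus(E\otimes_{\tau}H_{\tau})$, under which $\mathfrak{y}:=\begin{pmatrix}\mathfrak{z}&\mathfrak{u}\\0&\mathfrak{w}\end{pmatrix}$ acts on column vectors and, for $a\in\sigma(M)'$ and $b\in\tau(M)'$, one has $I_{E}\otimes\mathrm{diag}(a,b)=\mathrm{diag}(I_{E}\otimes a,\,I_{E}\otimes b)$.

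For clause (1), let $\iota_{\sigma}\colon H_{\sigma}\to H_{\sigma\oplus\tau}$ be the inclusion onto the first summand and $P_{\tau}\colon H_{\sigma\oplus\tau}\to H_{\tau}$ the projection onto the second; being the structural morphisms of the biproduct $\sigma\oplus\tau$ these lie in $\Sigma$. A direct check gives $\iota_{\sigma}\mathfrak{z}=\mathfrak{y}(I_{E}\otimes\iota_{\sigma})$ and $P_{\tau}\mathfrak{y}=\mathfrak{w}(I_{E}\otimes P_{\tau})$, so that $\iota_{\sigma}\in\mathcal{I}(\sigma\times\mathfrak{z},(\sigma\oplus\tau)\times\mathfrak{y})$ and $P_{\tau}\in\mathcal{I}((\sigma\oplus\tau)\times\mathfrak{y},\tau\times\mathfrak{w})$. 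Writing $f_{\sigma\oplus\tau}(\mathfrak{y})=\begin{pmatrix}A&B\\C&D\end{pmatrix}$, the relation $\iota_{\sigma}f_{\sigma}(\mathfrak{z})=f_{\sigma\oplus\tau}(\mathfrak{y})\iota_{\sigma}$ forces $A=f_{\sigma}(\mathfrak{z})$ and $C=0$, and the relation $P_{\tau}f_{\sigma\oplus\tau}(\mathfrak{y})=f_{\tau}(\mathfrak{w})P_{\tau}$ forces $D=f_{\tau}(\mathfrak{w})$; we then put $\Delta f_{\sigma,\tau}(\mathfrak{z},\mathfrak{w})(\mathfrak{u}):=B\in B(H_{\tau},H_{\sigma})$.

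For clause (2), given $b\in\sigma(M)'$ with $b\mathfrak{z}=\mathfrak{z}(I_{E}\otimes b)$, take $C=\mathrm{diag}(b,I_{H_{\tau}})$; this lies in $\mathcal{I}(\sigma\oplus\tau,\sigma\oplus\tau)$ (it equals $\iota_{\sigma}bP_{\sigma}+\iota_{\tau}P_{\tau}$ — when $\Sigma$ is not full one needs $b$ itself to be a $\Sigma$-morphism, which always holds for $b=tI_{H_{\sigma}}$ and for full $\Sigma$). Using $b\mathfrak{z}=\mathfrak{z}(I_{E}\otimes b)$ one verifies $C\mathfrak{y}=\mathfrak{y}'(I_{E}\otimes C)$ with $\mathfrak{y}'=\begin{pmatrix}\mathfrak{z}&b\mathfrak{u}\\0&\mathfrak{w}\end{pmatrix}\in\mathcal{U}(\sigma\oplus\tau)$ (by hypothesis), so $Cf_{\sigma\oplus\tau}(\mathfrak{y})=f_{\sigma\oplus\tau}(\mathfrak{y}')C$; comparing $(1,1)$- and $(1,2)$-entries and invoking clause (1) on both sides yields $bf_{\sigma}(\mathfrak{z})=f_{\sigma}(\mathfrak{z})b$ and $\Delta f_{\sigma,\tau}(\mathfrak{z},\mathfrak{w})(b\mathfrak{u})=b\,\Delta f_{\sigma,\tau}(\mathfrak{z},\mathfrak{w})(\mathfrak{u})$, and $b=tI_{H_{\sigma}}$ gives the scalar case. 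Clause (3) is the mirror argument with $C=\mathrm{diag}(I_{H_{\sigma}},b)$, $b\in\tau(M)'$, $b\mathfrak{w}=\mathfrak{w}(I_{E}\otimes b)$, where $C$ intertwines $(\sigma\oplus\tau)\times\begin{pmatrix}\mathfrak{z}&\mathfrak{u}(I_{E}\otimes b)\\0&\mathfrak{w}\end{pmatrix}$ and $(\sigma\oplus\tau)\times\mathfrak{y}$, and one compares $(2,2)$- and $(1,2)$-entries.

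For clause (4) take $\sigma=\tau$; here $\mathfrak{z}+b\mathfrak{u}$ must lie in $\mathcal{U}(\sigma)$ for $f_{\sigma}(\mathfrak{z}+b\mathfrak{u})$ to be meaningful (this is automatic when the $\mathcal{U}(\sigma)$ are balls about an additive field, since then $\norm{(\mathfrak{z}+b\mathfrak{u})-\zeta_{\sigma}}$ and $\norm{\mathfrak{z}-\zeta_{\sigma}}$ are dominated by $\norm{\mathfrak{y}-\zeta_{\sigma}\oplus\zeta_{\sigma}}$), whence $\mathfrak{z}\oplus(\mathfrak{z}+b\mathfrak{u})\in\mathcal{U}(\sigma\oplus\sigma)$ by closure under direct sums. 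Let $V=\begin{pmatrix}I_{H_{\sigma}}&I_{H_{\sigma}}\\0&I_{H_{\sigma}}\end{pmatrix}$, invertible in $\mathcal{I}(\sigma\oplus\sigma,\sigma\oplus\sigma)$, and set $\mathfrak{y}_{1}=\begin{pmatrix}\mathfrak{z}&b\mathfrak{u}\\0&\mathfrak{z}+b\mathfrak{u}\end{pmatrix}$; a short computation gives $V^{-1}\mathfrak{y}_{1}(I_{E}\otimes V)=\mathfrak{z}\oplus(\mathfrak{z}+b\mathfrak{u})$, so $V\in\mathcal{I}\bigl((\sigma\oplus\sigma)\times(\mathfrak{z}\oplus(\mathfrak{z}+b\mathfrak{u})),(\sigma\oplus\sigma)\times\mathfrak{y}_{1}\bigr)$. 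Using respect of intertwiners together with clause (1) — the block-diagonal point evaluating to $f_{\sigma}(\mathfrak{z})\oplus f_{\sigma}(\mathfrak{z}+b\mathfrak{u})$ — the $(1,2)$-entry of $Vf_{\sigma\oplus\sigma}(\mathfrak{z}\oplus(\mathfrak{z}+b\mathfrak{u}))=f_{\sigma\oplus\sigma}(\mathfrak{y}_{1})V$ reads $f_{\sigma}(\mathfrak{z}+b\mathfrak{u})=f_{\sigma}(\mathfrak{z})+\Delta f_{\sigma,\sigma}(\mathfrak{z},\mathfrak{z}+b\mathfrak{u})(b\mathfrak{u})$, and $b=tI_{H_{\sigma}}$ gives the last display. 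No deep obstacle is expected — the lemma is essentially an unwinding of the definition of a matricial family — but the step demanding genuine care is checking, for each chosen $C$, the intertwining relation \eqref{eq:basic_intertwining}, $C\mathfrak{z}=\mathfrak{w}(I_{E}\otimes C)$: this is where one must keep the balanced tensor products and the identity $I_{E}\otimes\mathrm{diag}(a,b)=\mathrm{diag}(I_{E}\otimes a,I_{E}\otimes b)$ straight, and confirm that each auxiliary point ($\mathfrak{y}'$, $\mathfrak{y}_{1}$, $\mathfrak{z}\oplus(\mathfrak{z}+b\mathfrak{u})$) lies in the relevant member of the matricial family.
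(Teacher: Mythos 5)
Your proof is correct and follows essentially the same route as the paper's: for (1)--(3) you use exactly the paper's intertwiners (the column injection and row projection for (1), and $\mathrm{diag}(b,I_{\tau})$, $\mathrm{diag}(I_{\sigma},b)$ for (2)--(3)) and compare block entries. The only variation is in (4), where the paper applies the single row intertwiner $\begin{pmatrix}-I_{\sigma} & I_{\sigma}\end{pmatrix}$ carrying $\begin{pmatrix}\mathfrak{z} & b\mathfrak{u}\\ 0 & \mathfrak{z}+b\mathfrak{u}\end{pmatrix}$ to $\mathfrak{z}$, while you use the unipotent $V=\begin{pmatrix}I & I\\ 0 & I\end{pmatrix}$ against the block-diagonal point $\mathfrak{z}\oplus(\mathfrak{z}+b\mathfrak{u})$ --- an equivalent computation, whose extra requirement that $\mathfrak{z}+b\mathfrak{u}\in\mathcal{U}(\sigma)$ is implicitly needed for the statement anyway, as you note.
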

\begin{rem}
\label{Rem:_Key1} It should be emphasized that in $(1)$, $\Delta f_{\sigma,\tau}(\mathfrak{z},\mathfrak{w})(\mathfrak{u})$
represents an operator in $B(H_{\tau},H_{\sigma})$, whose full dependence
on $\mathfrak{z}$, $\mathfrak{w}$ and $\mathfrak{u}$ has still
to be determined. Among other things, part (2) of the lemma proves
that $\Delta f_{\sigma,\tau}(\mathfrak{z},\mathfrak{w})(\mathfrak{u})$
extends to be homogeneous in $\mathfrak{u}$ of degree one, but the
\emph{additivity} in $\mathfrak{u}$ will be proved later in Lemma~\ref{additivity}.
It should also be emphasized that to prove that $\Delta f_{\sigma,\tau}(\mathfrak{z},\mathfrak{w})(\mathfrak{u})$
extends to be homogeneous in $\mathfrak{u}$ the full force of the
assumption that $f$ is matricial is not used: We may assume that
$\sigma=\tau$ and that the intertwiners involved are operator matrices
whose entries are scalar multiples of the identity (on $H_{\sigma}$).
These are present in any of the categories we use, as was stated at
the outset of Section \ref{sec:Preliminaries}.\end{rem}
\begin{proof}
Write $I_{\sigma}$ for the identity operator on $H_{\sigma}$. Then
\begin{equation}
\left(\begin{array}{cc}
\mathfrak{z} & \mathfrak{u}\\
0 & \mathfrak{w}
\end{array}\right)\left(\begin{array}{c}
I_{E}\otimes I_{\sigma}\\
0
\end{array}\right)=\left(\begin{array}{c}
I_{\sigma}\\
0
\end{array}\right)\mathfrak{z}\label{eq:Intw_1}
\end{equation}
 and 
\begin{equation}
\left(\begin{array}{cc}
0 & I_{\tau}\end{array}\right)\left(\begin{array}{cc}
\mathfrak{z} & \mathfrak{u}\\
0 & \mathfrak{w}
\end{array}\right)=\mathfrak{w}\left(\begin{array}{cc}
0 & I_{E}\otimes I_{\tau}\end{array}\right).\label{eq:Intw_2}
\end{equation}
 Since $f$ preserves intertwiners, we may write 
\begin{equation}
f_{\sigma\oplus\tau}(\left(\begin{array}{cc}
\mathfrak{z} & \mathfrak{u}\\
0 & \mathfrak{w}
\end{array}\right))\left(\begin{array}{c}
I_{\sigma}\\
0
\end{array}\right)=\left(\begin{array}{c}
I_{\sigma}\\
0
\end{array}\right)f_{\sigma}(\mathfrak{z})\label{eq:Intw_1_bis}
\end{equation}
 and 
\begin{equation}
\left(\begin{array}{cc}
0 & I_{\tau}\end{array}\right)f_{\sigma\oplus\tau}(\left(\begin{array}{cc}
\mathfrak{z} & \mathfrak{u}\\
0 & \mathfrak{w}
\end{array}\right))=f_{\tau}(\mathfrak{w})\left(\begin{array}{cc}
0 & I_{\tau}\end{array}\right).\label{eq:Intw_2_bis}
\end{equation}
 So, if we write $f_{\sigma\oplus\tau}(\begin{bmatrix}\mathfrak{z} & \mathfrak{u}\\
0 & \mathfrak{w}
\end{bmatrix})$ as $\begin{bmatrix}a_{11} & a_{12}\\
a_{21} & a_{22}
\end{bmatrix},$ then \eqref{eq:Intw_1_bis} shows that $a_{11}=f_{\sigma}(\mathfrak{z})$
and $a_{21}=0$, while \eqref{eq:Intw_2_bis} also shows that $a_{21}=0$
as well as $a_{22}=f_{\tau}(\mathfrak{w})$. The remaining entry,
$a_{12}$, is taken as the definition of $\Delta f_{\sigma,\tau}(\mathfrak{z},\mathfrak{w})(\mathfrak{u})$.
This proves (1).

For (2) note that the equality 
\[
\left(\begin{array}{cc}
b & 0\\
0 & I_{\tau}
\end{array}\right)\left(\begin{array}{cc}
\mathfrak{z} & \mathfrak{u}\\
0 & \mathfrak{w}
\end{array}\right)=\left(\begin{array}{cc}
\mathfrak{z} & b\mathfrak{u}\\
0 & \mathfrak{w}
\end{array}\right)\left(\begin{array}{cc}
I_{E}\otimes b & 0\\
0 & I_{E}\otimes I_{\tau}
\end{array}\right)
\]
 implies 
\begin{multline*}
\left(\begin{array}{cc}
b & 0\\
0 & I_{\tau}
\end{array}\right)\left(\begin{array}{cc}
f_{\sigma}(\mathfrak{z}) & \Delta f_{\sigma,\tau}(\mathfrak{z},\mathfrak{w})(\mathfrak{u})\\
0 & f_{\tau}(\mathfrak{w})
\end{array}\right)\\
=\left(\begin{array}{cc}
f_{\sigma}(\mathfrak{z}) & \Delta f_{\sigma,\tau}(\mathfrak{z},\mathfrak{w})(b\mathfrak{u})\\
0 & f_{\tau}(\mathfrak{w})
\end{array}\right)\left(\begin{array}{cc}
b & 0\\
0 & I_{\tau}
\end{array}\right),
\end{multline*}
 proving (2).

The proof of (3) is similar using the equality 
\[
\left(\begin{array}{cc}
I_{\sigma} & 0\\
0 & b
\end{array}\right)\left(\begin{array}{cc}
\mathfrak{z} & \mathfrak{u}(I_{E}\otimes b)\\
0 & \mathfrak{w}
\end{array}\right)=\left(\begin{array}{cc}
\mathfrak{z} & \mathfrak{u}\\
0 & \mathfrak{w}
\end{array}\right)\left(\begin{array}{cc}
I_{E}\otimes I_{\sigma} & 0\\
0 & I_{E}\otimes b
\end{array}\right).
\]

For (4), simply note that 
\[
\left(\begin{array}{cc}
-I_{\sigma} & I_{\tau}\end{array}\right)\left(\begin{array}{cc}
\mathfrak{z} & b\mathfrak{u}\\
0 & \mathfrak{z}+b\mathfrak{u}
\end{array}\right)=\mathfrak{z}\left(\begin{array}{cc}
-I_{E}\otimes I_{\sigma} & I_{E}\otimes I_{\tau}\end{array}\right)
\]
 and use the properties of $f$.
\end{proof}
Even though the linearity in $\mathfrak{u}$ of the operator $\Delta f_{\sigma,\tau}(\mathfrak{z},\mathfrak{w})(\mathfrak{u})$
has still to be shown, $\Delta f_{\sigma,\tau}(\mathfrak{z},\mathfrak{w})$
can be viewed as a noncommutative difference operator with values
in $B(H_{\tau},H_{\sigma})$.
\begin{defn}
\label{def:Taylor_derivative}With the hypotheses and notation as
in Lemma \ref{Definition_Delta}, we call $\Delta f_{\sigma,\tau}(\mathfrak{z},\mathfrak{w})$
the \emph{Taylor difference operator} determined by $f$ and the points
$\mathfrak{z}$ and $\mathfrak{w}$. If $\sigma=\tau$ and $\mathfrak{z}=\mathfrak{w}$,
we call $\Delta f_{\sigma,\sigma}(\mathfrak{z},\mathfrak{z})$ the
\emph{Taylor derivative} of $f$ at $\mathfrak{z}$ and denote it
$\Delta f_{\sigma}(\mathfrak{z},\mathfrak{z})$ or $\Delta f_{\sigma}(\mathfrak{z})$.
\end{defn}
Even though its linearity in $\mathfrak{u}$ has still to be shown,
we can also define non commutative difference operators of higher
order. We will need these first. So to this end, note that by applying
part (1) of Lemma~\ref{Definition_Delta} repeatedly one finds that
for every $\sigma_{0},\ldots,\sigma_{n}$ in $\Sigma$, for every
$\mathfrak{z}_{i}\in\mathcal{U}(\sigma_{i})$ ($0\leq i\leq n$) and
for every $\mathfrak{u}_{j}\in\mathcal{I}(\sigma_{j}^{E}\circ\varphi,\sigma_{j-1})$
($1\leq j\leq n$), the matrix 
\begin{equation}
f_{\sigma_{0}\oplus\sigma_{1}\oplus\cdots\oplus\sigma_{n}}(\left(\begin{array}{ccccc}
\mathfrak{z}_{0} & \mathfrak{u}_{1} & 0 & \cdots & 0\\
0 & \mathfrak{z}_{1} & \ddots & \ddots & \vdots\\
\vdots & \ddots & \ddots & \ddots & 0\\
\vdots &  & \ddots & \mathfrak{z}_{n-1} & \mathfrak{u}_{n}\\
0 & \cdots & \cdots & 0 & \mathfrak{z}_{n}
\end{array}\right))\label{eq:f_multiple_sigmas}
\end{equation}

\noindent has a block upper triangular form with $f_{\sigma_{0}}(\mathfrak{z}_{0}),\ldots,f_{\sigma_{n}}(\mathfrak{z}_{n})$
on the main diagonal, assuming of course that the argument is in $\mathcal{U}(\sigma_{0}\oplus\sigma_{1}\oplus\cdots\oplus\sigma_{n})$,
i.e., 
\begin{multline}
f_{\sigma_{0}\oplus\sigma_{1}\oplus\cdots\oplus\sigma_{n}}(\left(\begin{array}{ccccc}
\mathfrak{z}_{0} & \mathfrak{u}_{1} & 0 & \cdots & 0\\
0 & \mathfrak{z}_{1} & \ddots & \ddots & \vdots\\
\vdots & \ddots & \ddots & \ddots & 0\\
\vdots &  & \ddots & \mathfrak{z}_{n-1} & \mathfrak{u}_{n}\\
0 & \cdots & \cdots & 0 & \mathfrak{z}_{n}
\end{array}\right))\\
=\begin{bmatrix}f_{\sigma_{0}}(\mathfrak{z}_{0}) & a_{01} & a_{02} & \cdots & a_{0n}\\
0 & f_{\sigma_{1}}(\mathfrak{z}_{1}) & a_{12} & \ddots & a_{1n}\\
\vdots &  & \ddots & \ddots & \vdots\\
\vdots &  &  & f_{\sigma_{n-1}}(\mathfrak{z}_{n-1}) & a_{n-1n}\\
0 & \cdots & \cdots &  & f_{\sigma_{n}}(\mathfrak{z}_{n})
\end{bmatrix}\label{eq:Def_Delta_n}
\end{multline}

\begin{defn}
\noindent \label{def:Delta_n} The function of $\mathfrak{u}_{1}$,
$\mathfrak{u}_{2}$, $\cdots$, $\mathfrak{u}_{n}$ defined by the
$0,n$ entry of the right hand side of Equation \ref{eq:Def_Delta_n}
will be called the \emph{$n^{th}$-order Taylor difference operator}
determined by $\mathfrak{z}_{0}$, $\mathfrak{z}_{1}$,$\cdots$,$\mathfrak{z}_{n}$,
and will be denoted $\Delta^{n}f_{\sigma_{0},\sigma_{1},\cdots,\sigma_{n}}(\mathfrak{z}_{0},\ldots,\mathfrak{z}_{n})$.
If $\mathfrak{z}_{0}=\mathfrak{z}_{1}=\cdots=\mathfrak{z}_{n}=\mathfrak{z}$,
we call $\Delta^{n}f_{\sigma,\sigma,\cdots,\sigma}(\mathfrak{z},\mathfrak{z},\cdots,\mathfrak{z}):=\Delta^{n}f_{\sigma}(\mathfrak{z})$
the $n^{th}$\emph{-order Taylor derivative} of $f_{\sigma}$ at $\mathfrak{z}$.
\end{defn}
The arguments for the proofs of parts (2) and (3) of Lemma~\ref{Definition_Delta}
can be adapted easily to prove the following lemma. We omit the details.
\begin{lem}
\label{Delta_n} Suppose $\{\mathfrak{z}_{i}\}$ and $\{\mathfrak{u}_{j}\}$
are as above. Fix $1\leq k\leq n+1$ and choose $b\in\sigma_{k-1}(M)'$
so that $b\mathfrak{z}_{k-1}=\mathfrak{z}_{k-1}(I_{E}\otimes b)$
(in particular, $b$ can be in $\mathbb{C}$), then: 
\begin{enumerate}
\item If $1<k\leq n$ and if both 
\begin{eqnarray*}
\Delta^{n}f_{\sigma_{0},\sigma_{1},\cdots,\sigma_{n}}(\mathfrak{z}_{0},\ldots,\mathfrak{z}_{n})(\mathfrak{u}_{1},\ldots,\mathfrak{u}_{k-1},b\mathfrak{u}_{k},\ldots,\mathfrak{u}_{n})\\
\end{eqnarray*}
 and $\Delta^{n}f_{\sigma_{0},\sigma_{1},\cdots,\sigma_{n}}(\mathfrak{z}_{0},\ldots,\mathfrak{z}_{n})(\mathfrak{u}_{1},\ldots,\mathfrak{u}_{k-1}(I_{E}\otimes b),\mathfrak{u}_{k},\ldots,\mathfrak{u}_{n})$
are well defined, in the sense that the argument matrices in the expression
\ref{eq:f_multiple_sigmas} lie in $\mathcal{U}(\sigma_{0}\oplus\sigma_{1}\oplus\cdots\oplus\sigma_{n}),$
then 
\begin{multline*}
\Delta^{n}f_{\sigma_{0},\sigma_{1},\cdots,\sigma_{n}}(\mathfrak{z}_{0},\ldots,\mathfrak{z}_{n})(\mathfrak{u}_{1},\ldots,\mathfrak{u}_{k-1},b\mathfrak{u}_{k},\ldots,\mathfrak{u}_{n})\\
=\Delta^{n}f_{\sigma_{0},\sigma_{1},\cdots,\sigma_{n}}(\mathfrak{z}_{0},\ldots,\mathfrak{z}_{n})(\mathfrak{u}_{1},\ldots,\mathfrak{u}_{k-1}(I_{E}\otimes b),\mathfrak{u}_{k},\ldots,\mathfrak{u}_{n}).
\end{multline*}

\item If $k=1$ and if $\Delta^{n}f_{\sigma_{0},\sigma_{1},\cdots,\sigma_{n}}f(\mathfrak{z}_{0},\ldots,\mathfrak{z}_{n})(b\mathfrak{u}_{1},\ldots,\mathfrak{u}_{n})$
is well defined, then 
\begin{multline*}
\Delta^{n}f_{\sigma_{0},\sigma_{1},\cdots,\sigma_{n}}(\mathfrak{z}_{0},\ldots,\mathfrak{z}_{n})(b\mathfrak{u}_{1},\ldots,\mathfrak{u}_{n})\\
=b\Delta^{n}f_{\sigma_{0},\sigma_{1},\cdots,\sigma_{n}}(\mathfrak{z}_{0},\ldots,\mathfrak{z}_{n})(\mathfrak{u}_{1},\ldots,\mathfrak{u}_{n})
\end{multline*}
.
\item If $k=n+1$ and if $\Delta^{n}f_{\sigma_{0},\sigma_{1},\cdots,\sigma_{n}}(\mathfrak{z}_{0},\ldots,\mathfrak{z}_{n})(\mathfrak{u}_{1},\ldots,\mathfrak{u}_{n}(I_{E}\otimes b))$
is well defined, then 
\begin{multline*}
\Delta^{n}f_{\sigma_{0},\sigma_{1},\cdots,\sigma_{n}}(\mathfrak{z}_{0},\ldots,\mathfrak{z}_{n})(\mathfrak{u}_{1},\ldots,\mathfrak{u}_{n}(I_{E}\otimes b))\\
=\Delta^{n}f_{\sigma_{0},\sigma_{1},\cdots,\sigma_{n}}(\mathfrak{z}_{0},\ldots,\mathfrak{z}_{n})(\mathfrak{u}_{1},\ldots,\mathfrak{u}_{n})b.
\end{multline*}

\end{enumerate}
\end{lem}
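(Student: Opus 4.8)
The plan is to imitate, at the level of $(n+1)\times(n+1)$ block matrices, the proofs of parts (2) and (3) of Lemma~\ref{Definition_Delta}. Write $\tilde\sigma:=\sigma_0\oplus\sigma_1\oplus\cdots\oplus\sigma_n$, and for the identity to be proved let $A$ and $A'$ denote the two upper bidiagonal matrices over $\tilde\sigma$ of the form in \eqref{eq:f_multiple_sigmas} that carry the two strings of super-diagonal arguments appearing on its two sides: in case (1), $A$ carries $b\mathfrak{u}_k$ in position $(k-1,k)$ and $A'$ carries $\mathfrak{u}_{k-1}(I_E\otimes b)$ in position $(k-2,k-1)$; in case (2), $A$ carries $b\mathfrak{u}_1$ in position $(0,1)$; in case (3), $A'$ carries $\mathfrak{u}_n(I_E\otimes b)$ in position $(n-1,n)$. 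By the hypothesis that the difference operators in question are well defined, both $A$ and $A'$ lie in $\mathcal{U}(\tilde\sigma)$, so $f_{\tilde\sigma}(A)$ and $f_{\tilde\sigma}(A')$ are defined; by \eqref{eq:Def_Delta_n} they are block upper triangular with the common diagonal $f_{\sigma_0}(\mathfrak{z}_0),\dots,f_{\sigma_n}(\mathfrak{z}_n)$, and their $(0,n)$ block entries are precisely the two $n^{th}$-order difference operators to be compared.

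The key device is a single ``diagonal'' intertwiner linking $A$ and $A'$. Let $D$ be the block-diagonal operator on $H_{\tilde\sigma}$ that equals $b$ on the summand $H_{\sigma_{k-1}}$ and the identity on every other summand. Since $b\in\sigma_{k-1}(M)'$ we have $D\in\mathcal{I}(\tilde\sigma,\tilde\sigma)$, and $I_E\otimes D$ is the corresponding operator on $H_{\tilde\sigma^E\circ\varphi}$ with $I_E\otimes b$ in the $(k-1)$st slot --- which is well defined precisely because $b$ commutes with $\sigma_{k-1}(M)$; the same observation shows that the modified entries $b\mathfrak{u}_k$, $\mathfrak{u}_{k-1}(I_E\otimes b)$, $\mathfrak{u}_n(I_E\otimes b)$ remain in the correct spaces $\mathcal{I}(\sigma_j^E\circ\varphi,\sigma_{j-1})$, so $A$ and $A'$ are legitimately formed. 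Multiplying out $DA'$ and $A(I_E\otimes D)$ block by block: $DA'$ alters only the $(k-1)$st block-row of $A'$ and $A(I_E\otimes D)$ alters only the $(k-1)$st block-column of $A$, and, after the labeling above, every entry of the two products coincides except the $(k-1,k-1)$ diagonal entry, where one gets $b\mathfrak{z}_{k-1}$ and the other $\mathfrak{z}_{k-1}(I_E\otimes b)$. By the standing hypothesis $b\mathfrak{z}_{k-1}=\mathfrak{z}_{k-1}(I_E\otimes b)$ these agree, so $DA'=A(I_E\otimes D)$, i.e.\ $D\in\mathcal{I}(\tilde\sigma\times A',\tilde\sigma\times A)$. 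Because $f$ respects intertwiners (Remark~\ref{rem:Rel_matricial_function}, using the convention of Remark~\ref{Xeta}), we obtain $D\,f_{\tilde\sigma}(A')=f_{\tilde\sigma}(A)\,D$.

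It remains only to read off the $(0,n)$ block entry of this operator identity. Left multiplication by $D$ rescales the $(k-1)$st block-row by $b$ on the left; right multiplication by $D$ rescales the $(k-1)$st block-column by $b$ on the right. Hence: if $1<k\le n$ then $0<k-1<n$, so neither the top row nor the last column is affected and the $(0,n)$ entries of $f_{\tilde\sigma}(A')$ and of $f_{\tilde\sigma}(A)$ coincide, which is part (1); if $k=1$ then the top row of $f_{\tilde\sigma}(A')$ is multiplied by $b$ on the left while the last column of $f_{\tilde\sigma}(A)$ is untouched, giving part (2); and if $k=n+1$ then the top row is untouched while the last column of $f_{\tilde\sigma}(A)$ is multiplied by $b$ on the right, giving part (3). (The remark that $b$ may be taken in $\mathbb{C}$ is subsumed, since scalars automatically commute with $\sigma_{k-1}(M)$.)

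There is no genuine obstacle beyond careful bookkeeping of the block structure; the two points that deserve attention are (i) that, with $b\in\sigma_{k-1}(M)'$, the modified off-diagonal entries stay in the intertwiner spaces $\mathcal{I}(\sigma_j^E\circ\varphi,\sigma_{j-1})$, so that $A$ and $A'$ are valid arguments of $f_{\tilde\sigma}$, and (ii) that one never needs the common value $DA'=A(I_E\otimes D)$ itself to lie in $\mathcal{U}(\tilde\sigma)$: only $A,A'\in\mathcal{U}(\tilde\sigma)$ is used, and this is exactly the ``well defined'' hypothesis of the statement. This is why the details may safely be omitted.
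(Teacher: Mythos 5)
Your argument is correct and is exactly the adaptation the paper has in mind: it omits the details, saying the proofs of parts (2) and (3) of Lemma~\ref{Definition_Delta} adapt easily, and your block-diagonal intertwiner $D$ with $b$ in the $(k-1)$st slot, the verification $DA'=A(I_E\otimes D)$ using $b\mathfrak{z}_{k-1}=\mathfrak{z}_{k-1}(I_E\otimes b)$, and the read-off of the $(0,n)$ corner of $Df_{\tilde\sigma}(A')=f_{\tilde\sigma}(A)D$ is precisely that adaptation. The bookkeeping (membership of the modified entries in the correct intertwiner spaces, and using only $A,A'\in\mathcal{U}(\tilde\sigma)$) is handled correctly, so no gap.
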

{}
\begin{rem}
\label{Rem:_Key2}Again we want to note that the argument for Lemma
\ref{Delta_n} shows that if we take the $\sigma_{i}$s to be one
and the same $\sigma$, so that the matrix 
\[
\left(\begin{array}{ccccc}
\mathfrak{z}_{0} & \mathfrak{u}_{1} & 0 & \cdots & 0\\
0 & \mathfrak{z}_{1} & \ddots & \ddots & \vdots\\
\vdots & \ddots & \ddots & \ddots & 0\\
\vdots &  & \ddots & \mathfrak{z}_{n-1} & \mathfrak{u}_{n}\\
0 & \cdots & \cdots & 0 & \mathfrak{z}_{n}
\end{array}\right)
\]
acts from $E\otimes_{\sigma}H_{m\sigma}$ to $H_{m\sigma}$, then
the only matrices necessary to show that $\Delta^{n}f_{\sigma_{0},\sigma_{1},\cdots,\sigma_{n}}(\mathfrak{z}_{0},\ldots,\mathfrak{z}_{n})(\mathfrak{u}_{1},\ldots,\mathfrak{u}_{n})$
extends to be homogeneous in each of the $\mathfrak{u}_{i}$s are
block matrices whose entries are scalar multiples of the identity
on $H_{\sigma}$.
\end{rem}
{}
\begin{lem}
\label{fmatrix} Given $\sigma_{0},\ldots,\sigma_{n}$ in $\Sigma$,
$\mathfrak{z}_{i}\in\mathcal{U}(\sigma_{i})$ ($0\leq i\leq n$) and
$\mathfrak{u}_{j}\in\mathcal{I}(\sigma_{j}^{E}\circ\varphi,\sigma_{j-1})$
($1\leq j\leq n$), then for $0\leq i<j\leq n$, the $i,j$ entry,
$a_{ij}$ of the right hand side of Equation \eqref{eq:Def_Delta_n}
is 
\[
a_{i,j}=\Delta^{j}f_{\sigma_{0},\sigma_{1},\cdots,\sigma_{j}}(\mathfrak{z}_{0},\dots,\mathfrak{z}_{j})(\mathfrak{u}_{1},\ldots,\mathfrak{u}_{j}).
\]
\end{lem}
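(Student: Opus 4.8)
The plan is to show, by induction on $n$, that the $(i,j)$ entry $a_{i,j}$ of the block upper triangular matrix in \eqref{eq:Def_Delta_n} is the $(j-i)^{th}$-order Taylor difference operator built from the truncated data, namely
$a_{i,j}=\Delta^{j-i}f_{\sigma_i,\sigma_{i+1},\ldots,\sigma_j}(\mathfrak{z}_i,\ldots,\mathfrak{z}_j)(\mathfrak{u}_{i+1},\ldots,\mathfrak{u}_j)$,
which by Definition~\ref{def:Delta_n} is precisely the top-right entry of $f_{\sigma_i\oplus\cdots\oplus\sigma_j}$ evaluated at the band matrix assembled from $\mathfrak{z}_i,\ldots,\mathfrak{z}_j$ and $\mathfrak{u}_{i+1},\ldots,\mathfrak{u}_j$. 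The only tools required are part~(1) of Lemma~\ref{Definition_Delta} and Definition~\ref{def:Delta_n}, together with the standing understanding (already built into Lemma~\ref{Definition_Delta}, and automatic for the matricial discs to which this lemma is applied) that every principal sub-band-matrix of the argument in \eqref{eq:f_multiple_sigmas} that we meet lies in the appropriate member of $\{\mathcal{U}(\sigma)\}_{\sigma\in\Sigma}$.

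The base case $n=1$ is immediate: the unique above-diagonal entry is $a_{0,1}=\Delta f_{\sigma_0,\sigma_1}(\mathfrak{z}_0,\mathfrak{z}_1)(\mathfrak{u}_1)$ by the definition of the Taylor difference operator in Lemma~\ref{Definition_Delta}(1). For the inductive step, write $\mathfrak{Z}$ for the $(n+1)$-fold band matrix in \eqref{eq:f_multiple_sigmas} and let $\mathfrak{Z}_{[0,n-1]}$ and $\mathfrak{Z}_{[1,n]}$ be its upper-left and lower-right $n\times n$ principal sub-band-matrices, which have the same form for the representations $\sigma_0,\ldots,\sigma_{n-1}$ and $\sigma_1,\ldots,\sigma_n$ respectively. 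Applying Lemma~\ref{Definition_Delta}(1) with $\sigma=\sigma_0\oplus\cdots\oplus\sigma_{n-1}$, $\tau=\sigma_n$, $\mathfrak{z}=\mathfrak{Z}_{[0,n-1]}$, $\mathfrak{w}=\mathfrak{z}_n$ and $\mathfrak{u}$ the column whose only nonzero entry is $\mathfrak{u}_n$, one reads off that the upper-left $n\times n$ block of $f_{\sigma_0\oplus\cdots\oplus\sigma_n}(\mathfrak{Z})$ is exactly $f_{\sigma_0\oplus\cdots\oplus\sigma_{n-1}}(\mathfrak{Z}_{[0,n-1]})$; by the inductive hypothesis this identifies $a_{i,j}$ for all $0\le i<j\le n-1$. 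Symmetrically, Lemma~\ref{Definition_Delta}(1) with $\sigma=\sigma_0$, $\tau=\sigma_1\oplus\cdots\oplus\sigma_n$, $\mathfrak{z}=\mathfrak{z}_0$, $\mathfrak{w}=\mathfrak{Z}_{[1,n]}$ and $\mathfrak{u}$ the row whose only nonzero entry is $\mathfrak{u}_1$ shows that the lower-right $n\times n$ block of $f_{\sigma_0\oplus\cdots\oplus\sigma_n}(\mathfrak{Z})$ is $f_{\sigma_1\oplus\cdots\oplus\sigma_n}(\mathfrak{Z}_{[1,n]})$, which identifies $a_{i,j}$ for all $1\le i<j\le n$. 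The one pair $(i,j)$ with $0\le i<j\le n$ not met by either sub-block is $(0,n)$, and $a_{0,n}=\Delta^n f_{\sigma_0,\ldots,\sigma_n}(\mathfrak{z}_0,\ldots,\mathfrak{z}_n)(\mathfrak{u}_1,\ldots,\mathfrak{u}_n)$ by Definition~\ref{def:Delta_n}. This closes the induction.

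There is no serious obstacle here; the argument is purely combinatorial once Lemma~\ref{Definition_Delta} is in hand. The two points requiring care are verifying that the two peelings really reproduce $f(\mathfrak{Z}_{[0,n-1]})$ and $f(\mathfrak{Z}_{[1,n]})$ verbatim on the respective $n\times n$ blocks, so that the inductive hypothesis applies unchanged, and the index bookkeeping, namely that $\{(i,j):j\le n-1\}\cup\{(i,j):i\ge 1\}$ omits only the pair $(0,n)$. If one prefers to avoid the induction, the same conclusion follows directly from the fact that $f$ respects intertwiners: the inclusion $H_{\sigma_0}\oplus\cdots\oplus H_{\sigma_j}\hookrightarrow H_{\sigma_0}\oplus\cdots\oplus H_{\sigma_n}$ intertwines $\mathfrak{Z}_{[0,j]}$ with $\mathfrak{Z}$ (column $l\le j$ of $\mathfrak{Z}$ is supported in rows $l-1$ and $l$), while the orthogonal projection of $H_{\sigma_0}\oplus\cdots\oplus H_{\sigma_j}$ onto $H_{\sigma_i}\oplus\cdots\oplus H_{\sigma_j}$ intertwines $\mathfrak{Z}_{[0,j]}$ with $\mathfrak{Z}_{[i,j]}$; composing the two resulting identities shows at once that the $(i,j)$ entry of $f(\mathfrak{Z})$ agrees with the top-right entry of $f(\mathfrak{Z}_{[i,j]})$, which is the asserted value.
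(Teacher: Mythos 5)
Your argument is correct and is essentially the paper's: an induction on $n$ driven by Lemma~\ref{Definition_Delta}(1) applied to two-block partitions of the band matrix, with the corner entry supplied by Definition~\ref{def:Delta_n} (the paper peels off the last summand and then re-partitions at an $i$-dependent spot to handle the last column, whereas you peel at both ends and quote the definition for the single remaining pair $(0,n)$ --- an immaterial variation, and your closing intertwiner argument is just the same mechanism unpacked). You also prove the statement in its intended, $i$-dependent form $a_{i,j}=\Delta^{j-i}f_{\sigma_{i},\ldots,\sigma_{j}}(\mathfrak{z}_{i},\ldots,\mathfrak{z}_{j})(\mathfrak{u}_{i+1},\ldots,\mathfrak{u}_{j})$, which is what the printed ($i$-independent, dimensionally impossible for $i>0$) formula must mean and what is used later, e.g.\ in Lemma~\ref{ljmj}; your explicit remark that the principal sub-band matrices must lie in the corresponding $\mathcal{U}(\cdot)$ (automatic for the matricial discs where the lemma is applied) is a point the paper leaves implicit.
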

\begin{proof}
The proof proceeds by induction. For $n=1$, the assertion is Lemma~\ref{Definition_Delta}(1).
So assume it holds for $n$ and write $\sigma=\oplus_{i=0}^{n+1}\sigma_{i}$.
We apply Lemma~\ref{Definition_Delta}(1) repeatedly. If we partition
$\sigma$ as $\sigma=(\oplus_{i=0}^{n}\sigma_{i})\oplus\sigma_{n+1}$,
Lemma~\ref{Definition_Delta} and the induction hypothesis prove
the lemma for all $j\leq n$ and for $i=j=n+1$. To obtain the formula
for $a_{i,n+1}$, simply write $\sigma$ as $\sigma=(\oplus_{k=0}^{i-2}\sigma_{k})\oplus(\oplus_{k=i-1}^{n+1}\sigma_{k})$,
then apply Lemma~\ref{Definition_Delta} and the induction assumption.
\end{proof}
The argument used in the proof of the following lemma was shown to
us by Victor Vinnikov. It will appear in his joint work with D. S.
Kaliuzhny\u{i}-Verbovetsky\u{i} \cite{K-VVPrep}.
\begin{lem}
\label{additivity} Let $f=\{f_{\sigma}\}_{\sigma\in\Sigma}$ be a
matricial family of functions defined on a matricial $E,\Sigma$-family
$\{\mathcal{U}(\sigma)\}_{\sigma\in\Sigma}$ where $\Sigma$ is an
additive subcategory of $NRep(M)$. Suppose $\sigma,\tau\in\Sigma$,
$\mathfrak{z}\in\mathcal{U}(\sigma)$, $\mathfrak{w}\in\mathcal{U}(\sigma)$
and $\mathfrak{u_{i}}\in\mathcal{I}(\tau^{E}\circ\varphi,\sigma)$
for $i=1,2$ are such that $A:=\left(\begin{array}{cc}
\mathfrak{z} & \mathfrak{u}_{1}+\mathfrak{u}_{2}\\
0 & \mathfrak{w}
\end{array}\right)\in\mathcal{U}(\sigma\oplus\tau)$, $B:=\left(\begin{array}{ccc}
\mathfrak{z} & 0 & \mathfrak{u}_{1}\\
0 & \mathfrak{z} & \mathfrak{u}_{2}\\
0 & 0 & \mathfrak{w}
\end{array}\right)\in\mathcal{U}(2\sigma\oplus\tau)$ and $C:=\left(\begin{array}{ccc}
\mathfrak{z} & 0 & \mathfrak{u}_{2}\\
0 & \mathfrak{z} & \mathfrak{u}_{1}\\
0 & 0 & \mathfrak{w}
\end{array}\right)\in\mathcal{U}(2\sigma\oplus\tau)$. Then 
\[
\Delta f_{\sigma,\tau}(\mathfrak{z},\mathfrak{w})(\mathfrak{u}_{1}+\mathfrak{u}_{2})=\Delta f_{\sigma,\tau}(\mathfrak{z},\mathfrak{w})(\mathfrak{u}_{1})+\Delta f(\mathfrak{z},\mathfrak{w})(\mathfrak{u}_{2}).
\]
\end{lem}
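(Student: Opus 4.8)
The plan is to exploit the block-matrix identity that makes the $3\times 3$ matrix $B$ (and $C$) similar, via an intertwiner with scalar entries, to matrices whose off-diagonal data combines $\mathfrak{u}_1$ and $\mathfrak{u}_2$ into $\mathfrak{u}_1+\mathfrak{u}_2$. Concretely, consider the operator $T:H_{2\sigma\oplus\tau}\to H_{\sigma\oplus\tau}$ given in block form by
\[
T=\begin{bmatrix} I_\sigma & I_\sigma & 0 \\ 0 & 0 & I_\tau \end{bmatrix},
\]
together with the companion $I_E\otimes$-version on the source side. A direct computation shows $T B = A\,(I_E\otimes T)$: the top row of $TB$ is $(\mathfrak{z},\ \mathfrak{z},\ \mathfrak{u}_1+\mathfrak{u}_2)$ applied appropriately, and the bottom row matches $\mathfrak{w}$. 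Since $T\in\mathcal{I}(2\sigma\oplus\tau,\sigma\oplus\tau)$ has scalar-identity entries — hence lies in whatever additive category $\Sigma$ we work in — and $TB=A(I_E\otimes T)$ says $T\in\mathcal{I}((2\sigma\oplus\tau)\times B,\ (\sigma\oplus\tau)\times A)$, the matricial hypothesis on $f$ gives
\[
T\, f_{2\sigma\oplus\tau}(B) = f_{\sigma\oplus\tau}(A)\, T.
\]

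The next step is to read off the $(0,2)$-type entry of each side using the block-triangular structure from Lemma~\ref{fmatrix} (or just Lemma~\ref{Definition_Delta}(1) iterated). On the right, $f_{\sigma\oplus\tau}(A)$ has the $2\times 2$ upper-triangular form with corner entry $\Delta f_{\sigma,\tau}(\mathfrak{z},\mathfrak{w})(\mathfrak{u}_1+\mathfrak{u}_2)$; multiplying by $T$ on the right and extracting the top-right block yields exactly $\Delta f_{\sigma,\tau}(\mathfrak{z},\mathfrak{w})(\mathfrak{u}_1+\mathfrak{u}_2)$. On the left, $f_{2\sigma\oplus\tau}(B)$ is block upper-triangular with diagonal $f_\sigma(\mathfrak{z}),f_\sigma(\mathfrak{z}),f_\tau(\mathfrak{w})$ and, by Lemma~\ref{fmatrix}, $(0,2)$-entry equal to $\Delta^2 f_{\sigma,\sigma,\tau}(\mathfrak{z},\mathfrak{z},\mathfrak{w})(0,\mathfrak{u}_1)$-type data; more simply, the $(0,1)$-entry is $\Delta f(\mathfrak{z},\mathfrak{z})(0)=0$, the $(1,2)$-entry is $\Delta f_{\sigma,\tau}(\mathfrak{z},\mathfrak{w})(\mathfrak{u}_2)$, and the $(0,2)$-entry is $\Delta f_{\sigma,\tau}(\mathfrak{z},\mathfrak{w})(\mathfrak{u}_1)$ (since the block above the last column of $B$ is $(\mathfrak{u}_1,\mathfrak{u}_2)^t$ with the first $\sigma$-block carrying $\mathfrak{u}_1$). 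Left-multiplying this matrix by $T$ adds the top two rows, so the top-right entry of $T f_{2\sigma\oplus\tau}(B)$ is $\Delta f_{\sigma,\tau}(\mathfrak{z},\mathfrak{w})(\mathfrak{u}_1) + \Delta f_{\sigma,\tau}(\mathfrak{z},\mathfrak{w})(\mathfrak{u}_2)$. Equating the two sides gives the claimed additivity. (The role of the matrix $C$ is only to guarantee, by symmetry of the hypotheses, that the bookkeeping is consistent regardless of which $\mathfrak{u}_i$ we place in which $\sigma$-slot; alternatively one uses $C$ with the permuting intertwiner to see the sum is symmetric in $\mathfrak{u}_1,\mathfrak{u}_2$.)

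The main obstacle I anticipate is purely combinatorial: correctly identifying which higher-order Taylor difference operators appear in the off-diagonal blocks of $f_{2\sigma\oplus\tau}(B)$ and checking that, after left multiplication by $T$, no unwanted cross term (e.g. a genuine second-order term $\Delta^2 f$) survives. The key point that makes it work is that $B$ is \emph{block bidiagonal with a zero super-super-diagonal in the relevant spot} — the $(0,1)$ block of $B$ is $0$ — so the second-order difference $\Delta^2 f_{\sigma,\sigma,\tau}(\mathfrak{z},\mathfrak{z},\mathfrak{w})(\cdot,\cdot)$ is evaluated with a $0$ in its first argument, and by Lemma~\ref{Definition_Delta}(2) (homogeneity of degree one in that argument, already established) it vanishes. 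Thus only the first-order terms remain, and the identity collapses to the desired additivity. I would organize the write-up as: (i) exhibit $T$ and verify $TB=A(I_E\otimes T)$ and $TC = A'(I_E\otimes T)$ for the appropriate $A'$; (ii) apply the matricial property; (iii) compute both sides block-wise, invoking Lemma~\ref{fmatrix} and the vanishing of the second-order term; (iv) conclude.
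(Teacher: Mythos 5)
Your skeleton coincides with the paper's: your $T$ is exactly the intertwiner $D=\left(\begin{smallmatrix}I & I & 0\\ 0 & 0 & I\end{smallmatrix}\right)$ used there, the relation $TB=A(I_{E}\otimes T)$ is verified the same way, and equating $(1,3)$ entries of $Tf_{2\sigma\oplus\tau}(B)=f_{\sigma\oplus\tau}(A)T$ gives $y+\Delta f_{\sigma,\tau}(\mathfrak{z},\mathfrak{w})(\mathfrak{u}_{2})=\Delta f_{\sigma,\tau}(\mathfrak{z},\mathfrak{w})(\mathfrak{u}_{1}+\mathfrak{u}_{2})$, where $y$ denotes the $(1,3)$ entry (your $(0,2)$ entry) of $f_{2\sigma\oplus\tau}(B)$. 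The genuine gap is your identification $y=\Delta f_{\sigma,\tau}(\mathfrak{z},\mathfrak{w})(\mathfrak{u}_{1})$. You justify it by Lemma \ref{fmatrix} plus "the second-order term vanishes because its first argument is $0$", but Lemma \ref{fmatrix} applies only to matrices whose off-diagonal entries sit on the first superdiagonal; $B$ has $0$ in the $(1,2)$ slot and $\mathfrak{u}_{1}$ in the $(1,3)$ slot, so it is not of that form, and at this stage no expansion of $f_{2\sigma\oplus\tau}(B)$ into Taylor difference operators is available for such a pattern. Expansions for more general sparse patterns (Lemma \ref{ljmj}) come later and rest on Corollary \ref{multilinearity}, i.e.\ on the very additivity being proved, so that route is circular as well as unfounded. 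What Lemma \ref{Definition_Delta}(1), applied with the splitting $\sigma\oplus(\sigma\oplus\tau)$, actually yields is that the first column of $f(B)$ below the diagonal vanishes and that the $(2,3)$ entry is $\Delta f_{\sigma,\tau}(\mathfrak{z},\mathfrak{w})(\mathfrak{u}_{2})$; the entries $x$ in position $(1,2)$ and $y$ in position $(1,3)$ are left undetermined.

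This is precisely where the hypothesis matrix $C$ does real work, which your parenthetical dismisses as bookkeeping: the paper takes the permutation matrix $S$ interchanging the two $\sigma$-slots, observes $C=SBS$, hence $f_{2\sigma\oplus\tau}(C)=Sf_{2\sigma\oplus\tau}(B)S$, and concludes that the $(1,3)$ entry of $f(B)$ equals the $(2,3)$ entry of $f(C)$, which the same $\sigma\oplus(\sigma\oplus\tau)$ splitting identifies as $\Delta f_{\sigma,\tau}(\mathfrak{z},\mathfrak{w})(\mathfrak{u}_{1})$. (An alternative patch: the co-isometry $P=\left(\begin{smallmatrix}I & 0 & 0\\ 0 & 0 & I\end{smallmatrix}\right)$ satisfies $PB=\left(\begin{smallmatrix}\mathfrak{z} & \mathfrak{u}_{1}\\ 0 & \mathfrak{w}\end{smallmatrix}\right)(I_{E}\otimes P)$, so respecting intertwiners gives $y=\Delta f_{\sigma,\tau}(\mathfrak{z},\mathfrak{w})(\mathfrak{u}_{1})$ directly — but this uses $\left(\begin{smallmatrix}\mathfrak{z} & \mathfrak{u}_{1}\\ 0 & \mathfrak{w}\end{smallmatrix}\right)\in\mathcal{U}(\sigma\oplus\tau)$, an assumption implicitly needed anyway for the right-hand side of the lemma to be defined.) Once $y$ is identified by one of these arguments, the rest of your computation is the paper's proof.
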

\begin{proof}
Considering $f_{2\sigma\oplus\tau}(B)$ and $f_{2\sigma\oplus\tau}(C)$
and using Lemma~\ref{Definition_Delta}(1) with $2\sigma\oplus\tau$
split as $\sigma\oplus(\sigma\oplus\tau)$ we find that these matrices
have the following from: 
\[
f_{2\sigma\oplus\tau}(B)=\left(\begin{array}{ccc}
f_{\sigma}(\mathfrak{z}) & x & y\\
0 & f_{\sigma}(\mathfrak{z}) & \Delta f_{\sigma,\tau}(\mathfrak{z},\mathfrak{w})(\mathfrak{u}_{2})\\
0 & 0 & f_{\tau}(\mathfrak{w})
\end{array}\right)
\]
 and 
\[
f_{2\sigma\oplus\tau}(C)=\left(\begin{array}{ccc}
f_{\sigma}(\mathfrak{z}) & u & v\\
0 & f_{\sigma}(\mathfrak{z}) & \Delta f_{\sigma,\tau}(\mathfrak{z},\mathfrak{w})(\mathfrak{u}_{1})\\
0 & 0 & f_{\tau}(\mathfrak{w})
\end{array}\right)
\]
for some $x,y,u,$ and $v$. Writing $S$ for the $3\times3$ permutation
matrix associated with the transposition $(1,2)$, we see that $C=SBS^{-1}=SBS$.
Thus $f(C)=Sf(B)S$ and, therefore, $y=\Delta f_{\sigma,\tau}(\mathfrak{z},\mathfrak{w})(\mathfrak{u}_{1})$
and 
\[
f_{2\sigma\oplus\tau}(B)=\left(\begin{array}{ccc}
f(\mathfrak{z}) & x & \Delta f_{\sigma,\tau}(\mathfrak{z},\mathfrak{w})(\mathfrak{u}_{1})\\
0 & f(\mathfrak{z}) & \Delta f_{\sigma,\tau}(\mathfrak{z},\mathfrak{w})(\mathfrak{u}_{2})\\
0 & 0 & f_{\tau}(\mathfrak{w})
\end{array}\right).
\]
Now write $D=\left(\begin{array}{ccc}
I & I & 0\\
0 & 0 & I
\end{array}\right)$. Then $DB=AD$ and, thus $Df_{2\sigma\oplus\tau}(B)=f_{\sigma\oplus\tau}(A)D$.
Since $f_{\sigma\oplus\tau}(A)=\left(\begin{array}{cc}
f_{\sigma}(\mathfrak{z}) & \Delta f_{\sigma,\tau}(\mathfrak{z},\mathfrak{w})(\mathfrak{u}_{1}+\mathfrak{u}_{2})\\
0 & f_{\tau}(\mathfrak{w})
\end{array}\right)$, we may equate the $(1,3)$ entries to obtain the result.\end{proof}
\begin{rem}
\label{Rem:_Key3} Again, in Lemma \ref{additivity}, if we set $\tau=\sigma$,
we see that the only matrices used in the proof are matrices that
are block scalar matrices. Combining Lemma \ref{additivity} with
Lemma~\ref{Definition_Delta}(2), we conclude that $\Delta f_{\sigma,\tau}(\mathfrak{z},\mathfrak{w})(\mathfrak{u})$
extends to be linear in $\mathfrak{u}$. 
\end{rem}
We can now deduce the multilinearity of $\Delta^{n}f_{\sigma_{0},\sigma_{1},\cdots,\sigma_{n}}(\mathfrak{z}_{0},\ldots,\mathfrak{z}_{n})(\cdot,\ldots,\cdot)$
(with $\mathfrak{z}_{0},\ldots,\mathfrak{z}_{n}$ fixed) from the
linearity of $\Delta f_{\sigma,\tau}(\mathfrak{z},\mathfrak{w})(\mathfrak{u})$
in $\mathfrak{u}$. In order to avoid specifying the conditions on
the variables as in the statement of the lemma above, we restrict
ourselves to matricial discs.
\begin{cor}
\label{multilinearity} Let $f=\{f_{\sigma}\}_{\sigma\in\Sigma}$
be a matricial family of functions, defined in a matricial disc $\mathbb{D}(\zeta,r)$.
Then for every $\sigma_{0},\ldots,\sigma_{n}$ in $\Sigma$ and every
$\mathfrak{z}_{i}\in\mathbb{D}(\zeta_{\sigma_{i}},r,\sigma_{i})$,
($0\leq i\leq n$), the function 
\[
\Delta^{n}f_{\sigma_{0},\sigma_{1},\cdots,\sigma_{n}}(\mathfrak{z}_{0},\ldots,\mathfrak{z}_{n})(\mathfrak{u}_{1},\ldots,\mathfrak{u}_{n}),
\]
defined for $\mathfrak{u}_{j}\in\mathcal{I}(\sigma_{j}^{E}\circ\varphi,\sigma_{j-1})$
($1\leq j\leq n$) with norm sufficiently small can be extended to
a multilinear map on $\mathcal{I}(\sigma_{1}^{E}\circ\varphi,\sigma_{0})\times\cdots\times\mathcal{I}(\sigma_{n}^{E}\circ\varphi,\sigma_{n-1})$. \end{cor}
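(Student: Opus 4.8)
The plan is to bootstrap from the degree-one case (Remark \ref{Rem:_Key3}) by peeling off one argument at a time, using the structure of the block upper-triangular matrix in Equation \eqref{eq:Def_Delta_n} together with Lemma \ref{fmatrix}. First I would fix $\sigma_0,\ldots,\sigma_n$ in $\Sigma$ and points $\mathfrak{z}_i\in\mathbb{D}(\zeta_{\sigma_i},r,\sigma_i)$, and observe that for $\mathfrak{u}_j$ of sufficiently small norm the argument matrix in \eqref{eq:f_multiple_sigmas} lies in $\mathbb{D}(\zeta_{\sigma_0\oplus\cdots\oplus\sigma_n},r,\sigma_0\oplus\cdots\oplus\sigma_n)$ (because that matrix differs from $\zeta_{\sigma_0}\oplus\cdots\oplus\zeta_{\sigma_n}$ only in the small-norm superdiagonal blocks, and $\zeta$ is an additive field so $\zeta_{\sigma_0\oplus\cdots\oplus\sigma_n}$ is the direct sum of the $\zeta_{\sigma_i}$). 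Hence $\Delta^n f_{\sigma_0,\ldots,\sigma_n}(\mathfrak{z}_0,\ldots,\mathfrak{z}_n)(\mathfrak{u}_1,\ldots,\mathfrak{u}_n)$ is defined on a neighborhood of the origin in $\mathcal{I}(\sigma_1^E\circ\varphi,\sigma_0)\times\cdots\times\mathcal{I}(\sigma_n^E\circ\varphi,\sigma_{n-1})$; homogeneity of degree one in each $\mathfrak{u}_j$ follows already from Lemma \ref{Delta_n}, so that each variable extends to all of its intertwiner space by scaling, and it only remains to prove additivity in each variable separately.

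The key step is to establish additivity in one fixed slot $\mathfrak{u}_k$ while all other $\mathfrak{u}_j$ are held fixed of small norm. I would do this by reproducing, one level deeper, the permutation-and-compression argument of Lemma \ref{additivity}. Namely, for a given $k$, replace $\sigma_{k-1}$ by $\sigma_{k-1}\oplus\sigma_{k-1}$ in the big block matrix \eqref{eq:f_multiple_sigmas}, splitting the single $\mathfrak{u}_k = \mathfrak{u}_k' + \mathfrak{u}_k''$ into a $2\times 1$ column $\bigl(\begin{smallmatrix}\mathfrak{u}_k'\\\mathfrak{u}_k''\end{smallmatrix}\bigr)$ and the single $\mathfrak{u}_{k-1}$ into the $1\times 2$ row $\bigl(\begin{smallmatrix}\mathfrak{u}_{k-1}&0\end{smallmatrix}\bigr)$ (or $\bigl(\begin{smallmatrix}0&\mathfrak{u}_{k-1}\end{smallmatrix}\bigr)$), exactly as $B$ and $C$ are built in Lemma \ref{additivity}. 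Conjugating by the relevant block permutation $S$ interchanges the two copies of $\sigma_{k-1}$ and shows the two ``off-diagonal'' entries that carry $\mathfrak{u}_k'$ and $\mathfrak{u}_k''$ respectively are each $\Delta^n f(\ldots)(\ldots,\mathfrak{u}_k',\ldots)$ and $\Delta^n f(\ldots)(\ldots,\mathfrak{u}_k'',\ldots)$; then compressing by $D = \bigl(\begin{smallmatrix}I&I\end{smallmatrix}\bigr)$ in that slot (and identity elsewhere), which satisfies $DB = AD$ with $A$ the original matrix carrying $\mathfrak{u}_k'+\mathfrak{u}_k''$, and equating the $(0,n)$ entries using Lemma \ref{fmatrix}, yields
\[
\Delta^n f_{\sigma_0,\ldots,\sigma_n}(\mathfrak{z}_0,\ldots,\mathfrak{z}_n)(\ldots,\mathfrak{u}_k'+\mathfrak{u}_k'',\ldots)
= \Delta^n f(\ldots)(\ldots,\mathfrak{u}_k',\ldots) + \Delta^n f(\ldots)(\ldots,\mathfrak{u}_k'',\ldots).
\]
Combined with the degree-one homogeneity from Lemma \ref{Delta_n}, this gives linearity in the $k$-th slot, and since $k$ was arbitrary, multilinearity follows.

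The main obstacle I anticipate is bookkeeping, not conceptual depth: one has to check that the enlarged matrices (with $\sigma_{k-1}$ doubled) still lie in the matricial disc so that $f$ may be applied — which again reduces to the additive-field property of $\zeta$ and smallness of the $\mathfrak{u}_j$ — and one has to verify that Lemma \ref{fmatrix} correctly identifies the relevant entries of $f$ applied to the doubled and to the compressed matrices as the claimed lower-order Taylor difference operators, so that equating entries is legitimate. A secondary subtlety is that $\Delta^n f$ is a priori only defined for small $\mathfrak{u}_j$; one handles this by first proving additivity and homogeneity on the small-norm polydisc, then defining the extension by $\Delta^n f(\ldots)(\mathfrak{u}_1,\ldots,\mathfrak{u}_n) := \prod_j t_j^{-1}\,\Delta^n f(\ldots)(t_1\mathfrak{u}_1,\ldots,t_n\mathfrak{u}_n)$ for suitably small nonzero scalars $t_j$, and checking this is well defined using the homogeneity already in hand. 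No new input beyond Lemmas \ref{Definition_Delta}, \ref{Delta_n}, \ref{fmatrix}, and \ref{additivity} is needed.
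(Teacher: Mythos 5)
Your reduction to the degree-one case is the right instinct, but the doubling construction you describe breaks down for every slot $k\geq 2$, i.e., precisely when the doubled summand $\sigma_{k-1}$ has the incoming entry $\mathfrak{u}_{k-1}$ attached to it. Take $n=2$, $k=2$ and place $\mathfrak{u}_{1}$ as the row $(\mathfrak{u}_{1}\;\;0)$, as you propose, so that
\[
B=\begin{pmatrix}\mathfrak{z}_{0} & \mathfrak{u}_{1} & 0 & 0\\ 0 & \mathfrak{z}_{1} & 0 & \mathfrak{u}_{2}'\\ 0 & 0 & \mathfrak{z}_{1} & \mathfrak{u}_{2}''\\ 0 & 0 & 0 & \mathfrak{z}_{2}\end{pmatrix},\qquad
A=\begin{pmatrix}\mathfrak{z}_{0} & \mathfrak{u}_{1} & 0\\ 0 & \mathfrak{z}_{1} & \mathfrak{u}_{2}'+\mathfrak{u}_{2}''\\ 0 & 0 & \mathfrak{z}_{2}\end{pmatrix},\qquad
D=\begin{pmatrix}I & 0 & 0 & 0\\ 0 & I & I & 0\\ 0 & 0 & 0 & I\end{pmatrix}.
\]
Then $(DB)_{0,2}=0$ while $(A(I_{E}\otimes D))_{0,2}=\mathfrak{u}_{1}$, so $D$ is \emph{not} an intertwiner and the step ``$DB=AD$, equate the $(0,n)$ entries'' is unavailable. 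Worse, the matrix $B$ cannot carry the information you need: compressing away the second copy of $\sigma_{1}$ (that compression \emph{is} an intertwiner from $B$ to the matrix with $\mathfrak{u}_{2}'$ alone) shows that the top-right entry of $f(B)$ equals $\Delta^{2}f_{\sigma_0,\sigma_1,\sigma_2}(\mathfrak{z}_0,\mathfrak{z}_1,\mathfrak{z}_2)(\mathfrak{u}_{1},\mathfrak{u}_{2}')$, with no contribution from $\mathfrak{u}_{2}''$. If you instead duplicate $\mathfrak{u}_{1}$ into both columns, $(\mathfrak{u}_{1}\;\;\mathfrak{u}_{1})$, then $DB=A(I_{E}\otimes D)$ does hold, but now you must identify the top-right entry of $f(B)$ as the \emph{sum} of the two $\Delta^{2}f$ terms, which neither the permutation trick (the enlarged matrix is symmetric only after also swapping $\mathfrak{u}_{2}'$ and $\mathfrak{u}_{2}''$) nor Lemma \ref{fmatrix} (which covers only matrices supported on the superdiagonal) provides; that identification is essentially the additivity you are trying to prove. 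Your scheme does work for $k=1$ (and, dually, for the last slot if one doubles $\sigma_{n}$ and splits $\mathfrak{u}_{n}$ as a row), because there the doubled summand has no adjacent entry other than the variable being split — which is exactly the situation of Lemma \ref{additivity}.

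The paper avoids the middle-slot problem entirely by a coarse re-blocking rather than a new doubling argument: for fixed $j$ it writes the $(n+1)\times(n+1)$ matrix of \eqref{eq:f_multiple_sigmas} as $\left(\begin{smallmatrix}X & Z\\ 0 & Y\end{smallmatrix}\right)$, where $X$ is the upper-left $j\times j$ block and $Z$ has $\mathfrak{u}_{j}$ in its lower-left corner and zeros elsewhere. By Lemma \ref{fmatrix}, $\Delta^{n}f_{\sigma_{0},\ldots,\sigma_{n}}(\mathfrak{z}_{0},\ldots,\mathfrak{z}_{n})(\mathfrak{u}_{1},\ldots,\mathfrak{u}_{n})$ is a fixed corner entry of $\Delta f(X,Y)(Z)$, and Lemma \ref{additivity} (together with the homogeneity of Lemma \ref{Definition_Delta}(2)) already gives linearity of $\Delta f(X,Y)(Z)$ in $Z$; since, with the other variables held fixed inside $X$ and $Y$, the block $Z$ depends linearly on $\mathfrak{u}_{j}$, linearity in the $j$-th slot follows at once. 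If you want to salvage your route, replace the doubling of $\sigma_{k-1}$ for interior slots by this $2\times2$ re-blocking; as written, your proof only covers the extreme slots.
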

\begin{proof}
For $n=1$, this is shown in Lemma~\ref{additivity}. For the general
case, fix $1\leq j\leq n$ and write 
\[
\left(\begin{array}{ccccc}
\mathfrak{z}_{0} & \mathfrak{u}_{1} & 0 & \cdots & 0\\
0 & \mathfrak{z}_{1} & \ddots & \ddots & \vdots\\
\vdots & \ddots & \ddots & \ddots & 0\\
\vdots &  & \ddots & \mathfrak{z}_{n-1} & \mathfrak{u}_{n}\\
0 & \cdots & \cdots & 0 & \mathfrak{z}_{n}
\end{array}\right)=\left(\begin{array}{cc}
X & Z\\
0 & Y
\end{array}\right)
\]
where $X$ is a $j\times j$ block and $Z$ is a $j\times(n-j)$ block
with $\mathfrak{u}_{j}$ in the bottom left corner and all other entries
$0$. Applying $f_{\sigma_{0}\oplus\sigma_{1}\oplus\cdots\oplus\sigma_{n}}$
to this matrix we get, on one hand, a matrix with $\Delta^{n}f_{\sigma_{0},\sigma_{1},\cdots,\sigma_{n}}(\mathfrak{z}_{0},\ldots,\mathfrak{z}_{n})(\mathfrak{u}_{1},\ldots,\mathfrak{u}_{n})$
in the upper right corner and, on the other hand, a $2\times2$ block
matrix that we shall simply write as $\Delta f(X,Y)(Z)$ in the $(1,2)$
corner. Using Lemma~\ref{additivity}, we know that $\Delta f(X,Y)(Z)$
is linear in $Z$ and, thus, linear in $\mathfrak{u}_{j}$. Therefore
$\Delta^{n}f_{\sigma_{0},\sigma_{1},\cdots,\sigma_{n}}(\mathfrak{z}_{0},\ldots,\mathfrak{z}_{n})(\mathfrak{u}_{1},\ldots,\mathfrak{u}_{n})$,
being a corner, is linear in $\mathfrak{u}_{j}$. This completes the
proof. 
\end{proof}
The following theorem is an analogue of Taylor's Taylor Theorem with
remainder \cite[Proposition 4.2]{Tay73a}.
\begin{thm}
\label{Taylor_Remainder} Let $f=\{f_{\sigma}\}_{\sigma\in\Sigma}$
be a matricial family of functions defined on a matricial disc $\mathbb{D}(\zeta,r)$.
Fix $\sigma\in\Sigma$ and choose $\mathfrak{z}$ and $\mathfrak{w}$
in $\mathbb{D}(\zeta_{\sigma},r,\sigma)$ so that the matrix 
\begin{equation}
\left(\begin{array}{ccccc}
\mathfrak{z} & \mathfrak{w} & 0 & \cdots & 0\\
0 & \mathfrak{z} & \ddots & \ddots & \vdots\\
\vdots & \ddots & \ddots & \ddots & 0\\
\vdots &  & \ddots & \mathfrak{z} & \mathfrak{w}\\
0 & \cdots & \cdots & 0 & \mathfrak{z}+\mathfrak{w}
\end{array}\right)\label{eq:Big_matrix}
\end{equation}
 lies in $\mathbb{D}(\zeta_{(n+1)\sigma},r,(n+1)\sigma)$. Then 
\[
f_{\sigma}(\mathfrak{z}+\mathfrak{w})=\sum_{k=0}^{n-1}\Delta^{k}f_{\sigma}(\mathfrak{z})(\mathfrak{w},\ldots,\mathfrak{w})+\Delta^{n}f_{\sigma,\sigma,\cdots,\sigma}(\mathfrak{z},\ldots,\mathfrak{z},\mathfrak{z}+\mathfrak{w})(\mathfrak{w},\ldots,\mathfrak{w}).
\]
 \end{thm}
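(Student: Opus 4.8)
The plan is to obtain the formula in one stroke by evaluating $f$ on the single $(n+1)\times(n+1)$ block matrix $T$ of \eqref{eq:Big_matrix} and reading off one block entry of an intertwining identity. Identify $H_{(n+1)\sigma}$ with $H_\sigma^{\oplus(n+1)}$ and let $c\colon H_\sigma\to H_{(n+1)\sigma}$ be the column $c=(I_\sigma,I_\sigma,\dots,I_\sigma)^{t}$, i.e. $c=\sum_{k=0}^{n}\iota_k$. Since each coordinate injection $\iota_k$ belongs to $\Sigma$ and morphism spaces of $\Sigma$ are vector spaces, $c\in\mathcal{I}(\sigma,(n+1)\sigma)$. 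Each block row of $T$ has entries summing to $\mathfrak{z}+\mathfrak{w}$, so a direct computation gives $T(I_E\otimes c)=c(\mathfrak{z}+\mathfrak{w})$; hence $c\in\mathcal{I}\big(\sigma\times(\mathfrak{z}+\mathfrak{w}),\,(n+1)\sigma\times T\big)$. Before doing this one checks that the arguments are legitimate: $T$ lies in $\mathbb{D}(\zeta_{(n+1)\sigma},r,(n+1)\sigma)$ by hypothesis, and compressing $T-\zeta_{(n+1)\sigma}$ to the bottom diagonal corner (using that $\zeta$ is an additive field, so $\zeta_{(n+1)\sigma}=\zeta_\sigma\oplus\cdots\oplus\zeta_\sigma$) shows $(\mathfrak{z}+\mathfrak{w})-\zeta_\sigma$ has norm $<r$, so $f_\sigma(\mathfrak{z}+\mathfrak{w})$ is defined.

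Next I would invoke the defining property of a matricial family: since $f$ respects intertwiners, $c\,f_\sigma(\mathfrak{z}+\mathfrak{w})=f_{(n+1)\sigma}(T)\,c$. The left-hand side is the column all of whose $n+1$ blocks equal $f_\sigma(\mathfrak{z}+\mathfrak{w})$; in particular its $0$-th block is $f_\sigma(\mathfrak{z}+\mathfrak{w})$. On the right, $f_{(n+1)\sigma}(T)$ is block upper triangular by Lemma~\ref{fmatrix} (equivalently \eqref{eq:Def_Delta_n}), so the $0$-th block of $f_{(n+1)\sigma}(T)\,c$ is $\sum_{j=0}^{n}\big[f_{(n+1)\sigma}(T)\big]_{0,j}$. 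Applying Lemma~\ref{fmatrix} with $\mathfrak{z}_0=\cdots=\mathfrak{z}_{n-1}=\mathfrak{z}$, $\mathfrak{z}_n=\mathfrak{z}+\mathfrak{w}$, and $\mathfrak{u}_1=\cdots=\mathfrak{u}_n=\mathfrak{w}$ identifies these entries: $\big[f_{(n+1)\sigma}(T)\big]_{0,0}=f_\sigma(\mathfrak{z})$; for $1\le j\le n-1$, $\big[f_{(n+1)\sigma}(T)\big]_{0,j}=\Delta^{j}f_{\sigma,\dots,\sigma}(\mathfrak{z},\dots,\mathfrak{z})(\mathfrak{w},\dots,\mathfrak{w})=\Delta^{j}f_\sigma(\mathfrak{z})(\mathfrak{w},\dots,\mathfrak{w})$; and for $j=n$, $\big[f_{(n+1)\sigma}(T)\big]_{0,n}=\Delta^{n}f_{\sigma,\dots,\sigma}(\mathfrak{z},\dots,\mathfrak{z},\mathfrak{z}+\mathfrak{w})(\mathfrak{w},\dots,\mathfrak{w})$. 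Reading $\Delta^{0}f_\sigma(\mathfrak{z})$ as $f_\sigma(\mathfrak{z})$, equating the two expressions for the $0$-th block gives exactly the asserted identity.

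I do not expect a genuine obstacle here: the entire content is in the choice of the intertwiner $c$ and in Lemma~\ref{fmatrix}, which is already in hand. The only points demanding a little care are purely bookkeeping --- that $c$ is a morphism of $\Sigma$ (additivity of hom-sets plus the standing assumption that the coordinate injections lie in $\Sigma$), and that every representation and every element appearing ($\mathfrak{z}+\mathfrak{w}$, $T$, and all intermediate block matrices implicit in Lemma~\ref{fmatrix}) lies in the respective disc, which is precisely what the hypothesis on \eqref{eq:Big_matrix} together with additivity of $\zeta$ secures.
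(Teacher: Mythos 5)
Your proposal is correct and follows the paper's own argument: the paper likewise observes that the column intertwiner of identities satisfies $A(I_E\otimes c)=c(\mathfrak{z}+\mathfrak{w})$ and then applies the intertwining property of $f_{(n+1)\sigma}$ together with Lemma~\ref{fmatrix}. Your write-up merely spells out the block bookkeeping and the membership checks that the paper leaves implicit.
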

\begin{proof}
Denote the matrix (\ref{eq:Big_matrix}) by $A$ and note that, 
\[
A\left(\begin{array}{c}
I_{E}\otimes I_{\sigma}\\
\vdots\\
\vdots\\
\vdots\\
I_{E}\otimes I_{\sigma}
\end{array}\right)=\left(\begin{array}{c}
I_{\sigma}\\
\vdots\\
\vdots\\
\vdots\\
I_{\sigma}
\end{array}\right)(\mathfrak{z}+\mathfrak{w}).
\]
Applying the intertwining property of $f_{(n+1)\sigma}$ and Lemma~\ref{fmatrix}
completes the proof. 
\end{proof}
We would like to pass to the limit as $n\to\infty$, in Theorem \ref{Taylor_Remainder},
but to do this, we must impose an additional hypothesis on $f$, viz.
$f$ must be locally uniformly bounded in the sense of Theorem \ref{thm:Matricial_implies_Frechet_hol.}.
The following theorem should be compared with Theorem \ref{boundedhol}.
There, we assumed that the additive subcategory $\Sigma$ is full
and contains a special generator for $NRep(M)$. Here we make no assumptions
on $\Sigma$ other than it is an additive subcategory of $NRep(M)$.
\begin{thm}
\label{TTseries} Let $f=\{f_{\sigma}\}_{\sigma\in\Sigma}$ be a matricial
family of functions defined on a matricial disc $\mathbb{D}(\zeta,r)$
and suppose that $f$ is locally uniformly bounded in the sense of
Theorem \ref{thm:Matricial_implies_Frechet_hol.}. Then: 
\begin{enumerate}
\item Each $f_{\sigma}$ is Frechet differentiable in $\mathfrak{z}$, $\mathfrak{z}\in\mathbb{D}(\zeta_{\sigma},r,\sigma)$,
and
\[
f_{\sigma}'(\mathfrak{z})(\mathfrak{w})=\Delta f(\mathfrak{z})(\mathfrak{w}).
\]
 
\item Each $f_{\sigma}$ may be expanded on $\mathbb{D}(\zeta_{\sigma},r,\sigma)$
as 
\begin{equation}
f_{\sigma}(\zeta_{\sigma}+\mathfrak{z})=\sum_{k=0}^{\infty}\Delta^{k}f_{\sigma}(\zeta_{\sigma})(\mathfrak{z},\ldots,\mathfrak{z}),\label{TT}
\end{equation}
 where the series converges absolutely and uniformly on every disc
$\mathbb{D}(0,r_{0},\sigma)$ with $r_{0}<r$. 
\end{enumerate}
\end{thm}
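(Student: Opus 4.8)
The plan is to run the standard ``Taylor's Taylor theorem'' machinery, using the locally uniform bound on $f$ to control the remainder term $\Delta^{n}f_{\sigma,\ldots,\sigma}(\mathfrak{z},\ldots,\mathfrak{z},\mathfrak{z}+\mathfrak{w})(\mathfrak{w},\ldots,\mathfrak{w})$ from Theorem~\ref{Taylor_Remainder} and force it to vanish as $n\to\infty$. First I would fix $\sigma$, fix $r_{0}<r$, and work with $\mathfrak{z}$ in a disc $\mathbb{D}(\zeta_{\sigma},\rho,\sigma)$ with $r_{0}<\rho<r$; the key estimate is a bound on the norm of the $(0,n)$-entry of $f_{(n+1)\sigma}$ applied to the bidiagonal matrix in \eqref{eq:Big_matrix}. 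The point is that, provided $\|\mathfrak{w}\|$ is small enough relative to $\rho-r_{0}$ (so that the big matrix stays inside $\mathbb{D}(\zeta_{(n+1)\sigma},\rho,(n+1)\sigma)$), the whole matrix has norm $\le \rho$, hence $f_{(n+1)\sigma}$ of it has norm $\le C:=\sup_{\sigma}\sup_{\mathbb{D}(\zeta_{\sigma},\rho,\sigma)}\|f_{\sigma}\|<\infty$, and therefore each entry, in particular $\Delta^{n}f_{\sigma,\ldots,\sigma}(\mathfrak{z},\ldots,\mathfrak{z},\mathfrak{z}+\mathfrak{w})(\mathfrak{w},\ldots,\mathfrak{w})$, has norm $\le C$. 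That alone only gives boundedness of the remainder, not decay; to get decay one replaces $\mathfrak{w}$ by $\lambda\mathfrak{w}$ with $|\lambda|$ slightly larger than $1$ (Cauchy-estimate trick): by the homogeneity/multilinearity established in Corollary~\ref{multilinearity}, $\Delta^{k}f_{\sigma}(\zeta_{\sigma})(\lambda\mathfrak{w},\ldots,\lambda\mathfrak{w})=\lambda^{k}\Delta^{k}f_{\sigma}(\zeta_{\sigma})(\mathfrak{w},\ldots,\mathfrak{w})$, and similarly the $n$-th order remainder scales like $\lambda^{n}$, so from the uniform bound $C$ on the remainder evaluated at $\lambda\mathfrak{w}$ one extracts $\|\Delta^{n}f_{\sigma}(\zeta_{\sigma})(\mathfrak{w},\ldots,\mathfrak{w})\|\le C|\lambda|^{-n}$, whence the series \eqref{TT} converges geometrically.

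More precisely, for part (2) I would argue as follows. Apply Theorem~\ref{Taylor_Remainder} with $\mathfrak{z}$ replaced by $\zeta_{\sigma}$ (using that $\zeta_{\sigma}\in\mathbb{D}(\zeta_{\sigma},r,\sigma)$ trivially) and $\mathfrak{w}$ replaced by $\mathfrak{z}$, getting
\[
f_{\sigma}(\zeta_{\sigma}+\mathfrak{z})=\sum_{k=0}^{n-1}\Delta^{k}f_{\sigma}(\zeta_{\sigma})(\mathfrak{z},\ldots,\mathfrak{z})+\Delta^{n}f_{\sigma,\ldots,\sigma}(\zeta_{\sigma},\ldots,\zeta_{\sigma},\zeta_{\sigma}+\mathfrak{z})(\mathfrak{z},\ldots,\mathfrak{z}).
\]
Now fix $r_{0}<r$ and let $\mathfrak{z}\in\mathbb{D}(0,r_{0},\sigma)$, so $\zeta_{\sigma}+\mathfrak{z}\in\mathbb{D}(\zeta_{\sigma},r_{0},\sigma)$. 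Choose $r_{1}$ with $r_{0}<r_{1}<r$ and set $\lambda=r_{1}/r_{0}>1$ when $\mathfrak{z}\ne 0$ (the case $\mathfrak{z}=0$ is trivial); replace $\mathfrak{z}$ by $\lambda\mathfrak{z}$ throughout, observe that the bidiagonal matrix built from $\zeta_{\sigma}$ and $\lambda\mathfrak{z}$ has norm $\le\|\zeta_{(n+1)\sigma}\|+\|\lambda\mathfrak{z}\|$; actually it is cleaner to center at $\zeta_{\sigma}$ and recenter so the matrix lies in $\mathbb{D}(\zeta_{(n+1)\sigma},r_{1},(n+1)\sigma)$, which holds once $\|\lambda\mathfrak{z}\|<r_{1}$, i.e., $\|\mathfrak{z}\|<r_{0}$. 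Then the remainder evaluated at $\lambda\mathfrak{z}$ has norm $\le \sup_{\tau,\,\mathfrak{y}\in\mathbb{D}(\zeta_{\tau},r_{1},\tau)}\|f_{\tau}(\mathfrak{y})\|=:C_{r_{1}}<\infty$. By Corollary~\ref{multilinearity}, $\Delta^{n}f_{\sigma,\ldots,\sigma}(\zeta_{\sigma},\ldots,\zeta_{\sigma},\zeta_{\sigma}+\lambda\mathfrak{z})(\lambda\mathfrak{z},\ldots,\lambda\mathfrak{z})$ is $\lambda^{n}$ times the same expression at $\zeta_{\sigma}+\mathfrak{z}$ in the last slot and $\mathfrak{z}$ in the remaining slots — here one must be a little careful because the last argument $\zeta_{\sigma}+\lambda\mathfrak{z}$ is not homogeneously scaled; the fix is to note that Corollary~\ref{multilinearity} gives homogeneity in each of the $n$ arguments $\mathfrak{u}_{j}$ for \emph{fixed} $\mathfrak{z}_{0},\ldots,\mathfrak{z}_{n}$, so we should instead invoke Theorem~\ref{Taylor_Remainder} directly with $\mathfrak{w}=\lambda\mathfrak{z}$ and then use homogeneity only in the $\mathfrak{u}$-variables of the remainder term $\Delta^{n}f_{\sigma,\ldots,\sigma}(\zeta_{\sigma},\ldots,\zeta_{\sigma},\zeta_{\sigma}+\lambda\mathfrak{z})$, obtaining $\le C_{r_{1}}$ for that, hence $\|\Delta^{n}f_{\sigma}(\zeta_{\sigma})(\mathfrak{z},\ldots,\mathfrak{z})\|\le C_{r_{1}}\lambda^{-n}=C_{r_{1}}(r_{0}/r_{1})^{n}$ once we argue that the $(n+1)$-argument remainder itself, as a function of its last point, is bounded by $C_{r_{1}}$ uniformly. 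This last uniformity is exactly the content of the matrix-norm bound: the $(0,n)$ entry of $f_{(n+1)\sigma}(A)$ is bounded by $\|f_{(n+1)\sigma}(A)\|\le C_{r_{1}}$ whenever $\|A\|\le r_{1}$ (after recentering at $\zeta_{(n+1)\sigma}$). Summing the geometric series gives absolute and uniform convergence of \eqref{TT} on $\mathbb{D}(0,r_{0},\sigma)$, and letting $n\to\infty$ in the Taylor-with-remainder formula gives the identity \eqref{TT} with the remainder $\to 0$.

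For part (1), Frechet differentiability of $f_{\sigma}$ at $\mathfrak{z}\in\mathbb{D}(\zeta_{\sigma},r,\sigma)$ with derivative $\mathfrak{w}\mapsto\Delta f_{\sigma}(\mathfrak{z})(\mathfrak{w})$ follows from the $n=2$ case of Theorem~\ref{Taylor_Remainder} applied with base point $\mathfrak{z}$ (not $\zeta_{\sigma}$): $f_{\sigma}(\mathfrak{z}+\mathfrak{w})=f_{\sigma}(\mathfrak{z})+\Delta f_{\sigma}(\mathfrak{z})(\mathfrak{w})+\Delta^{2}f_{\sigma,\sigma,\sigma}(\mathfrak{z},\mathfrak{z},\mathfrak{z}+\mathfrak{w})(\mathfrak{w},\mathfrak{w})$; by the matrix-norm argument the second-order remainder is bounded by a constant times $\|\mathfrak{w}\|^{2}$ (use the Cauchy trick once more, scaling $\mathfrak{w}\mapsto t\mathfrak{w}$, to pull out $\|\mathfrak{w}\|^{2}$), so it is $o(\|\mathfrak{w}\|)$; linearity of $\mathfrak{w}\mapsto\Delta f_{\sigma}(\mathfrak{z})(\mathfrak{w})$ is Remark~\ref{Rem:_Key3} (boundedness of this linear map also follows from the matrix estimate), which identifies it as the Frechet derivative. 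The main obstacle, and the step to handle with care, is getting the \emph{$n$-independent} bound $C_{r_{1}}$ on the higher-order remainders and matching the combinatorics of the Cauchy scaling argument with the multilinearity in Corollary~\ref{multilinearity}: one must verify that enlarging $\|\mathfrak{w}\|$ by the factor $\lambda=r_{1}/r_{0}$ keeps the block bidiagonal matrix \eqref{eq:Big_matrix} inside the matricial disc $\mathbb{D}(\zeta_{(n+1)\sigma},r_{1},(n+1)\sigma)$ for \emph{every} $n$ simultaneously — this is where the block-diagonal structure of $\zeta_{(n+1)\sigma}=\zeta_{\sigma}\oplus\cdots\oplus\zeta_{\sigma}$ (guaranteed by $\zeta$ being an additive field) and a uniform bound on the norm of such a bidiagonal matrix in terms of $\max(\|\mathfrak{z}-\zeta_{\sigma}\|,\|\mathfrak{w}\|)$ (independent of the block size, coming from the simple $2\times 2$-block upper-triangular norm estimate iterated, or more cleanly from viewing the matrix as $\mathrm{diag}(\mathfrak{z},\ldots,\mathfrak{z})$ plus a nilpotent shift whose norm is $\|\mathfrak{w}\|$) do the work.
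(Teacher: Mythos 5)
Part (1) of your proposal is fine and is essentially the paper's own argument: the $3\times3$ matrix gives the bound $\Vert\Delta^{2}f_{\sigma,\sigma,\sigma}(\mathfrak{z},\mathfrak{z},\mathfrak{z}+\mathfrak{w})(\mathfrak{w},\mathfrak{w})\Vert\leq M$ for $\Vert\mathfrak{w}\Vert\leq s$, and homogeneity in the $\mathfrak{u}$-slots pulls out the factor $\Vert\mathfrak{w}\Vert^{2}/s^{2}$, so the remainder is $o(\Vert\mathfrak{w}\Vert)$.

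The gap is in part (2), precisely at the step you flagged as the one ``to handle with care,'' and it does not go through as you describe. The recentered matrix \eqref{eq:Big_matrix} is \emph{not} $\zeta_{(n+1)\sigma}$ plus a nilpotent shift of norm $\Vert\mathfrak{w}\Vert$: the increment also sits in the bottom-right diagonal corner, and already the $2\times2$ corner with $\mathfrak{u}$ in both the $(1,2)$ and $(2,2)$ entries has norm $\sqrt{2}\,\Vert\mathfrak{u}\Vert$. So the displacement of the matrix built from $\zeta_{\sigma}$ and the scaled increment $\lambda\mathfrak{z}$ has norm of order $\sqrt{1+\lambda^{2}}\,\Vert\mathfrak{z}\Vert$, not $\lambda\Vert\mathfrak{z}\Vert$ and not $\max(\Vert\mathfrak{z}-\zeta_{\sigma}\Vert,\Vert\mathfrak{w}\Vert)$. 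Consequently, with $\lambda=r_{1}/r_{0}$ the matrix lies in $\mathbb{D}(\zeta_{(n+1)\sigma},r_{1},(n+1)\sigma)$ only when $\Vert\mathfrak{z}\Vert<r_{1}/\sqrt{1+\lambda^{2}}$, which is strictly smaller than $r_{0}$; and if $r_{0}\geq r/\sqrt{2}$ there is no admissible $\lambda>1$ at all, so your scaled remainder estimate cannot even be started. Your argument therefore proves \eqref{TT} only on a strictly smaller disc and never establishes the identity on all of $\mathbb{D}(0,r_{0},\sigma)$. The paper closes exactly this gap with two different matrices and one extra idea that your proposal lacks: the matrix whose diagonal entries are \emph{all} $\zeta_{\sigma}$ (no corner perturbation) has displacement norm $q\Vert\mathfrak{z}-\zeta_{\sigma}\Vert<r_{1}$ on the whole $r_{0}$-disc and yields the coefficient bound $\Vert\Delta^{k}f_{\sigma}(\zeta_{\sigma})(\mathfrak{z},\ldots,\mathfrak{z})\Vert\leq Mq^{-k}$, hence absolute and uniform convergence of the series on $\mathbb{D}(0,r_{0},\sigma)$; the matrix with $\mathfrak{z}$ in the corner gives the remainder estimate, and with it \eqref{TT}, only for $\Vert\mathfrak{z}-\zeta_{\sigma}\Vert<r_{1}/\sqrt{1+q^{2}}$; equality on the larger disc is then obtained by restricting to a complex line, comparing the Banach-space-valued analytic functions $\lambda\mapsto f_{\sigma}(\zeta_{\sigma}+\lambda\mathfrak{z})$ (analytic by part (1)) and $\lambda\mapsto\sum_{k}\lambda^{k}\Delta^{k}f_{\sigma}(\zeta_{\sigma})(\mathfrak{z},\ldots,\mathfrak{z})$, which agree near $0$ and hence at $\lambda=1$ by the identity theorem for vector-valued analytic functions. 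Without this continuation step (or an equivalent device), your proof only reaches radius roughly $r/\sqrt{2}$, not every $r_{0}<r$.
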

\begin{proof}
To prove (1), let $r_{0}<r$ and let $M$ be greater than 
\[
\sup_{\sigma\in\Sigma}\sup_{\mathfrak{z}\in\mathbb{D}(\zeta_{\sigma},r_{0})}\Vert f_{\sigma}(\mathfrak{z})\Vert,
\]
which is finite by assumption. Fix $\mathfrak{z}\in\mathbb{D}(\zeta_{\sigma},r_{0},\sigma)$
and note that for $\mathfrak{w}$ with norm sufficiently small (say,
$||\mathfrak{w}||\leq s$), the matrix 
\begin{equation}
\left(\begin{array}{ccc}
\mathfrak{z} & \mathfrak{w} & 0\\
0 & \mathfrak{z} & \mathfrak{w}\\
0 & 0 & \mathfrak{z}+\mathfrak{w}
\end{array}\right)\label{eq:3x3 matrix}
\end{equation}
lies in $\mathbb{D}(\zeta_{3\sigma},r_{0},3\sigma)=\mathbb{D}(\zeta_{\sigma}\oplus\zeta_{\sigma}\oplus\zeta_{\sigma},r_{0},3\sigma)$.
But this implies in particular, that 
\[
\begin{bmatrix}\mathfrak{z} & \mathfrak{w}\\
0 & \mathfrak{z}
\end{bmatrix}
\]
lies in $\mathbb{D}(\zeta_{2\sigma},r_{0},2\sigma)$. Our boundedness
assumption, then implies that for $\Vert\mathfrak{w}\Vert\leq s$,
\[
\Vert\Delta f_{\sigma}(\mathfrak{z})(\mathfrak{w})\Vert\leq\Vert\begin{bmatrix}f_{\sigma}(\mathfrak{z}) & \Delta f_{\sigma}(\mathfrak{z})(\mathfrak{w})\\
0 & f_{\sigma}(\mathfrak{z})
\end{bmatrix}\Vert=\Vert f_{2\sigma}(\begin{bmatrix}\mathfrak{z} & \mathfrak{w}\\
0 & \mathfrak{z}
\end{bmatrix})\Vert\leq M,
\]
i.e., $\Delta f_{\sigma}(\mathfrak{z})(\cdot)$ extends to be a \emph{bounded}
linear operator. Further, the same sort of argument shows that when
we apply $f_{3\sigma}$ to the matrix \eqref{eq:3x3 matrix} for $\Vert\mathfrak{w}\Vert\leq s$,
we have $||\Delta^{2}f_{\sigma,\sigma,\sigma}(\mathfrak{z},\mathfrak{z},\mathfrak{z}+\mathfrak{w})(\mathfrak{w},\mathfrak{w})||\leq M$,
by definition of $\Delta^{2}f_{\cdot,\cdot,\cdot}(\cdot,\cdot,\cdot)(\cdot,\cdot)$.
But by Theorem~\ref{Taylor_Remainder}, $f_{\sigma}(\mathfrak{z}+\mathfrak{w})-f_{\sigma}(\mathfrak{z})-\Delta f_{\sigma}(\mathfrak{z})(\mathfrak{w})=\Delta^{2}f_{\sigma,\sigma,\sigma}(\mathfrak{z},\mathfrak{z},\mathfrak{z}+\mathfrak{w})(\mathfrak{w},\mathfrak{w})$.
So, if we write $\mathfrak{w}_{0}$ for $\frac{s}{||\mathfrak{w}||}\mathfrak{w}$,
then $||f_{\sigma}(\mathfrak{z}+\mathfrak{w})-f_{\sigma}(\mathfrak{z})-\Delta f_{\sigma}(\mathfrak{z})(\mathfrak{w})||=||(\frac{||\mathfrak{w}||}{s})^{2}\Delta^{2}f_{\sigma,\sigma,\sigma}(\mathfrak{z},\mathfrak{z},\mathfrak{z}+\mathfrak{w})(\mathfrak{w}_{0},\mathfrak{w}_{0})||\leq\frac{||\mathfrak{w}||^{2}M}{s^{2}}$,
which proves that 
\[
\frac{1}{||\mathfrak{w}||}||f_{\sigma}(\mathfrak{z}+\mathfrak{w})-f_{\sigma}(\mathfrak{z})-\Delta f_{\sigma}(\mathfrak{z})(\mathfrak{w})||\underset{}{\longrightarrow0}
\]
as $\mathfrak{w}\to0$. Thus (1) is proved.

To prove (2), fix $0<r_{0}<r_{1}<r$, write $q:=r_{1}/r_{0}$ and
let $r_{2}:=\frac{r_{1}}{\sqrt{1+q^{2}}}<r_{0}$. Then for every $n$
and every $\mathfrak{z}\in\mathbb{D}(\zeta_{\sigma},r_{2},\sigma)$,
\[
\mathfrak{w}:=\left(\begin{array}{ccccc}
\zeta_{\sigma} & q(\mathfrak{z}-\zeta_{\sigma}) & 0 & \cdots & 0\\
0 & \zeta_{\sigma} & \ddots & \ddots & \vdots\\
\vdots & \ddots & \ddots & \ddots & 0\\
\vdots &  & \ddots & \zeta_{\sigma} & q(\mathfrak{z}-\zeta_{\sigma})\\
0 & \cdots & \cdots & 0 & \mathfrak{z}
\end{array}\right)
\]
lies in $\mathbb{D}(\zeta_{(n+1)\sigma},r_{1},(n+1)\sigma)\subseteq\mathbb{D}(\zeta_{(n+1)\sigma},r,(n+1)\sigma)$.
Thus 
\[
||\Delta^{n}f_{\sigma,\sigma,\cdots,\sigma}(\zeta_{\sigma},\ldots,\zeta_{\sigma},\mathfrak{z})(q(\mathfrak{z}-\zeta_{\sigma}),\ldots,q(\mathfrak{z}-\zeta_{\sigma}))||\leq||f(\mathfrak{w})||\leq M.
\]
From Lemma~\ref{fmatrix}, we conclude that 
\[
||\Delta^{n}f_{\sigma,\sigma,\cdots,\sigma}(\zeta_{\sigma},\ldots,\zeta_{\sigma},\mathfrak{z})(\mathfrak{z}-\zeta_{\sigma},\ldots,\mathfrak{z}-\zeta_{\sigma})||\leq q^{-n}M\underset{n\rightarrow\infty}{\longrightarrow}0.
\]
So Theorem~\ref{Taylor_Remainder} proves (\ref{TT}) for $\mathfrak{z}\in\mathbb{D}(\zeta_{\sigma},r_{2},\sigma)$.

To pass to $\mathfrak{z}$ in the larger disc $\mathbb{D}(\zeta_{\sigma},r_{0},\sigma)$,
observe that for such $\mathfrak{z}$ 
\[
\mathfrak{u}:=\left(\begin{array}{ccccc}
\zeta_{\sigma} & q(\mathfrak{z}-\zeta_{\sigma}) & 0 & \cdots & 0\\
0 & \zeta_{\sigma} & \ddots & \ddots & \vdots\\
\vdots & \ddots & \ddots & \ddots & 0\\
\vdots &  & \ddots & \zeta_{\sigma} & q(\mathfrak{z}-\zeta_{\sigma})\\
0 & \cdots & \cdots & 0 & \zeta_{\sigma}
\end{array}\right)
\]
lies in the disc $\mathbb{D}(\zeta_{(n+1)\sigma},r_{1},(n+1)\sigma)\subseteq\mathbb{D}(\zeta_{(n+1)\sigma},r,(n+1)\sigma).$
Consequently 
\begin{multline*}
||\Delta^{n}f_{\sigma,\sigma,\cdots,\sigma}(\zeta_{\sigma},\ldots,\zeta_{\sigma})(\mathfrak{z}-\zeta_{\sigma},\ldots,\mathfrak{z}-\zeta_{\sigma})||=\\
q^{-n}||\Delta^{n}f_{\sigma,\sigma,\cdots,\sigma}(\zeta_{\sigma},\ldots,\zeta_{\sigma})(q(\mathfrak{z}-\zeta_{\sigma}),\ldots,q(\mathfrak{z}-\zeta_{\sigma}))||\leq q^{-n}M.
\end{multline*}
It follows that the series in Equation (\ref{TT}) converges absolutely
and uniformly on $\mathbb{D}(0,r_{0})$.

To see that the sum is $f_{\sigma}(\zeta_{\sigma}+\mathfrak{z})$,
fix $\mathfrak{z}\in\mathbb{D}(0,r_{0},\sigma)$ and consider the
function $h(\lambda):=f_{\sigma}(\zeta_{\sigma}+\lambda\mathfrak{z})$.
By part (1) $h$ is analytic in a disc centered at the origin in the
complex plane having radius bigger than $1$. On the other hand we
may also form 
\begin{multline*}
g(\lambda):=\sum_{k=0}^{\infty}\Delta^{k}f_{\sigma,\sigma,\cdots,\sigma}(\zeta_{\sigma},\ldots,\zeta_{\sigma})(\lambda\mathfrak{z},\ldots,\lambda\mathfrak{z})\\
=\sum_{k=0}^{\infty}\lambda^{k}\Delta^{k}f_{\sigma,\sigma,\cdots,\sigma}(\zeta_{\sigma},\ldots,\zeta_{\sigma})(\mathfrak{z},\ldots,\mathfrak{z}),
\end{multline*}
which is also analytic in a disc centered at the origin of radius
bigger than $1$. By what we have just shown, these two Banach space-valued
functions agree on a neighborhood of the origin in the complex plane.
Therefore, they agree on the intersection of their domains of definition
(\cite[Theorem 3.11.5]{HP1974}), which includes $1$, i.e., Equation
\eqref{TT} is valid throughout the disc $\mathbb{D}(0,r_{0},\sigma)$.\end{proof}
\begin{rem}
As a special case of Theorem \ref{TTseries}, we obtain a formula
that was inspired by \cite{K-V2012}. If $\sigma$ is a normal representation
of $M$, then the subcategory $\Sigma$ generated by $\sigma$ is
just the collection of finite multiples of $\sigma$. The collection
of morphisms from $m\sigma$ to $n\sigma$ is just the $n\times m$
matrices over $\sigma(M)'.$ A $\zeta_{0}\in E^{\sigma*}$ generates
an additive field over $\Sigma$ simply by setting $\zeta_{k\sigma}=\zeta_{0}\oplus\zeta_{0}\oplus\cdots\oplus\zeta_{0}$
($k$ summands). Then, if $f=\{f_{m\sigma}\}_{m\in\mathbb{N}}$ is
a locally bounded, matricial function on the matricial disc $\mathbb{D}(\zeta,r)$,
$\zeta=\{\zeta_{m\sigma}\}_{m\in\mathbb{N}}$, then Theorem \ref{TT}
implies that for every $m\in\mathbb{N}$ and every $\mathfrak{z}$
in $\mathbb{D}(\zeta_{m\sigma},r,m\sigma)$, 
\[
f_{m\sigma}(\mathfrak{z})=\sum_{k=0}^{\infty}\Delta^{k}f_{m\sigma}(\zeta_{m\sigma})(\mathfrak{z}-\zeta_{m\sigma},\ldots,\mathfrak{z}-\zeta_{m\sigma}).
\]

\end{rem}
Suppose $f=\{f_{\sigma}\}_{\sigma\in\Sigma}$ is a matricial function
defined on a matricial disc $\mathbb{D}(\zeta,r)$ and suppose that
$f$ is locally uniformly bounded as in Theorems \eqref{thm:Matricial_implies_Frechet_hol.}
and \eqref{TT}. Then we have just seen that each $f_{\sigma}$ is
Frechet differentiable throughout $\mathbb{D}(\zeta_{\sigma},r,\sigma)$.
By \cite[Theorem 26.3.10]{HP1974} $f_{\sigma}$ can be expanded in
a unique power series about each point $\mathfrak{z}\in\mathbb{D}(\zeta_{\sigma},r,\sigma)$,
and the series converges at least in the largest open ball centered
at $\mathfrak{z}$ contained in $\mathbb{D}(\zeta_{\sigma},r,\sigma)$.
The terms of the power series are built from the higher order Frechet
derivatives of $f_{\sigma}$. Recall from the general theory of differentiable
functions on Banach spaces (applied to our setting) that the $n^{th}$
order Frechet derivative of $f_{\sigma}$ is a $B(H_{\sigma})$-valued
function, denoted $D^{n}f_{\sigma}$, that is defined on $\mathbb{D}(\zeta_{\sigma},r,\sigma)\times E^{\sigma*}\times E^{\sigma*}\times\cdots\times E^{\sigma*}$
and has the following properties: For each $\mathfrak{z}$, $D^{n}f_{\sigma}(\mathfrak{z})(\mathfrak{u}_{1},\ldots,\mathfrak{u}_{n})$
is a bounded, symmetric, multilinear function of $(\mathfrak{u}_{1},\ldots,\mathfrak{u}_{n})\in E^{\sigma*}\times E^{\sigma*}\times\cdots\times E^{\sigma*}$,
where the norm $\Vert D^{n}f_{\sigma}(\mathfrak{z})(\cdot,\ldots,\cdot)\Vert$
is locally bounded in $\mathfrak{z}\in\mathbb{D}(0,r,\sigma)$ (\cite[Theorem 26.3.5]{HP1974}).
If we write $D^{n}f_{\sigma}(\mathfrak{z})(\mathfrak{u})$ for $D^{n}f_{\sigma}(\mathfrak{z})(\mathfrak{u},\ldots,\mathfrak{u})$,
then $D^{n}f_{\sigma}(\mathfrak{z})(\mathfrak{u})$ is homogeneous
of degree $n$ in $\mathfrak{u}$ and for each $\mathfrak{a}\in\mathbb{D}(\zeta_{\sigma},r,\sigma)$
there is an $r'$, depending on $\mathfrak{a}$, such that 
\begin{equation}
f_{\sigma}(\mathfrak{z}+\mathfrak{u})=\sum_{n=0}^{\infty}\frac{1}{n!}D^{n}f_{\sigma}(\mathfrak{z})(\mathfrak{u}),
\end{equation}
with the convergence uniform for $||\mathfrak{z}-\mathfrak{a}||<r'$
and $||\mathfrak{u}||<r'$ (\cite[Theorem 3.17.1]{HP1974}).

When $\mathfrak{z}=\zeta_{\sigma}$ we find that 
\begin{equation}
f_{\sigma}(\zeta_{\sigma}+\mathfrak{u})=\sum_{n=0}^{\infty}\frac{1}{n!}D^{n}f_{\sigma}(\zeta_{\sigma})(\mathfrak{u})\label{Tseries}
\end{equation}

Since each of the summands in equations (\ref{TT}) and (\ref{Tseries})
is homogeneous, we conclude that 
\[
\Delta^{k}f_{\sigma}(\zeta_{\sigma})(\mathfrak{z})=\frac{1}{k!}D^{k}f_{\sigma}(\zeta_{\sigma})(\mathfrak{z})
\]
for all $k$ and $\mathfrak{z}$ for which the left hand side is well
defined.

We may therefore summarize our analysis as follows.
\begin{cor}
\label{nlinear} Let $\Sigma$ be an additive subcategory of $NRep(M)$,
let $f=\{f_{\sigma}\}_{\sigma\in\Sigma}$ be a matricial family of
functions defined on a matricial disc $\mathbb{D}(\zeta,r)$, and
suppose that $f$ is locally uniformly bounded on $\mathbb{D}(\zeta,r)$.
Then, for every $\sigma\in\Sigma$, every $k\geq0$, and every $\mathfrak{z}\in\mathbb{D}(0,r,\sigma)$,
Taylor's Taylor derivatives and the Frechet derivatives of $f_{\sigma}$
are related by the equation 
\begin{equation}
\Delta^{k}f_{\sigma}(\zeta_{\sigma})(\mathfrak{z},\ldots,\mathfrak{z})=\frac{1}{k!}D^{k}f(\zeta_{\sigma})(\mathfrak{z}),\label{eq:Taylor's_coefficients}
\end{equation}
and so $\Delta^{k}f_{\sigma}(\zeta_{\sigma})(\mathfrak{z})$ is the
restriction to the diagonal of \foreignlanguage{english}{\textup{$\mathbb{D}(\zeta_{\sigma},r,\sigma)^{k}$}}
of a bounded, \emph{symmetric,} $k$-linear map on $E^{\sigma*}\times E^{\sigma*}\cdots\times E^{\sigma*}$.
Consequently, we may write 
\begin{eqnarray*}
f_{\sigma}(\zeta_{\sigma}+\mathfrak{z}) & = & \sum_{k=0}^{\infty}\Delta^{k}f_{\sigma}(\zeta_{\sigma})(\mathfrak{z})\\
 & = & \sum_{k=0}^{\infty}\frac{1}{k!}D^{k}f(\zeta_{\sigma})(\mathfrak{z})
\end{eqnarray*}
\end{cor}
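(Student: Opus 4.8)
The plan is to obtain Corollary~\ref{nlinear} by collecting the two power‑series expansions already established for $f_\sigma$ near $\zeta_\sigma$ and matching them term by term. Fix $\sigma\in\Sigma$. First I would invoke Theorem~\ref{TTseries}(1): under our standing hypotheses each $f_\sigma$ is Frechet differentiable throughout the open ball $\mathbb{D}(\zeta_\sigma,r,\sigma)$ of the Banach space $E^{\sigma*}$, and, being also locally bounded there, it is Frechet holomorphic in the sense of Hille--Phillips. Hence by \cite[Theorem 26.3.10]{HP1974} it admits, about $\zeta_\sigma$, a Frechet--Taylor expansion $f_\sigma(\zeta_\sigma+\mathfrak{u})=\sum_{n\ge0}\frac{1}{n!}D^nf_\sigma(\zeta_\sigma)(\mathfrak{u})$ valid on some ball about $\zeta_\sigma$, where $D^nf_\sigma(\zeta_\sigma)$ is a bounded, symmetric $n$‑linear map on $E^{\sigma*}\times\cdots\times E^{\sigma*}$ and $D^nf_\sigma(\zeta_\sigma)(\mathfrak{u})$ denotes its diagonal value, a continuous homogeneous polynomial of degree $n$ in $\mathfrak{u}$ \cite[Theorems 26.3.5 and 3.17.1]{HP1974}.

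Next I would bring in the competing expansion: Theorem~\ref{TTseries}(2) gives $f_\sigma(\zeta_\sigma+\mathfrak{z})=\sum_{k\ge0}\Delta^{k}f_\sigma(\zeta_\sigma)(\mathfrak{z},\dots,\mathfrak{z})$, converging absolutely and uniformly on each $\mathbb{D}(0,r_0,\sigma)$ with $r_0<r$, the $k$‑th summand being homogeneous of degree $k$ in $\mathfrak{z}$ by Lemma~\ref{Definition_Delta}(2) (applied with scalar intertwiners), and well defined on the whole disc with the bound $\Vert\Delta^kf_\sigma(\zeta_\sigma)(\mathfrak{z},\dots,\mathfrak{z})\Vert\le q^{-k}M$ recorded in the proof of Theorem~\ref{TTseries}(2). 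To identify the two expansions I would restrict to a complex line: for fixed $\mathfrak{z}\in\mathbb{D}(0,r_0,\sigma)$ the $B(H_\sigma)$‑valued maps $\lambda\mapsto f_\sigma(\zeta_\sigma+\lambda\mathfrak{z})$, $\lambda\mapsto\sum_{k}\lambda^{k}\Delta^{k}f_\sigma(\zeta_\sigma)(\mathfrak{z},\dots,\mathfrak{z})$ and $\lambda\mapsto\sum_{n}\frac{\lambda^{n}}{n!}D^{n}f_\sigma(\zeta_\sigma)(\mathfrak{z})$ are all holomorphic in $\lambda$ near the origin and agree for small real $\lambda$; by uniqueness of analytic continuation on a connected domain (\cite[Theorem 3.11.5]{HP1974}, exactly as in the proof of Theorem~\ref{TTseries}(2)) their $\lambda$‑Taylor coefficients coincide. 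Equating the coefficient of $\lambda^{k}$ yields $\Delta^{k}f_\sigma(\zeta_\sigma)(\mathfrak{z},\dots,\mathfrak{z})=\frac{1}{k!}D^{k}f_\sigma(\zeta_\sigma)(\mathfrak{z})$ for every $\mathfrak{z}\in\mathbb{D}(0,r,\sigma)$, which is \eqref{eq:Taylor's_coefficients}.

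The structural assertion then drops out: $\frac{1}{k!}D^{k}f_\sigma(\zeta_\sigma)$ is a bounded, symmetric $k$‑linear map on $E^{\sigma*}\times\cdots\times E^{\sigma*}$ whose restriction to the diagonal is $\mathfrak{z}\mapsto\frac{1}{k!}D^{k}f_\sigma(\zeta_\sigma)(\mathfrak{z})$, so by the identity just proved $\Delta^{k}f_\sigma(\zeta_\sigma)(\mathfrak{z})$ is the diagonal restriction of a bounded, symmetric $k$‑linear map as well. (Corollary~\ref{multilinearity} already produced \emph{a} multilinear extension of $\Delta^{k}f_\sigma(\zeta_\sigma)$; the present argument upgrades it to the bounded symmetric one coming from the Frechet side and identifies the two.) The displayed chain of equalities in the statement is then nothing but Theorem~\ref{TTseries}(2), Equation~\eqref{TT}, rewritten using \eqref{eq:Taylor's_coefficients}.

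I expect the only delicate point to be the bookkeeping in the line‑restriction step: one must check that the three functions of $\lambda$ genuinely share an open disc of convergence about $\lambda=0$ (supplied by the $q^{-k}M$ estimate and by \cite[Theorem 3.17.1]{HP1974}) and that "homogeneous of degree $k$" is available on both sides \emph{before} the comparison is made — in particular that $\Delta^{k}f_\sigma(\zeta_\sigma)(\mathfrak{z},\dots,\mathfrak{z})$ is defined for all $\mathfrak{z}$ in the full disc, which is precisely what Corollary~\ref{multilinearity} together with the proof of Theorem~\ref{TTseries}(2) guarantees. Everything else is a direct transcription of facts already in hand.
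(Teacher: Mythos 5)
Your argument is correct and follows essentially the same route as the paper: the paper likewise combines the Frechet--Taylor expansion from \cite[Theorems 26.3.5, 26.3.10 and 3.17.1]{HP1974} with the expansion \eqref{TT} of Theorem~\ref{TTseries}(2) and identifies the two series term by term using the degree-$k$ homogeneity of the summands. Your restriction to a complex line merely makes that homogeneity comparison explicit (mirroring the $h(\lambda)$, $g(\lambda)$ device already used in the proof of Theorem~\ref{TTseries}(2)), so no substantive difference or gap remains.
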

\begin{rem}
\label{Partial Summary}At this point we pause to take stock of what
we have proved, and with what hypotheses. First of all, we have proved
the first assertion of Theorem \ref{thm:Matricial_implies_Frechet_hol.}.
Further, we identified the higher Frechet derivatives of $f$ with
Taylor's Taylor derivatives (Equation \eqref{eq:Taylor's_coefficients}).
We did this with the minimal hypotheses that we have placed on our
additive categories, vis., that they contain the natural injections
and projections for finite direct sums. 

To obtain the second assertion of Theorem \ref{thm:Matricial_implies_Frechet_hol.},
we will need the assumptions stated there. Even though in some of
the results to follow we can we can get by with slightly weaker assumptions,
for the remainder of this section, we shall assume:
\begin{quote}
\emph{$\Sigma$ is full; every $\sigma$ in $\Sigma$ is faithful;
and $\zeta$ is central.}
\end{quote}
\end{rem}
The assumption that $\zeta$ is central allows us to assume that $\zeta_{\sigma}=0$
for each $\sigma$. That is, we can translate the disc $\mathbb{D}(\zeta,R)$
to $\mathbb{D}(0,R)$ whenever $\zeta$ is central.
\begin{lem}
\label{lem:Translation_invariance}Suppose $\Sigma$ is a full additive
subcategory of $NRep(M)$, and suppose that $\zeta=\{\zeta_{\sigma}\}_{\sigma\in\Sigma}$
is a central additive field on $\Sigma$. Suppose, also, that $f=\{f_{\sigma}\}_{\sigma\in\Sigma}$
is a matricial function define on the matricial disc $\mathbb{D}(\zeta,R)$,
$0<R\leq\infty$, and define $g=\{g_{\sigma}\}_{\sigma\in\Sigma}$
by setting $g_{\sigma}(\mathfrak{z}):=f_{\sigma}(\mathfrak{z}+\zeta_{\sigma})$.
Then $g$ is a matricial function on $\mathbb{D}(0,R)$ and for all
positive integers $k$, 
\[
\Delta^{k}f_{\sigma}(\zeta_{\sigma})(\mathfrak{z}-\zeta_{\sigma})=\Delta^{k}g_{\sigma}(0)(\mathfrak{z}-\zeta_{\sigma}).
\]
\end{lem}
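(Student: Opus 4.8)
The statement has two parts: (i) $g$ is a matricial family of functions on $\mathbb{D}(0,R)$, and (ii) the Taylor difference operators of $f$ at $\zeta_\sigma$ coincide with those of $g$ at $0$. The plan is to reduce everything to the basic intertwining relations and the definition of $\Delta^k$ via the block-triangular formula \eqref{eq:Def_Delta_n}.

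First I would verify that $\{g_\sigma\}$ is well-defined on $\mathbb{D}(0,R)$: since $\zeta_\sigma\in E^{\sigma*}$ and $\Vert\mathfrak{z}\Vert<R$ forces $\Vert\mathfrak{z}+\zeta_\sigma - \zeta_\sigma\Vert < R$, the point $\mathfrak{z}+\zeta_\sigma$ lies in $\mathbb{D}(\zeta_\sigma,R,\sigma)$, so $g_\sigma(\mathfrak{z}):=f_\sigma(\mathfrak{z}+\zeta_\sigma)$ makes sense. Next, to see that $g$ respects intertwiners, take $C\in\mathcal{I}(\sigma,\tau)$ with $C\mathfrak{z}=\mathfrak{w}(I_E\otimes C)$ for $\mathfrak{z}\in\mathbb{D}(0,R,\sigma)$, $\mathfrak{w}\in\mathbb{D}(0,R,\tau)$. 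Because $\zeta$ is central, $C\zeta_\sigma=\zeta_\tau(I_E\otimes C)$; adding, $C(\mathfrak{z}+\zeta_\sigma)=(\mathfrak{w}+\zeta_\tau)(I_E\otimes C)$, i.e. $C\in\mathcal{I}(\sigma\times(\mathfrak{z}+\zeta_\sigma),\tau\times(\mathfrak{w}+\zeta_\tau))$. Applying the hypothesis that $f$ respects intertwiners gives $Cf_\sigma(\mathfrak{z}+\zeta_\sigma)=f_\tau(\mathfrak{w}+\zeta_\tau)C$, which is exactly $Cg_\sigma(\mathfrak{z})=g_\tau(\mathfrak{w})C$. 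This proves (i).

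For (ii), I would unwind the definition of $\Delta^k$ through \eqref{eq:Def_Delta_n}. The point $\mathfrak{z}-\zeta_\sigma$ has norm $<R$ when $\mathfrak{z}\in\mathbb{D}(\zeta_\sigma,R,\sigma)$, so the $(k+1)\times(k+1)$ block matrix with $\zeta_\sigma$ on the diagonal and $\mathfrak{z}-\zeta_\sigma$ on the superdiagonal differs by the scalar (central) matrix $\zeta_\sigma\oplus\cdots\oplus\zeta_\sigma = \zeta_{(k+1)\sigma}$ from the matrix with $0$ on the diagonal and $\mathfrak{z}-\zeta_\sigma$ on the superdiagonal; since $\zeta$ is additive, $\zeta_{(k+1)\sigma}$ is precisely what is added. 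The key observation is that evaluating $f_{(k+1)\sigma}$ on the first matrix equals evaluating $g_{(k+1)\sigma}$ on the second, by the very definition $g_{(k+1)\sigma}(M) = f_{(k+1)\sigma}(M+\zeta_{(k+1)\sigma})$ — provided the argument of $g_{(k+1)\sigma}$, namely the strictly-block-triangular matrix with $\mathfrak{z}-\zeta_\sigma$'s, lies in $\mathbb{D}(0,R,(k+1)\sigma)$, which holds exactly when the Taylor-remainder hypotheses used to define $\Delta^k$ at $\zeta_\sigma$ hold. Reading off the $(0,k)$ entry of the block-triangular form on both sides, and using that the $(0,k)$ entry computes $\Delta^k f_{\sigma,\ldots,\sigma}(\zeta_\sigma,\ldots,\zeta_\sigma)(\mathfrak{z}-\zeta_\sigma,\ldots,\mathfrak{z}-\zeta_\sigma)$ on the left and $\Delta^k g_{\sigma,\ldots,\sigma}(0,\ldots,0)(\mathfrak{z}-\zeta_\sigma,\ldots,\mathfrak{z}-\zeta_\sigma)$ on the right, yields the asserted equality.

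The only mildly delicate point — and the one I'd be most careful about — is matching domains: I must check that the strictly-block-triangular matrix built from $\mathfrak{z}-\zeta_\sigma$ lies in $\mathbb{D}(0,R,(k+1)\sigma)$ if and only if the corresponding matrix built from $\zeta_\sigma$ on the diagonal lies in $\mathbb{D}(\zeta_{(k+1)\sigma},R,(k+1)\sigma)$; this is immediate since one matrix is obtained from the other by subtracting $\zeta_{(k+1)\sigma}$, and $\mathbb{D}(\zeta_{(k+1)\sigma},R,(k+1)\sigma) = \zeta_{(k+1)\sigma}+\mathbb{D}(0,R,(k+1)\sigma)$. Nothing here requires fullness, faithfulness, or uniform boundedness; only that $\zeta$ is a central additive field (additivity to handle the block diagonal, centrality is in fact not even needed for this lemma beyond giving $\zeta_{(k+1)\sigma}$ the right form, but it is assumed in context), so the proof is essentially a bookkeeping argument translating $f$ by the central field $\zeta$.
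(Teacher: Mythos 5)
Your proposal is correct and follows essentially the same route as the paper: the matriciality of $g$ comes from the observation that, for a central additive field, $C\mathfrak{z}=\mathfrak{w}(I_{E}\otimes C)$ implies $C(\mathfrak{z}+\zeta_{\sigma})=(\mathfrak{w}+\zeta_{\tau})(I_{E}\otimes C)$ (i.e.\ the intertwiner spaces are unchanged by the translation), and the equality of the Taylor derivatives is read off directly from Definition \ref{def:Delta_n}, since the block matrix defining $\Delta^{k}f_{\sigma}(\zeta_{\sigma})$ is exactly the translate by $\zeta_{(k+1)\sigma}=\zeta_{\sigma}\oplus\cdots\oplus\zeta_{\sigma}$ of the one defining $\Delta^{k}g_{\sigma}(0)$. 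The paper states both points as immediate; you have simply supplied the (correct) bookkeeping, including the domain-matching remark.
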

\begin{proof}
The fact that $g$ is matricial is immediate from the observation
that when $\zeta$ is central, $\mathcal{I}(\sigma\times(\mathfrak{z}+\zeta_{\sigma}),\tau\times(\mathfrak{w}+\zeta_{\tau}))=\mathcal{I}(\sigma\times\mathfrak{z},\tau\times\mathfrak{w})$.
The equation for Taylor's derivatives is immediate from their definition,
Definition \ref{def:Delta_n}.
\end{proof}
We want to show that this symmetric multilinear map is the restriction
of a completely bounded map defined on the tensor product $E^{\sigma*}\otimes E^{\sigma*}\cdots\otimes E^{\sigma*}$,
balanced over $\sigma(M)'$. To accomplish this, we need the following
technical lemma.
\begin{lem}
\label{ljmj} Suppose $f=\{f_{\sigma}\}_{\sigma\in\Sigma}$ is a matricial
function defined on a matricial disc centered at the origin $\mathbb{D}(0,r)=\{\mathbb{D}(0,r,\sigma\}_{\sigma\in\Sigma}.$
Suppose that $p$ and $k$ are positive integers and that for $1\leq j\leq k$,
integers $l(j)$ and $m(j)$ are defined and satisfy 
\[
1\leq l(1)<m(1),l(2)<m(2),l(3)<\cdots<m(k)\leq p.
\]
Suppose also that $\mathfrak{u}_{1},\ldots,\mathfrak{u}_{k}$ are
in $E^{\sigma*}$ and let $\mathfrak{U}$ be the $p\times p$ matrix
over $E^{\sigma*}$ (viewed as an element of $E^{p\sigma*}$) whose
$(l(j),m(j))$ entry is $\mathfrak{u}_{j}$, for all $j$, $1\leq j\leq k$,
and whose other entries are all zero. Then $A:=f_{p\sigma}(\mathfrak{U})$,
viewed as a $p\times p$ matrix over $B(H_{\sigma})$, will have non
zero entries only in positions $(l(j),m(j+s))$ where $m(j)=l(j+1),m(j+1)=l(j+2),\ldots,m(j+s-1)=l(j+s)$.
In these positions we will have 
\[
A_{l(j),m(j+s)}=\Delta^{s+1}f_{\sigma}(0)(\mathfrak{u}_{j},\ldots,\mathfrak{u}_{j+s}).
\]
 \end{lem}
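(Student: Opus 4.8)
The plan is to exploit the interleaving hypotheses on the indices $l(j),m(j)$ in order to recognise that $\mathfrak U$ is, up to a permutation similarity, block diagonal with \emph{bidiagonal} blocks, and then to read off the entries of $f_{p\sigma}(\mathfrak U)$ from Lemma~\ref{fmatrix} applied to each block.

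First I would record the combinatorial input. Form the directed graph $G$ on the vertex set $\{1,\dots,p\}$ whose edges are $l(j)\to m(j)$, with the edge $l(j)\to m(j)$ labelled by $\mathfrak u_j$. The hypothesis $l(j)<m(j)$ for all $j$, together with the interleaving of the $l$'s and $m$'s, forces every vertex of $G$ to have out-degree and in-degree at most one; hence $G$ is a disjoint union of directed paths $P_1,\dots,P_m$ together with some isolated vertices, and along each path consecutive edges are head-to-tail, i.e.\ whenever $l(j)\to m(j)$ and $l(j{+}1)\to m(j{+}1)$ lie in the same path then $m(j)=l(j{+}1)$. Choose a permutation $\pi$ of $\{1,\dots,p\}$ listing the vertices of $P_1$ in path order, then those of $P_2$, and so on, finishing with the isolated vertices, and let $S_\pi=\sum_{a}\iota_{\pi(a)}\iota_a^{*}$ be the corresponding permutation unitary on $H_{p\sigma}$. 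Then $S_\pi$ is built from the natural injections and projections that are always assumed to lie in $\Sigma$, it commutes with $p\sigma(M)$, and $\mathfrak U':=S_\pi\,\mathfrak U\,(I_E\otimes S_\pi)^{*}$ is the matrix obtained from $\mathfrak U$ by permuting rows and columns by $\pi$. By the choice of $\pi$, $\mathfrak U'=\mathfrak V_1\oplus\cdots\oplus\mathfrak V_m\oplus 0$, where each $\mathfrak V_c$ is bidiagonal with $0$'s on the main diagonal and the labels of $P_c$ on the super-diagonal. Since $S_\pi\,\mathfrak U=\mathfrak U'\,(I_E\otimes S_\pi)$, we have $S_\pi\in\mathcal I(p\sigma\times\mathfrak U,\,p\sigma\times\mathfrak U')$, so because $f$ respects intertwiners, $f_{p\sigma}(\mathfrak U)=S_\pi^{*}\,f_{p\sigma}(\mathfrak U')\,S_\pi$. (Here, as in Lemma~\ref{fmatrix}, one takes the $\mathfrak u_j$ small enough that the strictly upper triangular matrix $\mathfrak U$ lies in $\mathbb D(0,r,p\sigma)$; the identities proved are polynomial in the $\mathfrak u_j$, so nothing is lost.)

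Next I would compute $f_{p\sigma}(\mathfrak U')$. The coordinate projections $q_c$ onto the successive blocks of $\mathfrak U'$ satisfy $q_c\mathfrak U'=\mathfrak V_c(I_E\otimes q_c)$ because $\mathfrak U'$ is block diagonal, so, exactly as above, $f_{p\sigma}(\mathfrak U')=f(\mathfrak V_1)\oplus\cdots\oplus f(\mathfrak V_m)\oplus f_{q\sigma}(0)$, and $f_{q\sigma}(0)$ is the diagonal matrix all of whose diagonal entries are $f_\sigma(0)$ by the direct-sum property of $f$. Lemma~\ref{fmatrix}, applied to each bidiagonal block $\mathfrak V_c$ (all of whose diagonal entries are $0$), then says that $f(\mathfrak V_c)$ is block upper triangular with $f_\sigma(0)$ on the diagonal and that the entry of $f(\mathfrak V_c)$ joining two vertices of $P_c$ that are $t$ edges apart is $\Delta^{t}f_{\sigma}(0)$ evaluated at the labels of those $t$ intervening edges. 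In particular, if $l(j)$ and $m(j{+}s)$ lie on a common path and are joined there by the edge chain $\mathfrak u_j,\mathfrak u_{j+1},\dots,\mathfrak u_{j+s}$ (i.e.\ $m(j)=l(j{+}1),\,m(j{+}1)=l(j{+}2),\dots,m(j{+}s{-}1)=l(j{+}s)$), the corresponding entry is $\Delta^{s+1}f_{\sigma}(0)(\mathfrak u_j,\dots,\mathfrak u_{j+s})$. Finally I would transport this back through $S_\pi^{*}(\cdot)S_\pi$, which merely relabels the matrix entries by $\pi^{-1}$: the $(a,b)$ entry of $A=f_{p\sigma}(\mathfrak U)$ equals the $(\pi(a),\pi(b))$ entry of the block-diagonal matrix above, hence vanishes unless $a$ and $b$ lie on a common path with $a$ no later than $b$ — that is, unless $a=b$, in which case it is $f_\sigma(0)$, or $(a,b)=(l(j),m(j{+}s))$ for a chain as above, in which case it is $\Delta^{s+1}f_{\sigma}(0)(\mathfrak u_j,\dots,\mathfrak u_{j+s})$. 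This is the assertion.

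The argument is largely bookkeeping. The one step requiring genuine care is the passage from the interleaving hypotheses to the statement that $G$ is a disjoint union of paths whose consecutive edges carry \emph{consecutive} indices $\mathfrak u_j,\mathfrak u_{j+1},\dots$, since it is precisely this that makes the $t$-edge entry of $\mathfrak V_c$ come out as $\Delta^{s+1}f_\sigma(0)(\mathfrak u_j,\dots,\mathfrak u_{j+s})$ with the indices appearing consecutively. I would also re-examine the exact form of the conclusion of Lemma~\ref{fmatrix} (how its $(i,j)$ entry depends on the lower index $i$, not merely on $j$; if needed one compresses $\mathfrak V_c$ further to the sub-path from $a$ to $b$) and the domain point flagged above, but neither affects the structure of the proof.
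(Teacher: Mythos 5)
Your proof is correct and is essentially the paper's argument: conjugate $\mathfrak{U}$ by a permutation unitary $S$, which lies in $\mathcal{I}(p\sigma\times\mathfrak{U},p\sigma\times\mathfrak{U}')$ so that the matricial property gives $f_{p\sigma}(\mathfrak{U})=S^{*}f_{p\sigma}(\mathfrak{U}')S$, reduce to the situation where all nonzero entries sit on the first superdiagonal, read off the entries from Lemma~\ref{fmatrix}, and transport back. The only difference is bookkeeping: the paper permutes everything onto the superdiagonal of a single matrix and kills cross-chain entries because $\Delta^{r}f_{\sigma}(0)$ vanishes when one argument is $0$, whereas you split $\mathfrak{U}'$ into block-diagonal bidiagonal pieces indexed by the paths of your graph --- the same mechanism either way.
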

\begin{proof}
Suppose first that, for every $j$, $m(j)=l(j)+1$. Then the non zero
entries of $\mathfrak{U}$ are all on the first diagonal above the
main one and the result follows easily from Lemma~\ref{fmatrix}
(noting that $\Delta^{r}f(0)(\mathfrak{z}_{1},\ldots,\mathfrak{z}_{r})=0$
if one of the $\mathfrak{z}_{i}$ is $0$). For the general case,
let $\theta$ be a permutation on $\{1,\ldots,p\}$ such that $\theta(l(1))=1$,
$\theta(m(j))=\theta(l(j))+1$ for all $j$ and $\theta(l(j+1))=\theta(m(j))+|m(j)-l(j+1)|$
for all $1\leq j\leq p-1$. Such $\theta$ always exists (but, in
general, is not unique). Write $S$ for the permutation matrix associated
with $\theta$ and consider $\mathfrak{U}'=S\mathfrak{U}(I\otimes S^{-1})$.
This matrix will have non zero entries only on the first diagonal
above the main diagonal and, thus, $f(\mathfrak{U}')=SAS^{-1}$ will
be of the form described above and it will follow that $A$ satisfies
the assertion of the lemma.
\end{proof}
Recall that if $V_{1},V_{2},\cdots,V_{k},\mbox{ and }X$ are operator
spaces and if $\varphi:V_{1}\times V_{2}\times\cdots\times V_{k}\rightarrow X$
is a multilinear map, then one writes $\varphi_{(n)}$ for the multilinear
map 
\[
\varphi_{(n)}:M_{n}(V_{1})\times\cdots\times M_{n}(V_{k})\rightarrow M_{n}(X)
\]
 defined by 
\[
\varphi_{(n)}((\alpha_{1}\otimes v_{1}),\cdots,(\alpha_{k}\otimes v_{k}))=\alpha_{1}\alpha_{2}\cdots\alpha_{k}\otimes\varphi(v_{1},v_{2},\ldots,v_{k})
\]
 for $\alpha_{i}\in M_{n}(\mathbb{C})$ and $v_{i}\in V_{i}$ \cite[Section 9.1]{Effros2000}.
\begin{lem}
\label{The_map_phi(n)} Let $f=\{f_{\sigma}\}_{\sigma\in\Sigma}$
be a matricial family of functions defined on a matricial $E,\Sigma$-family
of discs $\{\mathbb{D}(0,r,\sigma)\}_{\sigma\in\Sigma}$, where $\Sigma$
is a full additive subcategory of $NRep(M)$. For $\sigma\in\Sigma$,
write $\varphi:E^{\sigma*}\times E^{\sigma*}\cdots\times E^{\sigma*}\rightarrow B(H_{\sigma})$
for the map 
\[
\varphi(\mathfrak{u}_{1},\ldots,\mathfrak{u}_{k})=\Delta^{k}f_{\sigma}(0)(\mathfrak{u}_{1},\ldots,\mathfrak{u}_{k}),\qquad\mathfrak{u}_{i}\in E^{\sigma*},\, i=1,2,\cdots,k.
\]
Then, when $M_{n}(E^{\sigma*})$ is identified with $E^{n\sigma*}$,
we have 
\begin{equation}
\varphi_{(n)}(\mathfrak{U}_{1},\ldots,\mathfrak{U}_{k})=\Delta^{k}f_{\sigma}(0)(\mathfrak{U}_{1},\ldots,\mathfrak{U}_{k})\label{phin}
\end{equation}
 for $\mathfrak{U}_{i}\in E^{n\sigma*}$. \end{lem}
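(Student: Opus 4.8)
The plan is to verify the identity \eqref{phin} first on elementary tensors and then propagate by multilinearity. Both sides of \eqref{phin} are $k$-linear in $(\mathfrak{U}_{1},\ldots,\mathfrak{U}_{k})$: the left side by the very definition of the amplification $\varphi_{(n)}$, and the right side because, by Corollary~\ref{multilinearity} applied to the finite multiples of $n\sigma$ (which lie in $\Sigma$, and for which $0$ lies in the disc since the disc is centered at the origin), $\Delta^{k}f_{\sigma}(0)(\cdot,\ldots,\cdot)$ — equivalently $\Delta^{k}f_{n\sigma,\ldots,n\sigma}(0,\ldots,0)(\cdot,\ldots,\cdot)$ in the notation of Definition~\ref{def:Delta_n} — is $k$-linear on $E^{n\sigma*}\times\cdots\times E^{n\sigma*}$. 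Hence it suffices to prove \eqref{phin} when each $\mathfrak{U}_{i}$ has the form $\alpha_{i}\otimes\mathfrak{u}_{i}$ with $\alpha_{i}\in M_{n}(\mathbb{C})$ and $\mathfrak{u}_{i}\in E^{\sigma*}$, and, by a further application of multilinearity in the $\alpha_{i}$, we may take each $\alpha_{i}$ to be a matrix unit $e_{p(i),q(i)}$. For such a choice, $\mathfrak{U}_{i}$, regarded (via the identification $M_{n}(E^{\sigma*})\cong E^{n\sigma*}$) as an $n\times n$ matrix over $E^{\sigma*}$, is the matrix with $\mathfrak{u}_{i}$ in the $(p(i),q(i))$ slot and zeros elsewhere, and by definition of $\varphi_{(n)}$,
\[
\varphi_{(n)}(\mathfrak{U}_{1},\ldots,\mathfrak{U}_{k})=\bigl(e_{p(1),q(1)}e_{p(2),q(2)}\cdots e_{p(k),q(k)}\bigr)\otimes\Delta^{k}f_{\sigma}(0)(\mathfrak{u}_{1},\ldots,\mathfrak{u}_{k}),
\]
which equals $e_{p(1),q(k)}\otimes\Delta^{k}f_{\sigma}(0)(\mathfrak{u}_{1},\ldots,\mathfrak{u}_{k})$ when $q(i)=p(i+1)$ for $1\le i\le k-1$ and is $0$ otherwise.

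To evaluate the right-hand side of \eqref{phin} for the same input, write $\Delta^{k}f_{\sigma}(0)(\mathfrak{U}_{1},\ldots,\mathfrak{U}_{k})$, an element of $B(H_{n\sigma})\cong M_{n}(B(H_{\sigma}))$, as the $(0,k)$ block entry of $f_{(k+1)n\sigma}$ applied to the $(k+1)\times(k+1)$ block matrix with zero diagonal and $\mathfrak{U}_{i}$ on the first superdiagonal (Definition~\ref{def:Delta_n}). Unpacking the blocks, this is a single $(k+1)n\times(k+1)n$ matrix $\mathfrak{U}$ over $E^{\sigma*}$ whose only nonzero entry arising from $\mathfrak{U}_{i}$ is $\mathfrak{u}_{i}$ in global position $((i-1)n+p(i),\,in+q(i))$, for $i=1,\ldots,k$. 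Applying Lemma~\ref{ljmj} to $\mathfrak{U}$ with $p=(k+1)n$ (after the permutation of summands used in the proof of that lemma to bring the nonzero entries onto the first superdiagonal), one reads off the nonzero entries of $f_{(k+1)n\sigma}(\mathfrak{U})$: they occur along chains $\mathfrak{u}_{i},\mathfrak{u}_{i+1},\ldots$ for which consecutive positions match, i.e. $q(i)=p(i+1)$. The only chain that can contribute an entry lying in the $(0,k)$ block — rows $1,\ldots,n$ and columns $kn+1,\ldots,(k+1)n$ — is the full chain from $\mathfrak{u}_{1}$ to $\mathfrak{u}_{k}$, since only $\mathfrak{u}_{1}$ originates in the $0$-th block row and only $\mathfrak{u}_{k}$ terminates in the $k$-th block column; this chain exists precisely when $q(i)=p(i+1)$ for $1\le i\le k-1$, in which case its value is $\Delta^{k}f_{\sigma}(0)(\mathfrak{u}_{1},\ldots,\mathfrak{u}_{k})$ and it sits in position $(p(1),q(k))$ of that block. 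Thus $\Delta^{k}f_{\sigma}(0)(\mathfrak{U}_{1},\ldots,\mathfrak{U}_{k})=e_{p(1),q(k)}\otimes\Delta^{k}f_{\sigma}(0)(\mathfrak{u}_{1},\ldots,\mathfrak{u}_{k})$ when the matching conditions hold and is $0$ otherwise, matching the displayed expression for $\varphi_{(n)}(\mathfrak{U}_{1},\ldots,\mathfrak{U}_{k})$. This proves \eqref{phin} on elementary tensors, and hence in general.

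The main obstacle is purely the bookkeeping of the last paragraph: correctly translating the ``chain'' positions described in Lemma~\ref{ljmj} into block coordinates of the $(k+1)n\times(k+1)n$ matrix and checking that no partial chain can land in the $(0,k)$ block. I would also record at the outset the fact that $\Delta^{r}f_{\sigma}(0)(\mathfrak{z}_{1},\ldots,\mathfrak{z}_{r})=0$ whenever some $\mathfrak{z}_{j}=0$ — immediate from the multilinearity furnished by Corollary~\ref{multilinearity} — since this is exactly what guarantees that the zero entries of $\mathfrak{U}$ produce no spurious contributions when Lemma~\ref{ljmj} is invoked.
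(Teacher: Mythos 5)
Your proposal is correct and follows essentially the same route as the paper: reduce by multilinearity (Corollary \ref{multilinearity}) to matrix-unit elementary tensors, form the $(k+1)\times(k+1)$ block matrix with the $\mathfrak{U}_{i}$ on the superdiagonal, and apply Lemma \ref{ljmj} with $p=(k+1)n$ to identify the $(1,k+1)$ block of $f_{(k+1)n\sigma}$ of it with $e_{p(1),q(k)}\otimes\Delta^{k}f_{\sigma}(0)(\mathfrak{u}_{1},\ldots,\mathfrak{u}_{k})$ under the chaining condition. Your explicit check that no partial chain can land in the upper-right block is just a slightly more detailed rendering of the same bookkeeping the paper performs.
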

\begin{proof}
Since the functions on both sides of (\ref{phin}) are $k$-linear,
it suffices to prove the lemma for matrices $\mathfrak{U}_{i}$ that
have only one non zero entry. If $\mathfrak{U}$ is such a matrix
and the only non zero entry is $\mathfrak{u}$, which lies in the
$(i,j)$ position, we write $\mathfrak{U}=\varepsilon_{i,j}\otimes\mathfrak{u}$.
So, we write $\mathfrak{U}_{j}=\varepsilon_{r(j),s(j)}\otimes\mathfrak{u_{j}}$
and, using the definition of $\varphi_{(n)}$, we have $\varphi_{(n)}(\mathfrak{U}_{1},\ldots,\mathfrak{U}_{k})=\varepsilon_{r(1),s(1)}\cdots\varepsilon_{r(k),s(k)}\otimes\Delta^{k}f_{\sigma}(0)(\mathfrak{u}_{1},\ldots,\mathfrak{u}_{k})$.
Thus 
\[
\varphi_{(n)}(\mathfrak{U}_{1},\ldots,\mathfrak{U}_{k})=\varepsilon_{r(1),s(k)}\otimes\Delta^{k}f_{\sigma}(0)(\mathfrak{u}_{1},\ldots,\mathfrak{u}_{k})
\]
provided $s(j)=r(j+1)$ for all $1\leq j\leq k-1$, and it equals
$0$ otherwise. In order to compute the right hand side of (\ref{phin}),
we form the matrix 
\[
B:=\left(\begin{array}{ccccc}
0 & \mathfrak{U}_{1} & 0 & \cdots & 0\\
0 & 0 & \ddots & \ddots & \vdots\\
\vdots & \ddots & \ddots & \ddots & 0\\
\vdots &  & \ddots & 0 & \mathfrak{U}_{k}\\
0 & \cdots & \cdots & 0 & 0
\end{array}\right),
\]
write $A$ for the $(k+1)\times(k+1)$ matrix $f_{(k+1)n\sigma}(B)$
and note that $\Delta^{k}f_{n\sigma}(0)(\mathfrak{U}_{1},\ldots,\mathfrak{U}_{k})$
is the $n\times n$ block of $A$ in the $(1,k+1)$ position. If we
now view $A$ as a matrix of size $n(k+1)\times n(k+1)$ (over $B(H_{\sigma})$),
we see that $A$ satisfies the assumptions of Lemma~\ref{ljmj},
with $p=n(k+1)$, $l(1)=r(1)$, $m(1)=n+s(1)$, $l(2)=n+r(2)$ etc.
It follows from that lemma that the only non zero entry in the upper-right
$n\times n$ block can be in the $(l(1),m(k))$ position and this
will be non zero only if $m(1)=l(2),m(2)=l(3),\ldots,m(k-1)=l(k)$.
Using our notation here, this entry will be non zero if and only if
$s(j)=r(j+1)$ for all $1\leq j\leq k-1$. If this is the case, then
by Lemma~\ref{ljmj}, this entry will be $\Delta^{k}f_{\sigma}(0)(\mathfrak{u}_{1},\ldots,\mathfrak{u}_{k})$.
This completes the proof.\end{proof}
\begin{prop}
\label{completely_bounded} Let $f=\{f_{\sigma}\}_{\sigma\in\Sigma}$
be a matricial family of functions defined on a matricial disc $\mathbb{D}(0,r)=\{\mathbb{D}(0,r,\sigma)\}_{\sigma\in\Sigma}$
where $\Sigma$ is a full, additive subcategory of $NRep(M)$. Suppose
$f$ is uniformly bounded in norm by $M$ so that, for every $\sigma\in\Sigma$,
and for every $\mathfrak{z}\in\mathbb{D}(0,r,\sigma)$, $||f_{\sigma}(\mathfrak{z})||\leq M$.
Then, for every $k$, the map 
\begin{equation}
\Delta^{k}f_{\sigma}(0)(\cdot,\cdots,\cdot):E^{\sigma*}\times E^{\sigma*}\cdots\times E^{\sigma*}\rightarrow B(H_{\sigma})\label{eq:Delta_k_as_a_map}
\end{equation}
 is a $k$-linear map, balanced over $\sigma(M)'$, and is completely
bounded, with 
\[
||\Delta^{k}f_{\sigma}(0)(\cdot,\cdots,\cdot)||_{cb}\leq\frac{M}{r^{k}}.
\]
\end{prop}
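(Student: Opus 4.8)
The statement to prove is Proposition~\ref{completely_bounded}: that $\Delta^{k}f_{\sigma}(0)$ is $k$-linear, balanced over $\sigma(M)'$, and completely bounded with cb-norm at most $M/r^{k}$. The $k$-linearity is already in hand from Corollary~\ref{multilinearity} (or directly from Lemma~\ref{additivity} together with Lemma~\ref{Definition_Delta}(2)), so the real content is the ``balanced over $\sigma(M)'$'' assertion and the cb-norm estimate. For the balancing, I would invoke Lemma~\ref{Delta_n}: parts (1)--(3) of that lemma say precisely that for $b\in\sigma(M)'$, one may slide $b$ from the right of $\mathfrak{u}_{k-1}$ to the left of $\mathfrak{u}_{k}$ inside $\Delta^{n}f$, and one may pull scalars (and elements of $\sigma(M)'$ that commute appropriately) in and out of the two ends. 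Since at the center $\mathfrak{z}_{0}=\cdots=\mathfrak{z}_{n}=0$, the condition $b\mathfrak{z}_{i}=\mathfrak{z}_{i}(I_{E}\otimes b)$ is automatic for \emph{every} $b\in\sigma(M)'$, so the balancing relations hold unconditionally; this is exactly what it means for $\Delta^{k}f_{\sigma}(0)(\cdot,\ldots,\cdot)$ to factor through the balanced tensor product $E^{\sigma*}\otimes_{\sigma(M)'}\cdots\otimes_{\sigma(M)'}E^{\sigma*}$.

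For the complete boundedness, the key tool is Lemma~\ref{The_map_phi(n)}, which identifies the amplification $\varphi_{(n)}$ of the $k$-linear map $\varphi=\Delta^{k}f_{\sigma}(0)$ with $\Delta^{k}f_{n\sigma}(0)$ under $M_{n}(E^{\sigma*})\cong E^{n\sigma*}$. So it suffices to bound $\|\Delta^{k}f_{n\sigma}(0)(\mathfrak{U}_{1},\ldots,\mathfrak{U}_{k})\|$ uniformly over $\mathfrak{U}_{i}\in M_{n}(E^{\sigma*})$ of norm $\le 1$, by $M/r^{k}$, independently of $n$. The mechanism is the dilation/scaling trick already used in the proof of Theorem~\ref{TTseries}(2): given $\mathfrak{U}_{1},\ldots,\mathfrak{U}_{k}$ in $E^{n\sigma*}$ with $\|\mathfrak{U}_{i}\|<1$, pick $0<\rho<r$ and form the $(k+1)\times(k+1)$ block bidiagonal matrix over $E^{n\sigma*}$ with zeros on the diagonal and suitably scaled copies $\rho\,\mathfrak{U}_{i}$ (or a geometric-weight choice) on the superdiagonal; one checks that this matrix lies in $\mathbb{D}(0,r,(k+1)n\sigma)$, so applying $f$ to it produces an operator of norm $\le M$, and by Lemma~\ref{fmatrix} (as used to prove Lemma~\ref{The_map_phi(n)}) its $(1,k+1)$ corner block is $\rho^{k}\,\Delta^{k}f_{n\sigma}(0)(\mathfrak{U}_{1},\ldots,\mathfrak{U}_{k})$ (up to the bookkeeping of the weights). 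Hence $\|\Delta^{k}f_{n\sigma}(0)(\mathfrak{U}_{1},\ldots,\mathfrak{U}_{k})\|\le M/\rho^{k}$; letting $\rho\nearrow r$ gives $\le M/r^{k}$. Taking the supremum over $n$ yields $\|\Delta^{k}f_{\sigma}(0)(\cdot,\ldots,\cdot)\|_{cb}\le M/r^{k}$.

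One arithmetic point to nail down: to make the superdiagonal block matrix land in $\mathbb{D}(0,r,(k+1)n\sigma)$ when all the $\|\mathfrak{U}_{i}\|$ are close to $1$, one should not just put $\rho\mathfrak{U}_{i}$ on every superdiagonal slot (the norm of a bidiagonal operator matrix with $k$ nonzero blocks is controlled, but I want the cleanest constant). The safe choice, paralleling the proof of Theorem~\ref{TTseries}, is to use a single weighted matrix: put $q^{j}\rho\,\mathfrak{U}_{j}$ or normalize so that the operator-matrix norm is $<r$, then extract the $(1,k+1)$ corner which carries the product of the weights times $\Delta^{k}f$. Since a bidiagonal matrix with blocks $B_{1},\ldots,B_{k}$ on the superdiagonal has norm $\le\max_{j}\|B_{j}\|$ when the $B_{j}$ act between mutually orthogonal summands (the blocks have disjoint supports), in fact $\rho\,\mathfrak{U}_{i}$ on the superdiagonal already gives a matrix of norm $<\rho<r$, and its corner is exactly $\rho^{k}\Delta^{k}f_{n\sigma}(0)(\mathfrak{U}_{1},\ldots,\mathfrak{U}_{k})$ by Lemma~\ref{fmatrix}. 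So the estimate is clean.

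**Main obstacle.** The only genuine subtlety is the norm computation for the block bidiagonal matrix and the correct identification of which corner entry of $f$ applied to it equals $\rho^{k}\Delta^{k}f$; both are handled by Lemma~\ref{fmatrix} and Lemma~\ref{The_map_phi(n)}, so with those in hand the argument is essentially bookkeeping. The conceptual heart --- reducing complete boundedness of a multilinear ``Taylor coefficient'' map to a single application of uniform boundedness of $f$ on a larger disc via an ampliation-and-dilation matrix --- is exactly the pattern already established in the proof of Theorem~\ref{TTseries}(2), so I expect no real difficulty, only care with constants and with confirming that fullness of $\Sigma$ (needed so that all the auxiliary block matrices and permutation matrices are morphisms in $\Sigma$) and faithfulness of the representations are used where required.
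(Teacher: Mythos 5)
Your proposal is correct and follows essentially the same route as the paper: the balancing comes from Lemma~\ref{Delta_n}, the cb-norm is reduced to the plain norm via Lemma~\ref{The_map_phi(n)} (the amplification being $\Delta^{k}f_{n\sigma}(0)$), and the norm estimate is obtained by placing scaled arguments on the superdiagonal of a block bidiagonal matrix in $\mathbb{D}(0,r,(k+1)\sigma)$, applying the uniform bound $M$ to its corner, and invoking multilinearity. The only cosmetic difference is that the paper scales directly with $r\mathfrak{u}_{i}$ for $\mathfrak{u}_{i}$ in the open unit ball rather than using $\rho<r$ and letting $\rho\nearrow r$.
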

\begin{proof}
With the notation preceding Lemma~\ref{The_map_phi(n)}, we have
\[
||\Delta^{k}f_{\sigma}(0)(\cdot,\cdots,\cdot)||_{cb}=\sup\{||\Delta^{k}f_{\sigma}(0)_{(n)}||:n\geq1\}.
\]
 Note that in \cite{Effros2000} this norm is denoted $||\cdot||_{mb}$
but we follow the notation in \cite{Blecher2004d} and in other places
in the literature. Using Lemma~\ref{The_map_phi(n)}, it suffices
to show that $||\Delta^{k}f_{\sigma}(0)(\cdot,\cdots,\cdot)||\leq\frac{M}{r^{k}}$.
For this, consider $\mathfrak{u}_{i}\in\mathbb{D}(0,1,\sigma)$ and
write $\mathfrak{u}'_{i}:=r\mathfrak{u}_{i}\in\mathbb{D}(0,r,\sigma)$.
Then
\[
||\Delta^{k}f_{\sigma}(0)(\mathfrak{u}'_{1},\ldots,\mathfrak{u}'_{k})||\leq||f_{(k+1)\sigma}(\left(\begin{array}{ccccc}
0 & \mathfrak{u}'_{1} & 0 & \cdots & 0\\
0 & 0 & \ddots & \ddots & \vdots\\
\vdots & \ddots & \ddots & \ddots & 0\\
\vdots &  & \ddots & 0 & \mathfrak{u}'_{k}\\
0 & \cdots & \cdots & 0 & 0
\end{array}\right))||\leq M.
\]
By the $k$-linearity of the map, we get 
\[
||\Delta^{k}f_{\sigma}(0)(\mathfrak{u}_{1},\ldots,\mathfrak{u}_{k})||\leq\frac{M}{r^{k}},
\]
proving the complete norm estimate. The only thing left to prove is
the fact that the map is balanced, but this follows from Lemma~\ref{Delta_n}.
\end{proof}
Recall that, in the discussion preceding Lemma~\ref{lem:Derivative_of_Z_k},
$\mathcal{Z}_{k}$ is defined by the formula, $\mathcal{Z}_{k}(\mathfrak{z}):=\mathfrak{z}^{(k)}=\mathfrak{z}(I_{E}\otimes\mathfrak{z})\cdots(I_{E^{\otimes k-1}}\otimes\mathfrak{z})$,
for $\mathfrak{z}\in E^{\sigma*}$. For the statement of the following
theorem it will be convenient to use the natural extension of $\mathcal{Z}_{k}$
to $E^{\sigma*}\times E^{\sigma*}\times\cdots\times E^{\sigma*}$
and write $\mathcal{Z}_{k}(\mathfrak{u}_{1},\ldots,\mathfrak{u}_{k}):=\mathfrak{u}_{1}(I_{E}\otimes\mathfrak{u}_{2})\cdots(I_{E^{\otimes k-1}}\otimes\mathfrak{u}_{k})$,
for $\mathfrak{u}_{1},\ldots,\mathfrak{u}_{k}$ in $E^{\sigma*}$.
\begin{thm}
\label{theta_k} Let $f=\{f_{\sigma}\}_{\sigma\in\Sigma}$ be a matricial
family of functions defined on a matricial disc $\mathbb{D}(0,r)$
over a full, additive subcategory $\Sigma$ of faithful representations
in $NRep(M)$. Suppose that $f$ is defined and bounded uniformly
$M$ on $\mathbb{D}(0,r)$. Then, for every $k$, there is a unique
$\theta_{k}\in E^{\otimes k}$, with $||\theta_{k}||\leq\frac{M}{r^{k}}$,
such that 
\begin{equation}
\Delta^{k}f_{\sigma}(0)(\mathfrak{u}_{1},\ldots,\mathfrak{u}_{k})h=\mathcal{Z}_{k}(\mathfrak{u}_{1},\ldots,\mathfrak{u}_{k})(\theta_{k}\otimes h)
\end{equation}
for every $\mathfrak{u}_{1},\ldots,\mathfrak{u}_{k}$ in $E^{\sigma*}$
and every $h\in H_{\sigma}$. \end{thm}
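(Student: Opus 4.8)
The plan is to ``linearize'' the Taylor difference operators $\Delta^{k}f_{\sigma}(0)$, transport them to the $\sigma$-dual of $E^{\otimes k}$, and then recover $\theta_{k}$ from the Muhly--Solel duality theorem \cite[Theorem~3.6]{Muhly2004a}. (Recall that the disc here is already centred at $0$, so $\Delta^{k}f_{\sigma}(0)$ is the $k$-th Taylor derivative at the centre.)

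Fix $k$. By Proposition~\ref{completely_bounded} the map $\Delta^{k}f_{\sigma}(0)(\cdot,\ldots,\cdot)\colon E^{\sigma*}\times\cdots\times E^{\sigma*}\to B(H_{\sigma})$ is $k$-linear, balanced over $\sigma(M)'$, and completely bounded with $\Vert\Delta^{k}f_{\sigma}(0)(\cdot,\ldots,\cdot)\Vert_{cb}\le M/r^{k}$, for every $\sigma\in\Sigma$. Now $(E^{\otimes k})^{\sigma*}=\mathcal{I}(\sigma^{E^{\otimes k}}\circ\varphi_{k},\sigma)$ is, up to self-dual completion, the balanced $k$-fold tensor product of $E^{\sigma*}$ with itself over $\sigma(M)'$, the balancing map being $(\mathfrak{u}_{1},\ldots,\mathfrak{u}_{k})\mapsto\mathcal{Z}_{k}(\mathfrak{u}_{1},\ldots,\mathfrak{u}_{k})$; this is the factorization of $\sigma\times\mathfrak{z}$ on $\mathcal{T}_{0}(E)$ through the generalized powers introduced before Lemma~\ref{lem:Derivative_of_Z_k}, together with the balancing identities recorded there and in Lemma~\ref{Delta_n}. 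Hence $\Delta^{k}f_{\sigma}(0)$ descends to a linear map $\psi_{\sigma}$ on the span of the $\mathcal{Z}_{k}(\mathfrak{u}_{1},\ldots,\mathfrak{u}_{k})$ by $\psi_{\sigma}(\mathcal{Z}_{k}(\mathfrak{u}_{1},\ldots,\mathfrak{u}_{k}))=\Delta^{k}f_{\sigma}(0)(\mathfrak{u}_{1},\ldots,\mathfrak{u}_{k})$, and, the identifications $M_{n}(E^{\sigma*})\cong E^{n\sigma*}$ and $M_{n}((E^{\otimes k})^{\sigma*})\cong(E^{\otimes k})^{n\sigma*}$ being compatible with this factorization, $\psi_{\sigma}$ is a completely bounded $\sigma(M)'$-bimodule map with $\Vert\psi_{\sigma}\Vert_{cb}\le M/r^{k}$ and extends to $(E^{\otimes k})^{\sigma*}$. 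Moreover $\{\psi_{\sigma}\}_{\sigma\in\Sigma}$ is itself a matricial family: if $C\in\mathcal{I}(\sigma,\tau)$ and $C\mathfrak{u}_{i}=\mathfrak{v}_{i}(I_{E}\otimes C)$, then $\mathrm{diag}(C,\ldots,C)$ intertwines the block representations attached to the superdiagonal matrices with entries $\mathfrak{u}_{i}$ and $\mathfrak{v}_{i}$, so comparing the $(0,k)$ corners of $\mathrm{diag}(C)f_{(k+1)\sigma}(\cdot)=f_{(k+1)\tau}(\cdot)\mathrm{diag}(C)$ gives $C\,\Delta^{k}f_{\sigma}(0)(\mathfrak{u}_{1},\ldots,\mathfrak{u}_{k})=\Delta^{k}f_{\tau}(0)(\mathfrak{v}_{1},\ldots,\mathfrak{v}_{k})\,C$, i.e.\ $C\psi_{\sigma}(\mathfrak{w})=\psi_{\tau}(\mathfrak{w}')C$ whenever $C\mathfrak{w}=\mathfrak{w}'(I_{E^{\otimes k}}\otimes C)$, using the naturality of $\mathcal{Z}_{k}$ in $\sigma$.

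Next I would apply the duality theorem with $E$ replaced by $E^{\otimes k}$. In the form needed, \cite[Theorem~3.6]{Muhly2004a} identifies a uniformly bounded matricial family $\{\psi_{\sigma}\}_{\sigma\in\Sigma}$ of completely bounded $\sigma(M)'$-bimodule maps $(E^{\otimes k})^{\sigma*}\to B(H_{\sigma})$, over a full additive subcategory $\Sigma$ of faithful representations, with a unique $\theta_{k}\in E^{\otimes k}$ via $\psi_{\sigma}(\mathfrak{w})=\mathfrak{w}L_{\theta_{k}}$ for all $\sigma\in\Sigma$ and $\mathfrak{w}\in(E^{\otimes k})^{\sigma*}$, with $\Vert\theta_{k}\Vert\le\sup_{\sigma}\Vert\psi_{\sigma}\Vert_{cb}\le M/r^{k}$. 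Specializing $\mathfrak{w}=\mathcal{Z}_{k}(\mathfrak{u}_{1},\ldots,\mathfrak{u}_{k})$ and applying both sides to $h\in H_{\sigma}$ yields $\Delta^{k}f_{\sigma}(0)(\mathfrak{u}_{1},\ldots,\mathfrak{u}_{k})h=\mathcal{Z}_{k}(\mathfrak{u}_{1},\ldots,\mathfrak{u}_{k})(\theta_{k}\otimes h)$, which is the assertion; uniqueness of $\theta_{k}$ is immediate from faithfulness of the representations in $\Sigma$ (if $\theta_{k}$ and $\theta_{k}'$ both work, then $\mathcal{Z}_{k}(\mathfrak{u}_{1},\ldots,\mathfrak{u}_{k})(\theta_{k}\otimes h)=\mathcal{Z}_{k}(\mathfrak{u}_{1},\ldots,\mathfrak{u}_{k})(\theta_{k}'\otimes h)$ for all admissible data forces $\theta_{k}=\theta_{k}'$).

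The one genuinely delicate point is the duality step in the absence of a generator for $NRep(M)$. If one only has the single-representation version of the duality theorem, the way around it is a patching argument: run it on a fixed faithful $\sigma\in\Sigma$ together with its finite amplifications $n\sigma$ (all in $\Sigma$ by additivity, all intertwiners available by fullness) to obtain $\theta_{k}^{\sigma}\in E^{\otimes k}$ implementing $\psi_{n\sigma}$ for every $n$; then, for any other faithful $\tau\in\Sigma$, form $\sigma\oplus\tau\in\Sigma$ and use the injections $\iota_{\sigma}\in\mathcal{I}(\sigma,\sigma\oplus\tau)$ and $\iota_{\tau}\in\mathcal{I}(\tau,\sigma\oplus\tau)$ — members of $\Sigma$ by fullness, and intertwining $\sigma\times0$ and $\tau\times0$ with $(\sigma\oplus\tau)\times0$ — to compress $\psi_{\sigma\oplus\tau}$ to the corners and conclude $\mathfrak{w}L_{\theta_{k}^{\sigma}}=\mathfrak{w}L_{\theta_{k}^{\sigma\oplus\tau}}=\mathfrak{w}L_{\theta_{k}^{\tau}}$, whence $\theta_{k}^{\sigma}=\theta_{k}^{\sigma\oplus\tau}=\theta_{k}^{\tau}$ by faithfulness. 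The common value is the required $\theta_{k}$, and the bound $\Vert\theta_{k}\Vert\le M/r^{k}$ comes from the isometry in the duality together with Proposition~\ref{completely_bounded}.
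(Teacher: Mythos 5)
Your overall strategy is the paper's: linearize the balanced, completely bounded $k$-linear map $\Delta^{k}f_{\sigma}(0)$ supplied by Proposition~\ref{completely_bounded} and then use duality to produce $\theta_{k}$. But the two steps you assert in passing are exactly where the real work lies, and as written they are gaps. First, the claim that $\Delta^{k}f_{\sigma}(0)$ ``descends'' to a map $\psi_{\sigma}$ on the span of the elements $\mathcal{Z}_{k}(\mathfrak{u}_{1},\ldots,\mathfrak{u}_{k})$ and ``extends to $(E^{\otimes k})^{\sigma*}$'' with $\Vert\psi_{\sigma}\Vert_{cb}\leq M/r^{k}$ is not a formal consequence of multilinearity, balancedness, and the identification of $(E^{\otimes k})^{\sigma*}$ with a balanced tensor power of $E^{\sigma*}$: boundedness (even complete boundedness) of a balanced multilinear map does not by itself control its linearization in the Hilbert-module tensor norm. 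This is precisely what the module Haagerup tensor product machinery is for; the paper transposes to the right correspondence $E^{\sigma}$ (via $\eta\mapsto\eta^{*}$), applies \cite[Theorem 2.3]{Blecher2000} to obtain a completely bounded linear map on $E^{\sigma}\otimes_{hN}\cdots\otimes_{hN}E^{\sigma}$, and then uses Blecher's identification of the module Haagerup tensor product with the $C^{*}$-interior tensor product \cite[Theorem 4.3]{Blecher1997d}. Moreover, the span of the $\mathcal{Z}_{k}$'s is not norm-dense in the self-dual module $(E^{\otimes k})^{\sigma*}$, so the extension you invoke would also require an ultraweak-continuity argument that nothing in your construction of $\psi_{\sigma}$ provides; the paper avoids extending module maps to the self-dual completion altogether by tensoring with $H_{\sigma}$ and invoking \cite[Remark 1.8]{Viselter2011} at the Hilbert-space level.

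Second, the duality step. \cite[Theorem 3.6]{Muhly2004a} is not a statement about (matricial families of) completely bounded bimodule maps $(E^{\otimes k})^{\sigma*}\to B(H_{\sigma})$; for a single faithful $\sigma$ it identifies $E^{\otimes k}$ with the $\iota$-dual $((E^{\otimes k})^{\sigma})^{\iota}$, i.e.\ with the operators $H_{\sigma}\to E^{\otimes k}\otimes_{\sigma}H_{\sigma}$ that commute appropriately with the $\sigma(M)'$-actions. To apply it one must first manufacture such an intertwiner from $\Delta^{k}f_{\sigma}(0)$, and that is the content of the paper's appeal to \cite[Lemma 3.5]{Muhly1998a}: the linearized map together with the identity representation $\iota$ of $\sigma(M)'$ is a completely bounded covariant representation of $E^{\sigma}\otimes_{C^{*}}\cdots\otimes_{C^{*}}E^{\sigma}$, which yields $\tilde{\Psi}$ with $\tilde{\Psi}(\varphi(\cdot)\otimes I_{H})=\iota(\cdot)\tilde{\Psi}$ and $\Vert\tilde{\Psi}\Vert\leq M/r^{k}$, to which the duality theorem (and Equation (3.1) of \cite{Muhly2004a}) then applies. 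Your fallback ``patching'' argument over $\Sigma$ only addresses the independence of $\theta_{k}$ from $\sigma$ -- which is not the difficulty, and which your corner-compression argument would indeed handle once the single-$\sigma$ statement is available. What is missing is the single-$\sigma$ construction of $\theta_{k}$ itself; that requires the Blecher tensor-product theorems and the covariant-representation lemma (or an equivalent argument), not a citation of the duality theorem ``in the form needed.''
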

\begin{proof}
Consider the map $\psi:E^{\sigma}\times E^{\sigma}\times\cdots\times E^{\sigma}\rightarrow B(H)$
defined by 
\[
\psi(\eta_{1},\ldots,\eta_{k})=\Delta^{k}f_{\sigma}(0)(\eta_{k}^{*},\ldots,\eta_{1}^{*})^{*}.
\]
For simplicity, we write here $N$ for the von Neumann algebra $\sigma(M)'$.
Using Proposition~\ref{completely_bounded}, we see that this is
a $k$-linear map, balanced over $\sigma(M)'$, with norm not exceeding
$\frac{M}{r^{k}}$. Applying \cite[Theorem 2.3]{Blecher2000}, we
find that it induces a linear, completely bounded map $\Psi:E^{\sigma}\otimes_{hN}\ E^{\sigma}\otimes_{hN}\cdots\otimes_{hN}E^{\sigma}\rightarrow B(H_{\sigma})$,
where $\otimes_{hN}$ is the module Haagerup tensor product, and $||\Psi||\leq\frac{M}{r^{k}}$.
But $E^{\sigma}\otimes_{hN}\ E^{\sigma}\otimes_{hN}\cdots\otimes_{hN}E^{\sigma}=E^{\sigma}\otimes_{C^{*}}\ E^{\sigma}\otimes_{C^{*}}\cdots\otimes_{C^{*}}E^{\sigma}$
where $\otimes_{C^{*}}$ is the internal tensor product of $C^{*}$-correspondences
(see \cite[Theorem 4.3]{Blecher1997d} ). For $b,c\in N$ and $\eta_{1},\ldots,\eta_{k}\in E^{\sigma}$,
we have $\Psi(b\cdot\eta_{1}\otimes\eta_{2}\otimes\cdots\otimes\eta_{k}\cdot c)=\Psi((I_{E^{\sigma}}\otimes b)\eta_{1}\otimes\eta_{2}\otimes\cdots\otimes\eta_{k}c)=\Delta^{k}f_{\sigma}(0)(c^{*}\eta_{k}^{*},\ldots,\eta_{1}^{*}(I\otimes b^{*}))^{*}=(c^{*}\Delta^{k}f(\underline{0})(\eta_{k}^{*},\ldots,\eta_{1}^{*})(I\otimes b^{*}))^{*}=b\Psi(\eta_{1}\otimes\eta_{2}\otimes\cdots\otimes\eta_{k})c$
(using Lemma~\ref{Delta_n}). Thus, $\Psi$ is a bimodule map. Write
$F$ for the $C^{*}$-correspondence $E^{\sigma}\otimes_{C^{*}}\ E^{\sigma}\otimes_{C^{*}}\cdots\otimes_{C^{*}}E^{\sigma}$
(over $N=\sigma(M)'$). Using the terminology of \cite{Muhly1998a},
we say that $(\Psi,\iota)$ is a completely bounded covariant representation
of $F$ where $\iota$ is the identity representation of $\sigma(M)'$
on $H_{\sigma}$. It follows from \cite[Lemma 3.5]{Muhly1998a} that
there is a bounded map $\tilde{\Psi}:F\otimes_{\iota}H_{\sigma}\rightarrow H_{\sigma}$
such that $\tilde{\Psi}(\varphi_{F}(\cdot)\otimes I_{H})=\iota(\cdot)\tilde{\Psi}$
and $||\tilde{\Psi}||\leq\frac{M}{r^{k}}$. Now note that $(E^{\sigma})^{\otimes k}$
is the self-dual completion of $F$ and, using Remark 1.8 in \cite{Viselter2011},
we have $F\otimes_{\iota}H_{\sigma}=(E^{\sigma})^{\otimes k}\otimes_{\iota}H_{\sigma}$.
Thus we can view $\tilde{\Psi}$ as a map from $(E^{\sigma})^{\otimes k}\otimes_{\iota}H_{\sigma}$
into $H_{\sigma}$ satisfying $\tilde{\Psi}(\varphi(\cdot)\otimes I_{H})=\iota(\cdot)\tilde{\Psi}$.
Applying \cite[Theorem 3.6 and Lemma 3.7]{Muhly2004a}, there is an
element $\theta_{k}\in E^{\otimes k}$ that corresponds to $\tilde{\Psi}$
via the isomorphism $((E^{\sigma})^{\otimes k})^{\iota}\cong E^{\otimes k}$.
More precisely, we have, using Equation (3.1) in \cite{Muhly2004a},
for every $\eta_{1},\eta_{2},\ldots,\eta_{k}$ in $E^{\sigma}$ and
every $h\in H_{\sigma}$, 
\begin{equation}
L_{\theta_{k}}^{*}(I_{E^{\otimes(k-1)}}\otimes\eta_{k})\cdots(I_{E}\otimes\eta_{2})\eta_{1}h=\tilde{\Psi}(\eta_{k}\otimes\eta_{k-1}\otimes\cdots\otimes\eta_{1}\otimes h)=
\end{equation}
 
\[
=\Delta^{k}f_{\sigma}(\tilde{0})(\eta_{1}^{*},\ldots,\eta_{k}^{*})^{*}h.
\]
 Taking adjoints and writing $\mathfrak{u}_{i}$ for $\eta_{i}^{*}$,
we obtain the desired result.
\end{proof}
The following corollary is now immediate, by Theorem~\ref{TTseries}
and Theorem~\ref{theta_k}.
\begin{cor}
\label{power_series} Let $f=\{f_{\sigma}\}_{\sigma\in\Sigma}$ be
a uniformly bounded matricial family of functions defined on a matricial
disc $\mathbb{D}(0,r)$, where $\Sigma$ is a full additive subcategory
of faithful representations in $NRep(M)$. Then there is a uniquely
determined series $\theta\sim\sum_{k\geq0}\theta_{k}$ in $\mathcal{T}_{+}((E))$
with $R(\theta)\geq r$ such that $f$ is the family of tensorial
power series $\{\sum_{k\geq0}\mathcal{Z}_{k}(\mathfrak{z})L_{\theta_{k}}\mid\mathfrak{z}\in\mathbb{D}(0,r,\sigma)\}_{\sigma\in\Sigma}$
.\end{cor}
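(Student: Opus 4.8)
The plan is to read off the conclusion directly from Theorem~\ref{TTseries} and Theorem~\ref{theta_k}: the former equips each $f_\sigma$ with a convergent Taylor--Taylor expansion, and the latter converts its coefficients into tensors over $E$. First I would note that, since $f$ is defined on the matricial disc $\mathbb{D}(0,r)$ based at the origin, the relevant additive field is the zero field, which is automatically central; together with the standing assumptions that $\Sigma$ is full and consists of faithful representations, this puts the hypotheses of both Theorem~\ref{TTseries} and Theorem~\ref{theta_k} in force. Let $M$ denote a uniform bound for $\|f_\sigma(\mathfrak z)\|$ over all $\sigma\in\Sigma$ and $\mathfrak z\in\mathbb{D}(0,r,\sigma)$, as in Theorem~\ref{theta_k}. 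By Theorem~\ref{TTseries}(2), for every $\sigma\in\Sigma$ and $\mathfrak z\in\mathbb{D}(0,r,\sigma)$,
\[
f_\sigma(\mathfrak z)=\sum_{k=0}^{\infty}\Delta^{k}f_{\sigma}(0)(\mathfrak z,\ldots,\mathfrak z),
\]
with absolute and uniform convergence on each subdisc $\mathbb{D}(0,r_0,\sigma)$, $r_0<r$; and by Theorem~\ref{theta_k}, for each $k$ there is a unique $\theta_k\in E^{\otimes k}$ with $\|\theta_k\|\le M/r^{k}$ such that $\Delta^{k}f_{\sigma}(0)(\mathfrak u_1,\ldots,\mathfrak u_k)h=\mathcal{Z}_k(\mathfrak u_1,\ldots,\mathfrak u_k)(\theta_k\otimes h)$ for all $\mathfrak u_i\in E^{\sigma*}$ and $h\in H_\sigma$.

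Putting $\mathfrak u_1=\cdots=\mathfrak u_k=\mathfrak z$ and recalling that $\mathcal{Z}_k(\mathfrak z,\ldots,\mathfrak z)=\mathcal{Z}_k(\mathfrak z)$ and $L_{\theta_k}h=\theta_k\otimes h$, the displayed relation from Theorem~\ref{theta_k} reads $\Delta^{k}f_{\sigma}(0)(\mathfrak z,\ldots,\mathfrak z)=\mathcal{Z}_k(\mathfrak z)L_{\theta_k}$; substituting this into the Taylor--Taylor expansion yields $f_\sigma(\mathfrak z)=\sum_{k\ge0}\mathcal{Z}_k(\mathfrak z)L_{\theta_k}$ on $\mathbb{D}(0,r,\sigma)$ for every $\sigma\in\Sigma$. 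The estimate $\|\theta_k\|\le M/r^{k}$ forces $\limsup_k\|\theta_k\|^{1/k}\le 1/r$, i.e.\ $R(\theta)\ge r$, so by Proposition~\ref{prop:Power_series_give_holomorphic_fcns} the series $\sum_{k\ge0}\mathcal{Z}_k(\cdot)L_{\theta_k}$ is indeed a tensorial power series and $f$ is the family it determines.

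For uniqueness, suppose $\theta'\sim\sum_k\theta'_k$ also has $R(\theta')\ge r$ and the tensorial power series family it determines equals $f$. Then, for each $\sigma$, $f_\sigma$ is Frechet analytic on $\mathbb{D}(0,r,\sigma)$ with $k$-homogeneous part $\mathcal{Z}_k(\cdot)L_{\theta'_k}$; by uniqueness of the homogeneous expansion and Corollary~\ref{nlinear}, this part agrees on the diagonal with $\Delta^{k}f_{\sigma}(0)(\cdot)$. Since $\Delta^{k}f_{\sigma}(0)$ extends to a symmetric $k$-linear map (Corollary~\ref{nlinear}), it is recovered from its diagonal by polarization, so $\Delta^{k}f_{\sigma}(0)(\mathfrak u_1,\ldots,\mathfrak u_k)h=\mathcal{Z}_k(\mathfrak u_1,\ldots,\mathfrak u_k)(\theta'_k\otimes h)$ for all tuples, and the uniqueness clause of Theorem~\ref{theta_k} gives $\theta'_k=\theta_k$ for every $k$. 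I do not expect any genuine obstacle: all the analytic content --- the Taylor--Taylor expansion under mere local uniform boundedness, and the identification of each Taylor coefficient with an element of $E^{\otimes k}$ through the module Haagerup tensor product and duality arguments --- is already supplied by Theorems~\ref{TTseries} and~\ref{theta_k}. The only points requiring care are the bookkeeping in the polarization step of the uniqueness argument and checking that the diagonal evaluation of the $k$-linear map literally reproduces $\mathcal{Z}_k(\mathfrak z)L_{\theta_k}$ rather than a permuted variant, both of which are immediate from the definition of $\mathcal{Z}_k$.
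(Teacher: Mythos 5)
Your existence argument is exactly the paper's: the paper deduces the corollary directly from Theorem~\ref{TTseries} and Theorem~\ref{theta_k}, precisely by substituting the representation $\Delta^{k}f_{\sigma}(0)(\mathfrak{z},\ldots,\mathfrak{z})=\mathcal{Z}_{k}(\mathfrak{z})L_{\theta_{k}}$ into the Taylor--Taylor expansion, and your estimate $\norm{\theta_{k}}\leq M/r^{k}\Rightarrow R(\theta)\geq r$ is the right way to get the radius statement. So the first half of your proposal is correct and follows the paper's route.

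The uniqueness step, however, has a genuine gap. Corollary~\ref{nlinear} does \emph{not} assert that $\Delta^{k}f_{\sigma}(0)(\cdot,\ldots,\cdot)$ is symmetric; it says only that its restriction to the diagonal agrees with the diagonal of the symmetric map $\frac{1}{k!}D^{k}f_{\sigma}(0)$. In general $\Delta^{k}f_{\sigma}(0)$ is not symmetric: in the basic example with $\theta_{2}=e_{1}\otimes e_{2}$ one has $\Delta^{2}f_{\sigma}(0)(\mathfrak{u}_{1},\mathfrak{u}_{2})=U_{1}^{(1)}U_{2}^{(2)}$, which changes under swapping $\mathfrak{u}_{1}$ and $\mathfrak{u}_{2}$. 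Polarization recovers from a diagonal only the \emph{symmetrization} of a multilinear map, so from the agreement of the $k$-homogeneous diagonal parts you may conclude only that the symmetrizations of $\mathcal{Z}_{k}(\cdot,\ldots,\cdot)(\theta_{k}\otimes\cdot)$ and $\mathcal{Z}_{k}(\cdot,\ldots,\cdot)(\theta'_{k}\otimes\cdot)$ coincide; at a one-dimensional $\sigma$ this cannot even distinguish $e_{1}\otimes e_{2}$ from $e_{2}\otimes e_{1}$ (the quadratic term of $e_{1}\otimes e_{2}-e_{2}\otimes e_{1}$ vanishes identically at scalar points), and it is not the hypothesis of the uniqueness clause of Theorem~\ref{theta_k}, which requires the \emph{unsymmetrized} identity on all tuples. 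The fix uses the matricial structure rather than polarization: for $\mathfrak{u}_{1},\ldots,\mathfrak{u}_{k}\in E^{\sigma*}$ (scaled so the point lies in the disc), evaluate both $f_{(k+1)\sigma}$ and the tensorial power series family determined by $\theta'$ at the nilpotent upper-triangular element of $E^{(k+1)\sigma*}$ with $\mathfrak{u}_{1},\ldots,\mathfrak{u}_{k}$ on the superdiagonal; by Definition~\ref{def:Delta_n} and Lemma~\ref{fmatrix} the $(1,k+1)$ corner of the first is $\Delta^{k}f_{\sigma}(0)(\mathfrak{u}_{1},\ldots,\mathfrak{u}_{k})$, while a direct computation (only $\mathcal{Z}_{k}$ contributes to that corner, and its corner entry is $\mathcal{Z}_{k}(\mathfrak{u}_{1},\ldots,\mathfrak{u}_{k})$) shows the corner of the second is $\mathcal{Z}_{k}(\mathfrak{u}_{1},\ldots,\mathfrak{u}_{k})L_{\theta'_{k}}$. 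Equating these gives the unsymmetrized identity for all tuples, and then the uniqueness assertion of Theorem~\ref{theta_k} yields $\theta'_{k}=\theta_{k}$.
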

\begin{rem}
\label{rem:Completion_of_the_proof_of_Thm_5.1} Corollary \ref{power_series}
and Lemma \ref{lem:Translation_invariance} immediately yield the
second assertion of Theorem \ref{thm:Matricial_implies_Frechet_hol.}
\end{rem}

\section{Series for matricial families of maps\label{sec:Series-for-matricial}}

\selectlanguage{english}%
In the previous section we studied matricial families of functions
in contexts where special generators are not present in the category
under consideration. In this section we focus on matricial families
of \emph{maps} (as in Theorem~\ref{UEtoUF}). Many of the results
proved for functions extend to the setting of such families with only
minor changes necessary. \foreignlanguage{american}{Indeed, the formulas
that go into defining $\Delta^{k}f_{\sigma}$ when $f$ is a map are
minor variants of the formulas that enter into the definitions of
$\Delta^{k}f_{\sigma}$ when $f$ is a function. One has only to replace
equations like $Cf_{\sigma}(\mathfrak{z})=f_{\tau}(\mathfrak{w})C$
with $Cf_{\sigma}(\mathfrak{z})=f_{\tau}(\mathfrak{w})(I_{F}\otimes C)$.
To illustrate, recall that to say $f$ is a matricial family of maps
means that for each pair, $\sigma$ and $\tau$ in $\Sigma$, for
each $\mathfrak{z}\in\mbox{\ensuremath{\mathbb{D}}(0,r,\ensuremath{\sigma}) }$,
for each $\mathfrak{w}\in\mathbb{D}(0,r,\tau)$, and for each $C\in\mathcal{I}(\sigma,\tau)$
such that $C\mathfrak{z}=\mathfrak{w}(I_{E}\otimes C)$ we have 
\[
Cf_{\sigma}(\mathfrak{z})=f_{\tau}(\mathfrak{w})(I_{F}\otimes C).
\]
So, if $\begin{bmatrix}\mathfrak{z} & \mathfrak{u}\\
0 & \mathfrak{w}
\end{bmatrix}$ lies in $\mathbb{D}(0,r,\sigma\oplus\tau)$ and if $f_{\sigma\oplus\tau}(\begin{bmatrix}\mathfrak{z} & \mathfrak{u}\\
0 & \mathfrak{w}
\end{bmatrix})=\begin{bmatrix}a_{11} & a_{21}\\
a_{21} & a_{22}
\end{bmatrix}$, then because 
\[
\left(\begin{array}{c}
I_{\sigma}\\
0
\end{array}\right)\mathfrak{z}=\left(\begin{array}{cc}
\mathfrak{z} & \mathfrak{u}\\
0 & \mathfrak{w}
\end{array}\right)\left(\begin{array}{c}
I_{E}\otimes I_{\sigma}\\
0
\end{array}\right),
\]
we must have 
\begin{equation}
\begin{pmatrix}I_{\mathfrak{z}}\\
0
\end{pmatrix}f_{\sigma}(\mathfrak{z})=\begin{pmatrix}a_{11} & a_{12}\\
a_{21} & a_{22}
\end{pmatrix}\left(\begin{array}{c}
I_{F}\otimes I_{\sigma}\\
0
\end{array}\right),\label{eq:Intw_bis_2}
\end{equation}
which implies that $a_{11}=f_{\sigma}(\mathfrak{z})$ and $a_{21}=0$.
Formula \eqref{eq:Intw_bis_2} is essentially formula \eqref{eq:Intw_1_bis},
and the other formulas in the analysis of matricial functions have
similar modifications for matricial maps. In particular, one can proceed
to define $\Delta f_{\sigma,\tau}(\mathfrak{z},\mathfrak{w})(\mathfrak{u})$
as $a_{12}$, and prove that $\Delta f_{\sigma,\tau}(\mathfrak{z},\mathfrak{w})(\mathfrak{u})$
is linear in $\mathfrak{u}$. With matricial maps, however, $\Delta f_{\sigma,\tau}(\mathfrak{z},\mathfrak{w})(\cdot)$
is a map from $E^{\sigma*}$ to $F^{\sigma*}$. }

\selectlanguage{american}%
Once the distinction between matricial maps and functions is recognized,
the entire body of results that begins with part (1) of Definition
\ref{Definition_Delta}, ends with Remark \ref{Partial Summary},
and does not inolve the \emph{bimodule properties} of $E^{\sigma*}$
(as a bimodule over $\sigma(M)'$), goes through \emph{mutatis mutandis}
for matricial maps.\foreignlanguage{english}{ In particular, the series
expansions of functions found in Corollary \ref{nlinear} make sense
and remain valid for maps.} \foreignlanguage{english}{However, the
notion of a ''tensorial power series of maps'' has not been defined
and the series expansion found in assertion (2) of Theorem \ref{thm:Matricial_implies_Frechet_hol.}
does not make sense in the setting of maps. So our principal goal,
Theorem \ref{expansion_for_maps}, is to exhibit the appropriate replacement
and to give a definition of tensorial power series of maps. }

We shall assume our category $\Sigma\subseteq NRep(M)$ is additive,
full, and has the property that every representation $\sigma\in\Sigma$
is faithful. Although matricial maps can be defined on arbitrary matricial
sets, for simplicity we shall restrict ourselves to matricial discs
centered at the origin. Thus we will consider matricial maps $f=\{f_{\sigma}\}_{\sigma\in\Sigma}$
defined on an $E,\Sigma$-matricial disc $\mathbb{D}(0,r)$, for some
$r$. We shall assume that $f$ maps into an $F,\Sigma$-matricial
disc. This is tantamount to assuming that $f$ is uniformly bounded
in $\sigma$. 

\selectlanguage{english}%
We will write $_{M}\mathcal{L}_{M}(E,F)$ for the maps in $\mathcal{L}(E,F)$
that are bimodule maps. That is, $T\in\mathcal{L}(E,F)$ lies in $_{M}\mathcal{L}_{M}(E,F)$
if and only if $T(\varphi_{E}(a)\xi b)=\varphi_{F}(a)T(\xi)b$, for
all $a,b\in M$. Our goal is to prove the following theorem that complements
part (2) of Theorem \ref{thm:Matricial_implies_Frechet_hol.}.
\begin{thm}
\label{expansion_for_maps} Let $E$ and $F$ be two $W^{*}$-correspondences
over the same $W^{*}$-algebra, $M$, and suppose $\Sigma$ is a full
additive subcategory of $NRep(M)$ whose objects are all faithful
representations of $M$. If $f=\{f_{\sigma}\}_{\sigma\in\Sigma}$
is a matricial family of maps, mapping an $E,\Sigma$-disc $\mathbb{D}(0,r,E)$
to an $F,\Sigma$-disc $\mathbb{D}(0,R,F)$, then there is a uniquely
defined sequence of bimodule maps $\{\mathfrak{D}^{k}f\}_{k=0}^{\infty}$,
where for each $k$, $\mathfrak{D}^{k}f$ lies in $\,_{M}\mathcal{L}_{M}(F,E^{\otimes k})$,
such that for every $\mathfrak{z}\in\mathbb{D}(0,r,\sigma)$, 
\begin{equation}
f_{\sigma}(\mathfrak{z})=f_{\sigma}(0)+\sum_{k\geq1}\mathcal{Z}_{k}(\mathfrak{z})(\mathfrak{D}^{k}f\otimes I_{H_{\sigma}}).\label{eq:Tensorial_power_series_of_maps}
\end{equation}

\end{thm}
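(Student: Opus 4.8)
The plan is to follow the same route as the proofs of Theorems~\ref{TTseries}, \ref{completely_bounded} and \ref{theta_k}, carrying the Taylor difference machinery over to matricial families of maps as indicated in the discussion preceding the statement, and then replacing the final duality step (which for functions produced a tensor $\theta_{k}\in E^{\otimes k}$) by the \emph{functorial} form of the Muhly--Solel duality theorem, which will produce a bimodule map $\mathfrak{D}^{k}f\colon F\to E^{\otimes k}$. \emph{Step 1 (Taylor expansion).} Since $f$ maps $\mathbb{D}(0,r,E)$ into the $F,\Sigma$-disc $\mathbb{D}(0,R,F)$, it is uniformly bounded by $R$, so the analysis running from Lemma~\ref{Definition_Delta}(1) through Corollary~\ref{nlinear} applies to $f$ with the sole change that $\Delta^{k}f_{\sigma}(0)(\mathfrak{u}_{1},\dots,\mathfrak{u}_{k})$ now lies in $F^{\sigma*}$ rather than $B(H_{\sigma})$; in particular $f_{\sigma}(\mathfrak{z})=f_{\sigma}(0)+\sum_{k\ge1}\Delta^{k}f_{\sigma}(0)(\mathfrak{z},\dots,\mathfrak{z})$, with the series norm-convergent on $\mathbb{D}(0,r,\sigma)$. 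It therefore suffices, for each $k\ge1$, to produce $\mathfrak{D}^{k}f\in{}_{M}\mathcal{L}_{M}(F,E^{\otimes k})$ with
\[
\Delta^{k}f_{\sigma}(0)(\mathfrak{u}_{1},\dots,\mathfrak{u}_{k})=\mathcal{Z}_{k}(\mathfrak{u}_{1},\dots,\mathfrak{u}_{k})\,(\mathfrak{D}^{k}f\otimes I_{H_{\sigma}}),\qquad \mathfrak{u}_{i}\in E^{\sigma*},
\]
and then restrict to the diagonal. \emph{Step 2 (complete boundedness and balancing).} The proof of Proposition~\ref{completely_bounded} uses only Lemmas~\ref{ljmj}, \ref{fmatrix}, \ref{The_map_phi(n)} and \ref{Delta_n}, each of which has an immediate analogue for maps (using $F^{n\sigma*}=M_{n}(F^{\sigma*})$), so it carries over: for each $k$ and $\sigma$ the map $\Delta^{k}f_{\sigma}(0)(\cdot,\dots,\cdot)\colon E^{\sigma*}\times\cdots\times E^{\sigma*}\to F^{\sigma*}$ is $k$-linear, balanced over $N:=\sigma(M)'$, and completely bounded with $\|\Delta^{k}f_{\sigma}(0)\|_{cb}\le R/r^{k}$.

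\emph{Step 3 (extracting the bimodule map).} Fix $k\ge1$ and $\sigma\in\Sigma$. Passing to adjoints, define $\psi\colon E^{\sigma}\times\cdots\times E^{\sigma}\to F^{\sigma}$ by $\psi(\eta_{1},\dots,\eta_{k})=\Delta^{k}f_{\sigma}(0)(\eta_{k}^{*},\dots,\eta_{1}^{*})^{*}$; by Step 2 and the balancing relations of Lemma~\ref{Delta_n} this is a $k$-linear, $N$-balanced, completely bounded map with $\|\psi\|_{cb}\le R/r^{k}$ (the two adjoint operations are completely isometric). By the universal property of the module Haagerup tensor product, \cite[Theorem 2.3]{Blecher2000}, $\psi$ linearises to a completely bounded map $\Psi$ on $E^{\sigma}\otimes_{hN}\cdots\otimes_{hN}E^{\sigma}=E^{\sigma}\otimes_{C^{*}}\cdots\otimes_{C^{*}}E^{\sigma}$ (\cite[Theorem 4.3]{Blecher1997d}) with values in $F^{\sigma}$ and $\|\Psi\|_{cb}\le R/r^{k}$; the balancing of $\psi$ forces $\Psi$ to be an $N$-bimodule map, exactly as in the proof of Theorem~\ref{theta_k}. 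Since $(E^{\sigma})^{\otimes k}$ is the self-dual completion of $E^{\sigma}\otimes_{C^{*}}\cdots\otimes_{C^{*}}E^{\sigma}$ and $F^{\sigma}$ is self-dual, $\Psi$ extends to a normal $N$-bimodule map $\Psi\in{}_{N}\mathcal{L}_{N}((E^{\sigma})^{\otimes k},F^{\sigma})$ (cf. \cite[Remark 1.8]{Viselter2011}).

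Now apply the Muhly--Solel duality. Writing $\iota$ for the identity representation of $N$ on $H_{\sigma}$, the assignment $X\mapsto X^{\iota}$ carries $W^{*}$-correspondences over $N$ to $W^{*}$-correspondences over $\iota(N)'\cong M$, is covariant and monoidal on bimodule maps, and is, up to natural isomorphism, inverse to $Y\mapsto Y^{\sigma}$; by \cite[Theorem 3.6 and Lemma 3.7]{Muhly2004a}, $((E^{\sigma})^{\otimes k})^{\iota}\cong E^{\otimes k}$ and $(F^{\sigma})^{\iota}\cong F$. Hence $\Psi^{\iota}$ is a bimodule map $E^{\otimes k}\to F$, and we set $\mathfrak{D}^{k}f:=(\Psi^{\iota})^{*}\in{}_{M}\mathcal{L}_{M}(F,E^{\otimes k})$; isometry of the duality gives $\|\mathfrak{D}^{k}f\|\le R/r^{k}$, which also guarantees convergence of the series in \eqref{eq:Tensorial_power_series_of_maps}. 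Tracing the chain $\Delta^{k}f_{\sigma}(0)\leftrightarrow\psi\leftrightarrow\Psi\leftrightarrow\Psi^{\iota}\leftrightarrow\mathfrak{D}^{k}f$ through \cite[Equation (3.1)]{Muhly2004a}, exactly as in the last paragraph of the proof of Theorem~\ref{theta_k} but now carrying along the correspondence $F$, yields the displayed identity; substituting into the Step~1 expansion and restricting to the diagonal gives \eqref{eq:Tensorial_power_series_of_maps}. This is the step I expect to require the most care: one must promote the duality theorem of \cite{Muhly2004a}, which identifies the correspondence $E^{\otimes k}$ with a space of intertwiners, to a functorial statement about bimodule \emph{maps} $F\to E^{\otimes k}$, and then verify that the resulting $\mathfrak{D}^{k}f$ reproduces the concrete formula involving $\mathcal{Z}_{k}$.

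\emph{Step 4 (independence of $\sigma$; uniqueness).} A priori $\mathfrak{D}^{k}f$ was built from a fixed $\sigma$. Because $\Sigma$ contains the finite multiples $n\sigma$ with the natural injections and projections, and $\Delta^{k}f_{n\sigma}(0)$ is compatible with these as in Lemma~\ref{The_map_phi(n)}, the map obtained from $n\sigma$ agrees with the one from $\sigma$; and for any $\sigma,\tau\in\Sigma$, comparing on the summands of $\sigma\oplus\tau$ shows the two constructions agree, so $\mathfrak{D}^{k}f$ is well defined. Uniqueness of $\{\mathfrak{D}^{k}f\}$ follows from the homogeneity of the terms of \eqref{eq:Tensorial_power_series_of_maps} (scale $\mathfrak{z}$ by $\lambda\in\mathbb{C}$ and match Taylor coefficients in $\lambda$) together with the faithfulness of the representations in $\Sigma$, precisely as in the uniqueness arguments for Theorem~\ref{RD} and Corollary~\ref{power_series}. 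Finally, declaring the series on the right of \eqref{eq:Tensorial_power_series_of_maps} a \emph{tensorial power series of maps} supplies the promised definition.
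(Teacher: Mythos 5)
Your proposal is correct in outline and follows the same skeleton as the paper's argument: carry the Taylor difference machinery (Lemma \ref{Definition_Delta} through Corollary \ref{nlinear}) over to maps, establish complete boundedness and $\sigma(M)'$-balancing of $\Delta^{k}f_{\sigma}(0)$ as in Proposition \ref{completely_bounded}, linearize through the module Haagerup tensor product as in Theorem \ref{theta_k}, and then use the duality of \cite[Theorem 3.6]{Muhly2004a} to convert the resulting bimodule map into an element of $_{M}\mathcal{L}_{M}(F,E^{\otimes k})$. The two places where you genuinely deviate are worth noting. First, the ``functorial'' duality step you flag as delicate is exactly what the paper isolates and proves as Lemma \ref{dual_maps}: a bimodule map $\tau\in{}_{\sigma(M)'}\mathcal{L}_{\sigma(M)'}(E^{\sigma},F^{\sigma})$ induces $\tau_{*}\in{}_{M}\mathcal{L}_{M}(F,E)$ characterized by $\eta^{*}L_{\tau_{*}(\theta)}=\tau(\eta)^{*}L_{\theta}$, and it is this defining relation (not an abstract monoidality statement) that produces the concrete formula with $\mathcal{Z}_{k}$; your $(\Psi^{\iota})^{*}$ is the same map, but the verification you defer is the actual content. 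Also, the paper deliberately avoids your claim that $\Psi$ extends to a bimodule map on the self-dual completion $(E^{\sigma})^{\otimes k}$; it only needs $\Psi\otimes I_{H_{\sigma}}$ on $(E^{\sigma})^{\otimes k}\otimes_{\iota}H_{\sigma}=E^{\sigma}\otimes_{C^{*}}\cdots\otimes_{C^{*}}E^{\sigma}\otimes_{\iota}H_{\sigma}$, which is precisely what \cite[Remark 1.8]{Viselter2011} supplies -- your extension claim is defensible via Paschke's extension theorem for maps into self-dual modules, but Viselter's remark is not the right citation for it, and the weaker statement suffices. Second, for independence of $\sigma$ the paper proves Lemma \ref{indep_sigma}, comparing the constructions at two faithful representations through \emph{all} intertwiners in $\mathcal{I}(\sigma_{1},\sigma_{2})$ and using that their ranges span; you instead compare via the direct-sum injections and appeal to uniqueness of the representing tensor at a single faithful $\sigma$. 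That route does work, but the uniqueness step needs to be made explicit: it rests on the spanning property $\bigvee\{\eta(H_{\sigma})\mid\eta\in E^{\sigma}\}=E\otimes_{\sigma}H_{\sigma}$ of \cite[Lemma 3.5]{Muhly2004a} (iterated over tensor powers), which is where faithfulness enters, rather than on the Fourier-coefficient arguments of Theorem \ref{RD} that you cite and which presuppose a special generator not available here.
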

Theorem \ref{expansion_for_maps} immediately suggests the following
definition for a tensorial power series of maps between two correspondence
duals.
\begin{defn}
\label{def:Tensorial_power_series_of_maps}A map $f$ defined from
an open disc $\mathbb{D}(\zeta_{0},r,\sigma)$ in $E^{\sigma*}$ to
$F^{\sigma*}$ is said to have a \emph{tensorial power series expansion}
on $\mathbb{D}(\zeta_{0},r,\sigma)$ in case there is a sequence $\{\Theta_{k}\}_{k\geq0}$,
with $\Theta_{k}$ in $\,_{M}\mathcal{L}_{M}(F,E^{\otimes k})$, such
that 
\[
f(\mathfrak{z})=\sum_{k\geq0}\mathcal{Z}_{k}(\mathfrak{z}-\zeta_{0})(\Theta_{k}\otimes I_{H_{\sigma}})
\]
for all $\mathfrak{z}\in\mathbb{D}(\zeta_{0},r,\sigma)$.
\end{defn}
Under our standing hypotheses on $\Sigma$, the derivatives $\Delta^{k}f_{\sigma}(0$)
are really $k$-linear bimodule maps on $(E^{\sigma*})^{k}$ mapping
to $F^{\sigma*}$, when we view $E^{\sigma*}$ as a bimodule over
$\sigma(M)'$, and they are balanced over $\sigma(M)'$. The principal
obstacle to proving Theorem \ref{expansion_for_maps} turns out to
be isolating the dependence of $\Delta^{k}f_{\sigma}(0)$ on $\sigma$.
This, in turn requires a careful study of the dualities involved.
Our analysis therefore rests on the following two lemmas. Note that
the first is couched in terms of maps from $E^{\sigma}$to $F^{\sigma}$
instead of maps from $E^{\sigma*}$ to $F^{\sigma*}$. The reason
is that as we noted on page~\pageref{E-sigma_correspondence}, \emph{$E^{\sigma}$
}and\emph{ $F^{\sigma}$ }are\emph{ $right$} correspondences over
$\sigma(M)'$ and this allows us to apply our Duality Theorem, \cite[Theorem 3.6]{Muhly2004a},
to identify $E$ and $F$ with the second duals, $E^{\sigma,\iota}$
and $F^{\sigma,\iota}$, respectively, where $\iota$ is the identity
representation of $\sigma(M)'$ on $H_{\sigma}.$ This identification
will prove central to what follows.
\begin{lem}
\label{dual_maps} If $E$ and $F$ are two $W^{*}-correspondences$
over $M$ and if $\sigma\in NRep(M)$ is a faithful normal representation,
then:
\begin{enumerate}
\item Every $\tau\in_{\sigma(M)'}\mathcal{L}_{\sigma(M)'}(E^{\sigma},F^{\sigma})$
induces is a unique bounded map $\tau_{*}:F\rightarrow E$ such that
for every $\eta\in E^{\sigma}$ and every $\theta\in F$, 
\begin{equation}
\eta^{*}L_{\tau_{*}(\theta)}=\tau(\eta)^{*}L_{\theta},\label{tau_star}
\end{equation}
where $L_{\theta}:H_{\sigma}\to F\otimes_{\sigma}H_{\sigma}$ is given
by $L_{\theta}h=\theta\otimes h.$
\item The map $\tau_{*}$ lies in $_{M}\mathcal{L}_{M}(F,E)$ and the correspondence
$\tau\mapsto\tau_{*}$ is contravariant and surjective, i.e., $(\tau_{1}\tau_{2})_{*}=\tau_{2*}\tau_{1*}$
and $(_{\sigma(M)'}\mathcal{L}_{\sigma(M)'}(E^{\sigma},F^{\sigma}))_{*}=_{M}\mathcal{L}_{M}(F,E)$. 
\end{enumerate}
\end{lem}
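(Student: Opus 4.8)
The plan is to derive everything from the Muhly--Solel Duality Theorem \cite[Theorem 3.6]{Muhly2004a}. Since $\sigma$ is faithful, that theorem identifies any $W^{*}$-correspondence $G$ over $M$ with the $\iota$-dual $(G^{\sigma})^{\iota}$ of the $\sigma(M)'$-correspondence $G^{\sigma}$, where $\iota$ is the identity representation of $\sigma(M)'$ on $H_{\sigma}$. Unwound, this says precisely that $\theta\mapsto L_{\theta}$ is an isometric bijection of $G$ onto the set of operators $Y:H_{\sigma}\to G\otimes_{\sigma}H_{\sigma}$ satisfying $Yb=(I_{G}\otimes b)Y$ for all $b\in\sigma(M)'$, and it carries with it the fact (here is where faithfulness enters, via the unitary $G^{\sigma}\otimes_{\iota}H_{\sigma}\cong G\otimes_{\sigma}H_{\sigma}$, $\eta\otimes h\mapsto\eta h$) that $\{\eta h:\eta\in G^{\sigma},\,h\in H_{\sigma}\}$ is total in $G\otimes_{\sigma}H_{\sigma}$; in particular $\eta^{*}Y=0$ for all $\eta\in G^{\sigma}$ forces $Y=0$, and $L_{\xi}=0$ forces $\xi=0$. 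Throughout I would use freely the elementary identities $L_{\theta b}=L_{\theta}\sigma(b)$, $L_{\varphi_{F}(a)\theta}=(\varphi_{F}(a)\otimes I)L_{\theta}$, and (for $\eta\in E^{\sigma}$, as the adjoint of $\eta\sigma(a)=(\varphi_{E}(a)\otimes I)\eta$) $\eta^{*}(\varphi_{E}(a)\otimes I)=\sigma(a)\eta^{*}$, and likewise on $F^{\sigma}$.

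For (1), fix $\tau\in\,_{\sigma(M)'}\mathcal{L}_{\sigma(M)'}(E^{\sigma},F^{\sigma})$. First I would transport $\tau\otimes I_{\iota}$ through the unitaries $E^{\sigma}\otimes_{\iota}H_{\sigma}\cong E\otimes_{\sigma}H_{\sigma}$ and $F^{\sigma}\otimes_{\iota}H_{\sigma}\cong F\otimes_{\sigma}H_{\sigma}$ to obtain a bounded operator $\widehat{\tau}:E\otimes_{\sigma}H_{\sigma}\to F\otimes_{\sigma}H_{\sigma}$, $\|\widehat{\tau}\|\le\|\tau\|$, satisfying $\widehat{\tau}\circ\eta=\tau(\eta)$ for all $\eta\in E^{\sigma}$; the hypothesis that $\tau$ is a \emph{left} $\sigma(M)'$-module map translates into $\widehat{\tau}(I_{E}\otimes b)=(I_{F}\otimes b)\widehat{\tau}$ for all $b\in\sigma(M)'$. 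Then $X:=\widehat{\tau}^{*}L_{\theta}:H_{\sigma}\to E\otimes_{\sigma}H_{\sigma}$ satisfies $\eta^{*}X=(\widehat{\tau}\eta)^{*}L_{\theta}=\tau(\eta)^{*}L_{\theta}$ and, using $L_{\theta}b=(I_{F}\otimes b)L_{\theta}$, also $Xb=\widehat{\tau}^{*}(I_{F}\otimes b)L_{\theta}=(I_{E}\otimes b)X$; so the Duality Theorem yields a unique $\xi\in E$ with $X=L_{\xi}$, and I set $\tau_{*}(\theta):=\xi$. Equation \eqref{tau_star} is exactly the relation $\eta^{*}X=\tau(\eta)^{*}L_{\theta}$. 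Uniqueness of $\tau_{*}(\theta)$ holds because $\eta^{*}L_{\xi}$ ($\eta\in E^{\sigma}$) determines $\xi$; linearity of $\theta\mapsto\tau_{*}(\theta)$ follows from linearity of $\theta\mapsto L_{\theta}$ and injectivity of $\xi\mapsto L_{\xi}$; and $\|\tau_{*}(\theta)\|=\|L_{\tau_{*}(\theta)}\|=\|\widehat{\tau}^{*}L_{\theta}\|\le\|\tau\|\,\|\theta\|$.

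For (2), I would argue purely from \eqref{tau_star} and the three $L$-identities. That $\tau_{*}\in\,_{M}\mathcal{L}_{M}(F,E)$ is a two-line check: for $a\in M$, $\eta^{*}L_{\tau_{*}(\varphi_{F}(a)\theta)}=\tau(\eta)^{*}(\varphi_{F}(a)\otimes I)L_{\theta}=\sigma(a)\tau(\eta)^{*}L_{\theta}=\sigma(a)\eta^{*}L_{\tau_{*}(\theta)}=\eta^{*}(\varphi_{E}(a)\otimes I)L_{\tau_{*}(\theta)}=\eta^{*}L_{\varphi_{E}(a)\tau_{*}(\theta)}$ for all $\eta$, whence $\tau_{*}(\varphi_{F}(a)\theta)=\varphi_{E}(a)\tau_{*}(\theta)$; and $\eta^{*}L_{\tau_{*}(\theta b)}=\tau(\eta)^{*}L_{\theta}\sigma(b)=\eta^{*}L_{\tau_{*}(\theta)}\sigma(b)=\eta^{*}L_{\tau_{*}(\theta)b}$ gives $\tau_{*}(\theta b)=\tau_{*}(\theta)b$. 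Contravariance is equally formal: for $\tau_{2}:E^{\sigma}\to G^{\sigma}$ and $\tau_{1}:G^{\sigma}\to F^{\sigma}$, three applications of \eqref{tau_star} give $\eta^{*}L_{(\tau_{1}\tau_{2})_{*}(\theta)}=\tau_{1}(\tau_{2}(\eta))^{*}L_{\theta}=\tau_{2}(\eta)^{*}L_{\tau_{1*}(\theta)}=\eta^{*}L_{\tau_{2*}(\tau_{1*}(\theta))}$, so $(\tau_{1}\tau_{2})_{*}=\tau_{2*}\tau_{1*}$. For surjectivity, given $S\in\,_{M}\mathcal{L}_{M}(F,E)$ I would put $S^{\dagger}(\eta):=(S^{*}\otimes I_{H_{\sigma}})\eta$; the hypothesis that $S$ is a bimodule map shows $S^{\dagger}$ maps $E^{\sigma}$ into $F^{\sigma}$ and lies in $\,_{\sigma(M)'}\mathcal{L}_{\sigma(M)'}(E^{\sigma},F^{\sigma})$, and since $(S\otimes I)L_{\theta}=L_{S\theta}$ one gets $\eta^{*}L_{(S^{\dagger})_{*}(\theta)}=S^{\dagger}(\eta)^{*}L_{\theta}=\eta^{*}(S\otimes I)L_{\theta}=\eta^{*}L_{S\theta}$, hence $(S^{\dagger})_{*}=S$. (The same computation with $S=\tau_{*}$ yields $(\tau_{*})^{\dagger}=\tau$, so $\tau\mapsto\tau_{*}$ is a bijection with inverse $S\mapsto S^{\dagger}$, which also gives $\|\tau_{*}\|=\|\tau\|$.)

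The only substantial ingredient is the Duality Theorem itself; within the argument the one point requiring care — and the main place to slip — is the bookkeeping of module structures. One must remember that $E^{\sigma}$ is a \emph{right} Hilbert $\sigma(M)'$-module whose left action is $\varphi_{E^{\sigma}}(b)=I_{E}\otimes b$, while the operators $L_{\theta}$ do \emph{not} belong to $F^{\sigma}$: they only satisfy the weaker relation $L_{\theta}b=(I_{F}\otimes b)L_{\theta}$. Keeping the two roles separate — generic operators $H_{\sigma}\to G\otimes_{\sigma}H_{\sigma}$ versus intertwiners in $G^{\sigma}$ — is what makes the passage between $\tau$, $\widehat{\tau}$ and $\tau_{*}$, and all the identities above, go through as short manipulations.
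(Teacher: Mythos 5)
Your proposal is correct and follows essentially the same route as the paper: both define $\tau_{*}$ through the Duality Theorem \cite[Theorem 3.6]{Muhly2004a} (your $\widehat{\tau}^{*}L_{\theta}$ is exactly the paper's $(\tau^{*}\otimes I_{H_{\sigma}})W_{F}(\theta)$ read through the canonical unitaries $G^{\sigma}\otimes_{\iota}H_{\sigma}\cong G\otimes_{\sigma}H_{\sigma}$), and both get uniqueness from the faithfulness-based density of $\{\eta h:\eta\in E^{\sigma},h\in H_{\sigma}\}$. The only difference is that you write out the bimodule, contravariance and surjectivity checks (the map $S\mapsto(S^{*}\otimes I)(\cdot)$) that the paper dismisses as ``follows easily'' and ``follows from duality,'' and these computations are accurate.
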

\begin{proof}
Recall that $E^{\sigma}:=\mathcal{I}(\sigma,\sigma^{E}\circ\varphi)$
is a (right) $W^{*}$-correspondence over $\sigma(M)'.$ Recall, too,
that if $\iota$ denotes the identity representation of $\sigma(M)'$
on $H_{\sigma}$, then the map $W_{E}:E\rightarrow E^{\sigma,\iota}$
such that $W_{E}(\xi)^{*}(\eta\otimes h)=L_{\xi}^{*}(\eta h)$, where
$\xi\in E,$ $\eta\in E^{\sigma}$ and $h\in H_{\sigma}$, and where
$L_{\xi}:H_{\sigma}\to E\otimes H_{\sigma}$ is given by $L_{\xi}h:=\xi\otimes h$,
is a correspondence isomorphism \cite[Theorem 3.6]{Muhly2004a}. Similarly,
one has a correspondence isomorphism $W_{F}:F\to F^{\sigma,\iota}.$
Also, note that, given $g\in F^{\sigma,\iota}$ (so that $g:H_{\sigma}\rightarrow F^{\sigma}\otimes_{\iota}H_{\sigma}$
and $g(bh)=(\varphi_{\iota}(b)\otimes I_{H_{\sigma}})gh$, where $b\in\sigma(M)$
and $\varphi_{\iota}(\cdot)$ is the left action of $\sigma(M)$ on
$F^{\sigma,\iota}$) and $\tau\in_{\sigma(M)'}\mathcal{L}_{\sigma(M)'}(E^{\sigma},F^{\sigma})$,
we have that $(\tau^{*}\otimes I_{H_{\sigma}})g\in E^{\sigma,\iota}$.
Now define 
\[
\tau_{*}(\theta)=W_{E}^{-1}((\tau^{*}\otimes I_{H_{\sigma}})W_{F}(\theta)).
\]
 It follows easily that $\tau_{*}\in_{M}\mathcal{L}_{M}(F,E)$. Also,
$(\tau^{*}\otimes I)W_{F}(\theta)=W_{E}(\tau_{*}(\theta))$ and, for
every $\eta\in E^{\sigma}$ and $h\in H_{\sigma}$, $W_{E}(\tau_{*}(\theta))^{*}(\eta\otimes h)=W_{F}(\theta)(\tau^{*}(\eta)\otimes h)$.
Using the definitions of $W_{E}$ and $W_{F}$, we find that $L_{\tau_{*}(\theta)}^{*}(\eta h)=L_{\theta}^{*}(\tau(\eta)h)$.
By taking adjoints, we obtain Equation (\ref{tau_star}). For the
uniqueness statement in (1), suppose that $\xi\in E$ satisfies $\eta^{*}L_{\xi}=\tau(\eta)^{*}L_{\theta}$
for all $\eta\in E^{\sigma}$. Then, for every $\eta\in E^{\sigma}$
and $h\in H_{\sigma}$, $L_{\xi}^{*}(\eta h)=L_{\tau_{*}(\theta)}^{*}(\eta h)$
and, since the images of all $\eta\in E^{\sigma}$ span $E\otimes H_{\sigma}$,
we find that $L_{\tau_{*}(\theta)}^{*}=L_{\xi}^{*}$ on $E\otimes H_{\sigma}$,
which in turn implies that $\tau_{*}(\theta)=\xi$. The fact that
the map $\tau\mapsto\tau_{*}$ is contravariant follows easily from
the definition. The fact that it is surjective follows from duality.\end{proof}
\begin{lem}
\label{indep_sigma} Suppose $E$ and $F$ are two $W^{*}$-correspondences
over the same $W^{*}$-algebra $M$. For $i=1,2,$ let $\sigma_{1}$
and $\sigma_{2}$ be two faithful representations in $NRep(M)$ and
let $\tau_{i}$ be a map in $_{\sigma_{i}(M)'}\mathcal{L}_{\sigma_{i}(M)'}(E^{\sigma_{i}},F^{\sigma_{i}})$.
If for every $c,d\in\mathcal{I}(\sigma_{1},\sigma_{2})$ we have $\tau_{1}((I\otimes c^{*})\eta d)=(I\otimes c^{*})\tau_{2}(\eta)d$
for every $\eta\in E^{\sigma_{2}}$, then $\tau_{1}=\tau_{2}$. Consequently,
under our standing hypotheses that $\Sigma$ is additive, full, and
composed of faithful representations, if $\{\tau_{\sigma}\}_{\sigma\in\Sigma}$
is a family of maps, with $\tau_{\sigma}\in_{\sigma(M)'}\mathcal{L}_{\sigma(M)'}(E^{\sigma},F^{\sigma})$,
and if for every $\sigma_{1},\sigma_{2}\in\Sigma$ and every $c,d\in\mathcal{I}(\sigma_{1},\sigma_{2})$
we have $\tau_{\sigma_{1}}((I\otimes c^{*})\eta d)=(I\otimes c^{*})\tau_{\sigma_{2}}(\eta)d$
for every $\eta\in E^{\sigma_{2}}$, then the maps $\tau_{\sigma*}$,
$\sigma\in\Sigma,$ obtained from Lemma~\ref{dual_maps} are independent
of $\sigma$.\end{lem}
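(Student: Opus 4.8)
The plan is to establish the equality $\tau_{1}=\tau_{2}$ in the only sense in which it can hold when $\sigma_{1}\neq\sigma_{2}$: the bimodule maps $\tau_{1*},\tau_{2*}\colon F\to E$ produced by Lemma~\ref{dual_maps}(1) coincide. Once this is proved for an arbitrary pair of faithful representations, the ``Consequently'' clause follows at once, since for the family $\{\tau_{\sigma}\}_{\sigma\in\Sigma}$ the displayed compatibility relation is exactly the hypothesis of the first statement, applied pair by pair, and every object of $\Sigma$ is faithful. (If one prefers to avoid the generator input used below, it suffices instead to apply the first statement to the two pairs $(\sigma_{1}\oplus\sigma_{2},\sigma_{1})$ and $(\sigma_{1}\oplus\sigma_{2},\sigma_{2})$, where the coordinate projection lies in the relevant intertwiner space and is already surjective, obtaining $\tau_{(\sigma_{1}\oplus\sigma_{2})*}=\tau_{\sigma_{1}*}$ and $\tau_{(\sigma_{1}\oplus\sigma_{2})*}=\tau_{\sigma_{2}*}$.)

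To prove the first statement, write $L_{\xi}^{\sigma}\colon H_{\sigma}\to E\otimes_{\sigma}H_{\sigma}$ (or into $F\otimes_{\sigma}H_{\sigma}$) for the map $h\mapsto\xi\otimes h$, as in the proof of Theorem~\ref{UEtoUF}; by Lemma~\ref{dual_maps}(1), $\tau_{i*}(\theta)$ is the unique element $\xi$ of $E$ satisfying $\eta^{*}L_{\xi}^{\sigma_{i}}=\tau_{i}(\eta)^{*}L_{\theta}^{\sigma_{i}}$ for all $\eta\in E^{\sigma_{i}}$. Fix $\theta\in F$ and put $\xi:=\tau_{1*}(\theta)$. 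First I would check that $\xi$ satisfies the relation that characterizes $\tau_{2*}(\theta)$; the uniqueness assertion in Lemma~\ref{dual_maps}(1) then forces $\tau_{1*}(\theta)=\tau_{2*}(\theta)$, and since $\theta$ is arbitrary, $\tau_{1*}=\tau_{2*}$.

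The verification is a short manipulation. Given $\eta\in E^{\sigma_{2}}$ and $c,d\in\mathcal{I}(\sigma_{1},\sigma_{2})$, the element $\zeta:=(I_{E}\otimes c^{*})\eta d$ lies in $E^{\sigma_{1}}$ (a one-line check using that $c,d$ are intertwiners). Evaluating the defining relation for $\xi=\tau_{1*}(\theta)$ at $\zeta$ gives $\zeta^{*}L_{\xi}^{\sigma_{1}}=\tau_{1}(\zeta)^{*}L_{\theta}^{\sigma_{1}}$. On the left, $\zeta^{*}=d^{*}\eta^{*}(I_{E}\otimes c)$, and the elementary identity $(I_{E}\otimes c)L_{\xi}^{\sigma_{1}}=L_{\xi}^{\sigma_{2}}c$ turns it into $d^{*}\eta^{*}L_{\xi}^{\sigma_{2}}c$. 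On the right, the hypothesis $\tau_{1}(\zeta)=(I_{F}\otimes c^{*})\tau_{2}(\eta)d$ gives $\tau_{1}(\zeta)^{*}=d^{*}\tau_{2}(\eta)^{*}(I_{F}\otimes c)$, and $(I_{F}\otimes c)L_{\theta}^{\sigma_{1}}=L_{\theta}^{\sigma_{2}}c$ turns it into $d^{*}\tau_{2}(\eta)^{*}L_{\theta}^{\sigma_{2}}c$. Hence, writing $X:=\eta^{*}L_{\xi}^{\sigma_{2}}-\tau_{2}(\eta)^{*}L_{\theta}^{\sigma_{2}}\in B(H_{\sigma_{2}})$ (which does not depend on $c,d$), we obtain $d^{*}Xc=0$ for all $c,d\in\mathcal{I}(\sigma_{1},\sigma_{2})$.

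The remaining step — and, I expect, the only genuinely delicate one — is to conclude $X=0$. Because $\sigma_{1}$ is faithful, it is a generator for $NRep(M)$ in Rieffel's sense \cite[Proposition~1.3]{R1974a}, so $\sigma_{2}$ is the image of a morphism $\bigoplus_{n}H_{\sigma_{1}}\to H_{\sigma_{2}}$; decomposing that morphism over the summands exhibits $H_{\sigma_{2}}$ as the closed linear span of the ranges of the operators in $\mathcal{I}(\sigma_{1},\sigma_{2})$. Feeding this into $d^{*}Xc=0$: fixing $c$ and letting $d$ vary gives $Xc=0$, and then letting $c$ vary gives $X=0$. Unravelling, $X=0$ says $\eta^{*}L_{\xi}^{\sigma_{2}}=\tau_{2}(\eta)^{*}L_{\theta}^{\sigma_{2}}$ for every $\eta\in E^{\sigma_{2}}$, which is precisely the relation defining $\tau_{2*}(\theta)$; hence $\xi=\tau_{2*}(\theta)$, as required. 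The earlier manipulations are pure bookkeeping with the maps $L_{\xi}^{\sigma}$, so the whole argument hinges on this spanning fact, which is exactly where the faithfulness of $\sigma_{1}$ (and, for Lemma~\ref{dual_maps} to apply, of $\sigma_{2}$) is used.
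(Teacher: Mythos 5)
Your proof is correct and takes essentially the same route as the paper: the identical manipulation of the hypothesis applied to the elements $(I_{E}\otimes c^{*})\eta d\in E^{\sigma_{1}}$, combined with the spanning of intertwiner ranges that faithfulness provides (Rieffel's generator fact), and a final appeal to the duality of Lemma~\ref{dual_maps}. The only cosmetic difference is bookkeeping: you compress by $d^{*}(\cdot)c$ and conclude in $B(H_{\sigma_{2}})$ via the uniqueness clause of Lemma~\ref{dual_maps}, whereas the paper substitutes the defining relation for $\tau_{2*}$ and tests against the vectors $\eta_{c,d}h$, whose span is all of $E\otimes_{\sigma_{1}}H_{\sigma_{1}}$; the two finishes are interchangeable.
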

\begin{proof}
First note that $(I_{E}\otimes c^{*})\eta d$ lies in $E^{\sigma_{1}}$
for every $c,d\in\mathcal{I}(\sigma_{1},\sigma_{2})$ and every $\eta\in E^{\sigma_{2}}$.
If we write $\eta_{c,d}$ for $(I_{E}\otimes c^{*})\eta d$, then
by the assumption, $\tau_{1}(\eta_{c,d})=(I\otimes c^{*})\tau_{2}(\eta)d$.
So, for every $\theta\in F$, we have $\eta_{c,d}^{*}L_{\tau_{1*}(\theta)}=\tau_{1}(\eta_{c,d})^{*}L_{\theta}=((I\otimes c^{*})\tau_{2}(\eta)d)^{*}L_{\theta}=d^{*}\tau_{2}(\eta)^{*}L_{\theta}(I\otimes c)=d^{*}\eta^{*}L_{\tau_{2*}(\theta)}(I\otimes c)=d^{*}\eta^{*}(I\otimes c)L_{\tau_{2*}(\theta)}=\eta_{c,d}^{*}L_{\tau_{2*}(\theta)}$.
Thus 
\begin{equation}
L_{\tau_{1*}(\theta)}^{*}\eta_{c,d}=L_{\tau_{2*}(\theta)}^{*}\eta_{c,d}
\end{equation}
 for every $\eta\in E^{\sigma_{2}}$ and $c,d\in\mathcal{I}(\sigma_{1},\sigma_{2})$.
Since both $\sigma_{1}$ and $\sigma_{2}$ are normal faithful representations
of $M$, $\bigvee\{d(H_{\sigma_{1}}):d\in\mathcal{I}(\sigma_{1},\sigma_{2})\}=H_{\sigma_{2}}$
and $\bigvee\{c^{*}(H_{\sigma_{2}}):c\in\mathcal{I}(\sigma_{1},\sigma_{2})\}=H_{\sigma_{1}}$.
It follows from \cite[Lemma 3.5]{Muhly2004a} that $\bigvee\{\eta(H_{\sigma_{2}}):\eta\in E^{\sigma_{2}}\}=E\otimes_{\sigma_{2}}H_{\sigma_{2}}$
and, therefore, that 
\begin{equation}
\bigvee\{\eta_{c,d}(H_{\sigma_{1}}):\eta\in E^{\sigma_{2}},\; c,d\in\mathcal{I}(\sigma_{1},\sigma_{2})\}=E\otimes_{\sigma_{1}}H_{\sigma_{1}}.
\end{equation}
 Combining these equations above we conclude that $\tau_{1*}=\tau_{2*}$.\end{proof}
\begin{cor}
\label{Deltaf}Under the hypotheses of Theorem \ref{expansion_for_maps},
the matricial map $f$ has a well-defined Taylor derivative $\Delta f_{\sigma}(0)(\cdot)$
that is a map from $E^{\sigma*}$ to $F^{\sigma*}$ in $_{\sigma(M)'}\mathcal{L}_{\sigma(M)'}(E^{\sigma*},F^{\sigma*})$.
Its transposed map $\eta\mapsto(\Delta f_{\sigma}(0)(\eta^{*}))^{*}$
lies in $_{\sigma(M)'}\mathcal{L}_{\sigma(M)'}(E^{\sigma},F^{\sigma})$
and has the following dependence on $\sigma$: There is a map $\mathfrak{D}f\in_{M}\mathcal{L}_{M}(F,E)$
such that for every $\sigma\in\Sigma$ and every $\mathfrak{z}\in E^{\sigma*}$,
\[
\Delta f_{\sigma}(0)(\mathfrak{z})=\mathfrak{z}\circ(\mathfrak{D}f\otimes I_{H_{\sigma}}).
\]
\end{cor}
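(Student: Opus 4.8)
The plan is to combine the results already established in this section with the two lemmas immediately preceding the corollary. First I would observe that under the standing hypotheses ($\Sigma$ full, additive, composed of faithful representations) the entire analysis of $\Delta f_{\sigma}$ carried out for matricial functions goes through \emph{mutatis mutandis} for matricial maps, as explained in the discussion following Definition \ref{def:Tensorial_power_series_of_maps}: the only change is that $\Delta f_{\sigma,\tau}(\mathfrak{z},\mathfrak{w})(\cdot)$ is now a map from $E^{\sigma*}$ to $F^{\sigma*}$ rather than into $B(H_{\sigma})$. In particular, the proof of Lemma \ref{Definition_Delta}, together with Lemma \ref{additivity} and its corollary, shows that $\Delta f_{\sigma}(0)(\cdot)$ is a well-defined additive map, and parts (2), (3) of Lemma \ref{Delta_n} (in the map version) show it is $\sigma(M)'$-linear on both sides, i.e.\ $\Delta f_{\sigma}(0)\in\,_{\sigma(M)'}\mathcal{L}_{\sigma(M)'}(E^{\sigma*},F^{\sigma*})$. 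Taking the transposed map $\tau_{\sigma}:\eta\mapsto(\Delta f_{\sigma}(0)(\eta^{*}))^{*}$ then gives a right-module map in $\,_{\sigma(M)'}\mathcal{L}_{\sigma(M)'}(E^{\sigma},F^{\sigma})$, which is exactly the kind of object that Lemmas \ref{dual_maps} and \ref{indep_sigma} are designed to handle.

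Next I would apply Lemma \ref{dual_maps} to each $\tau_{\sigma}$ to obtain a map $(\tau_{\sigma})_{*}\in\,_{M}\mathcal{L}_{M}(F,E)$, and then verify the hypothesis of Lemma \ref{indep_sigma}, namely that for $\sigma_{1},\sigma_{2}\in\Sigma$ and $c,d\in\mathcal{I}(\sigma_{1},\sigma_{2})$ one has $\tau_{\sigma_{1}}((I_{E}\otimes c^{*})\eta d)=(I_{E}\otimes c^{*})\tau_{\sigma_{2}}(\eta)d$ for all $\eta\in E^{\sigma_{2}}$. This is where the matricial property of $f$ is used in full: an intertwiner between two (possibly different) representations, encoded as a block operator matrix $C$ with an off-diagonal entry, forces the corresponding off-diagonal block of $f$ applied to a block-upper-triangular matrix to transform in the required way. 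Concretely, the compatibility of $\Delta f_{\sigma}(0)(\cdot)$ with intertwiners $C\in\mathcal{I}(\sigma_{1},\sigma_{2})$ — i.e.\ $C\,\Delta f_{\sigma_{1}}(0)(\mathfrak{u})=\Delta f_{\sigma_{2}}(0)(C\mathfrak{u}(I_{E}\otimes C)^{*})(I_{F}\otimes C)$ or the appropriate variant — translates, after transposing, precisely into the hypothesis of Lemma \ref{indep_sigma}. Having checked it, Lemma \ref{indep_sigma} tells us the maps $(\tau_{\sigma})_{*}$ are all equal; call the common map $\mathfrak{D}f\in\,_{M}\mathcal{L}_{M}(F,E)$.

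Finally I would unwind the duality to recover the stated formula $\Delta f_{\sigma}(0)(\mathfrak{z})=\mathfrak{z}\circ(\mathfrak{D}f\otimes I_{H_{\sigma}})$. By Lemma \ref{dual_maps}(1), $\tau_{\sigma}=(\mathfrak{D}f)$-dual means $\eta^{*}L_{\mathfrak{D}f(\theta)}=\tau_{\sigma}(\eta)^{*}L_{\theta}$ for all $\eta\in E^{\sigma}$, $\theta\in F$. Using the identification of $E$ with $E^{\sigma,\iota}$ and $F$ with $F^{\sigma,\iota}$ via the isomorphisms $W_{E},W_{F}$ of \cite[Theorem 3.6]{Muhly2004a}, and writing out what $\mathfrak{z}\circ(\mathfrak{D}f\otimes I_{H_{\sigma}})$ means as an operator $F^{\sigma}\otimes_{\iota}H_{\sigma}\to F\otimes_{\sigma}H_{\sigma}\to E\otimes_{\sigma}H_{\sigma}\to H_{\sigma}$, one checks that pairing both sides against an arbitrary $\eta\in E^{\sigma}$ and $h\in H_{\sigma}$ gives the same scalar, whence the two maps in $F^{\sigma*}$ agree; transposing back recovers $\Delta f_{\sigma}(0)$. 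The main obstacle, I expect, is the bookkeeping in the previous paragraph: one must be careful that the intertwining relation satisfied by $\Delta f_{\sigma}(0)$ for matricial \emph{maps} (with the $I_{F}\otimes C$ appearing on the appropriate side) is exactly the right hypothesis for Lemma \ref{indep_sigma} after taking adjoints — the variance of the module actions and the placement of $I_{E}\otimes c^{*}$ versus $I_{F}\otimes c^{*}$ have to line up correctly. Everything else is a routine transcription of the function case and an application of the duality lemmas.
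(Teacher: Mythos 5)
Your outline is the paper's own proof, step for step: re-run Lemmas \ref{Definition_Delta}, \ref{additivity} and \ref{Delta_n} in the map setting to get that $\Delta f_{\sigma}(0)(\cdot)$ is a well-defined, linear bimodule map, transpose it into ${}_{\sigma(M)'}\mathcal{L}_{\sigma(M)'}(E^{\sigma},F^{\sigma})$, feed the transposes into Lemmas \ref{dual_maps} and \ref{indep_sigma} to obtain a $\sigma$-independent $\mathfrak{D}f\in{}_{M}\mathcal{L}_{M}(F,E)$, and unwind the duality relation $\eta^{*}L_{\mathfrak{D}f(\theta)}=\Delta f_{\sigma}(0)(\eta^{*})L_{\theta}$ on vectors $h\in H_{\sigma}$ to get the displayed formula. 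That is exactly how the paper argues, and your level of detail matches the paper's.

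The one place to tighten is the step you yourself flag as the obstacle: the relation you display, $C\,\Delta f_{\sigma_{1}}(0)(\mathfrak{u})=\Delta f_{\sigma_{2}}(0)(C\mathfrak{u}(I_{E}\otimes C)^{*})(I_{F}\otimes C)$, is not what the matricial property gives (for a general $C\in\mathcal{I}(\sigma_{1},\sigma_{2})$ the elements $\mathfrak{u}$ and $C\mathfrak{u}(I_{E}\otimes C)^{*}$ need not be intertwined by $C$), and it is not the hypothesis Lemma \ref{indep_sigma} asks for. After transposing, what is needed is the two-intertwiner identity
\[
\Delta f_{\sigma_{1}}(0)\bigl(d^{*}\mathfrak{w}(I_{E}\otimes c)\bigr)=d^{*}\,\Delta f_{\sigma_{2}}(0)(\mathfrak{w})\,(I_{F}\otimes c),\qquad c,d\in\mathcal{I}(\sigma_{1},\sigma_{2}),\ \mathfrak{w}\in E^{\sigma_{2}*}.
\]
This is obtained by passing through the mixed difference operator $\Delta f_{\sigma_{2},\sigma_{1}}(0,0)$: applying the matricial-map property to the $2\times2$ strictly upper triangular arguments with the intertwiner $\begin{bmatrix}I_{\sigma_{2}} & 0\\ 0 & c\end{bmatrix}\in\mathcal{I}(\sigma_{2}\oplus\sigma_{1},\sigma_{2}\oplus\sigma_{2})$ gives $\Delta f_{\sigma_{2},\sigma_{1}}(0,0)(\mathfrak{w}(I_{E}\otimes c))=\Delta f_{\sigma_{2}}(0)(\mathfrak{w})(I_{F}\otimes c)$, while the intertwiner $\begin{bmatrix}d^{*} & 0\\ 0 & I_{\sigma_{1}}\end{bmatrix}\in\mathcal{I}(\sigma_{2}\oplus\sigma_{1},\sigma_{1}\oplus\sigma_{1})$ gives $d^{*}\Delta f_{\sigma_{2},\sigma_{1}}(0,0)(\mathfrak{v})=\Delta f_{\sigma_{1}}(0)(d^{*}\mathfrak{v})$; taking $\mathfrak{v}=\mathfrak{w}(I_{E}\otimes c)$ and combining yields the identity, and taking adjoints turns it into precisely the hypothesis of Lemma \ref{indep_sigma}. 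This is also exactly where the fullness of $\Sigma$ is used, since $c$, $d$ and the block intertwiners built from them must lie in the category. With that substitution your argument is complete and coincides with the paper's proof.
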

\begin{proof}
As we indicated at the beginning of this section, the existence and
linearity of $\Delta f_{\sigma}(0)(\cdot)$ is proved using the arguments
of Lemma~\ref{Definition_Delta}(1) and Lemma~\ref{additivity}.
The fact that $\Delta f_{\sigma}(0)(\cdot)$ is a bimodule map uses
the arguments of Lemma~\ref{Definition_Delta}(2) and (3). (Note
that we are entitled to apply these because we are assuming $\Sigma$
is full.) Thus the transposed map $\eta\mapsto(\Delta f_{\sigma}(0)(\eta)^{*})^{*}$
lies in $_{\sigma(M)'}\mathcal{L}_{\sigma(M)'}(E^{\sigma},F^{\sigma})$,
also, and it follows from Lemma~\ref{dual_maps} and Lemma~\ref{indep_sigma}
that there is an element $\mathfrak{D}f$ of $\,_{M}\mathcal{L}_{M}(F,E)$
such that, for every $\sigma\in\Sigma$, every $\eta\in E^{\sigma}$
and every $\theta\in F$, $\eta^{*}L_{\mathfrak{D}f(\theta)}=\Delta f_{\sigma}(0)(\eta^{*})L_{\theta}$.
Writing $\mathfrak{z}$ in place of $\eta^{*}$ and applying the two
sides of this equality to $h\in H_{\sigma}$, we obtain the equation
\[
\Delta f_{\sigma}(0)(\mathfrak{z})(\theta\otimes h)=\mathfrak{z}(\mathfrak{D}f(\theta)\otimes h)=\mathfrak{z}(\mathfrak{D}f\otimes I_{H})(\theta\otimes h)
\]
 and the result follows.
\end{proof}
A similar analysis allows us to identify the dependence of $\Delta^{k}f_{\sigma}(0)$
on $\sigma$, $k>1$.
\begin{lem}
\label{Deltak} Under the hypotheses of Theorem \ref{expansion_for_maps},
we conclude that for all $k\in\mathbb{N}$ and $\sigma\in\Sigma$,
the Taylor derivative $\Delta^{k}f_{\sigma}(0)(\cdot,\ldots,\cdot)$
is well defined and is a completely bounded $k$-linear, bimodule
map from $E^{\sigma*}\times\cdots\times E^{\sigma*}$ to $F^{\sigma*}$
that is balanced over $\sigma(M)'$. Moreover, there is a uniquely
determined map $\mathfrak{D}^{k}f$ in $\,_{M}\mathcal{L}_{M}(F,E^{\otimes k})$
such that for every $\sigma\in\Sigma$ and every $\mathfrak{z}\in E^{\sigma*}$,
\[
\Delta^{k}f_{\sigma}(0)(\mathfrak{z})=\mathcal{Z}_{k}(\mathfrak{z})\circ(\mathfrak{D}^{k}f\otimes I_{H_{\sigma}}).
\]
\end{lem}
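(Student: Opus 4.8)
The plan is to run, for the $k$-linear operator $\Delta^{k}f_{\sigma}(0)$, the transposition-and-duality argument that produced Corollary~\ref{Deltaf} in the case $k=1$, with the Duality Theorem of \cite{Muhly2004a} used in its tensor-power form (as in the proof of Theorem~\ref{theta_k}) and with the multilinearity absorbed by passing to a module Haagerup tensor product. First I would dispose of the parts of the conclusion that need no new idea. That $\Delta^{k}f_{\sigma}(0)(\cdot,\dots,\cdot)$ is well defined and $k$-linear follows from the maps-analogues of Lemma~\ref{Definition_Delta}(1) and Lemma~\ref{additivity}, hence Corollary~\ref{multilinearity}; that it is a bimodule map balanced over $\sigma(M)'$ follows from the maps-analogues of Lemma~\ref{Delta_n}(1)--(3), whose proofs go through because $\Sigma$ is full; and that it is completely bounded, with $\|\Delta^{k}f_{\sigma}(0)\|_{cb}\le R/r^{k}$, follows from the maps-analogues of Lemmas~\ref{ljmj} and~\ref{The_map_phi(n)} and Proposition~\ref{completely_bounded}, using that $f$ takes values in $\mathbb{D}(0,R,F)$, so $\|f_{\sigma}\|\le R$ on $\mathbb{D}(0,r,\sigma)$. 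All of this is the ``\emph{mutatis mutandis}'' translation described at the start of this section, in which an equation $Cf_{\sigma}(\mathfrak{z})=f_{\tau}(\mathfrak{w})C$ is replaced by $Cf_{\sigma}(\mathfrak{z})=f_{\tau}(\mathfrak{w})(I_{F}\otimes C)$.

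Next I would transpose and linearize. Fix $\sigma$, set $N:=\sigma(M)'$, and define $\psi_{\sigma}\colon E^{\sigma}\times\cdots\times E^{\sigma}\to F^{\sigma}$ by $\psi_{\sigma}(\eta_{1},\dots,\eta_{k}):=\bigl(\Delta^{k}f_{\sigma}(0)(\eta_{k}^{*},\dots,\eta_{1}^{*})\bigr)^{*}$. Since $\Delta^{k}f_{\sigma}(0)$ is $k$-linear, completely bounded, balanced, and a bimodule map over $N$, the map $\psi_{\sigma}$ is a completely bounded, $k$-linear, $N$-module map balanced over $N$, with $\|\psi_{\sigma}\|_{cb}\le R/r^{k}$. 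Exactly as in the proof of Theorem~\ref{theta_k}, \cite[Theorem 2.3]{Blecher2000} promotes $\psi_{\sigma}$ to a completely bounded linear map on $E^{\sigma}\otimes_{hN}\cdots\otimes_{hN}E^{\sigma}=E^{\sigma}\otimes_{C^{*}}\cdots\otimes_{C^{*}}E^{\sigma}$ (\cite[Theorem 4.3]{Blecher1997d}), which extends to the self-dual completion to yield a bimodule map $\Psi_{\sigma}\colon(E^{\sigma})^{\otimes k}\to F^{\sigma}$ of $W^{*}$-correspondences over $N$. Using the canonical identification of $(E^{\sigma})^{\otimes k}$ with $(E^{\otimes k})^{\sigma}$ (up to the reordering of tensor factors appearing in the proof of Theorem~\ref{theta_k}; cf.\ \cite[Remark 1.8]{Viselter2011} and \cite{Muhly2004a}), we may view $\Psi_{\sigma}\in\,_{N}\mathcal{L}_{N}((E^{\otimes k})^{\sigma},F^{\sigma})$.

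Now I would apply Lemma~\ref{dual_maps} and Lemma~\ref{indep_sigma} \emph{with $E$ replaced throughout by $E^{\otimes k}$} --- both lemmas are stated for arbitrary $W^{*}$-correspondences. Lemma~\ref{dual_maps} produces $(\Psi_{\sigma})_{*}\in\,_{M}\mathcal{L}_{M}(F,E^{\otimes k})$ with $\eta^{*}L_{(\Psi_{\sigma})_{*}(\theta)}=\Psi_{\sigma}(\eta)^{*}L_{\theta}$ for all $\eta\in(E^{\otimes k})^{\sigma}$ and $\theta\in F$; Lemma~\ref{indep_sigma} then shows $(\Psi_{\sigma})_{*}$ is independent of $\sigma$ once one knows $\Psi_{\sigma_{1}}((I\otimes c^{*})\eta d)=(I\otimes c^{*})\Psi_{\sigma_{2}}(\eta)d$ for all $\sigma_{1},\sigma_{2}\in\Sigma$ and $c,d\in\mathcal{I}(\sigma_{1},\sigma_{2})$. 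Establishing this last compatibility is the main obstacle. It should come out of the hypothesis that $f$ is matricial: applying the identity $Cf_{\sigma}(\cdot)=f_{\tau}(\cdot)(I_{F}\otimes C)$ to the block matrices defining $\Delta^{k}f$, now with $C$ a genuine intertwiner in $\mathcal{I}(\sigma_{1},\sigma_{2})$ --- available because $\Sigma$ is full --- in the manner of the proofs of Lemma~\ref{Definition_Delta}(2),(3) and Lemma~\ref{Delta_n}, one gets the corresponding relation for $\{\Delta^{k}f_{\sigma}(0)\}_{\sigma}$ across distinct representations, and this must then be carried through the transposition, the Haagerup factorization, and the passage to self-dual completions. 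Granting it, put $\mathfrak{D}^{k}f:=(\Psi_{\sigma})_{*}$.

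Finally I would unwind, just as in the $k=1$ case of Corollary~\ref{Deltaf}. Taking $\eta=(I_{E^{\otimes(k-1)}}\otimes\eta_{k})\cdots(I_{E}\otimes\eta_{2})\eta_{1}$, whose transpose is $\mathcal{Z}_{k}(\mathfrak{u}_{1},\dots,\mathfrak{u}_{k})$ with $\mathfrak{u}_{i}:=\eta_{i}^{*}$, and applying both sides of $\eta^{*}L_{\mathfrak{D}^{k}f(\theta)}=\Psi_{\sigma}(\eta)^{*}L_{\theta}$ to $h\in H_{\sigma}$, one obtains $\Delta^{k}f_{\sigma}(0)(\mathfrak{u}_{1},\dots,\mathfrak{u}_{k})=\mathcal{Z}_{k}(\mathfrak{u}_{1},\dots,\mathfrak{u}_{k})\circ(\mathfrak{D}^{k}f\otimes I_{H_{\sigma}})$, and restricting to the diagonal gives the asserted formula. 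Uniqueness of $\mathfrak{D}^{k}f$ is immediate from the injectivity in Lemma~\ref{dual_maps} together with the density $\bigvee\{\eta(H_{\sigma}):\eta\in(E^{\otimes k})^{\sigma}\}=E^{\otimes k}\otimes_{\sigma}H_{\sigma}$. Apart from the compatibility check above, the main thing to watch is bookkeeping: one must keep precise track of the order-reversal hidden in the identification $(E^{\otimes k})^{\sigma}\cong(E^{\sigma})^{\otimes k}$ and match it against the order built into $\mathcal{Z}_{k}$.
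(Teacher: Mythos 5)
Your proposal is correct and takes essentially the same path as the paper's proof: carry the function-case arguments over to maps for existence, multilinearity, the balanced bimodule property and complete boundedness, then transpose $\Delta^{k}f_{\sigma}(0)$, linearize through Blecher's theorem on the module Haagerup (equivalently internal $C^{*}$) tensor product exactly as in Theorem \ref{theta_k}, dualize via Lemma \ref{dual_maps} together with Lemma \ref{indep_sigma}, and unwind to obtain the stated formula and uniqueness. The only (harmless) divergence is that you assert that $\Psi_{\sigma}$ extends to a bimodule map on the self-dual completion $(E^{\sigma})^{\otimes k}$, a point the paper explicitly declines to claim and instead circumvents by noting, via \cite[Remark 1.8]{Viselter2011}, that the argument of Lemma \ref{dual_maps} only needs $\Psi_{\sigma}\otimes I_{H_{\sigma}}$ to be well defined and $\sigma(M)'$-intertwining on $(E^{\sigma})^{\otimes k}\otimes_{\iota}H_{\sigma}$; your deferred cross-$\sigma$ compatibility check is likewise left implicit in the paper, being the same verification that underlies Corollary \ref{Deltaf}.
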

\begin{proof}
The proof of the existance of $\Delta^{k}f_{\sigma}(0)(\cdot,\ldots,\cdot)$
uses the same arguments as the analogous result for matricial families
of functions, as we already have mentioned. To get the bimodule properties
of $\Delta^{k}f_{\sigma}(0)(\cdot,\ldots,\cdot),$ note that as in
the proof of Theorem~\ref{theta_k}, $\Delta^{k}f_{\sigma}(0)(\cdot,\ldots,\cdot)$
induces a bimodule map $\Psi:E^{\sigma}\otimes_{C^{*}}E^{\sigma}\otimes_{C^{*}}\cdots\otimes_{C^{*}}E^{\sigma}\rightarrow F^{\sigma}$.
Using $\Psi$ in place of $\tau$ in Lemma~\ref{dual_maps}, we obtain
a map $\Psi_{*}\in_{M}\mathcal{L}_{M}(F,E^{\otimes k})$. Note that
we do not know that $\Psi$ induces a bimodule map on $(E^{\sigma})^{\otimes k}$,
but all we needed in the proof of Lemma~\ref{dual_maps} is to know
that the map $\Psi\otimes I_{H_{\sigma}}:(E^{\sigma})^{\otimes k}\otimes_{\iota}H_{\sigma}\rightarrow F\otimes_{\iota}H_{\sigma}$
is well defined and intertwines the actions of $\sigma(M)'$. This
holds here since $(E^{\sigma})^{\otimes k}\otimes_{\iota}H=E^{\sigma}\otimes_{C^{*}}E^{\sigma}\otimes_{C^{*}}\cdots\otimes_{C^{*}}E^{\sigma}\otimes_{\iota}H$,
thanks to an observation of Viselter \cite[Remark 1.8]{Viselter2011}.
So we do indeed obtain a map $\Psi_{*}\in_{M}\mathcal{L}_{M}(F,E^{\otimes k})$
such that, for every $\theta\in F$ and every $\mathfrak{z}\in E^{\sigma*}$,
\[
\mathcal{Z}_{k}(\mathfrak{z})L_{\Psi_{*}(\theta)}=\Delta^{k}f_{\sigma}(0)(\mathfrak{z})L_{\theta}.
\]
 The map $\mathfrak{D}^{k}f$ that we want is $\Psi_{*}$. 
\end{proof}
The proof of Theorem \ref{expansion_for_maps} is essentially complete.
All that is necessary is to observe that an analogue of the expansion
(\ref{TT}) holds also for matricial families of maps. Since we have
identified the Taylor derivatives with the $\mathfrak{D}^{k}f$, the
proof is complete.

We conclude by showing how the Schur class automorphisms we considered
in \cite{Muhly2008b} fit into theory we have developed here. 
\begin{example}
For a central element $\gamma\in\mathbb{D}(Z(E^{\sigma}))$, the map
$g_{\gamma}:\mathbb{D}(E^{\sigma*})\rightarrow\mathbb{D}(E^{\sigma*})$
is defined by 
\[
g_{\gamma}(\mathfrak{z})=\Delta_{\gamma}(I-\mathfrak{z}\gamma)^{-1}(\gamma^{*}-\mathfrak{z})\Delta_{\gamma^{*}}^{-1}
\]
 where $\Delta_{\gamma}=(I_{H}-\gamma^{*}\gamma)^{1/2}\in Z(\sigma(M)')=\sigma(Z(M))$
and $\Delta_{\gamma^{*}}=(I_{E\otimes H}-\gamma\gamma^{*})^{1/2}\in B(E\otimes_{\sigma}H_{\sigma})$.
In \cite[Lemma 4.20]{Muhly2008b} it is shown that there is a completely
isometric automorphism $\alpha_{\gamma}$ of $H^{\infty}(E)$ such
that 
\[
\widehat{\alpha_{\gamma}(X)}(\mathfrak{z})=\widehat{X}(g_{\gamma}(\mathfrak{z}))
\]
 for $\mathfrak{z}\in\mbox{\ensuremath{\mathbb{D}}(0,1,\ensuremath{\sigma})}$.
Using Theorem~\ref{UEtoUF} we see that $g_{\gamma}$ is a matricial
family of maps (with $E=F$) and, thus, Corollary~\ref{expansion_for_maps}
applies and we can write 
\begin{equation}
g_{\gamma}(\mathfrak{z})=g_{\gamma}(0)+\sum_{k\geq1}\mathcal{Z}_{k}(\mathfrak{z})(\mathfrak{D}^{k}g_{\gamma}\otimes I_{H_{\sigma}}).\label{g1}
\end{equation}
 On the other hand, using the expansion $(I-\mathfrak{z}\gamma)^{-1}=\sum_{k\geq0}(\mathfrak{z}\gamma)^{k}$,
one can write $g_{\gamma}$ as 
\begin{equation}
g_{\gamma}(\mathfrak{z})=\Delta_{\gamma}\gamma^{*}\Delta_{\gamma^{*}}^{-1}+\sum_{k\geq1}\Delta_{\gamma}\mathfrak{z}\gamma\mathfrak{z}\gamma\cdots\gamma\mathfrak{z}\Delta_{\gamma^{*}}\label{g2}
\end{equation}
 where, in the $k$-th term, $\mathfrak{z}$ appears $k$ times. 

At first glance, it might not be evident that the terms in the expansion
(\ref{g2}) can be written in the form of the terms in (\ref{g1}).
To deal with the zeroth term, simply note that $\Delta_{\gamma}\gamma^{*}=\gamma^{*}\Delta_{\gamma*}$
so that $\Delta_{\gamma}\gamma^{*}\Delta_{\gamma^{*}}^{-1}=\gamma^{*}=g_{\gamma}(0)$.
For the first term, note that $\Delta_{\gamma*}^{2}=I-\gamma\gamma^{*}\in(\varphi_{E}(M)\otimes I)'$
(since $\gamma\in E^{\sigma}$) but also, for $a\in\sigma(M)'$, $\gamma^{*}(I\otimes a)=a\gamma$
(since $\gamma$ is in the center of $E^{\sigma}$) and, thus, $\Delta_{\gamma*}^{2}\in(I_{E}\otimes\sigma(M)')'=\mathcal{L}(E)\otimes I_{H}$.
It follows that one can write $\Delta_{\gamma*}=X\otimes I_{H}$ for
an $X\in\mathcal{L}(E)\cap\varphi_{E}(M)'$. Note also that $\Delta_{\gamma}$
is in $Z(\sigma(M))=\sigma(Z(M))$ and (identifying $\Delta_{\gamma}$
with $\sigma^{-1}(\Delta_{\gamma})$), the first term in (\ref{g2})
can be written $\Delta_{\gamma}\mathfrak{z}\Delta_{\gamma*}=\mathfrak{z}(\varphi_{E}(\Delta_{\gamma})\otimes I_{H_{\sigma}})(X\otimes I_{H_{\sigma}})=\mathfrak{z}((\varphi_{E}(\Delta_{\gamma})X)\otimes I_{H_{\sigma}})$
so that, writing $\mathfrak{D}g_{\gamma}=\ \varphi_{E}(\Delta_{\gamma})X\in\mathcal{L}(E)\cap\varphi_{E}(M)'=_{M}\mathcal{L}_{M}(E)$,
we have the first term of (\ref{g2}) written in the form of (\ref{g1}).
(Note that $\varphi_{E}(\Delta_{\gamma})\in\varphi_{E}(M)'$ since
$\Delta_{\gamma}\in Z(M)$).

For $k\geq2$ a similar computation is possible. It is a little less
straightforward than the computation of $\mathfrak{D}g_{\gamma}$,
but the case when $k=2$ illustrates amply what to do. With $\gamma$
as above, define a map $Y:E\otimes H\rightarrow E^{\otimes2}\otimes H$
by $Y\zeta h=(I_{E}\otimes\zeta)\gamma h$ for every $\zeta\in E^{\sigma}$
and $h\in H_{\sigma}$. To see that this map is well defined and bounded,
compute $||\sum_{i}(I\otimes\zeta_{i})\gamma h_{i}||^{2}=\sum_{i,j}\langle(I\otimes\zeta_{i})\gamma h_{i},(I\otimes\zeta_{j})\gamma h_{j}\rangle=\sum_{i,j}\langle\gamma^{*}(I\otimes\zeta_{j}^{*}\zeta_{i})\gamma h_{i},h_{j}\rangle$.
Since $\gamma$ is in the center of $E^{\sigma}$, the last expression
equals $\sum_{i,j}\langle\gamma^{*}\gamma\zeta_{j}^{*}\zeta_{i}h_{i},h_{j}\rangle$.
As $\zeta_{j}\in E^{\sigma}$, this is equal to $\sum_{i,j}\langle\zeta_{j}^{*}(\varphi_{E}(\gamma^{*}\gamma)\otimes I_{H})\zeta_{i}h_{i},h_{j}\rangle=||(\varphi_{E}(\gamma^{*}\gamma)\otimes I_{H})^{1/2}\sum_{i}\zeta_{i}h_{i}||^{2}$.
Thus $||Y||\leq||\gamma||$. It is also easy to check that $Y$ intertwines
$I_{E}\otimes b$ and $I_{E^{\otimes2}}\otimes b$ for every $b\in\sigma(M)'$.
Thus, there is some $Y_{0}\in_{M}\mathcal{L}_{M}(E,E^{\otimes2})$
such that $Y=Y_{0}\otimes I_{H}$. For $\mathfrak{z}\in E^{\sigma*}$
and $\zeta\in E^{\sigma}$, we have $\mathfrak{z}\zeta\in\sigma(M)'$
and, since $\gamma$ is central, we conclude that for $h\in H_{\sigma}$,
$\gamma\mathfrak{z}\zeta h=(I\otimes\mathfrak{z}\zeta)\gamma h=(I\otimes\mathfrak{z})(Y_{0}\otimes I)\zeta h$.
Thus $\gamma\mathfrak{z}=(I_{E}\otimes\mathfrak{z})(Y_{0}\otimes I_{H_{\sigma}})$.
We now compute: 
\[
\Delta_{\gamma}\mathfrak{z}\gamma\mathfrak{z}\Delta_{\gamma*}=\mathfrak{z}\gamma\mathfrak{z}(\varphi_{E}(\Delta_{\gamma})\otimes I_{H})(X\otimes I_{H})=\mathcal{Z}_{2}(\mathfrak{z})(Y_{0}\varphi_{E}(\Delta_{\gamma})X\otimes I_{H}),
\]
which shows that $\mathfrak{D}^{2}g_{\gamma}=Y_{0}\varphi_{E}(\Delta_{\gamma})X$.
\end{example}
\selectlanguage{american}%
\bibliographystyle{amsalpha}
\bibliography{Master120711}

\end{document}